\newtheorem{Defi}{Definition}
\newtheorem{Thm}{Theorem}
\newtheorem{Prop}[Thm]{Proposition}
\newtheorem{Lem}{Lemma}
\newtheorem{Rem}{Remark}
\newtheorem{Exm}{Example}
\newtheorem{Ass}{Assumption}
\newtheorem{CExm}{Counterexample}
\newtheorem*{Prop*}{Proposition}
\newtheorem*{Cor*}{Corollary}
\newtheorem*{Thm*}{Theorem}
\def\eps{\varepsilon}
\def\R{{\mathbb R}}
\def\C{{\mathbb C}}
\def\N{{\mathbb N}}
\def\Z{{\mathbb Z}}
\def\T{{\mathbb T}}
\def\X{{\scriptstyle X}}
\def\XX{{\scriptscriptstyle X}}
\def\2scale{\stackrel{2-scale}{\relbar\joinrel\relbar\joinrel\relbar\joinrel\relbar\joinrel\rightharpoonup}}
\def\2scaleh{\stackrel{2-scale-{\bf h}^*}{\relbar\joinrel\relbar\joinrel\relbar\joinrel\relbar\joinrel\relbar\joinrel\relbar\joinrel\rightharpoonup}}
\def\sflow{\stackrel{\Sigma-\Phi_\tau}{\relbar\joinrel\relbar\joinrel\relbar\joinrel\relbar\joinrel\rightharpoonup}}
\begin{document}

\title[Convergence along mean flows]{Convergence along mean flows}

\bibliographystyle{plain}

\author[Thomas Holding]{Thomas Holding}
\address{T.H.: Mathematics Institute, University of Warwick, Coventry CV4 7AL, United Kingdom.}
\email{T.Holding@warwick.ac.uk}

\author[Harsha Hutridurga]{Harsha Hutridurga}
\address{H.H.: Department of Mathematics, Imperial College London, London, SW7 2AZ, United Kingdom.}
\email{h.hutridurga-ramaiah@imperial.ac.uk}

\author[Jeffrey Rauch]{Jeffrey Rauch}
\address{J.R.: Department of Mathematics, University of Michigan, Ann Arbor, MI 48109-1043, USA.}
\email{rauch@math.michigan.edu}

\begin{abstract}
We develop a technique of multiple scale asymptotic expansions along mean flows and a corresponding notion of weak multiple scale convergence.
These are applied to homogenize convection dominated parabolic equations with rapidly oscillating, locally periodic coefficients and $\mathcal{O}(\eps^{-1})$ mean convection term.
Crucial to our analysis is the introduction of a fast time variable, $\tau=t/\eps$, not apparent in the heterogeneous problem.
The effective diffusion coefficient is expressed in terms of the average of Eulerian cell solutions along the orbits of the mean flow in the fast time variable.
To make this notion rigorous, we use the theory of ergodic algebras with mean value.
\end{abstract}

\maketitle

{\bf Key words:} \emph{Homogenization, Two-scale convergence, Sigma-convergence, Strong convection regime, Ergodic algebra with mean value.}
\vspace{0.2 cm}

{\bf AMS subject classifications:} 35K10, 35B27, 46J10, 37C10.

\setcounter{tocdepth}{1}
\tableofcontents

\section{Introduction}\label{sec:introduction}

This article studies the homogenization of parabolic equations of convection-diffusion type with locally periodic (in space), rapidly oscillating coefficients. This work addresses the self-similar diffusive scaling in these equations, i.e. for an unknown scalar density $u^\eps(t,x)$, we consider the Cauchy problem for a convection-diffusion equation with large convection term:
\begin{align}\label{eq:intro:convect-diffuse}
\frac{\partial u^\eps}{\partial t}
+ \frac{1}{\eps} {\bf b}\left(x,\frac{x}{\eps}\right) \cdot \nabla u^\eps 
- \nabla \cdot \left( {\bf D}\left(x,\frac{x}{\eps}\right) \nabla u^\eps\right) 
= 0
\qquad
\mbox{ for }(t,x)\in\, ]0,T[\times\R^d
\end{align}
with $0<\eps\ll1$ the scale of heterogeneity. This scaling corresponds to the long-term behaviour which can be described in terms of the effective or homogenized limit of the above scaled system.

It has remained a largely open problem to determine the homogenized limit of the scaled equation \eqref{eq:intro:convect-diffuse}. This present work gives a partial answer to this question in the sense that we homogenize the non-homogeneous equation with locally periodic coefficients under some structural assumptions on the flows associated with certain vector fields. This is achieved by the introduction of a new notion of weak convergence in $L^p$ spaces with $1<p< \infty$.

Under no diffuse scaling, i.e. with no large convection term, homogenization of such equations is classical. In such a scenario, we can either employ the method of asymptotic expansions (see for instance, the monographs \cite{bensoussan2011asymptotic, sanchez1980non}) which provides us with the approximation
\begin{align}\label{eq:intro:2scale-exp-classic}
u^\eps(t,x) \approx u_0(t,x) + \eps u_1\left(t, x, \frac{x}{\eps}\right) + \eps^2 u_1\left(t, x, \frac{x}{\eps}\right) + \cdots
\end{align}
or employ a weak convergence approach of the two-scale convergence method introduced by G. Nguetseng in \cite{nguetseng1989general} and further developed by G. Allaire in \cite{allaire1992homogenization}. The cornerstone result of the two-scale convergence method is that, up to extraction of a subsequence, any uniformly (w.r.t. $\eps$) bounded sequence $\{u^\eps\}$ in some $L^p$ space with $1< p < \infty$ satisfies
\begin{align*}
\lim_{\eps\to0}
\iint\limits_{(0,T)\times\R^d}
u^\eps(t,x) \psi\left(t, x, \frac{x}{\eps} \right)
\, {\rm d}x\, {\rm d}t
=
\iiint\limits_{(0,T)\times\R^d\times\T^d}
u_0(t,x,y)
\psi(t,x,y)
\, {\rm d}y\, {\rm d}x\, {\rm d}t
\end{align*}
for some $u_0\in L^p((0,T)\times\R^d\times\T^d)$ called the weak two-scale limit and for any smooth $\psi(t,x,y)$ which is periodic in the $y$ variable.

Any weak convergence approach to homogenize a partial differential equation would involve passing to the limit (as the heterogeneity length scale tends to zero) in the weak formulation associated to the partial differential equation. This would require passing to the limit in products of weakly converging sequences. The main feature of the two-scale convergence method is that the particular choice of test functions allows us to pass to the limit in such products. If $u^\eps(t,x)$ weakly two-scale converges to $u_0(t,x,y)\in L^p((0,T)\times\R^d\times\T^d)$ and if the coefficient function $a(t,x,y),$ which is periodic in the $y$ variable, is admissible (roughly speaking, continuous or approximable by continuous functions in a certain sense -- see Definition \ref{defn:abs:admissible-2-scale} for precise statement), then the product has the convergence
\begin{align*}
a
\left(
t, x, \frac{x}{\eps}
\right)
u^\eps(t,x)
\rightharpoonup
\int\limits_{\T^d}
a(t,x,y) u_0(t,x,y)
\, {\rm d}y
\qquad
\mbox{ as }\eps\to0,
\end{align*}
in the sense of distributions.

In recent years, there have been numerous publications in the mathematics literature dedicated to generalize the notion of two-scale convergence (originally developed to handle periodic structures) to address the homogenization of partial differential equations with coefficients that belong to some ergodic algebras. Typically, all these works are about the study of the limiting behaviour (as $\eps\to0$) of the integral
\begin{align*}
\int\limits_{\R^d}
v^\eps(x) \psi\left(x,\frac{x}{\eps}\right)
\, {\rm d}x
\end{align*}
when $\{v^\eps\}$ is a uniformly bounded sequence in some Lebesgue space $L^p$ with $1 < p < \infty$ and $\psi(x,y)$ belongs to certain ergodic algebra in the $y$ variable. The notion of \emph{algebras with mean value} play a crucial role in these theories. This notion goes back to the work of Zhikov and Krivenko \cite{zhikov1983averaging} in the early 1980's (also see the book of Jikov, Kozlov and Oleinik \cite{jikov1994homogenization} for a pedagogical exposition). We cite some of the references in this context which we have consulted in developing our theory: \cite{casado2002two, nguetseng2003homogenization, nguetseng2004homogenization, nguetseng2011sigma}.

With regard to the homogenization of the scaled equation \eqref{eq:intro:convect-diffuse}, the known results are when the rapidly oscillating coefficients are purely periodic, i.e. of the type ${\bf b}\left(\frac{x}{\eps}\right)$, ${\bf D}\left(\frac{x}{\eps}\right)$. The case when the fluid field ${\bf b}(\cdot)$ is of zero mean was treated in \cite{bensoussan2011asymptotic, mclaughlin1985convection} using two-scale asymptotic expansions of the form \eqref{eq:intro:2scale-exp-classic}. They do not prove convergence. Over two decades ago, to address the case of fluid field ${\bf b}(\cdot)$ with non-zero mean, G. Papanicolaou suggested in \cite{papanicolaou1995diffusion} a modified two-scale asymptotic expansion where the coefficients in the expansion are taken along rapidly moving coordinates:
\begin{align}\label{eq:intro:2scale-drift-exp-classic}
u^\eps(t,x)
\approx
u_0\left( t, x-\frac{{\bf b}^* t}{\eps} \right)
+ \eps u_1\left( t, x-\frac{{\bf b}^* t}{\eps}, \frac{x}{\eps} \right)
+ \eps^2 u_2\left( t, x-\frac{{\bf b}^* t}{\eps}, \frac{x}{\eps} \right)
+ \cdots
\end{align}
The constant ${\bf b}^*\in\R^d$ is the mean field associated with ${\bf b}(\cdot)$. Note that the case ${\bf b}^* = 0$ coincides with the classical expansion \eqref{eq:intro:2scale-exp-classic}. We cite the works in \cite{allaire2007homogenization, allaire2010two, allaire2012homogenization} where the above expansion with drift is employed in homogenizing reactive transport models in periodic porous media.

Analogous to the two-scale convergence method, Maru{\v{s}}i{\'c}-Paloka and Piatnitski introduced a notion of weak convergence in \cite{maruvsic2005homogenization} called the two-scale convergence with drift (see \cite{allaire2008periodic} for a pedagogical exposition of this method) characterizing the limit
\begin{align*}
\lim_{\eps\to0}
\iint\limits_{(0,T)\times\R^d}
u^\eps(t,x) \psi \left( t, x-\frac{{\bf b}^* t}{\eps}, \frac{x}{\eps} \right)
\, {\rm d}x\, {\rm d}t
\end{align*}
where $\psi(t,x,y)$ is periodic in the $y$ variable and as usual the family $\{u^\eps\}$ is uniformly bounded (w.r.t. $\eps$) in some $L^p$ space with $1<p< \infty$.

Neither the modified two-scale expansion \eqref{eq:intro:2scale-drift-exp-classic} nor the notion of two-scale convergence with drift seem capable of treating equation \eqref{eq:intro:convect-diffuse} with locally periodic, rapidly oscillating coefficients, i.e. when ${\bf b}$ depends upon both $x$ and $y$. We cite the work of P-E. Jabin and A. Tzavaras \cite{jabin2009kinetic} which treats the homogenization of \eqref{eq:intro:convect-diffuse} with locally periodic fluid field ${\bf b}(x,y)$ and diffusion coefficient being unity. They treat a special case when the mean field $\bar{\bf b}(x)$ of the locally periodic fluid field ${\bf b}(x,y)$ vanishes, i.e. $\bar{\bf b}(x)\equiv0$ for all $x$. They introduce a notion of \emph{kinetic decomposition} to address this problem. As far as the authors are aware, the techniques of \cite{jabin2009kinetic} are not capable of addressing the case of non-zero mean field.

In this work, we introduce a new multiple scale expansion
\begin{align}\label{eq:intro:2scale-flow-exp-classic}
u^\eps(t,x)
\approx
u_0\left( t, \Phi_{-t/\eps}(x) \right)
+ \eps u_1\left( t, \frac{t}{\eps}, \Phi_{-t/\eps}(x), \frac{x}{\eps} \right)
+ \eps^2 u_2\left( t, \frac{t}{\eps}, \Phi_{-t/\eps}(x), \frac{x}{\eps} \right)
+ \cdots
\end{align}
which we call \emph{multiple scale expansion along mean flows}. The coefficient functions $u_i$ in \eqref{eq:intro:2scale-flow-exp-classic} are taken on rapidly moving coordinates $\Phi_{-t/\eps}(x)$ which is the flow associated with the mean field $\bar{\bf b}(x)$ of the locally periodic fluid field ${\bf b}(x,y)$. A novelty of our method is the introduction of the fast time variable $\tau:=t/\eps$. The main assumption in this work is on the Jacobian matrix $J(\tau,x)$ associated with the flow $\Phi_\tau(x)$.
\begin{align*}
\mbox{\centering {\bf Assumption:} \emph{There is a uniform constant $C$ such that $|J(\tau,x)|\le C$ for all $(\tau,x)\in\R\times\R^d$.}}
\end{align*}

The above assumption is trivially satisfied in all the previously known works on the homogenization of \eqref{eq:intro:convect-diffuse} because the Jacobian matrix associated with the flows in all these works is the identity.

Under this assumption, we derive a homogenized diffusion equation for the zeroth order approximation $u_0$ in \eqref{eq:intro:2scale-flow-exp-classic} with an explicit expression for the effective diffusion coefficient. The diffusion equation for $u_0$ is in Lagrangian coordinates because of the structure of the asymptotic expansion. The effect of Lagrangian stretching on the gradient of the scalar density $u^\eps$, i.e. creating large gradients has been widely studied in the literature in the case of non-oscillating coefficients (see for e.g. \cite{haynes2005controls, berestycki2005elliptic, constantin2008diffusion, hamel2010extinction}). If the above assumption is not made on the Jacobian matrix, we cannot expect a nontrivial limit as the large gradients can drive the solution to zero quickly. The mathematical model considered in this article is one of the simplified models for turbulent diffusion studied widely in the physics and mathematics literature -- for further details consult \cite[Section~2]{majda1999simplified}.

Taking inspiration from the work of Maru{\v{s}}i{\'c}-Paloka and Piatnitski \cite{maruvsic2005homogenization}, we devise a weak convergence approach which involves the characterization of the limit
\begin{align*}
\lim_{\eps\to0}
\iint\limits_{(0,T)\times\R^d}
u^\eps(t,x) \psi \left( t, \Phi_{-t/\eps}(x), \frac{t}{\eps}, \frac{x}{\eps} \right)
\, {\rm d}x\, {\rm d}t
\end{align*}
with a uniformly bounded family $\{u^\eps\}$ in some $L^p$ space ($1<p<+\infty$) and the test function $\psi(t,x,\tau,y)$ being periodic in the $y$ variable and belongs to an \emph{ergodic algebra with mean value} in the $\tau$ variable. We call this notion of convergence \emph{weak $\Sigma$-convergence along flows}.

To use this new notion of convergence, our strategy is to use test functions of the form $\psi \left( t, \Phi_{-t/\eps}(x), \frac{t}{\eps}, \frac{x}{\eps} \right)$ in the weak formulation of the scaled problem \eqref{eq:intro:convect-diffuse}. This weak formulation would have terms involving the Jacobian matrix associated with the flow $\Phi_{-t/\eps}(x)$. Note that the Jacobian matrix depends on the fast time variable, i.e. appears as $J\left(\frac{t}{\eps}, x\right)$, because of the chosen time scale in the flow. Our strategy, hence, is to consider test functions that belong to some ergodic algebra in the fast time variable $\tau$. 

Inspired by the notion of \emph{admissible} functions introduced by G. Allaire in \cite{allaire1992homogenization} and further clarified by M. Radu in her PhD thesis \cite{radu1992homogenization}, we introduce a notion of \emph{admissible} functions adapted to the weak $\Sigma$-convergence along flows (see Definition \ref{defn:abs:admissible-test-fn}). Another novelty of our approach is to consider the \emph{flow-representation} of functions (see Subsection \ref{ssec:tilde-representation} for precise definition). Our main result is to show that if the flow-representations of the fluid field ${\bf b}(x,y)$, the diffusion matrix ${\bf D}(x,y)$, the Jacobian matrix $J(\tau,x)$ are admissible, then we can derive the effective limit diffusion equation. These assumptions on the coefficients and the Jacobian matrix are very essential for our analysis as is evident from the counterexamples that are constructed in Section \ref{sec:Exm} of this paper.

The main homogenization result of this article is Theorem \ref{thm:hom:hom}. We summarize this result below (consult Theorem \ref{thm:hom:hom} in Section \ref{sec:homog_result} for precise statement).
\begin{Thm*}
Let $\Phi_\tau(x)$ be the flow associated with the mean field $\bar{\bf b}(x)$. Suppose the associated Jacobian matrix $J(\tau,x)$ is a uniformly bounded function of $\tau$ and $x$ variables. Let the flow-representations of the coefficients in \eqref{eq:intro:convect-diffuse} and that of the Jacobian matrix belong to certain ergodic algebra with mean value. Then the solution family $u^\eps(t,x)$ weakly $\Sigma$-converges along the flow $\Phi_\tau$ to the unique solution of the homogenized equation
\begin{align*}
\frac{\partial u_0}{\partial t}
-
\nabla_\XX \cdot \Big(
\mathfrak{D}(\X)\nabla_\XX u_0
\Big)
= 0
\end{align*}
where the effective diffusion matrix $\mathfrak{D}(\X)$ is given in terms of certain averages of solutions to cell problems and the averages are taken along the orbits of the mean flow.
\end{Thm*}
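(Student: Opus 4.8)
The plan is to follow the classical weak-convergence route to homogenization---a priori estimates, extraction of a limit in the new sense, and identification of that limit through oscillating test functions---but carried out entirely in the moving frame $X=\Phi_{-t/\eps}(x)$ and the fast time $\tau=t/\eps$. First I would establish uniform energy estimates. Testing \eqref{eq:intro:convect-diffuse} with $u^\eps$, the skew-symmetry of the transport operator (under the structural incompressibility-type assumptions on ${\bf b}$) controls the $\mathcal O(\eps^{-1})$ convection contribution to the energy identity, and coercivity of ${\bf D}$ then yields bounds for $u^\eps$ in $L^\infty(0,T;L^2(\R^d))$ and for $\nabla u^\eps$ in $L^2((0,T)\times\R^d)$, uniform in $\eps$. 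By the compactness theorem for weak $\Sigma$-convergence along the flow $\Phi_\tau$, I extract a subsequence with $u^\eps \sflow u_0$, the limit $u_0(t,X)$ being independent of the fast variables $\tau$ and $y$---forced, as in the classical theory, by the boundedness of $\nabla u^\eps$---while $\nabla u^\eps$ converges to a field with the two-scale corrector structure $\nabla_\XX u_0+\nabla_y u_1$, the change of frame introducing the Jacobian $J$.

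Next I would insert into the weak formulation test functions of the form
\[
\phi^\eps(t,x)=\psi_0\big(t,\Phi_{-t/\eps}(x)\big)+\eps\,\psi_1\big(t,\tfrac{t}{\eps},\Phi_{-t/\eps}(x),\tfrac{x}{\eps}\big),
\]
with $\psi_0,\psi_1$ smooth and $\psi_1$ admissible in the sense of Definition \ref{defn:abs:admissible-test-fn}. Differentiating in time produces an $\mathcal O(\eps^{-1})$ contribution $-\eps^{-1}\bar{\bf b}(X)\cdot\nabla_\XX\psi_0$ from the moving coordinate together with $\eps^{-1}\partial_\tau\psi_1$ from the fast time, while the spatial gradient produces $J^{T}\nabla_\XX\psi_0+\eps^{-1}\nabla_y\psi_1$. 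Organizing the resulting identity by powers of $\eps$ is the heart of the matter: the $\mathcal O(\eps^{-1})$ terms must cancel, and this cancellation is precisely what the choice of $\bar{\bf b}$ as the mean field guarantees---the bulk transport is carried by the flow, leaving the fluctuation ${\bf b}-\bar{\bf b}$ to drive the corrector.

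The surviving $\mathcal O(\eps^{-1})$ balance yields the cell (corrector) problem for $u_1$: an equation posed on the periodicity cell $\T^d_y$ and evolving in the fast time $\tau$ along the orbit of the flow, driven by $({\bf b}-\bar{\bf b})\cdot\nabla_\XX u_0$ and by the oscillation of $J$. Writing $u_1=\chi(\tau,X,y)\cdot\nabla_\XX u_0$ reduces this to a vector cell problem for $\chi$; invoking the mean-value property of the ergodic algebra I then average the cell data along the orbits of $\Phi_\tau$, which is exactly where admissibility of the flow-representations of ${\bf b}$, ${\bf D}$ and $J$ is indispensable for passing to the limit in the products of weakly $\Sigma$-convergent sequences. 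Collecting the $\mathcal O(1)$ terms and substituting $\chi$ produces the homogenized equation $\partial_t u_0-\nabla_\XX\cdot(\mathfrak{D}(X)\nabla_\XX u_0)=0$, with $\mathfrak{D}(X)$ assembled from ${\bf D}$, $\chi$ and $J$ and averaged both over $\T^d_y$ and along the $\tau$-orbits. Finally, the variational structure of the cell problem gives coercivity of $\mathfrak{D}$, hence well-posedness of the homogenized equation, and uniqueness of its solution upgrades subsequential convergence to convergence of the whole family.

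The step I expect to be the main obstacle is the $\mathcal O(\eps^{-1})$ cancellation intertwined with the fast-time averaging. Unlike the constant-drift case, here the Jacobian $J(\tau,x)$ genuinely oscillates in $\tau$, so the corrector problem lives on orbits of the flow rather than on a fixed torus; making rigorous the passage to the mean value along these orbits---and verifying that the flow-representations of the coefficients are admissible so that products of the weakly $\Sigma$-converging gradient with $J$ and ${\bf D}$ pass to the limit---is precisely the technical core that the ergodic-algebra-with-mean-value machinery is designed to handle.
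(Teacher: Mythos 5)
Your proposal reproduces the paper's overall architecture (energy estimates, $\Sigma$-$\Phi_\tau$ compactness, two-scale test functions $\psi+\eps\psi_1$, corrector, cell problem, effective equation, uniqueness upgrading to the full sequence), but it is missing the one idea that makes the argument actually close, and it contains a claim that is wrong as stated. First, the wrong claim: you assert that independence of $u_0$ from the fast variables $\tau$ and $y$ is ``forced, as in the classical theory, by the boundedness of $\nabla u^\eps$.'' Spatial gradient bounds force only $y$-independence (Proposition \ref{prop:abs:independent-of-y}); they say nothing about oscillation in the fast \emph{time}. In the paper, $s$-independence is a separate step (Lemma \ref{lem:hom:2-scale-compactness}): one tests the equation with $\eps\varphi(t,\Phi_{-t/\eps}(x),t/\eps)$, identifies the limit of the resulting singular convection term, and then invokes \emph{ergodicity} of the algebra through Lemma \ref{lem:abs:mean-value-tau-derivative}. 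Ergodicity is not decorative here; without it the fast-time independence fails.

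Second, and more seriously, your treatment of the $\mathcal{O}(\eps^{-1})$ terms is a genuine gap. They do not ``cancel'' because $\bar{\bf b}$ is the mean field: the singular term
\[
\frac1\eps\iint u^\eps(t,x)\,\bigl(\bar{\bf b}(x)-{\bf b}(x,\tfrac{x}{\eps})\bigr)\cdot{}^\top\!\!\,\widetilde{J}\bigl(\tfrac t\eps,\Phi_{-t/\eps}(x)\bigr)\nabla_\XX\psi\,{\rm d}x\,{\rm d}t
\]
has a \emph{nonzero} limit, which is exactly the dispersive contribution to $\mathfrak{D}$ (the term coupling $u_1$ with $\widehat{\widetilde{\bar{\bf b}}}-\widehat{\widetilde{\bf b}}$ in Lemma \ref{lem:hom:singular-terms-sigma-limit}); dropping it would give the wrong effective diffusion. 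Moreover, zero $y$-mean of the fluctuation is not enough to pass to the limit, because $u^\eps$ converges only weakly and one cannot multiply weak limits. The paper's resolution is the vector-potential device: by Lemma \ref{lem:hom:helmotz} write $\mathcal{F}=\bar{\bf b}-{\bf b}=\nabla_y\times\Upsilon$, so that by the chain rule \eqref{eq:hom:mathcal-F=epsilon-curl-upsilon} the factor $\eps^{-1}$ is absorbed, then integrate by parts with Green's formula for the curl and use that ${}^\top\!\!\,\widetilde{J}\nabla_\XX\psi=\nabla_x\bigl(\psi(t,\Phi_{-t/\eps}(x))\bigr)$ is an exact gradient, whose curl vanishes. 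This structure (and the admissibility assumptions on the flow representations of $\mathcal{F}$, $\Upsilon$, $\nabla_x\times\Upsilon$) is what lets every singular term either vanish or converge to an identifiable limit; it is also the reason the rigorous theorem is stated in dimension $3$, where the curl calculus is available (higher dimensions would require differential forms). You correctly flag this step as ``the main obstacle,'' but flagging it is not supplying it: without the Helmholtz decomposition (or an equivalent div-curl mechanism) your proof scheme stalls precisely at the point you identify.
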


\pagebreak[3]
{\centering {\bf Outline of the paper:}}
\begin{itemize}
\item In Section \ref{sec:heuristics}, we introduce the method of \emph{multiple scale asymptotic expansions along mean flows} to derive the effective equation for the scaled equation \eqref{eq:heu:CD}-\eqref{eq:heu:IV}. This result is recorded as Proposition \ref{prop:heu:formal-result} which gives an explicit expression for the effective diffusion matrix.
\item Section \ref{sec:abs} introduces the new notion of weak multiple scale convergence. In Subsections \ref{ssec:abs:algebras-w.m.v.} through \ref{ssec:abs:technicalities}, we recall enough of the theory of algebras with mean value. The notion of $\Sigma$-convergence along flows is introduced in Subsection \ref{ssec:abs:2-scale-convergence-along-flows}. The main compactness result with regard to this new notion of convergence is given by Theorem \ref{Thm:abs:compactness}. In Subsection \ref{ssec:abs:additional-bounds}, we obtain compactness results on the gradient sequences (in the sense of corrector results in homogenization).
\item Section \ref{sec:homog_result} deals with the homogenization result. The main result of this section is Theorem \ref{thm:hom:hom}. The main assumptions made on the coefficients and the Jacobian matrix are explained in Subsection \ref{ssec:hom:assumptions}.
\item Section \ref{sec:Exm} provides some discussion on the assumptions made on the coefficients and the Jacobian matrix. In particular, we give some examples of fluid fields with bounded Jacobian matrices and show that unbounded growth in the Jacobian matrix can lead to trivial and singular behaviour of the limit $u_0$. We also provide an explicit example of an equation, where the assumptions on the flow-representation of the coefficients do not hold, leading to two different homogenized equations in the $\eps\to0$ limit.
\item In Section \ref{sec:explicit}, we perform asymptotic analysis on some explicit convection-diffusion models which highlights the effectiveness of this new approach in addressing the large convection terms. Finally, in Section \ref{sec:conclusions}, we give some concluding remarks.
\end{itemize}

\noindent{\bf Acknowledgements.} 
This work was initiated during J.R.'s visit to the University of Cambridge. It was supported by the ERC grant \textsc{matkit}.
The authors would like to thank Gr\'egoire Allaire for his fruitful suggestions during the preparation of this article.
The authors would also like to thank Mariapia Polambaro for helpful discussions regarding Euclidean motions and for bringing to our attention the work of P-E. Jabin and A. Tzavaras \cite{jabin2009kinetic}. H.H. acknowledges the support of the ERC grant \textsc{matkit}. 
T.H. is supported by the UK Engineering and Physical Sciences Research Council (EPSRC) grant EP/H023348/1 for the University of Cambridge Centre for Doctoral Training, the Cambridge Centre for Analysis.
We would like to thank the referee for helpful comments.

\section{Asymptotic expansion along flows}\label{sec:heuristics}

\subsection{Mathematical model}

Let ${\bf b}(x,y):\R^d\times\T^d\to\R^d$ be a prescribed time-independent fluid field which is incompressible in both the $x$ and $y$ variables, i.e.
\begin{align}\label{eq:heu:null-divergence-fluid-field}
\nabla_x\cdot {\bf b}(x,y) = \nabla_y\cdot {\bf b}(x,y) = 0
\qquad
\mbox{ for a.e. }(x,y)\in\R^d\times\T^d.
\end{align}
Define the associated mean field as
\begin{align}\label{eq:heu:mean-field}
\bar{\bf b}(x) := \int\limits_{\T^d} {\bf b}(x,y)\, {\rm d}y.
\end{align}
{\centering{\textsc{Notation:} For any matrix $\mathbf{B}$, its transpose is denoted by ${}^\top\mathbf{B}$.}}

Let ${\bf D}(x,y)\in L^\infty(\R^d\times\T^d;\R^{d\times d})$ be a given time-independent symmetric (i.e. ${\bf D} = {}^\top{\bf D}$) matrix-valued diffusion coefficient which is assumed to be uniformly coercive, i.e.
\begin{align}\label{eq:heu:coercive-diffusion-matrix}
\exists\, \lambda, \Lambda >0
\, \, \mbox{ s.t. }
\, \, 
\lambda|\xi|^2 
\le 
{}^\top\!\!\, \xi\, {\bf D}(x,y)\, \xi
\le \Lambda|\xi|^2
\quad
\forall\xi\in\R^d
\, \, \mbox{ and for a.e. }
(x,y)\in\R^d\times\T^d.
\end{align}
Let $0<\eps\ll1$ be the scale of heterogeneity. Let us consider a scaled Cauchy problem with rapidly oscillating coefficients for an unknown scalar density $u^\eps(t,x):[0,T[\times\R^d\to[0,\infty)$.
\begin{subequations}
\begin{align}
& \frac{\partial u^\eps}{\partial t} 
+ \frac1\eps {\bf b}\left(x,\frac{x}{\eps}\right)\cdot\nabla u^\eps
- \nabla\cdot\left({\bf D}\left(x,\frac{x}{\eps}\right) \nabla u^\eps\right) 
= 0 & \mbox{ for }(t,x)\in\, ]0,T[\times\R^d,\label{eq:heu:CD}\\[0.2 cm]
& u^\eps(0,x) = u^{in}(x) & \mbox{ for }x\in\R^d.\label{eq:heu:IV}
\end{align}
\end{subequations}
The \emph{two-scale expansions with drift} method (see \cite{maruvsic2005homogenization, donato2005averaging, allaire2007homogenization, allaire2008periodic, allaire2010two, allaire2012homogenization}) employs the asymptotic expansion for the unknown density:
\begin{align}\label{eq:heu:drift-expansion}
u^\eps(t,x) =
\sum_{i=0}^\infty
\eps^i u_i\left(t,x-\frac{{\bf b}^*t}{\eps}, \frac{x}{\eps}\right),
\end{align}
where the drift velocity ${\bf b}^*\in\R^d$ is a constant and the coefficient functions $u_i(t,x,y)$ are assumed to be periodic in the $y$ variable. Remark that the coefficient functions $u_i$ in \eqref{eq:heu:drift-expansion} are written in moving coordinates. To be precise, consider the ordinary differential equation
\begin{align}\label{eq:heu:ode-constant-drift}
\dot{{\scriptstyle X}} = {\bf b}^*;\qquad
{\scriptstyle X}(0) = x.
\end{align}
Denote by $\Phi_\tau(x)$ the flow associated with \eqref{eq:heu:ode-constant-drift}. The flow evaluated at the time instant $(-t/\eps)$ is nothing but the moving coordinates taken in the asymptotic expansion \eqref{eq:heu:drift-expansion}, i.e.
\begin{align*}
\Phi_{-t/\eps}(x) = x-\frac{{\bf b}^*t}{\eps}.
\end{align*}
The idea of considering the asymptotic expansion along moving coordinates was mentioned by G. Papanicolaou in a survey paper \cite{papanicolaou1995diffusion}. It should be noted that the \emph{two-scale expansions with drift} method can handle the homogenization of convection-diffusion equation \eqref{eq:heu:CD} only when the fluid field is purely periodic, i.e. ${\bf b}(x,y)\equiv{\bf b}(y)$. In that case, the constant drift velocity is  taken to be
\begin{align*}
{\bf b}^* = \int\limits_{\T^d} {\bf b}(y)\, {\rm d}y.
\end{align*}
Taking cues from the constant drift scenario, consider the autonomous system
\begin{align}\label{eq:heu:ode-mean-field}
\dot{{\scriptstyle X}} = \bar{{\bf b}}({\scriptstyle X});\qquad
{\scriptstyle X}(0) = x.
\end{align}
Again, denoting the flow associated with \eqref{eq:heu:ode-mean-field} by $\Phi_\tau(x)$, we postulate the asymptotic expansion in the spirit of \eqref{eq:heu:drift-expansion}:
\begin{align}\label{eq:heu:mean-field-expansion}
u^\eps(t,x) =
\sum_{i=0}^\infty
\eps^i u_i\left( t, \frac{t}{\eps}, \Phi_{-t/\eps}(x), \frac{x}{\eps}\right).
\end{align}
Note that the coefficient functions $u_i(t, \tau, {\scriptstyle X}, y)$ in \eqref{eq:heu:mean-field-expansion} depend on an additional variable $\tau$ which we shall call the \emph{fast time variable}. We assume that the coefficient functions $u_i(t, \tau, {\scriptstyle X}, y)$ are periodic in the $y$ variable. The structural assumption on the coefficients $u_i(t,\tau,\X,y)$ with regard to the $\tau$ variable is a little bit subtle. We shall assume that the coefficient functions, as a function of $\tau$, belong to an \emph{ergodic algebra with mean value}. This shall guarantee the existence of certain weak* limits. This will be made more rigorous in a later stage of the article (see Section \ref{sec:abs}). The authors of \cite{bourgeat2003averaging} also introduced a fast time variable in their asymptotic expansion. However, they do not consider the expansion along moving coordinates as is the case in \eqref{eq:heu:mean-field-expansion}. Also, the authors of \cite{bourgeat2003averaging} assume that the coefficient functions decay exponentially in the fast time variable.

\subsection{Flow representation}\label{ssec:tilde-representation}

We introduce a notion of \emph{flow representation} that is very central to our analysis. The choice of considering rapidly moving coordinates in the expansion \eqref{eq:heu:mean-field-expansion} is equivalent to expressing the convection-diffusion equation \eqref{eq:heu:CD} in Lagrangian coordinates. This necessitates the consideration of the coefficient functions in the convection-diffusion equation in Lagrangian coordinates. Essentially, the flow representation takes into account the underlying flow structure associated with the mean field $\bar{\bf b}(x)$.  

To be precise, consider a function $f:\R^d\to\R$ and a flow $\Phi_\tau(x):\R\times\R^d\to\R^d$. The flow representation of $f$ is given by the function $\widetilde{f}:\R\times\R^d\to\R$ defined as
\begin{align*}
\widetilde{f}(\tau,x):=f(\Phi_{\tau}(x)).
\end{align*}
We shall use the following convention for their flow representations when we encounter locally periodic functions, i.e. functions of the form $f(x,y):\R^d\times\T^d\to\R$
\begin{align*}
\widetilde{f}(\tau,x,y):=f(\Phi_{\tau}(x),y).
\end{align*}
The following observations are obvious for the flow representations:
\begin{align*}
& \widetilde{f}(0,x) = f(x);
\\
& \widetilde{f}(\tau, \Phi_{\tau'}(x)) = f(\Phi_{\tau+\tau'}(x))
\qquad
\mbox{ for any }\tau,\tau'\in\R;
\\
& \widetilde{f}(\tau, \X) = f(x) 
\qquad
\mbox{ with the convention }\X:=\Phi_{-\tau}(x).
\end{align*}
When we encounter vector-valued functions, it should be noted that their flow representations are taken component-wise. It should also be noted that the flow $\Phi_\tau$ used in giving the flow representation of a function can be any one-parameter group of transformation and need not be associated with any vector field.

\begin{Rem}
The one parameter group $U^\tau$ defined by $(U^\tau f)(x) := \widetilde{f}(\tau, x)$ is generated (at least formally, i.e. without regard to functional spaces) by the skew-symmetric operator $\bar{\bf b}(x)\cdot \nabla$.
\end{Rem}

\subsection{Flows associated with vector fields}\label{ssec:flows}
Let $J(\tau,x)$ denote the Jacobian matrix of the flow $\Phi_\tau$ generated by \eqref{eq:heu:ode-mean-field}, i.e.
\begin{equation}\label{eq:heu:Jacobian-matrix}
J(-\tau,x)
=\begin{bmatrix}
\frac{\partial\Phi^1_\tau}{\partial x_1}&\dotsb&\frac{\partial\Phi^1_\tau}{\partial x_d}\\
\vdots&&\vdots\\
\frac{\partial\Phi^d_\tau}{\partial x_1}&\dotsb&\frac{\partial\Phi^d_\tau}{\partial x_d}
\end{bmatrix}
=\left(\frac{\partial\Phi^i_\tau}{\partial x_j}\right)_{i,j=1}^d.
\end{equation}
We have used the convention that $J(\tau,x)$ is the Jacobian of the \emph{backwards} flow $\Phi_{-\tau}(x)$ to ease notation as it is this that appears throughout. The flow representation of the Jacobian matrix function $J:\R\times\R^d\to\R^{d\times d}$ is defined by
\begin{align*}
\widetilde{J}(\tau,\Phi_{-\tau}(x)) 
= 
\widetilde{J}(\tau, \X) 
= 
J(\tau, x).
\end{align*}
In order to ensure the validity of the proposed asymptotic expansion \eqref{eq:heu:mean-field-expansion} we make the assumption of uniform boundedness on the Jacobian matrix:
\begin{Ass}\label{heu:ass:bound-on-J}
There is a constant $C$ such that $|J(\tau,x)|\le C$ for all $\tau\in\mathbb{R}$ and $x\in\mathbb{R}^d$.
\end{Ass}
To finish this subsection we record some facts regarding the change of variables. Although these are well known, we provide a proof in the Appendix for the convenience of the reader.

\begin{Lem}\label{lem:heu:basic-facts}
Let $\bar{\bf b}\in C^1(\R^d)$, then the following hold:
\begin{enumerate}[(i)]
\item $\nabla_{\XX}\cdot {}^\top\!\!\, \widetilde{J}(\tau, \X)=0$ in the sense of distributions.
\item $\nabla_{\XX}\cdot\left(\widetilde{J}(\tau,\X)\widetilde{f}(\tau,\X,y)\right)=0$ in the sense of distributions, for any vector field $f(x,y)$ which is of null-divergence in the $x$ variable, i.e. $\nabla_x\cdot f(x,y)=0$.
\item For any $\phi,\varphi\in C^\infty_c(\R^d;\R)$ we have the integration by parts formula:
\begin{align*}
\int\limits_{\R^d}
\phi(\X)
\left(
{}^\top\!\!\, \widetilde{J}(\tau,\X)\nabla_\XX \varphi(\X)
\right)
\, {\rm d}\X
= 
- \int\limits_{\R^d}
\varphi(\X)
\left(
{}^\top\!\!\, \widetilde{J}(\tau,\X)\nabla_\XX \phi(\X)
\right)
\, {\rm d}\X.
\end{align*}
\item For any $\tau\in\mathbb{R}$ and $x\in \R^d$ it holds that
\begin{align}\label{eq:heu:lem:bar-b-Phi=J-bar-b-x}
\bar{{\bf b}}(\X)=\bar{{\bf b}}\left(\Phi_{-\tau}(x)\right)
= J(\tau,x) \bar{{\bf b}}(x)=\widetilde{J}(\tau,\X)\widetilde{\bar{{\bf b}}}(\tau,\X).
\end{align}
\end{enumerate}
\end{Lem}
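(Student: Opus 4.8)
The plan is to establish the four assertions of Lemma \ref{lem:heu:basic-facts} by exploiting the group structure of the flow $\Phi_\tau$ and the standard Liouville-type identities for flows of vector fields. The foundational computation is the evolution equation for the Jacobian. Differentiating the defining ODE \eqref{eq:heu:ode-mean-field} in the initial datum $x$, one finds that the matrix $M(\tau,x):={}^\top\! J(-\tau,x)$ (the Jacobian of the forward flow $\Phi_\tau$) solves the linear variational system $\partial_\tau M = (\nabla_x \bar{\bf b})(\Phi_\tau(x))\, M$ with $M(0,x)=\mathrm{Id}$. Because $\bar{\bf b}$ is divergence free (this follows by integrating \eqref{eq:heu:null-divergence-fluid-field} over $\T^d$ and using the definition \eqref{eq:heu:mean-field} of $\bar{\bf b}$, so $\nabla_x\cdot\bar{\bf b}=0$), Liouville's formula gives $\det J\equiv 1$, so the flow is measure preserving. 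This volume preservation is what underlies the integration by parts formula (iii) and is the basic fact I would record first.

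For (iv) I would argue directly from the chain rule. Differentiating the group identity $\Phi_{\tau+\sigma}(x)=\Phi_\tau(\Phi_\sigma(x))$ or, more cleanly, differentiating $\Phi_{-\tau}(x)$ with respect to $\tau$ and using that $\bar{\bf b}$ is the generating field, one sees that $\bar{\bf b}(\Phi_{-\tau}(x))$ is transported by the linearized flow, which is precisely the statement $\bar{\bf b}(\X)=J(\tau,x)\bar{\bf b}(x)$ after matching the Jacobian convention in \eqref{eq:heu:Jacobian-matrix}. The final equality in \eqref{eq:heu:lem:bar-b-Phi=J-bar-b-x} is then just unwinding the flow-representation notation, namely $\widetilde{\bar{\bf b}}(\tau,\X)=\bar{\bf b}(x)$ and $\widetilde J(\tau,\X)=J(\tau,x)$ under the convention $\X=\Phi_{-\tau}(x)$.

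Assertions (i) and (ii) are the distributional divergence identities, and I regard these as the technical heart of the lemma. I would prove them by duality: fix a test function $\varphi\in C_c^\infty(\R^d)$ and consider $\int_{\R^d}\widetilde J(\tau,\X)\widetilde f(\tau,\X,y)\cdot\nabla_\X\varphi(\X)\,{\rm d}\X$, which I want to show vanishes. The strategy is to push the $\X$-integral back to the original $x$ variable via the change of variables $\X=\Phi_{-\tau}(x)$; since $\det J\equiv 1$ the Jacobian factor is trivial, and the columns of $\widetilde J$ paired against $\nabla_\X\varphi$ reassemble, by the chain rule, into $\nabla_x(\varphi\circ\Phi_{-\tau})$ evaluated on the original field $f(x,y)$. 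The integral then becomes $\int_{\R^d} f(x,y)\cdot\nabla_x\big(\varphi(\Phi_{-\tau}(x))\big)\,{\rm d}x$, which is zero after one more integration by parts precisely because $\nabla_x\cdot f(\cdot,y)=0$. Taking $f=\bar{\bf b}$ (or any constant-in-$y$ divergence-free field) recovers (i), while the general null-divergence $f$ gives (ii). For (iii) the same change of variables converts the weighted pairing $\int \phi\, {}^\top\!\widetilde J\,\nabla_\X\varphi\,{\rm d}\X$ into an ordinary unweighted $L^2$ pairing in the $x$ variable, where the skew-symmetry of $\nabla\cdot$ (again using $\det J\equiv1$) yields the antisymmetry in $\phi,\varphi$.

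The main obstacle I anticipate is bookkeeping rather than conceptual: one must be scrupulous about the two competing conventions — the sign convention in \eqref{eq:heu:Jacobian-matrix} that defines $J$ via the \emph{backward} flow, and the flow-representation relation $\widetilde J(\tau,\X)=J(\tau,x)$ with $\X=\Phi_{-\tau}(x)$ — so that the transpose and the direction of the flow line up correctly when the chain rule reassembles $\nabla_\X(\varphi\circ\Phi_\tau)$ into a field in the $x$ variable. Getting the transpose on the correct side in (i) and (iii) is where an error is most likely to creep in, and I would verify it by checking a single scalar component explicitly before asserting the distributional identity.
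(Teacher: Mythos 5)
Your proposal is correct, and it reaches the four claims by a route that differs from the paper's own proof in several places. For (i) your computation and the paper's are essentially the same argument run in opposite directions: the paper starts from the chain-rule identity $\partial_{x_i}\big(\varphi(\Phi_{-\tau}(x))\big)=\sum_j(\partial_{\XX_j}\varphi)(\Phi_{-\tau}(x))\,\partial_{x_i}\Phi^j_{-\tau}(x)$, integrates over $\R^d$ (the left side is a total derivative of a compactly supported function, hence integrates to zero), and then changes variables; you start from the pairing against $\nabla_\XX\varphi$ and change variables to reassemble the same total derivative. For (ii) and (iii) you genuinely diverge: the paper proves (ii) by a pointwise Leibniz expansion $\nabla_\XX\cdot(\widetilde J\widetilde f)=\widetilde f\cdot(\nabla_\XX\cdot{}^\top\widetilde J)+\widetilde J:\nabla_\XX\widetilde f$, killing the first term with (i) and the second with the group-property identity $\widetilde J(\tau,\X)J(-\tau,\X)=\mathrm{Id}$, and it proves (iii) by integrating by parts against the matrix ${}^\top\widetilde J$ and again invoking (i); your single unimodular change of variables $\X=\Phi_{-\tau}(x)$ handles both at once, converting each statement into an ordinary divergence-free/integration-by-parts identity in the $x$ variable. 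Your route is more self-contained ((ii) does not need (i) as an input) and makes explicit the fact $\det J\equiv1$, which the paper uses silently in its change of variables; the paper's route instead isolates the algebraic identities ((i) and the inverse-Jacobian relation) that are reused elsewhere in the article. For (iv) your argument --- differentiate the commutation relation $\Phi_{-\tau}\circ\Phi_\sigma=\Phi_\sigma\circ\Phi_{-\tau}$ in $\sigma$ at $\sigma=0$, so that $J(\tau,x)\bar{\bf b}(x)$ and $\bar{\bf b}(\Phi_{-\tau}(x))$ arise as the two sides of one equation --- is shorter and cleaner than the paper's, which shows that $g(\tau):=\bar{\bf b}(\Phi_{-\tau}(x))-J(\tau,x)\bar{\bf b}(x)$ satisfies the linear system $g_i'(\tau)=-\nabla_\XX\bar{\bf b}_i(\Phi_{-\tau}(x))\cdot g(\tau)$ with $g(0)=0$ and concludes by a Gr\"onwall argument.

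One small correction: taking $f=\bar{\bf b}$ (or any $x$-dependent, $y$-independent divergence-free field) in your duality argument yields only an instance of (ii), namely $\nabla_\XX\cdot\big(\widetilde J\,\widetilde{\bar{\bf b}}\big)=0$; it does not give (i). To recover (i), which asserts that every column of $\widetilde J$ is divergence free, you must specialize to the constant coordinate fields $f\equiv{\bf e}_i$, $i=1,\dots,d$ (these are trivially divergence free, so they fall within the class your argument covers). With that substitution your proof is complete.
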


\subsection{Multiple scale expansion along mean flows}\label{ssec:asymptotic-expansion}

We present a strategy to formally arrive at an effective equation for \eqref{eq:heu:CD}-\eqref{eq:heu:IV} by using the asymptotic expansion \eqref{eq:heu:mean-field-expansion} postulated earlier. In the case of constant drift \eqref{eq:heu:drift-expansion}, we have the following chain rules for differentiating the coefficient functions in the space and times variables:
\begin{align*}
\nabla_x \left( u_i \left( t, x - \frac{{\bf b}^*t}{\eps}, \frac{x}{\eps} \right) \right)
&= \nabla_x u_i \left( t, x - \frac{{\bf b}^*t}{\eps}, \frac{x}{\eps} \right)
+ \frac{1}{\eps} \nabla_y u_i \left( t, x - \frac{{\bf b}^*t}{\eps}, \frac{x}{\eps} \right),
\\[0.2 cm]
\frac{\partial}{\partial t} \left(u_i \left( t, x - \frac{{\bf b}^*t}{\eps}, \frac{x}{\eps} \right) \right)
&=\frac{\partial u_i}{\partial t} \left( t, x - \frac{{\bf b}^*t}{\eps}, \frac{x}{\eps} \right)-\frac1\eps{\bf b}^*\cdot \nabla_x u_i \left( t, x - \frac{{\bf b}^*t}{\eps}, \frac{x}{\eps} \right).
\end{align*}
Remark that the above simple expression for the derivative is because of the Jacobian matrix being the identity for the change of variables:
\begin{align*}
x\mapsto x-\frac{{\bf b}^*t}{\eps}.
\end{align*}
However, for the change of variables
\begin{align*}
x\mapsto \Phi_{-t/\eps}(x)
\end{align*}
where the flow $\Phi_\tau$ is associated with \eqref{eq:heu:ode-mean-field}, the associated chain rules for differentiating the coefficient functions in the asymptotic expansion \eqref{eq:heu:mean-field-expansion} with respect to the space and time variables shall be
\begin{equation*}
\begin{array}{ll}
\displaystyle
\nabla_x \left( u_i \left( t, \frac{t}{\eps}, \Phi_{-t/\eps}(x), \frac{x}{\eps} \right) \right)
= & \displaystyle
{}^\top \!\!\, \widetilde{J}\left(\frac{t}{\eps}, \Phi_{-t/\eps}(x) \right) \nabla_{\scriptscriptstyle X} u_i \left( t, \frac{t}{\eps}, \Phi_{-t/\eps}(x), \frac{x}{\eps} \right)\\[0.5 cm]
\displaystyle
& \displaystyle
+ \frac{1}{\eps} \nabla_y u_i \left( t, \frac{t}{\eps}, \Phi_{-t/\eps}(x), \frac{x}{\eps} \right),
\end{array}
\end{equation*}
\begin{equation*}
\begin{array}{ll}
\displaystyle
\frac{\partial}{\partial t} \left( u_i \left( t, \frac{t}{\eps}, \Phi_{-t/\eps}(x), \frac{x}{\eps} \right) \right)
= & \displaystyle
\frac{\partial u_i}{\partial t} \left( t, \frac{t}{\eps}, \Phi_{-t/\eps}(x), \frac{x}{\eps} \right)
+ \displaystyle
\frac{1}{\eps}\frac{\partial u_i}{\partial \tau} \left( t, \frac{t}{\eps}, \Phi_{-t/\eps}(x), \frac{x}{\eps} \right)\\[0.5 cm]
& \displaystyle
- \frac{1}{\eps}\bar{\bf b}\left(\Phi_{-t/\eps}(x)\right) \cdot \nabla_{\scriptscriptstyle X} u_i \left( t, \frac{t}{\eps}, \Phi_{-t/\eps}(x), \frac{x}{\eps} \right).
\end{array}
\end{equation*}
The strategy of any asymptotic expansion method in homogenization is to substitute the postulated expansion into the model equation and solve a cascade of equations for obtaining the coefficient functions in the asymptotic expansion. All the equations in this cascade obtained by this approach have a similar structure. Next, we state a standard Fredholm type result which guarantees the solvability of such equations provided the source terms satisfy a compatibility condition.
\begin{Lem}\label{lem:heu:fredholm}
Let $x\in\R^d$ be a fixed parameter. Suppose $g(x,\cdot)\in L^2(\T^d)$ be the source term in the boundary value problem:
\begin{equation}\label{eq:heu:fredholm}
{\bf b}(x,y)\cdot\nabla_y f - \nabla_y\cdot\left( {\bf D}(x,y)\nabla_y f\right) = g(x,y) \qquad \mbox{ in }\T^d.
\end{equation}
Then there exists a unique solution $f\in H^1(\T^d)/\R:=\{f\in H^1(\T^d):\int_{\T^d} f \,{\rm d} y=0\}$ to (\ref{eq:heu:fredholm}) if and only if the source term satisfies
\begin{equation}\label{eq:heu:fredholm-compatible}
\int\limits_{\T^d} g(x,y)\, {\rm d}y = 0.
\end{equation}
\end{Lem}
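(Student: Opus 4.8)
The plan is to treat $x$ as a frozen parameter and recast \eqref{eq:heu:fredholm} as a variational problem on the Hilbert space $V := H^1(\T^d)/\R$ of mean-zero functions, to which the Lax--Milgram theorem applies. Multiplying \eqref{eq:heu:fredholm} by a test function $\varphi \in V$ and integrating by parts on $\T^d$ (there is no boundary term) yields the weak formulation: find $f \in V$ with
\begin{align*}
a(f,\varphi) := \int\limits_{\T^d} \big({\bf b}(x,y)\cdot\nabla_y f\big)\varphi \, {\rm d}y + \int\limits_{\T^d} {\bf D}(x,y)\nabla_y f\cdot\nabla_y\varphi \, {\rm d}y = \int\limits_{\T^d} g(x,y)\varphi \, {\rm d}y
\end{align*}
for all $\varphi\in V$. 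The first point to observe is that the right-hand side $\varphi\mapsto \int_{\T^d} g\varphi \,{\rm d}y$ descends to a well-defined linear functional on the quotient $V$ precisely when it annihilates constants, i.e. exactly when the compatibility condition \eqref{eq:heu:fredholm-compatible} holds. This is where the hypothesis enters, and it also signals that the mean-zero subspace is the natural energy space.

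The key structural input is the divergence-free assumption \eqref{eq:heu:null-divergence-fluid-field}, which renders the convection term skew. Indeed, writing $({\bf b}\cdot\nabla_y f)f = \tfrac12 {\bf b}\cdot\nabla_y(f^2)$ and integrating by parts using $\nabla_y\cdot{\bf b}=0$ shows $\int_{\T^d}({\bf b}\cdot\nabla_y f)f\,{\rm d}y = 0$, so the convection term contributes nothing to $a(f,f)$. Consequently the coercivity bound $a(f,f) \ge \lambda\|\nabla_y f\|_{L^2(\T^d)}^2$ follows from the ellipticity \eqref{eq:heu:coercive-diffusion-matrix} alone, and the Poincar\'e inequality on $V$ upgrades this to $a(f,f)\ge c\|f\|_{H^1(\T^d)}^2$. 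Boundedness of $a$ on $V\times V$ is immediate from ${\bf b}\in L^\infty$, ${\bf D}\in L^\infty$ and Cauchy--Schwarz; note that the form is not symmetric, so Lax--Milgram, rather than the Riesz representation theorem, is the appropriate tool, and coercivity makes it applicable directly without recourse to the full Fredholm alternative.

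With coercivity and boundedness in hand, Lax--Milgram delivers, under \eqref{eq:heu:fredholm-compatible}, a unique $f\in V$ solving the weak problem, which is exactly the existence and uniqueness asserted in $H^1(\T^d)/\R$. For the necessity of \eqref{eq:heu:fredholm-compatible}, I would integrate \eqref{eq:heu:fredholm} over $\T^d$: the diffusion term integrates to zero as the divergence of a periodic field, while $\int_{\T^d}{\bf b}\cdot\nabla_y f\,{\rm d}y = -\int_{\T^d}(\nabla_y\cdot{\bf b})f\,{\rm d}y = 0$ again by \eqref{eq:heu:null-divergence-fluid-field}, forcing $\int_{\T^d} g\,{\rm d}y = 0$. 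The only genuinely delicate point is the justification of the skew-symmetry identity $\int_{\T^d}({\bf b}\cdot\nabla_y f)f\,{\rm d}y = 0$ for merely $f\in H^1(\T^d)$ and ${\bf b}\in L^\infty$ with distributional null divergence; this is handled by a routine density argument approximating $f$ by smooth functions, and is the only place where care beyond the standard elliptic toolkit is needed.
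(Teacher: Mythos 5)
The paper itself gives no proof of Lemma \ref{lem:heu:fredholm} --- it is invoked as a ``standard Fredholm type result'' --- so there is nothing to compare against line by line; your Lax--Milgram route is the standard one and its main ingredients are all correct: skew-symmetry of the convection term from \eqref{eq:heu:null-divergence-fluid-field} (with the density argument you rightly flag as the delicate point, since ${\bf b}$ is only $L^\infty$ with distributional null divergence), coercivity from \eqref{eq:heu:coercive-diffusion-matrix} together with Poincar\'e--Wirtinger on the mean-zero space, boundedness from ${\bf b},{\bf D}\in L^\infty$, and necessity of \eqref{eq:heu:fredholm-compatible} by pairing the equation with the constant test function.

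There is, however, one genuine logical step that your write-up glosses over, hidden in the conflation of the quotient $H^1(\T^d)/\R$ with the mean-zero subspace $V$ (a conflation the paper's notation invites, since it \emph{defines} the quotient as that subspace). On the subspace $V$, the functional $\varphi\mapsto\int_{\T^d}g\varphi\,{\rm d}y$ is well defined whether or not \eqref{eq:heu:fredholm-compatible} holds, so Lax--Milgram \emph{always} produces a unique $f\in V$ with $a(f,\varphi)=\int_{\T^d} g\varphi\,{\rm d}y$ for all $\varphi\in V$; the compatibility condition plays no role at that stage, contrary to your sentence ``Lax--Milgram delivers, under \eqref{eq:heu:fredholm-compatible}, a unique $f\in V$.'' Where the condition is actually needed is in passing from this variational solution, tested only against mean-zero functions, back to the PDE \eqref{eq:heu:fredholm}, which must hold against \emph{all} test functions. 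Concretely, for smooth $\psi$ write $\psi=(\psi-\bar\psi)+\bar\psi$ with $\bar\psi:=\int_{\T^d}\psi\,{\rm d}y$; since $a(f,{\rm const})={\rm const}\cdot\int_{\T^d}{\bf b}\cdot\nabla_y f\,{\rm d}y=0$ (the same identity you establish for the necessity direction), one finds that the Lax--Milgram solution $f$ satisfies, in the sense of distributions,
\begin{align*}
{\bf b}(x,y)\cdot\nabla_y f-\nabla_y\cdot\big({\bf D}(x,y)\nabla_y f\big)=g(x,y)-\int\limits_{\T^d}g(x,y)\,{\rm d}y,
\end{align*}
which coincides with \eqref{eq:heu:fredholm} exactly when \eqref{eq:heu:fredholm-compatible} holds. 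Equivalently, one can work honestly on the quotient: check that the bilinear form $a$ descends to it (again using $\int_{\T^d}{\bf b}\cdot\nabla_y f\,{\rm d}y=0$ and $\nabla_y({\rm const})=0$), note that the right-hand side functional descends iff $\int_{\T^d}g\,{\rm d}y=0$, and then Lax--Milgram on the quotient does give testing against all of $H^1(\T^d)$. All the ingredients for either repair are already in your proposal; they just need to be assembled explicitly, since as written the existence half of the ``if and only if'' does not follow from what you have proved.
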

Next, we record a formal result on the homogenized equation for the scaled equation with rapidly oscillating coefficients \eqref{eq:heu:CD}-\eqref{eq:heu:IV}.

\begin{Prop}[formal result]\label{prop:heu:formal-result}
Under Assumption \ref{heu:ass:bound-on-J} and the assumption \eqref{eq:heu:mean-field-expansion}, the solution to the Cauchy problem \eqref{eq:heu:CD}-\eqref{eq:heu:IV} formally satisfies
\begin{align}\label{eq:heu:u-eps-approx}
u^\eps(t,x) 
\approx
u_0 \left( t, \Phi_{-t/\eps}(x) \right)
+ \eps u_1 \left( t, \frac{t}{\eps}, \Phi_{-t/\eps}(x), \frac{x}{\eps} \right)
\end{align}
where the first order corrector $u_1$ can be written as
\begin{align}\label{eq:heu:form-of-u_1}
u_1(t,\X,\tau,y) 
= 
\widetilde{\omega}(\tau, \X, y)\cdot  {}^\top \!\!\, \widetilde{J}(\tau,\X) \nabla_{\scriptscriptstyle X} u_0(t,\X)
\end{align}
and the zeroth order term $u_0$ satisfies the homogenized diffusion equation
\begin{subequations}
\begin{align}
& \frac{\partial u_0}{\partial t} = \nabla_{\scriptscriptstyle X} \cdot \Big( \mathfrak{D}(\X) \nabla_{\scriptscriptstyle X} u_0\Big)
\qquad
\mbox{ for }(t,\X)\in \, ]0,T[\times\R^d,\label{eq:heu:homogenized-equation}
\\
& u_0(0,\X) = u^{in}(\X)
\qquad \qquad \qquad
\mbox{ for }\X\in\R^d.\label{eq:heu:homogenized-initial}
\end{align}
\end{subequations}
The effective diffusion coefficient is given by
\begin{equation}\label{eq:heu:effective-diffusion}
\begin{aligned}
\mathfrak{D}(\X)
=
\lim_{\ell\to\infty}
\frac{1}{2\ell}
\int\limits_{-\ell}^{+\ell}
\widetilde{J}(\tau,\X) 
\mathfrak{B}(\tau,\X)
{}^\top \!\! \, \widetilde{J}(\tau,\X)
\, {\rm d}\tau,
\end{aligned}
\end{equation}
where the elements of the matrix $\mathfrak{B}$ are given by
\begin{equation}\label{eq:heu:effective-diffusion-mathfrak-B}
\begin{aligned}
\mathfrak{B}_{ij}(\tau,\X)
& = 
\int\limits_{\T^d}
\widetilde{\bf D}(\tau,\X,y)
\Big(
\nabla_y\widetilde{\omega}_j(\tau,\X,y)
+ 
{\bf e}_j
\Big)
\cdot
\Big(
\nabla_y\widetilde{\omega}_i(\tau,\X,y)
+
{\bf e}_i
\Big)
\, {\rm d}y
\\
& +
\int\limits_{\T^d}
\left( 
\widetilde{{\bf b}}(\tau,\X,y)
\cdot 
\nabla_y \widetilde{\omega}_i(\tau,\X,y) 
\right)
\widetilde{\omega}_j(\tau,\X,y)
\, {\rm d}y
\\
& +
\int\limits_{\T^d}
\widetilde{\bf D}(\tau,\X,y)
\nabla_y\widetilde{\omega}_j(\tau,\X,y)
\cdot
{\bf e}_i
\, {\rm d}y
-
\int\limits_{\T^d}
\widetilde{\bf D}(\tau,\X,y)
\nabla_y\widetilde{\omega}_i(\tau,\X,y)
\cdot
{\bf e}_j
\, {\rm d}y
\end{aligned}
\end{equation}
for $i,j\in\{1,\cdots,d\}$. Furthermore, the components of $\omega$ satisfy the cell problem
\begin{align}\label{eq:heu:cell-problem}
{\bf b}(x,y)\cdot \Big( \nabla_y \omega_i + {\bf e}_i \Big)
- \nabla_y \cdot \Big( {\bf D}(x,y) \Big( \nabla_y \omega_i + {\bf e}_i \Big) \Big)
= \bar{\bf b}(x)\cdot {\bf e}_i
\qquad
\mbox{ in }\T^d,
\end{align}
for each $i\in\{1,\cdots,d\}$ and with the standard canonical basis $({\bf e}_i)_{1\le i\le d}$ in $\R^d$.
\end{Prop}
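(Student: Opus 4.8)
The plan is to run the standard multiple-scales algorithm adapted to the new variables: substitute the ansatz \eqref{eq:heu:mean-field-expansion} into \eqref{eq:heu:CD}, expand using the two chain rules recorded just above the statement, collect equal powers of $\eps$, and solve the resulting cascade. Throughout let $\langle\,\cdot\,\rangle$ denote the average $\int_{\T^d}\cdot\,{\rm d}y$. The organising observation is that, by Lemma \ref{lem:heu:basic-facts}(i), the spatial divergence of any flux $F$ expressed in the variables $(\tau,\X,y)$ satisfies $\nabla_x\cdot F=\nabla_\XX\cdot(\widetilde{J}F)+\eps^{-1}\nabla_y\cdot F$, so the diffusion term decomposes cleanly into successive orders. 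In every line of the cascade the unknown appears through the single cell operator $\mathcal{L}_0:=\widetilde{\bf b}(\tau,\X,y)\cdot\nabla_y-\nabla_y\cdot(\widetilde{\bf D}(\tau,\X,y)\nabla_y\,\cdot)$, whose solvability is governed by Lemma \ref{lem:heu:fredholm}: at each order the $y$-average of the forcing must vanish.

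At order $\eps^{-2}$ only the most singular pieces of the convection and diffusion terms survive, giving $\mathcal{L}_0u_0=0$; testing against $u_0$ and using the incompressibility $\nabla_y\cdot\widetilde{\bf b}=0$ together with the coercivity of $\widetilde{\bf D}$ forces $\nabla_yu_0=0$, i.e. $u_0=u_0(t,\tau,\X)$. At order $\eps^{-1}$ I collect the terms $\partial_\tau u_0-\bar{\bf b}(\X)\cdot\nabla_\XX u_0+\widetilde{\bf b}\cdot{}^\top\widetilde{J}\nabla_\XX u_0+\mathcal{L}_0u_1=0$; taking the $y$-average, using $\langle\widetilde{\bf b}\rangle=\widetilde{\bar{\bf b}}$ and Lemma \ref{lem:heu:basic-facts}(iv) in the form $\bar{\bf b}(\X)\cdot\nabla_\XX u_0=\widetilde{\bar{\bf b}}\cdot{}^\top\widetilde{J}\nabla_\XX u_0$, the drift terms cancel and the compatibility condition collapses to $\partial_\tau u_0=0$; hence $u_0=u_0(t,\X)$. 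The leftover equation is then an $\mathcal{L}_0$-problem for $u_1$ whose forcing $(\widetilde{\bar{\bf b}}-\widetilde{\bf b})\cdot p+\nabla_y\cdot(\widetilde{\bf D}p)$ is linear in $p:={}^\top\widetilde{J}\nabla_\XX u_0$, so by linearity $u_1=\widetilde{\omega}\cdot p=\widetilde{\omega}\cdot{}^\top\widetilde{J}\nabla_\XX u_0$, where $\widetilde{\omega}_j$ is the flow representation of the solution $\omega_j$ of the cell problem \eqref{eq:heu:cell-problem} (whose compatibility is automatic since $\int_{\T^d}(\bar{\bf b}_j-{\bf b}_j)\,{\rm d}y=0$). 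This establishes \eqref{eq:heu:form-of-u_1}.

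At order $\eps^0$ the solvability condition for $u_2$ is that the $y$-average of its forcing vanishes. Since $\langle u_1\rangle=0$ (the correctors have zero mean) the $\partial_\tau u_1$ and $\bar{\bf b}\cdot\nabla_\XX u_1$ terms drop, leaving a relation of the form $\partial_t u_0=R(\tau,\X)$, where $R$ gathers the averaged diffusive flux $\nabla_\XX\cdot(\widetilde{J}\langle\widetilde{\bf D}\rangle\,{}^\top\widetilde{J}\nabla_\XX u_0)$, the corrector flux $\nabla_\XX\cdot(\widetilde{J}\langle\widetilde{\bf D}\nabla_yu_1\rangle)$, and the convective contribution $-\langle\widetilde{\bf b}\cdot{}^\top\widetilde{J}\nabla_\XX u_1\rangle$. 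Because the left side is $\tau$-independent while the right side is not, I then apply the mean value $\mathcal{M}_\tau[\,\cdot\,]=\lim_{\ell\to\infty}\frac1{2\ell}\int_{-\ell}^{\ell}\cdot\,{\rm d}\tau$, which is legitimate once the flow representations lie in an ergodic algebra with mean value, producing the $\tau$-averaged effective equation \eqref{eq:heu:homogenized-equation} with $\mathfrak{D}(\X)=\mathcal{M}_\tau[\widetilde{J}\mathfrak{B}\,{}^\top\widetilde{J}]$ as in \eqref{eq:heu:effective-diffusion}.

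I expect the main obstacle to be the identification of the matrix $\mathfrak{B}$ in \eqref{eq:heu:effective-diffusion-mathfrak-B}, especially its skew-symmetric part arising from the convective piece $-\langle\widetilde{\bf b}\cdot{}^\top\widetilde{J}\nabla_\XX u_1\rangle$. Here the plan is to insert $u_1=\widetilde{\omega}\cdot p$, read off the coefficient of $\nabla_\XX\nabla_\XX u_0$ to recover the symmetric diffusive block $\int_{\T^d}\widetilde{\bf D}(\nabla_y\widetilde{\omega}_j+{\bf e}_j)\cdot(\nabla_y\widetilde{\omega}_i+{\bf e}_i)\,{\rm d}y$, and to integrate by parts in $y$ against the cell problem \eqref{eq:heu:cell-problem} to convert the remaining convective and first-order pieces into the stated terms $\int_{\T^d}(\widetilde{\bf b}\cdot\nabla_y\widetilde{\omega}_i)\widetilde{\omega}_j\,{\rm d}y$ and $\int_{\T^d}\widetilde{\bf D}\nabla_y\widetilde{\omega}_j\cdot{\bf e}_i\,{\rm d}y-\int_{\T^d}\widetilde{\bf D}\nabla_y\widetilde{\omega}_i\cdot{\bf e}_j\,{\rm d}y$. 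Throughout, Assumption \ref{heu:ass:bound-on-J} on the boundedness of $J$ is precisely what guarantees that the mean value $\mathcal{M}_\tau$ of these $\tau$-dependent integrals exists and that the postulated expansion remains self-consistent.
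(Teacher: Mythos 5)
Your proposal is correct and follows essentially the same route as the paper's own proof: the same cascade at orders $\eps^{-2},\eps^{-1},\eps^{0}$, the same Fredholm compatibility conditions forcing $u_0$ to be independent of $y$ and then of $\tau$, separation of variables giving \eqref{eq:heu:form-of-u_1} via the cell problem, $\tau$-averaging of the order-$\eps^{0}$ condition with the flow identities of Lemma \ref{lem:heu:basic-facts} supplying the divergence form, and the final simplification of $\mathfrak{B}$ by testing the cell problem for $\widetilde{\omega}_i$ against $\widetilde{\omega}_j$. One caveat: your closing claim that Assumption \ref{heu:ass:bound-on-J} is what guarantees the existence of the mean values $\mathcal{M}_\tau[\cdot]$ is inaccurate --- boundedness of $J$ does not ensure these limits exist (see Remark \ref{rem:heu:average-tau} and Counterexample \ref{Exm:CExm:non-uniqueness}, where $J\equiv 1$ yet the limit fails); the paper treats their existence as a separate formal admission, made rigorous later by the algebra-with-mean-value hypotheses.
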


\begin{Rem}\label{rem:heu:finite-expansion}
Even though we postulate an infinite sum in the asymptotic expansion \eqref{eq:heu:mean-field-expansion}, we compute only the zeroth and first order coefficients as in \eqref{eq:heu:u-eps-approx}. Our goal is to obtain an evolution equation for the zeroth order approximation, i.e. the homogenized equation \eqref{eq:heu:homogenized-equation}. For this purpose, the first order approximation \eqref{eq:heu:u-eps-approx} suffices. Continuing the expansion for higher order coefficients in the asymptotic expansion \eqref{eq:heu:mean-field-expansion} in the spirit of the theory of matching asymptotics is out of the scope of this present article.
\end{Rem}

\begin{Rem}\label{rem:heu:dispersion-effect}
The dispersion effects are evident from the expression \eqref{eq:heu:effective-diffusion} of the effective diffusion in the homogenized equation, i.e. the effective diffusion coefficients depend on the convective velocity. This is because of the strong convection in the scaled convection-diffusion equation \eqref{eq:heu:CD}.
\end{Rem}

\begin{Rem}\label{rem:heu:cell-problem-flow-rep}
The cell problem \eqref{eq:heu:cell-problem} in Proposition \ref{prop:heu:formal-result} is given in fixed spatial coordinate, i.e. $\omega\equiv\omega(x,y)$. As our analysis essentially considers the asymptotic expansion in moving coordinates along flows, we can recast the cell problem \eqref{eq:heu:cell-problem} along flows, i.e. for the flow representation of the cell solutions $\widetilde{\omega}(\tau,\X,y)$:
\begin{align}\label{eq:heu:cell-problem-flow-rep}
\widetilde{\bf b}(\tau,\X,y)\cdot 
\Big( 
\nabla_y \widetilde{\omega}_i(\tau,\X,y) + {\bf e}_i 
\Big)
- 
\nabla_y \cdot 
\Big( 
\widetilde{\bf D}(\tau,\X,y) 
\left( \nabla_y \widetilde{\omega}_i(\tau,\X,y) + {\bf e}_i \right) 
\Big)
= 
\widetilde{\bar{\bf b}}(\tau,\X)\cdot {\bf e}_i.
\end{align}
The above problem is posed on $\T^d$. The spatial variable $\X$ and the fast time variable $\tau$ are treated as parameters.
\end{Rem}

\begin{Rem}\label{rem:heu:effective-diffusion-not-symmetric}
Even though the molecular diffusion matrix ${\bf D}(x,y)$ is assumed to be symmetric, the effective diffusion matrix $\mathfrak{D}(\X)$ in the homogenized equation \eqref{eq:heu:homogenized-equation} need not be symmetric as is evident from the expression \eqref{eq:heu:effective-diffusion-mathfrak-B} for $\mathfrak{B}(\tau,\X)$. The elements of the symmetric part of $\mathfrak{B}$ are
\[
\mathfrak{B}^{\rm sym}_{ij}
=
\int\limits_{\T^d}
\widetilde{\bf D}(\tau,\X,y)
\Big(
\nabla_y\widetilde{\omega}_j(\tau,\X,y)
+ 
{\bf e}_j
\Big)
\cdot
\Big(
\nabla_y\widetilde{\omega}_i(\tau,\X,y)
+
{\bf e}_i
\Big)
\, {\rm d}y
\]
and the elements of the skew-symmetric part of the matrix $\mathfrak{B}$ are
\begin{align*}
\mathfrak{B}^{\rm asym}_{ij}
& =
\int\limits_{\T^d}
\widetilde{\omega}_j(\tau,\X,y)
\left( 
\widetilde{\bar{\bf b}}(\tau,\X)
-
\widetilde{{\bf b}}(\tau,\X,y)
\right)
\cdot {\bf e}_i
\, {\rm d}y
\\
& -
\int\limits_{\T^d}
\widetilde{\bf D}(\tau,\X,y)
\nabla_y\widetilde{\omega}_i(\tau,\X,y)
\cdot
\Big(
\nabla_y\widetilde{\omega}_j(\tau,\X,y)
+
{\bf e}_j
\Big)
\, {\rm d}y,
\end{align*}
where we have used the cell problem for flow representations \eqref{eq:heu:cell-problem-flow-rep} to arrive at the above simplified expression for the skew-symmetric part. Note that the symmetric and skew-symmetric parts of the effective diffusion matrix $\mathfrak{D}(\X)$ are given in terms of $\mathfrak{B}$ by:
\begin{align*}
\mathfrak{D}^{\rm sym}(\X)
=
\lim_{\ell\to\infty}
\frac{1}{2\ell}
\int\limits_{-\ell}^{+\ell}
\widetilde{J}(\tau,\X) 
\mathfrak{B}^{\rm sym}(\tau,\X)
{}^\top \!\! \, \widetilde{J}(\tau,\X)
\, {\rm d}\tau,
\\
\mathfrak{D}^{\rm asym}(\X)
=
\lim_{\ell\to\infty}
\frac{1}{2\ell}
\int\limits_{-\ell}^{+\ell}
\widetilde{J}(\tau,\X) 
\mathfrak{B}^{\rm asym}(\tau,\X)
{}^\top \!\! \, \widetilde{J}(\tau,\X)
\, {\rm d}\tau.
\end{align*}
The contribution of the non-symmetric part of the effective diffusion to the dynamics of the homogenized equation \eqref{eq:heu:homogenized-equation} is because of the fact that the effective diffusion coefficient $\mathfrak{D}$ is space dependent. In a purely periodic setting, i.e. when ${\bf b}(x,y)\equiv {\bf b}(y)$ and ${\bf D}(x,y)\equiv {\bf D}(y)$, the matrix $\mathfrak{B}$ in Proposition \ref{prop:heu:formal-result} is space independent. As the Jacobian matrix in the purely periodic case is the identity, we have that $\mathfrak{D}=\mathfrak{B}$. The matrix $\mathfrak{D}$ being independent of the spatial variable implies that the parabolic homogenized equation \eqref{eq:heu:homogenized-equation} is unaffected by the skew-symmetric part of the effective diffusion matrix.
\end{Rem}

\begin{Rem}\label{rem:heu:average-tau}
The expression \eqref{eq:heu:effective-diffusion} for the effective diffusion involves the averaging in the fast time variable. In this section dealing with formal derivation of the homogenized limit, we admit that the limits in the expression of the effective diffusion exist and are finite. In Section \ref{sec:abs}, we introduce a notion of weak convergence in some Lebesgue function spaces which proves that these limits indeed exist and are finite under certain assumptions on the coefficients. Note that some of these assumptions are required: in Counterexample \ref{Exm:CExm:non-uniqueness} in Section \ref{sec:Exm}, we provide an explicit example where these limits do not exist, and in fact multiple limit equations can be obtained on different sequences $\eps\to0$.
\end{Rem}
\begin{Rem}
An interesting feature in the expression \eqref{eq:heu:effective-diffusion} is that the integrands are all in their flow representations. This suggests that the effective diffusion is the cumulative effect of the convection and diffusion effects averaged along the flows.
\end{Rem}

\begin{proof}[Proof of Proposition \ref{prop:heu:formal-result}]
The equations at different orders of $\eps$ obtained by inserting the asymptotic expansion \eqref{eq:heu:mean-field-expansion} in the scaled equation \eqref{eq:heu:CD} are
\begin{equation}\label{eq:heu:cascade}
\begin{array}{ccl}
\displaystyle
\mathcal{O}(\eps^{-2}): &
\displaystyle
\widetilde{{\bf b}} \cdot \nabla_y u_0 - \nabla_y \cdot \left( \widetilde{{\bf D}}\nabla_y u_0\right) &
\displaystyle
= 0,
\\[0.3 cm]
\displaystyle
\mathcal{O}(\eps^{-1}): &
\displaystyle
\widetilde{{\bf b}} \cdot \nabla_y u_1 - \nabla_y \cdot \left( \widetilde{{\bf D}}\nabla_y u_1\right) &
\displaystyle
= \nabla_y \cdot \left( \widetilde{{\bf D}} {}^\top \!\!\, \widetilde{J} \nabla_{\scriptscriptstyle X} u_0\right)
+ {}^\top \!\!\, \widetilde{J} \nabla_{\scriptscriptstyle X} \cdot \left( \widetilde{{\bf D}}\nabla_y u_0 \right)
\\[0.3 cm]
& &
\displaystyle
+
\left( \widetilde{\bar{{\bf b}}} - \widetilde{{\bf b}} \right)\cdot  \left( {}^\top \!\!\, \widetilde{J} \nabla_{\scriptscriptstyle X} u_0\right)
- \frac{\partial u_0}{\partial \tau},
\\[0.3 cm]
\displaystyle
\mathcal{O}(\eps^{0}): &
\displaystyle
\widetilde{{\bf b}} \cdot \nabla_y u_2 - \nabla_y \cdot \left( \widetilde{{\bf D}}\nabla_y u_2\right) &
\displaystyle
= -\frac{\partial u_0}{\partial t} -\frac{\partial u_1}{\partial \tau}
+
\left( \widetilde{\bar{{\bf b}}} -  \widetilde{{\bf b}}\right)\cdot  \left( {}^\top \!\!\, \widetilde{J} \nabla_{\scriptscriptstyle X} u_1\right)
\\[0.3 cm]
& & 
\displaystyle
+ {}^\top \!\!\, \widetilde{J} \nabla_{\scriptscriptstyle X}
\cdot
\left(
\widetilde{{\bf D}}
\left(
{}^\top \!\!\, \widetilde{J} \nabla_{\scriptscriptstyle X} u_0
+
\nabla_y u_1
\right)
\right)
+ \nabla_y \cdot \left( \widetilde{{\bf D}} {}^\top \!\!\, \widetilde{J} \nabla_{\scriptscriptstyle X} u_1\right),
\end{array}
\end{equation}
where the flow representation of the Jacobian matrix and coefficients are used. Note that the relation \eqref{eq:heu:lem:bar-b-Phi=J-bar-b-x} is needed, for example, to show the right hand side of the $\mathcal{O}(\eps^{-2})$ equation is zero. We remark that all the equations in \eqref{eq:heu:cascade} have the same structure as the boundary value problem \eqref{eq:heu:fredholm} addressed in Lemma \ref{lem:heu:fredholm} which says that the solvability of these equations is subject to satisfying the compatibility condition \eqref{eq:heu:fredholm-compatible}.

The compatibility condition \eqref{eq:heu:fredholm-compatible} is trivially satisfied for the equation of $\mathcal{O}(\eps^{-2})$ in \eqref{eq:heu:cascade}. Further, the equation of $\mathcal{O}(\eps^{-2})$ in \eqref{eq:heu:cascade} implies that $u_0$ is independent of $y$, i.e.
\begin{align*}
u_0(t,\tau,{\scriptstyle X},y)\equiv u_0(t,\tau,{\scriptstyle X}).
\end{align*}
So the term involving $\nabla_y u_0$ in the equation of $\mathcal{O}(\eps^{-1})$ vanishes. To check if the right hand side of the equation of $\mathcal{O}(\eps^{-1})$ in \eqref{eq:heu:cascade} satisfies the compatibility condition \eqref{eq:heu:fredholm-compatible}, consider
\begin{align*}
\int\limits_{\T^d}
\nabla_y \cdot \left( \widetilde{{\bf D}}(\tau,\X,y) {}^\top \!\!\, \widetilde{J} \nabla_{\scriptscriptstyle X} u_0\right)
\, {\rm d}y
+ \int\limits_{\T^d}
\left( \widetilde{\bar{{\bf b}}}(\tau,\X) - \widetilde{{\bf b}}(\tau,\X,y) \right)\cdot  \left( {}^\top \!\!\, \widetilde{J} \nabla_{\scriptscriptstyle X} u_0\right)
\, {\rm d}y
- \int\limits_{\T^d}
\frac{\partial u_0}{\partial \tau}
\, {\rm d}y.
\end{align*}
The first integral in the previous expression vanishes by integration by parts. The second integral in the previous expression vanishes as well, thanks to the definition \eqref{eq:heu:mean-field} of the mean field $\bar{{\bf b}}(x)$, and as neither $J$ nor $u_0$ depend upon $y$. In the third integral, since $u_0$ is independent of the $y$ variable, in order to satisfy the compatibility condition, we should have that $u_0$ is independent of the fast time variable. Hence we have $u_0(t,\tau,{\scriptstyle X},y)\equiv u_0(t,{\scriptstyle X})$.

The linearity of equations in \eqref{eq:heu:cascade} implies that we can separate the variables in the first order corrector as in \eqref{eq:heu:form-of-u_1}.
The function $\widetilde{\omega}(\tau, \X, y)$ is the flow representation of the function $\omega=\left(\omega_i\right)_{1\le i\le d}$ whose components solve the cell problem \eqref{eq:heu:cell-problem-flow-rep} (see Remark \ref{rem:heu:cell-problem-flow-rep}).

Finally, we write the compatibility condition for the equation of $\mathcal{O}(\eps^{0})$ in \eqref{eq:heu:cascade}:
\begin{align*}
\int\limits_{\T^d}
& \frac{\partial u_0}{\partial t} 
\, {\rm d}y
+ \int\limits_{\T^d}
\frac{\partial u_1}{\partial \tau}
\, {\rm d}y
\\
& =
\int\limits_{\T^d}
\left( \widetilde{\bar{{\bf b}}}(\tau,\X) - \widetilde{{\bf b}}(\tau,\X,y) \right)\cdot  \left( {}^\top \!\!\, \widetilde{J} \nabla_{\scriptscriptstyle X} u_1\right)
\, {\rm d}y
+ \int\limits_{\T^d}
{}^\top \!\!\, \widetilde{J} \nabla_{\scriptscriptstyle X}
\cdot
\left(
\widetilde{{\bf D}}(\tau,\X,y)
\left(
{}^\top \!\!\, \widetilde{J} \nabla_{\scriptscriptstyle X} u_0
+ \nabla_y u_1
\right)
\right)
\, {\rm d}y.
\end{align*}
The previous expression contains terms that depend on the fast time variable $\tau$. We propose to average the above equation in the $\tau$ variable:
\begin{equation}\label{eq:heu:compatibility+tau-average}
\begin{aligned}
\frac{\partial u_0}{\partial t} 
+
\lim_{\ell\to\infty}
\frac{1}{2\ell}
\int\limits_{-\ell}^{+\ell}
\int\limits_{\mathbb{T}^d}
\frac{\partial u_1}{\partial \tau}
& \, {\rm d}y\, {\rm d}\tau
= 
\lim_{\ell\to\infty}
\frac{1}{2\ell}
\int\limits_{-\ell}^{+\ell}
\int\limits_{\T^d}
\left( \widetilde{\bar{{\bf b}}}(\tau,\X) - \widetilde{{\bf b}}(\tau,\X,y) \right)\cdot  \left( {}^\top \!\!\, \widetilde{J}(\tau,\X)  \nabla_{\scriptscriptstyle X} u_1\right)
\, {\rm d}y\, {\rm d}\tau
\\
&
+ \lim_{\ell\to\infty}
\frac{1}{2\ell}
\int\limits_{-\ell}^{+\ell}
\int\limits_{\T^d}
{}^\top \!\!\, \widetilde{J}(\tau,\X)  \nabla_{\scriptscriptstyle X}
\cdot
\left(
\widetilde{{\bf D}}(\tau,\X,y)
\left(
{}^\top \!\!\, \widetilde{J}(\tau,\X)  \nabla_{\scriptscriptstyle X} u_0
\right)
\right)
\, {\rm d}y\, {\rm d}\tau
\\
& + \lim_{\ell\to\infty}
\frac{1}{2\ell}
\int\limits_{-\ell}^{+\ell}
\int\limits_{\T^d}
{}^\top \!\!\, \widetilde{J}(\tau,\X)  \nabla_{\scriptscriptstyle X}
\cdot
\left(
\widetilde{{\bf D}}(\tau,\X,y)
\nabla_y u_1
\right)
\, {\rm d}y\, {\rm d}\tau.
\end{aligned}
\end{equation}
The second term on the left hand side of the previous expression is zero. The first term on the right hand side of \eqref{eq:heu:compatibility+tau-average} can be successively written as
\begin{align*}
& \lim_{\ell\to\infty}
\frac{1}{2\ell}
\int\limits_{-\ell}^{+\ell}
\int\limits_{\T^d}
\left( \widetilde{\bar{{\bf b}}}(\tau,\X) - \widetilde{\bf b}(\tau,\X,y) \right)\cdot  \left( {}^\top \!\!\, \widetilde{J}(\tau,\X)  \nabla_{\scriptscriptstyle X} u_1\right)
\, {\rm d}y\, {\rm d}\tau
\\
& =
\lim_{\ell\to\infty}
\frac{1}{2\ell}
\int\limits_{-\ell}^{+\ell}
\int\limits_{\T^d}
\widetilde{J}(\tau,\X)\left( \widetilde{\bar{{\bf b}}}(\tau,\X) - \widetilde{\bf b}(\tau,\X,y) \right)
\cdot 
\nabla_{\scriptscriptstyle X} \left( \widetilde{\omega}(\tau,\X,y)\cdot  {}^\top \!\!\, \widetilde{J}(\tau,\X) \nabla_{\scriptscriptstyle X} u_0(t,\X)\right)
\, {\rm d}y\, {\rm d}\tau
\\
& =
\nabla_{\scriptscriptstyle X}
\cdot
\lim_{\ell\to\infty}
\frac{1}{2\ell}
\int\limits_{-\ell}^{+\ell}
\int\limits_{\T^d}
\widetilde{J}(\tau,\X)\left( \widetilde{\bar{{\bf b}}}(\tau,\X) - \widetilde{\bf b}(\tau,\X,y) \right){}^\top \!\!\, \widetilde{\omega}(\tau,\X,y) {}^\top \!\!\, \widetilde{J}(\tau,\X) \nabla_{\scriptscriptstyle X} u_0(t,\X)
\, {\rm d}y\, {\rm d}\tau,
\end{align*}
where we are able to move the $\X$ derivative thanks to Lemma \ref{lem:heu:basic-facts}.(ii). The second term on the right hand side of \eqref{eq:heu:compatibility+tau-average} evaluates to
\begin{align*}
& \lim_{\ell\to\infty}
\frac{1}{2\ell}
\int\limits_{-\ell}^{+\ell}
\int\limits_{\T^d}
{}^\top \!\!\, \widetilde{J}(\tau,\X)  \nabla_{\scriptscriptstyle X}
\cdot
\left(
\widetilde{{\bf D}}(\tau,\X,y)
\left(
{}^\top \!\!\, \widetilde{J}(\tau,\X)  \nabla_{\scriptscriptstyle X} u_0
\right)
\right)
\, {\rm d}y\, {\rm d}\tau
\\
& =
\nabla_{\scriptscriptstyle X}
\cdot
\lim_{\ell\to\infty}
\frac{1}{2\ell}
\int\limits_{-\ell}^{+\ell}
\int\limits_{\T^d}
\widetilde{J}(\tau,\X) 
\widetilde{\bf D}(\tau,\X,y)
{}^\top \!\!\, \widetilde{J}(\tau,\X)  \nabla_{\scriptscriptstyle X} u_0
\, {\rm d}y\, {\rm d}\tau.
\end{align*}
The third term on the right hand side of \eqref{eq:heu:compatibility+tau-average} can be successively written as
\begin{align*}
& \lim_{\ell\to\infty}
\frac{1}{2\ell}
\int\limits_{-\ell}^{+\ell}
\int\limits_{\T^d}
{}^\top \!\!\, \widetilde{J}(\tau,\X)  \nabla_{\scriptscriptstyle X}
\cdot
\left(
\widetilde{{\bf D}}(\tau,\X,y)
\nabla_y u_1
\right)
\, {\rm d}y\, {\rm d}\tau
\\
& =
\nabla_{\scriptscriptstyle X}
\cdot
\lim_{\ell\to\infty}
\frac{1}{2\ell}
\int\limits_{-\ell}^{+\ell}
\int\limits_{\T^d}
\widetilde{J}(\tau,\X) 
\widetilde{\bf D}(\tau,\X,y)
\nabla_y \left( \widetilde{\omega}(\tau,\X,y)\cdot  {}^\top \!\!\, \widetilde{J}(\tau,\X) \nabla_{\scriptscriptstyle X} u_0(t,\X)\right)
\, {\rm d}y\, {\rm d}\tau
\\
& =
\nabla_{\scriptscriptstyle X}
\cdot
\lim_{\ell\to\infty}
\frac{1}{2\ell}
\int\limits_{-\ell}^{+\ell}
\int\limits_{\T^d}
\widetilde{J}(\tau,\X) 
\widetilde{\bf D}(\tau,\X,y)
{}^\top \!\!\, \nabla_y \widetilde{\omega}(\tau,\X,y){}^\top \!\!\, \widetilde{J}(\tau,\X) \nabla_{\scriptscriptstyle X} u_0(t,\X)
\, {\rm d}y\, {\rm d}\tau.
\end{align*}
Again, we are able to move the $\X$ derivative past the Jacobian thanks to Lemma \ref{lem:heu:basic-facts}.(i). Considering all the above observations, the compatibility condition \eqref{eq:heu:compatibility+tau-average} can be rewritten as a diffusion equation for $u_0(t,\X)$, i.e. \eqref{eq:heu:homogenized-equation}-\eqref{eq:heu:homogenized-initial}. The expression for the effective diffusion coefficient is given by
\begin{equation*}
\begin{aligned}
\mathfrak{D}(\X)
& = 
\lim_{\ell\to\infty}
\frac{1}{2\ell}
\int\limits_{-\ell}^{+\ell}
\int\limits_{\T^d}
\widetilde{J}(\tau,\X)\left( \widetilde{\bar{{\bf b}}}(\tau,\X) - \widetilde{\bf b}(\tau,\X,y) \right)  {}^\top \!\!\, \widetilde{\omega}(\tau,\X,y)  {}^\top \!\!\, \widetilde{J}(\tau,\X)\, {\rm d}y\, {\rm d}\tau
\\
& 
+
\lim_{\ell\to\infty}
\frac{1}{2\ell}
\int\limits_{-\ell}^{+\ell}
\int\limits_{\T^d}
\widetilde{J}(\tau,\X) 
\widetilde{\bf D}(\tau,\X,y)
{}^\top \!\!\, \widetilde{J}(\tau,\X)
\, {\rm d}y\, {\rm d}\tau
\\
& 
+
\lim_{\ell\to\infty}
\frac{1}{2\ell}
\int\limits_{-\ell}^{+\ell}
\int\limits_{\T^d}
\widetilde{J}(\tau,\X) 
\widetilde{\bf D}(\tau,\X,y)
{}^\top \!\!\, \nabla_y \widetilde{\omega}(\tau,\X,y){}^\top \!\!\, \widetilde{J}(\tau,\X)
\, {\rm d}y\, {\rm d}\tau.
\end{aligned}
\end{equation*}
Moving the $y$ integration inside, the expression for the effective diffusion becomes
\begin{equation*}
\begin{aligned}
\mathfrak{D}(\X)
& = 
\lim_{\ell\to\infty}
\frac{1}{2\ell}
\int\limits_{-\ell}^{+\ell}
\widetilde{J}(\tau, \X)
\left(
\int\limits_{\T^d}
\left( \widetilde{\bar{{\bf b}}}(\tau, \X) - \widetilde{{\bf b}}(\tau,\X,y) \right)
{}^\top \!\!\, \widetilde{\omega}(\tau,\X,y)
\, {\rm d}y
\right)
{}^\top \!\!\, \widetilde{J}(\tau,\X)
\, {\rm d}\tau
\\
& 
+
\lim_{\ell\to\infty}
\frac{1}{2\ell}
\int\limits_{-\ell}^{+\ell}
\widetilde{J}(\tau,\X)
\left(
\int\limits_{\T^d} 
\Big\{
\widetilde{\bf D}(\tau,\X,y)
+
\widetilde{\bf D}(\tau,\X,y)
{}^\top \!\!\, \nabla_y \widetilde{\omega}(\tau,\X,y)
\Big\}
\, {\rm d}y
\right)
{}^\top \!\!\, \widetilde{J}(\tau,\X)
\, {\rm d}\tau
\\
& =
\lim_{\ell\to\infty}
\frac{1}{2\ell}
\int\limits_{-\ell}^{+\ell}
\widetilde{J}(\tau,\X) 
\mathfrak{B}(\tau,\X)
{}^\top \!\!\, \widetilde{J}(\tau,\X)
\, {\rm d}\tau,
\end{aligned}
\end{equation*}
where the elements of $\mathfrak{B}$ are given by
\begin{align*}
\mathfrak{B}_{ij}(\tau,\X)
= 
\int\limits_{\T^d}
\Big\{
\left( \widetilde{\bar{{\bf b}}}_i(\tau, \X) - \widetilde{{\bf b}}_i(\tau,\X,y) \right)
\widetilde{\omega}_j(\tau,\X,y)
+
\widetilde{\bf D}_{ij}(\tau,\X,y)
+
\widetilde{\bf D}(\tau,\X,y)\nabla_y\widetilde{\omega}_j(\tau,\X,y)
\cdot {\bf e}_i
\Big\}
\, {\rm d}y
\end{align*}
for each $i,j\in\{1,\cdots,d\}$. To simplify the expression for the matrix $\mathfrak{B}$, we test the equation \eqref{eq:heu:cell-problem-flow-rep} for the flow-representation $\widetilde{\omega}_i$ by $\widetilde{\omega}_j$ and deduce
\begin{align*}
\int\limits_{\T^d}
\left( \widetilde{\bar{{\bf b}}}_i(\tau, \X) - \widetilde{{\bf b}}_i(\tau,\X,y) \right)
\widetilde{\omega}_j(\tau,\X,y)
\, {\rm d}y
=
\int\limits_{\T^d}
\left( 
\widetilde{{\bf b}}(\tau,\X,y)
\cdot 
\nabla_y \widetilde{\omega}_i(\tau,\X,y) 
\right)
\widetilde{\omega}_j(\tau,\X,y)
\, {\rm d}y
\\
+
\int\limits_{\T^d}
\widetilde{\bf D}(\tau,\X,y)\nabla_y\widetilde{\omega}_j(\tau,\X,y)
\cdot
\nabla_y\widetilde{\omega}_i(\tau,\X,y)
\, {\rm d}y
+
\int\limits_{\T^d}
\widetilde{\bf D}(\tau,\X,y)\nabla_y\widetilde{\omega}_j(\tau,\X,y)
\cdot
{\bf e}_i
\, {\rm d}y.
\end{align*}
Using the above equation, we can rewrite the elements of the matrix $\mathfrak{B}$ as in \eqref{eq:heu:effective-diffusion-mathfrak-B}.
\end{proof}

\begin{Rem}\label{rem:heu:add-constant-cell-solution}
The solution $\omega_i$ to the cell problem \eqref{eq:heu:cell-problem} is unique up to addition of constants in the $y$-variable, i.e. up to addition of a function $\eta(t,\tau,\X)$. However, any such function would not contribute to the expression of the effective diffusion. It is evident from the equation \eqref{eq:heu:compatibility+tau-average}. So, for our purposes at hand, we shall not dwell on characterizing $\eta(t,\tau,\X)$. It should be noted that the first order corrector obtained in \eqref{eq:heu:u-eps-approx} essentially is considering the oscillations in the space variable. We have not characterized the first order corrector with regard to the fast time variable.
\end{Rem}

\begin{Prop}\label{prop:heu:homo-eq-exist}
The homogenized equation \eqref{eq:heu:homogenized-equation}-\eqref{eq:heu:homogenized-initial} admits a unique solution such that
\begin{align*}
u_0(t,\X) \in C([0,T];L^2(\R^d));
\qquad
\nabla_\XX u_0(t,\X) \in [L^2((0,T)\times\R^d)]^d.
\end{align*}
\end{Prop}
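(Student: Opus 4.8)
The plan is to recast \eqref{eq:heu:homogenized-equation}--\eqref{eq:heu:homogenized-initial} in weak form and invoke the classical existence and uniqueness theorem of J.-L. Lions for linear parabolic equations (equivalently, a Galerkin argument with energy estimates). I would work in the Gelfand triple $H^1(\R^d)\hookrightarrow L^2(\R^d)\hookrightarrow H^{-1}(\R^d)$ and seek $u_0\in L^2(0,T;H^1(\R^d))$ with $\partial_t u_0\in L^2(0,T;H^{-1}(\R^d))$ satisfying, for a.e.\ $t$ and every $v\in H^1(\R^d)$,
\[
\langle \partial_t u_0, v\rangle + a(u_0,v) = 0, \qquad a(w,v):=\int\limits_{\R^d} \mathfrak{D}(\X)\,\nabla_\XX w\cdot \nabla_\XX v \,{\rm d}\X,
\]
together with $u_0(0,\cdot)=u^{in}\in L^2(\R^d)$. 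Everything then reduces to checking that the time-independent bilinear form $a(\cdot,\cdot)$ is bounded on $H^1(\R^d)$ and satisfies a G\aa rding inequality; the embedding $L^2(0,T;H^1)\cap H^1(0,T;H^{-1})\hookrightarrow C([0,T];L^2)$ supplies the stated time-continuity, and uniqueness is immediate from linearity and the resulting energy estimate.

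Boundedness of $a$ is routine: Assumption \ref{heu:ass:bound-on-J}, the bound \eqref{eq:heu:coercive-diffusion-matrix} on $\widetilde{\bf D}$, and the standard $H^1(\T^d)$ a priori estimate on the cell correctors $\widetilde\omega$ show that the integrand of \eqref{eq:heu:effective-diffusion}--\eqref{eq:heu:effective-diffusion-mathfrak-B} is uniformly bounded in $\tau$, whence $\mathfrak{D}\in L^\infty(\R^d;\R^{d\times d})$ and $|a(w,v)|\le \|\mathfrak{D}\|_{\infty}\|w\|_{H^1}\|v\|_{H^1}$.

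The heart of the matter is coercivity, and the decisive structural fact is that $\bar{\bf b}$ is divergence free: from \eqref{eq:heu:null-divergence-fluid-field} and \eqref{eq:heu:mean-field} one gets $\nabla_x\cdot\bar{\bf b}=0$, so by Liouville's theorem the flow $\Phi_\tau$ is volume preserving and $\det \widetilde J(\tau,\X)\equiv 1$. Combined with $|\widetilde J|\le C$ (Assumption \ref{heu:ass:bound-on-J}), Cramer's rule yields a uniform bound $|\widetilde J^{-1}|\le C'$, and hence $|{}^\top\widetilde{J}(\tau,\X)\,\xi|\ge C'^{-1}|\xi|$ for all $\xi\in\R^d$. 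Since only the symmetric part enters $a(w,w)$, I would combine the formula for $\mathfrak{B}^{\rm sym}$ in Remark \ref{rem:heu:effective-diffusion-not-symmetric} with the coercivity \eqref{eq:heu:coercive-diffusion-matrix} of $\widetilde{\bf D}$: fixing $\tau,\X$ and writing $\eta={}^\top\widetilde{J}\xi$, $w_\eta=\widetilde\omega\cdot\eta$,
\[
\xi\cdot \widetilde J\,\mathfrak{B}^{\rm sym}\,{}^\top\widetilde{J}\,\xi = \int\limits_{\T^d}\widetilde{\bf D}\,(\nabla_y w_\eta+\eta)\cdot(\nabla_y w_\eta+\eta)\,{\rm d}y \ge \lambda\int\limits_{\T^d}|\nabla_y w_\eta+\eta|^2\,{\rm d}y\ge \lambda|\eta|^2,
\]
the last inequality by Jensen, since $\int_{\T^d}\nabla_y w_\eta\,{\rm d}y=0$ forces the mean of $\nabla_y w_\eta+\eta$ to be $\eta$. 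Using $|\eta|\ge C'^{-1}|\xi|$ and averaging in $\tau$ as in \eqref{eq:heu:effective-diffusion} preserves the pointwise lower bound, giving $\xi\cdot\mathfrak{D}^{\rm sym}(\X)\,\xi\ge \lambda C'^{-2}|\xi|^2$ uniformly in $\X$.

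Consequently $a(w,w)=\int_{\R^d}\mathfrak{D}^{\rm sym}\nabla_\XX w\cdot\nabla_\XX w\ \ge\ \lambda C'^{-2}\|\nabla_\XX w\|_{L^2}^2$, i.e.\ a G\aa rding inequality $a(w,w)\ge \alpha\|w\|_{H^1}^2-\alpha\|w\|_{L^2}^2$ with $\alpha=\lambda C'^{-2}$. A bounded, time-independent form satisfying such a G\aa rding estimate, together with $u^{in}\in L^2(\R^d)$, falls exactly under Lions' theorem, which produces a unique $u_0\in L^2(0,T;H^1(\R^d))\cap C([0,T];L^2(\R^d))$ with $\nabla_\XX u_0\in[L^2((0,T)\times\R^d)]^d$. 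I expect the coercivity step to be the only real obstacle, the crucial and non-obvious input being that incompressibility forces $\det\widetilde J=1$; this is precisely what rescues coercivity from the otherwise uncontrolled Jacobian factors $\widetilde J,{}^\top\widetilde{J}$ appearing in \eqref{eq:heu:effective-diffusion}.
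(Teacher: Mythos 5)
Your proof is correct, and its skeleton matches the paper's: you establish positive definiteness of $\mathfrak{B}^{\rm sym}$ by exactly the computation in the paper's proof (your appeal to Jensen's inequality is just a slicker phrasing of the paper's expansion of $|\nabla_y\widetilde{\omega}_\xi+\xi|^2$, where the cross term dies by periodicity), you then transfer this to $\mathfrak{D}$ via a uniform lower bound on $|{}^\top\widetilde{J}\xi|$, and you finish with standard linear parabolic theory — which the paper simply cites (Ladyzhenskaya et al.) where you spell out Lions' theorem and the G\aa rding inequality. The one genuinely different step is the Jacobian lower bound. You argue: incompressibility of $\bar{\bf b}$ gives $\det \widetilde{J}\equiv 1$ by Liouville's theorem, and then Cramer's rule (the adjugate being polynomial in the bounded entries of $\widetilde{J}$) bounds $|\widetilde{J}^{-1}|$. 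The paper instead uses the group property of the autonomous flow to write $J(\tau,x)^{-1}=J(-\tau,\Phi_{-\tau}(x))$, so Assumption \ref{heu:ass:bound-on-J} bounds the inverse directly, with the same constant $C$. Your route makes the role of incompressibility explicit and works pointwise for any matrix family with bounded entries and unit determinant, with no reference to the flow structure (at the price of a dimension-dependent constant); the paper's route needs no information about the determinant at all — it only exploits autonomy plus the fact that Assumption \ref{heu:ass:bound-on-J} holds for both signs of $\tau$ — so it would survive even for bounded-Jacobian flows that are not volume preserving. In particular, your closing claim that $\det\widetilde{J}=1$ is ``precisely what rescues coercivity'' is slightly overstated: it is sufficient but, as the paper's argument shows, not necessary.
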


\begin{proof}
The elements of the symmetric part of the matrix $\mathfrak{B}$ are given by
\begin{align*}
\mathfrak{B}^{\mbox{\tiny sym}}_{ij}(\tau,\X)
= 
\int\limits_{\T^d}
\widetilde{\bf D}(\tau,\X,y)
\Big(
\nabla_y\widetilde{\omega}_j(\tau,\X,y)
+ 
{\bf e}_j
\Big)
\cdot
\Big(
\nabla_y\widetilde{\omega}_i(\tau,\X,y)
+
{\bf e}_i
\Big)
\, {\rm d}y.
\end{align*}
It is positive definite because
\begin{align*}
{}^\top\!\!\, \xi 
\, \mathfrak{B}^{\mbox{\tiny sym}}
\, \xi
\ge
\lambda
\int\limits_{\T^d}
|\nabla_y \widetilde{\omega}_\xi + \xi|^2
\, {\rm d}y
=\lambda
\int\limits_{\T^d}
|\nabla \widetilde{\omega}_\xi|^2+2\xi\cdot\nabla_y \widetilde{\omega}_\xi+|\xi|^2
\, {\rm d}y
\ge \lambda |\xi|^2,
\quad
\mbox{ for all }\xi\in\R^d
\end{align*}
where $\widetilde{\omega}_\xi:=\widetilde{\omega} \cdot \xi$, and the last inequality follows from the vanishing of the second of the three terms in the integrand due to $y$-periodicity of $\widetilde{\omega}$.

Take the effective diffusion matrix $\mathfrak{D}$ and consider, for $\xi\ne0$,
\begin{align*}
{}^\top\!\!\, \xi 
\, \mathfrak{D}
\, \xi
& =
\lim_{\ell\to\infty}
\frac{1}{2\ell}
\int\limits_{-\ell}^{+\ell}
{}^\top\!\!\, \xi 
\widetilde{J}(\tau,\X) 
\mathfrak{B}(\tau,\X)
{}^\top \!\! \, \widetilde{J}(\tau,\X)
\, \xi
\, {\rm d}\tau
\\
& =
\lim_{\ell\to\infty}
\frac{1}{2\ell}
\int\limits_{-\ell}^{+\ell}
{}^\top
\left[
{}^\top \!\! \, \widetilde{J}(\tau,\X)
\, \xi
\right]
\mathfrak{B}(\tau,\X)
{}^\top \!\! \, \widetilde{J}(\tau,\X)
\, \xi
\, {\rm d}\tau\\
&\ge \lambda \lim_{\ell\to\infty}
\frac{1}{2\ell}
\int\limits_{-\ell}^{+\ell} 
\left|
{}^\top\!\!\, \widetilde{J}(\tau,\X)\xi
\right|^2\, {\rm d}\tau 
\ge C\lambda |\xi|^2>0.
\end{align*}
This holds, thanks firstly to the positive definite property of the matrix $\mathfrak{B}$, and secondly to uniform bounds from below on the Jacobian matrix, i.e. 
\begin{equation*}
C^{-1}|\xi|=C^{-1}|J(\tau,x)^{-1}J(\tau,x)\xi|=C^{-1}|J(-\tau,\Phi_{-\tau}(x))J(\tau,x)\xi|\le |J(\tau,x)\xi|\le C|\xi|
\end{equation*}
for the uniform constant $C$ given by Assumption \ref{heu:ass:bound-on-J}, and where we have used that the flow is autonomous to express the inverse of the Jacobian in terms of the Jacobian at a different point. Thus we have shown that the effective diffusion coefficient $\mathfrak{D}(\X)$ is positive definite. On the other hand, $\mathfrak{D}(\X)$ is uniformly bounded from above. Then, it is a standard process to prove existence and uniqueness for \eqref{eq:heu:homogenized-equation}-\eqref{eq:heu:homogenized-initial} (cf. \cite{ladyzenskaja1968linear} if necessary).
\end{proof}

\section{$\Sigma$-convergence along flows}\label{sec:abs}

This section puts forth a new notion of convergence in $L^p$-spaces (with $1<p<\infty$) which gives a rigorous justification of (at least) the first two terms in the asymptotic expansion along mean flows \eqref{eq:heu:mean-field-expansion} postulated in Section \ref{sec:heuristics}, i.e. to justify the approximation \eqref{eq:heu:u-eps-approx} in Proposition \ref{prop:heu:formal-result}. This work is inspired from the seminal works of G. Nguetseng \cite{nguetseng1989general} and G. Allaire \cite{allaire1992homogenization}. In Section \ref{sec:heuristics}, we have formally derived the homogenized limit and obtained an explicit expression for the effective diffusion \eqref{eq:heu:effective-diffusion}. As mentioned in Remark \ref{rem:heu:average-tau}, there was an inherent assumption that the limits in the fast time variable exist and are finite.

The works \cite{nguetseng1989general, allaire1992homogenization} are in the context of periodic homogenization. G. Allaire does mention in \cite{allaire1992homogenization} that it would be interesting to extend the two-scale convergence theory from the periodic setting to the more general almost-periodic setting (see p.1484 in \cite{allaire1992homogenization}). This has been addressed in the past one and a half decade \cite{casado2002two, nguetseng2003homogenization, nguetseng2004homogenization, nguetseng2011sigma, sango2011generalized}. In all these new developments, a central role is played by the notion of \emph{algebra with mean value} introduced by Zhikov and Krivenko in \cite{zhikov1983averaging}.

In this section we present the abstract framework of \emph{$\Sigma$-convergence along flows}. In subsections \ref{ssec:abs:algebras-w.m.v.}-\ref{ssec:abs:technicalities} we develop enough of the theory of \emph{algebras with mean value} for our later purposes. As we do not aim to extend this theory beyond what already exists, we shall not give the theory in full generality and we refer the reader to existing literature (e.g. \cite{casado2002two, nguetseng2003homogenization, nguetseng2004homogenization, nguetseng2010reiterated, sango2011generalized, ambrosio2009multiscale}, see also \cite{francuu2012outline} for an introductory exposition and \cite{jikov1994homogenization} for a pedagogical exposition) for a more complete presentation and full proofs. In subsections \ref{ssec:abs:2-scale-convergence-along-flows}-\ref{ssec:abs:additional-bounds} we introduce the new concept of \emph{$\Sigma$-convergence along flows} and prove compactness results.

\subsection{Algebras with mean value}\label{ssec:abs:algebras-w.m.v.}

We shall denote the space of bounded uniformly continuous functions on $\R$ by $\mbox{BUC}(\R)$.

\begin{Defi}[Algebra with mean value]\label{def:abs:algebra-w.m.v.}
An \emph{algebra with mean value} (or \emph{algebra w.m.v.}, in short) is a Banach sub-algebra $\mathcal{A}$ of $\mbox{\em BUC}(\R)$ such that the following hold:
\begin{enumerate}
\item[(i)] $\mathcal{A}$ contains the constants.
\item[(ii)] $\mathcal{A}$ is translation invariant, i.e. for every $f\in\mathcal{A}$ and $a\in\R$, $f(\cdot-a)\in\mathcal{A}$.
\item[(iii)] Any $f\in \mathcal{A}$ possesses a mean value $M(f)$, by which we mean that 
\begin{align*}
f\left(\frac{\cdot}{\eps}\right)\rightharpoonup M(f) \text{ in }L^\infty(\R)\text{-weak* as }\eps\to0.
\end{align*}
\end{enumerate}
\end{Defi}
Note that the mean value can be equivalently expressed as
\begin{align*}
M(f)
=
\lim_{\ell\to\infty}\frac1{2\ell}\int\limits_{-\ell}^{+\ell}f(\tau)
\, {\rm d}\tau
\end{align*}
and that this limit exists for any $f\in\mathcal{A}$.

The theory of algebra w.m.v. is developed for the Banach space of bounded uniformly continuous functions on $\R^d$, i.e. in any arbitrary dimension. As this current work considers a fast time variable (i.e. in one dimension), we recall all the essential notions in this theory with emphasis on one dimension.

\subsection{Gelfand representation theory}\label{ssec:abs:gelfand-representation}

\begin{Defi}[Spectrum of a Banach algebra]\label{def:abs:spectrum-banach-algebra}
Given a commutative Banach algebra $\mathcal{A}$ with an identity $1\in\mathcal{A}$, we define its \emph{spectrum} $\Delta(\mathcal{A})$ as the set of algebra homeomorphisms, i.e. the maps $s:\mathcal{A}\to\C$ such that
\begin{enumerate}
\item $s$ is linear, i.e. for all $f,g\in\mathcal{A}, \lambda\in\C$, $s(f+g)=s(f)+s(g)$ and $s(\lambda f)=\lambda s(f)$,
\item $s$ is multiplicative, i.e. for all $f,g\in\mathcal{A}$, $s(fg)=s(f)s(g)$,
\item $s$ preserves the identity, i.e. $s(1)=1$.
\end{enumerate}
The elements $s\in\Delta(\mathcal{A})$ are called the \emph{characters} of $\mathcal{A}$.
\end{Defi}
As $\Delta(\mathcal{A})\subset \mathcal{A}'$, the topological dual of $\mathcal{A}$, we equip $\Delta(\mathcal{A})$ with the weak* subspace topology induced by $\mathcal{A}'$. This makes $\Delta(\mathcal{A})$ a compact Hausdorff space by the Banach-Alaoglu theorem. 

Of central importance to the study of Banach algebras is the Gelfand transform. We denote by $C(\Delta(\mathcal{A}))$, the space of complex-valued continuous functions on $\Delta(\mathcal{A})$.

\begin{Defi}[Gelfand transform]\label{def:abs:Gelfand-transform}
The \emph{Gelfand transform} is the map $\mathcal{G}:\mathcal{A}\to C(\Delta(\mathcal{A}))$ defined by $\mathcal{G}(f)(s)=s(f)$.
\end{Defi}

{\centering{\textsc{Notation:} For brevity, we denote $\mathcal{G}(f)$ as $\widehat{f}$.}}

The importance of the spectrum and Gelfand transform is in the following result, which allows us to replace the analysis of functions in $C(\R)$ with functions on a \emph{compact} space.
\begin{Thm}[Gelfand-Naimark]\label{thm:abs:Gelfand-Naimark}
Let $\mathcal{A}$ be a $\mathcal{C}^*$ algebra. Then $\mathcal{G}$ is an isometric isomorphism of $\mathcal{A}$ into $C(\Delta(\mathcal{A}))$.
\end{Thm}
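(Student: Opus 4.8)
This is the classical commutative Gelfand–Naimark theorem, and the plan is to assemble it from the standard structure theory of commutative Banach algebras together with the extra input coming from the $\mathcal{C}^*$ identity $\|f^*f\|=\|f\|^2$. First I would record that every character is automatically continuous with $\|s\|=1$: since $\mathcal{A}$ has a unit, $f-\lambda 1$ fails to be invertible precisely when $\lambda$ lies in the range of some character applied to $f$, and the bound $|s(f)|\le\|f\|$ follows because $s(f)$ belongs to the spectrum $\sigma(f)$. Consequently $\widehat{f}=\mathcal{G}(f)$ is genuinely an element of $C(\Delta(\mathcal{A}))$ with $\|\widehat{f}\|_\infty\le\|f\|$, and by Definition \ref{def:abs:spectrum-banach-algebra} the map $\mathcal{G}$ is linear, multiplicative, and identity-preserving, i.e. an algebra homomorphism.

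The crucial spectral identity is that the range of $\widehat{f}$ coincides with $\sigma(f)$; equivalently, $f$ is invertible in $\mathcal{A}$ if and only if $\widehat{f}$ never vanishes. This is where the commutative Banach algebra theory enters: one shows that the maximal ideals of $\mathcal{A}$ are exactly the kernels of characters, so that non-invertibility of $f-\lambda 1$ produces a character $s$ with $s(f)=\lambda$. Granting this, $\|\widehat{f}\|_\infty=\sup_{s}|s(f)|=r(f)$, the spectral radius of $f$.

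Next I would bring in the involution. Using the $\mathcal{C}^*$ identity one proves that every self-adjoint $a=a^*$ has real spectrum (via an exponential argument applied to $\exp(ita)$), so that $\widehat{a}$ is real-valued; splitting a general $f$ into self-adjoint real and imaginary parts then yields $\widehat{f^*}=\overline{\widehat{f}}$, i.e. $\mathcal{G}$ is a $*$-homomorphism. The isometry is then immediate: for self-adjoint $a$ the $\mathcal{C}^*$ identity gives $\|a^{2}\|=\|a\|^{2}$ and inductively $\|a^{2^n}\|=\|a\|^{2^n}$, so the spectral radius formula $r(a)=\lim_n\|a^{2^n}\|^{1/2^n}$ forces $\|\widehat{a}\|_\infty=r(a)=\|a\|$; for general $f$,
\[
\|\widehat{f}\|_\infty^2=\big\|\,\overline{\widehat{f}}\,\widehat{f}\,\big\|_\infty=\|\widehat{f^*f}\|_\infty=r(f^*f)=\|f^*f\|=\|f\|^2.
\]
In particular $\mathcal{G}$ is injective with closed range, which already establishes the claimed isometric embedding.

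Finally, to upgrade ``into'' to ``onto'' I would invoke the Stone–Weierstrass theorem on the compact Hausdorff space $\Delta(\mathcal{A})$: the image $\mathcal{G}(\mathcal{A})$ contains the constants (since $\widehat{1}=1$), is closed under complex conjugation (by the $*$-property), separates points of $\Delta(\mathcal{A})$ (two distinct characters disagree on some $f$, so $\widehat{f}$ separates them), and is closed (being the isometric image of the complete space $\mathcal{A}$); hence it is all of $C(\Delta(\mathcal{A}))$. The main obstacle is the spectral identification that the range of $\widehat{f}$ equals $\sigma(f)$, together with the reality of the spectrum of self-adjoint elements: these are the two genuinely non-formal inputs, the first belonging to Gelfand's maximal-ideal theory and the second being the point at which the $\mathcal{C}^*$ axiom is indispensable.
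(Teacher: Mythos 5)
Your proposal is correct, but there is nothing in the paper to compare it against: the paper states this result as the classical (commutative) Gelfand--Naimark theorem and gives no proof, deferring to the standard literature on Banach algebras. Your reconstruction is exactly the textbook argument: automatic continuity of characters via $s(f)\in\sigma(f)$, the Gelfand--Mazur/maximal-ideal identification of $\Delta(\mathcal{A})$ giving $\|\widehat{f}\|_\infty=r(f)$, reality of the spectrum of self-adjoint elements via $\exp(ita)$, the $*$-homomorphism property, isometry from $\|a^{2^n}\|=\|a\|^{2^n}$ together with the spectral radius formula and the $\mathcal{C}^*$ identity applied to $f^*f$, and finally Stone--Weierstrass for surjectivity. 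All of these steps are sound.

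Two minor remarks. First, your argument (necessarily) uses commutativity throughout --- maximal ideals as kernels of characters, and the very fact that the Gelfand transform can be isometric into a commutative algebra $C(\Delta(\mathcal{A}))$ --- whereas the paper's statement says only ``$\mathcal{C}^*$ algebra''; in the paper's context this is harmless, since the algebras in question are subalgebras of $\mathrm{BUC}(\R)$ (closed under conjugation), hence commutative, but it would be worth stating the hypothesis explicitly. Second, the paper's statement only claims an isometric isomorphism \emph{into} $C(\Delta(\mathcal{A}))$, i.e. an isometric embedding; your Stone--Weierstrass step upgrades this to \emph{onto}, which is the full strength of the classical theorem and is what is actually used implicitly later (e.g. when identifying $\mathcal{B}^2_{\mathcal{A}}$ with $L^2(\Delta(\mathcal{A}))$), so including it is a strength rather than a digression.
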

The mean value operator $M$ is a bounded linear functional on $\mathcal{A}$. By identifying $\mathcal{A}$ with $C(\Delta(\mathcal{A}))$ using the Gelfand transform, and applying the Riesz representation theorem we arrive at the following proposition, the observation of which forms the basis of Nguetseng's formalism of \emph{Homogenization Structures} \cite{nguetseng2003homogenization, nguetseng2004homogenization}.
\begin{Prop}\label{prop:abs:beta}
Let $\mathcal{A}$ be an algebra w.m.v.. Then the mean value operator $M$ is represented by a Radon probability measure $\beta$ on $\Delta(\mathcal{A})$, i.e. for all $f\in\mathcal{A}$ we have:
\begin{align}\label{eq:abs:beta}
M(f)
=
\int\limits_{\Delta(\mathcal{A})}\widehat{f}(s)
\, {\rm d}\beta(s).
\end{align}
\end{Prop}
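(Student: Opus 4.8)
The plan is to combine the Gelfand--Naimark isometric isomorphism (Theorem \ref{thm:abs:Gelfand-Naimark}) with the Riesz--Markov representation theorem for positive functionals on the compact Hausdorff space $\Delta(\mathcal{A})$. First I would verify that the mean value operator $M$ is indeed a bounded linear functional on $\mathcal{A}$: linearity is immediate from the definition, and boundedness follows from the estimate $|M(f)| = |\lim_{\ell\to\infty}\frac{1}{2\ell}\int_{-\ell}^{+\ell} f(\tau)\,\mathrm{d}\tau| \le \|f\|_{L^\infty(\R)} = \|f\|_{\mathcal{A}}$, so $\|M\|\le 1$. The existence of the defining limit for every $f\in\mathcal{A}$ is already recorded in the discussion following Definition \ref{def:abs:algebra-w.m.v.}.

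The key transfer step is to push $M$ through the Gelfand transform. Since $\mathcal{G}:\mathcal{A}\to C(\Delta(\mathcal{A}))$ is an isometric isomorphism onto its image, and since (as a $C^*$-algebra of real-type/self-adjoint structure inherited from $\mathrm{BUC}(\R)$) the image is all of $C(\Delta(\mathcal{A}))$, I can define a functional $L$ on $C(\Delta(\mathcal{A}))$ by $L(\widehat{f}) := M(f)$, i.e. $L := M\circ\mathcal{G}^{-1}$. This $L$ is a bounded linear functional on $C(\Delta(\mathcal{A}))$. The crucial observation is that $L$ is \emph{positive}: if $\widehat{f}\ge 0$ pointwise on $\Delta(\mathcal{A})$, then because $\mathcal{G}$ preserves the $C^*$-structure, the corresponding $f\in\mathcal{A}\subset\mathrm{BUC}(\R)$ is a nonnegative real function, whence each average $\frac{1}{2\ell}\int_{-\ell}^{+\ell} f\,\mathrm{d}\tau\ge 0$ and so $L(\widehat{f})=M(f)\ge 0$. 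By the Riesz--Markov--Kakutani representation theorem, there exists a unique finite positive Radon measure $\beta$ on $\Delta(\mathcal{A})$ with
\[
L(\widehat{f}) = \int\limits_{\Delta(\mathcal{A})}\widehat{f}(s)\,\mathrm{d}\beta(s)
\qquad\text{for all }\widehat{f}\in C(\Delta(\mathcal{A})),
\]
which is exactly \eqref{eq:abs:beta}.

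Finally, to see that $\beta$ is a \emph{probability} measure, I would evaluate on the constant function $f\equiv 1$, which lies in $\mathcal{A}$ by axiom (i) of Definition \ref{def:abs:algebra-w.m.v.}. On the one hand $M(1)=\lim_{\ell\to\infty}\frac{1}{2\ell}\int_{-\ell}^{+\ell} 1\,\mathrm{d}\tau = 1$; on the other hand $\mathcal{G}(1)=\widehat{1}$ is the constant function $1$ on $\Delta(\mathcal{A})$ because each character preserves the identity (axiom (iii) of Definition \ref{def:abs:spectrum-banach-algebra}). Hence $\beta(\Delta(\mathcal{A})) = \int_{\Delta(\mathcal{A})} 1\,\mathrm{d}\beta = M(1) = 1$, so $\beta$ is a Radon probability measure.

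I expect the main subtlety to be the positivity of the functional $L$, since it relies on the order structure surviving the Gelfand transform; concretely one must confirm that $\widehat{f}\ge 0$ on $\Delta(\mathcal{A})$ forces $f\ge 0$ as a function on $\R$. This is a standard property of the Gelfand transform for commutative $C^*$-algebras (the spectrum of a self-adjoint element equals the range of its Gelfand transform), but it is the one place where the $C^*$-structure, rather than mere Banach-algebra structure, is genuinely used, so I would make sure the isometric isomorphism of Theorem \ref{thm:abs:Gelfand-Naimark} is invoked in its full $*$-preserving strength.
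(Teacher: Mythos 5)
Your proposal is correct and follows exactly the route the paper itself takes, which is stated just before Proposition \ref{prop:abs:beta}: identify $\mathcal{A}$ with $C(\Delta(\mathcal{A}))$ via the Gelfand transform (Theorem \ref{thm:abs:Gelfand-Naimark}) and apply the Riesz representation theorem, with positivity and the normalization $M(1)=1$ giving a probability measure. The only remark worth adding is that positivity can be seen even more directly without invoking the full $*$-preserving strength of Gelfand--Naimark: each point evaluation $\delta_\tau(f)=f(\tau)$ is itself a character of $\mathcal{A}$, so $\widehat{f}\ge 0$ on $\Delta(\mathcal{A})$ forces $f(\tau)=\widehat{f}(\delta_\tau)\ge 0$ for every $\tau\in\R$, hence $M(f)\ge 0$.
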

This allows us to introduce the space $L^2(\Delta(\mathcal{A})):=L^2(\Delta(\mathcal{A}),{\rm d}\beta)$. Note that $\beta$ may not be supported on the whole of $\Delta(\mathcal{A})$. Indeed, in the Example \ref{exm:abs:a.w.m.v-convergence-at-infinity} below it is a Dirac mass at a single point.

\subsection{Examples of algebras with mean value}\label{ssec:abs:examples-alegras-w.m.v}

To give some intuition for these objects we provide some examples.

\begin{Exm}[Periodic functions]\label{exm:abs:a.w.m.v-periodic}
Let $\mathcal{A}$ be the set of continuous functions from $\R$ to $\C$ which are periodic with period $L$. Then the characters $s\in\Delta(\mathcal{A})$ are the maps defined by $s_t(f)=f(t)$ for $t\in \R/(L\Z)$, so that $\Delta(\mathcal{A})$ can be identified with the torus of length $L$. The Gelfand transform takes $f\in \mathcal{A}\subset C(\R)$ to its representative on the torus. The mean value operator $M$ is given by 
\begin{align*}
M(f)
=
\int\limits_{\Delta(\mathcal{A})}\widehat{f}(s)
\, {\rm d}\beta(s)
=
\frac1L \int\limits_0^L f(\tau)
\, {\rm d}\tau.
\end{align*}
\end{Exm}

\begin{Exm}[Functions that converge at infinity]\label{exm:abs:a.w.m.v-convergence-at-infinity}
Let $\mathcal{A}$ be the space of continuous functions $f:\R\to\C$ that converge to a limit at infinity, i.e. $\lim_{|\tau|\to\infty}f(\tau)$ exists. Then the spectrum $\Delta(\mathcal{A})$ are the point evaluation maps $s_t(f)=f(t)$ for $t\in\R\cup\{\infty\}$, and the spectrum can be identified with $\bar{\R}$ the one point compactification of $\R$. Under this identification, the Gelfand transform takes a function $f\in \mathcal{A}$ to a function $\widehat{f}:\bar{\R}\to\C$ with $\widehat{f}(t)=f(t)$ for $t\in \R$ and $\widehat{f}(\infty)=\lim_{|\tau|\to\infty}f(\tau)$. The mean value operator $M$ acts by $M(f)=\widehat{f}(\infty)$.
\end{Exm}

\begin{Exm}[Almost-periodic functions]\label{exm:abs:AP}
Let $\mathsf{T}(\R)$ denote the set of all trigonometric polynomials, i.e. all $f(t)$ that are finite linear combinations of the functions in the set
\begin{align*}
\Big\{
\cos(kt), \sin(kt)
: k\in\R
\Big\}.
\end{align*}
The space of almost-periodic functions in the sense of Bohr \cite{bohr1947almost} is the closure of $\mathsf{T}(\R)$ in the supremum norm.

A function $f(t)\in L^2_{loc}(\R)$ is called almost-periodic in the sense of Besicovitch if there is a sequence in $\mathsf{T}(\R)$ that converges to $f(t)$ in the Besicovitch semi-norm (given by \eqref{eq:abs:Besicovitch-seminorm} below).

A function $f(t)\in\mbox{BUC}(\R)$ is said to be almost-periodic if the set of translates
\begin{align}\label{eq:abs:translates}
\Big\{
f(\cdot - a)
: a\in\R
\Big\}
\end{align}
is relatively compact in $\mbox{BUC}(\R)$.
\end{Exm}
All the above three definitions of \emph{almost-periodic functions} are equivalent \cite{jikov1994homogenization}.

We also give the example of weakly almost-periodic functions due to Eberlein \cite{eberlein1949abstract}.
\begin{Exm}[Weakly almost-periodic functions]\label{exm:abd:WAP}
A continuous function $f(t)\in\mbox{BUC}(\R)$ is weakly almost periodic if the set of translates \eqref{eq:abs:translates} is relatively weakly compact in $\mbox{BUC}(\R)$.
\end{Exm}
Readers are to consult \cite{sango2011generalized} for more information on the space of weakly almost-periodic functions.
\subsection{Besicovitch spaces}\label{ssec:Besicovitch}

\begin{Defi}[Besicovitch space]\label{def:abs:besicovitch}
For an algebra w.m.v. $\mathcal{A}$ the corresponding \emph{Besicovitch space} $\mathcal{B}^2=\mathcal{B}^2_{\mathcal{A}}$ is the abstract completion of $\mathcal{A}$ with respect to the Besicovitch semi-norm:
\begin{align}\label{eq:abs:Besicovitch-seminorm}
\| f \|_{\mathcal{B}_\mathcal{A}^2}^2
=
\limsup_{\ell\to\infty}\frac1{2\ell}\int\limits_{-\ell}^{+\ell} |f(\tau)|^2
\, {\rm d}\tau.
\end{align}
\end{Defi}
Note that the elements of $\mathcal{B}^2$ are equivalence classes of functions that are indistinguishable under \eqref{eq:abs:Besicovitch-seminorm}. The mean value operator $M$ extends to a bilinear form $M(fg)$ on $\mathcal{B}^2_{\mathcal{A}}$. It is a standard result (see e.g. \cite{sango2011generalized}) that the Gelfand transform is an isometric isomorphism between $\mathcal{B}^2$ and $L^2(\Delta(\mathcal{A}))$. Note that $\mathcal{B}^2_\mathcal{A}$ inherits the \emph{translation invariance} (in the sense of Definition \ref{def:abs:algebra-w.m.v.}(iii)) from $\mathcal{A}$.

\begin{Defi}[Ergodic algebra w.m.v.]\label{def:abs:ergodic-algebra}
An algebra w.m.v. $\mathcal{A}$ is said to be \emph{ergodic} if any $f\in \mathcal{B}_\mathcal{A}^2$ satisfying 
\begin{align*}
\| f(\cdot)-f(\cdot-a) \|_{\mathcal{B}^2_\mathcal{A}} = 0\qquad\text{for all }a\in\R
\end{align*}
is equivalent in $\mathcal{B}^2_\mathcal{A}$ to a constant.
\end{Defi}
It is easy to see that the constant in Definition \ref{def:abs:ergodic-algebra} must be $M(f)$. 

\begin{Rem}
All of the examples of algebras w.m.v. given in Subsection \ref{ssec:abs:examples-alegras-w.m.v} are ergodic.
\end{Rem}
For our purposes the importance of \emph{ergodicity} of an algebra w.m.v. is the following lemma, whose proof may be found in \cite{ambrosio2009multiscale}.
\begin{Lem}\label{lem:abs:mean-value-tau-derivative}
Let $\mathcal{A}$ be an ergodic algebra w.m.v. and $f\in \mathcal{B}^2_{\mathcal{A}}$ have the property that, for any $g\in \mathcal{A}$ with $\frac{{\rm d}g}{{\rm d}\tau}\in\mathcal{A}$ we have:
\begin{align*}
M\left(f\frac{{\rm d}g}{{\rm d}\tau}\right)
=
\int\limits_{\Delta{(\mathcal{A})}}\widehat{f}(s)\widehat{\frac{{\rm d}g}{{\rm d}\tau}}(s)
\, {\rm d}\beta(s)
= 0
\end{align*}
where the first equality is automatic. Then $f=M(f)$ in $\mathcal{B}_{\mathcal{A}}^2$ and equivalently $\widehat{f}=M(f)$ $\beta$-almost everywhere.
\end{Lem}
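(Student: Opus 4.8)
The plan is to deduce from the hypothesis that $f$ is invariant under all translations in the sense of $\mathcal{B}^2_{\mathcal{A}}$, and then to invoke the ergodicity of $\mathcal{A}$ (Definition \ref{def:abs:ergodic-algebra}) to conclude that $f$ equals its mean value. Throughout write $(T_a g)(\tau):=g(\tau-a)$ for the translation group, and recall that $\mathcal{A}$ and $\mathcal{B}^2_{\mathcal{A}}$ are translation invariant and that $M$ is translation invariant; the latter gives, for the bilinear form $(h_1,h_2)\mapsto M(h_1 h_2)$ on $\mathcal{B}^2_{\mathcal{A}}$, the identity $M\big((T_{-a}f)\,g\big)=M\big(f\,(T_a g)\big)$ (a change of variables that does not affect the mean).

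First I would convert the infinitesimal hypothesis into a statement at finite translation. Fix $a\in\R$ and a test function $g\in\mathcal{A}$ with $\frac{dg}{d\tau}\in\mathcal{A}$. Since differentiation commutes with translation, $T_s g\in\mathcal{A}$ and $(T_s g)'=T_s(g')\in\mathcal{A}$ for every $s$, so each $T_s g$ is admissible in the hypothesis, giving $M\big(f\,(T_s g)'\big)=0$. Using the fundamental theorem of calculus in the form $T_a g-g=-\int_0^a T_s\big(\tfrac{dg}{d\tau}\big)\,ds$, together with the continuity of $s\mapsto T_s g'$ into $\mathcal{B}^2_{\mathcal{A}}$ (uniform continuity of $g'\in\mathrm{BUC}(\R)$ gives continuity in the sup norm, which dominates $\|\cdot\|_{\mathcal{B}^2_{\mathcal{A}}}$, and this lets me interchange $M$ with the $s$-integral since $\phi\mapsto M(f\phi)$ is continuous by Cauchy--Schwarz), I obtain
\begin{align*}
M\big(f\,(T_a g-g)\big)=-\int_0^a M\big(f\,T_s g'\big)\,ds=0.
\end{align*}
By the translation invariance of $M$ this reads $M\big((T_{-a}f-f)\,g\big)=0$ for every admissible smooth $g$.

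Next I would establish density of such $g$ in $\mathcal{B}^2_{\mathcal{A}}$. Given any $g\in\mathcal{A}\subset\mathrm{BUC}(\R)$, uniform continuity makes $s\mapsto T_s g$ continuous into $\mathcal{A}$, so the mollification $g_\delta:=\int_\R \rho_\delta(s)\,T_s g\,ds$ (with $\rho_\delta$ a standard mollifier) is an $\mathcal{A}$-valued integral of a continuous integrand, hence lies in the closed subspace $\mathcal{A}$; moreover $g_\delta'=\int_\R \rho_\delta'(s)\,T_s g\,ds\in\mathcal{A}$ and $\|g_\delta-g\|_\infty\to0$. Since $\|\cdot\|_{\mathcal{B}^2_{\mathcal{A}}}\le\|\cdot\|_\infty$ on $\mathcal{A}$, and $\mathcal{A}$ is dense in $\mathcal{B}^2_{\mathcal{A}}$ by definition, the admissible smooth test functions are dense in $\mathcal{B}^2_{\mathcal{A}}$. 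Applying $M\big((T_{-a}f-f)\,g\big)=0$ along a sequence of such $g$ converging to $h:=T_{-a}f-f\in\mathcal{B}^2_{\mathcal{A}}$ and using continuity of the bilinear form yields $\|T_{-a}f-f\|_{\mathcal{B}^2_{\mathcal{A}}}^2=M(h\,h)=0$. As $a$ was arbitrary, $f$ satisfies the hypothesis of Definition \ref{def:abs:ergodic-algebra}, whence $f=M(f)$ in $\mathcal{B}^2_{\mathcal{A}}$; the equivalent statement $\widehat{f}=M(f)$ $\beta$-almost everywhere then follows from the isometric isomorphism between $\mathcal{B}^2_{\mathcal{A}}$ and $L^2(\Delta(\mathcal{A}))$.

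I expect the main obstacle to be precisely the passage from the infinitesimal ``vanishing weak derivative'' condition to genuine translation invariance: justifying the integration step and, above all, proving the density of the differentiable elements of $\mathcal{A}$ in $\mathcal{B}^2_{\mathcal{A}}$, which is what upgrades orthogonality-against-all-smooth-$g$ into $T_{-a}f=f$. (In the complex-valued setting one also applies the hypothesis to $\bar g$, which is admissible because $\mathcal{A}$, being a $C^*$-algebra, is closed under conjugation.)
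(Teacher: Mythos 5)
Your proof is correct, and there is nothing internal to compare it against: the paper does not prove this lemma itself but defers to the citation \cite{ambrosio2009multiscale}. Your route --- integrating the infinitesimal hypothesis $M(f\,T_s g')=0$ over $s\in[0,a]$ via the identity $T_ag-g=-\int_0^a T_s g'\,{\rm d}s$ (a Bochner integral of a continuous, $\mathcal{A}$-valued integrand) to obtain $M\big((T_{-a}f-f)\,g\big)=0$, then upgrading this to $\|T_{-a}f-f\|_{\mathcal{B}^2_{\mathcal{A}}}=0$ by showing that the differentiable elements of $\mathcal{A}$ are dense in $\mathcal{B}^2_{\mathcal{A}}$, and finally invoking Definition \ref{def:abs:ergodic-algebra} --- is essentially the argument found in the literature the paper points to, where it is usually phrased in terms of the strongly continuous unitary translation group on $\mathcal{B}^2_{\mathcal{A}}$ and its skew-adjoint generator; your version is more elementary and self-contained. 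The delicate steps are all justified: uniform continuity of $g,g'\in\mathrm{BUC}(\R)$ makes $s\mapsto T_sg$ and $s\mapsto T_sg'$ continuous in the sup norm, so mollification stays inside the closed, translation-invariant subspace $\mathcal{A}$ and the bounded functional $\phi\mapsto M(f\phi)$ (bounded because the sup norm dominates the Besicovitch seminorm, plus Cauchy--Schwarz) passes through the integral; translation invariance of $M$ gives $M\big((T_{-a}f)g\big)=M\big(f\,T_ag\big)$ on $\mathcal{A}$ and extends to $f\in\mathcal{B}^2_{\mathcal{A}}$ by density and the isometry of translations; and your closing remark about conjugates is indeed needed (and suffices, since the paper's algebras are $C^*$, hence closed under conjugation) so that the orthogonality against admissible test functions reaches $M(h\bar h)=\|h\|^2_{\mathcal{B}^2_{\mathcal{A}}}$ with $h=T_{-a}f-f$. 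The identification of the constant with $M(f)$ and the $\beta$-a.e.\ statement then follow exactly as you say, from the remark after Definition \ref{def:abs:ergodic-algebra} and the isometric isomorphism $\mathcal{B}^2_{\mathcal{A}}\cong L^2(\Delta(\mathcal{A}))$.
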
 

\subsection{Product algebras and vector valued algebras}\label{ssec:abs:technicalities}

We wish to consider continuous functions $f(\tau,y)$ for which heuristically `$f$ is in $\mathcal{A}$ as a function of $\tau$' and `$f$ is in $C(\T^d)$ as a function of $y$'. To make sense of this, we recall that the tensor product $\mathcal{A}\otimes C(\T^d)$ is defined by
\begin{align*}
\mathcal{A}\otimes C(\T^d)
:=
\left\{\sum_{i=1}^Nf_ig_i:N\in\N, f_1,\dotsc,f_N\in \mathcal{A},\text{ and } g_1,\dotsc,g_N\in C(\T^d) \right\}
\end{align*}
and we define $\mathcal{A}\odot C(\T^d)$ as the closure of $\mathcal{A}\otimes C(\T^d)$ in the Banach algebra $\mbox{BUC}(\R\times\T^d)$. Note that by construction $\mathcal{A}\otimes C(\T^d)$ is dense in $\mathcal{A}\odot C(\T^d)$. More discussion of product algebras may be found in \cite{nguetseng2003homogenization, nguetseng2004homogenization}.

We will often need to use vector valued algebras of functions mapping to $\C^d$. This poses essentially no additional complications; we refer the reader to e.g. \cite{ambrosio2009multiscale} for details.

\subsection{$\Sigma$-convergence along flows}\label{ssec:abs:2-scale-convergence-along-flows}

Throughout this section we shall consider a flow $\Phi_\tau(x):\R\times\R^d\to\R^d$. One might think of $\Phi_\tau(x)$ as the flow of an autonomous ODE
\begin{align*}
\dot{\X} = \bar{\bf b}(\X),
\end{align*}
as was considered in Section \ref{sec:heuristics}, but this assumption will not be needed in this section. We will, however, make the following assumptions on the flow $\Phi_\tau(x)$.

\begin{Ass}\label{ass:abs:flow}
We assume that the flow $\Phi_\tau(x)$ satisfies the following:
\begin{enumerate}
\item[(i)] $\Phi_\tau(x)$ is continuously differentiable from $\R\times\R^d$ to $\R^d$.
\item[(ii)] $\Phi_\tau(x)$ satisfies the group property, i.e. $\Phi_t(\Phi_s(x))=\Phi_{t+s}(x)$ for all $t,s\in\R$ and $x\in\R^d$.
\item[(iii)] The Jacobian $J$ of $\Phi_\tau(x)$ defined by \eqref{eq:heu:Jacobian-matrix} is an \emph{uniformly bounded} function of $\tau$, locally uniformly in $x$,  i.e. for any compact $K\subset\R^d$ we have
\begin{align*}
\sup_{x\in K}\sup_{\tau\in\R}|J(\tau,x)|<\infty .
\end{align*}
\item[(iv)] For any $\tau\in \R$, $\Phi_\tau(x)$ is volume preserving, i.e. $\det(J(\tau, x))=1$.
\end{enumerate}
\end{Ass}

We now define the notion of weak $\Sigma$-convergence along flows, which generalizes the notion of two-scale convergence with drift introduced in \cite{maruvsic2005homogenization} and also the notion of $\Sigma$-convergence introduced in \cite{nguetseng2003homogenization}.

\begin{Defi}[weak $\Sigma$-convergence along flow]\label{def:abs:2scale-flow}
Let $\mathcal{A}$ be an algebra w.m.v.. Suppose $\Phi_\tau(x)$ be a flow satisfying Assumption \ref{ass:abs:flow} and let $u^\eps(t,x)$ be a sequence in $L^2((0,T)\times\R^d)$. We say that $u^\eps$ weakly $\Sigma$-converges along $\Phi_\tau(x)$ to a limit $u_0(t,\X,s,y)\in L^2((0,T)\times\R^d\times \Delta(\mathcal{A})\times \T^d)$ if, for any smooth test function $\psi(t,\X ,\tau,y)$ which is periodic in the $y$ variable and belongs to $\mathcal{A}$ in the $\tau$ variable, we have
\begin{equation}\label{eq:abs:sigma-convergence-flow-defn}
\begin{aligned}
\lim_{\eps\to 0}
\iint\limits_{(0,T)\times\R^d}
& 
u^\eps(t,x)\psi\left(t,\Phi_{-t/\eps}(x), \frac{t}{\eps}, \frac{x}{\eps}\right)
\, {\rm d}x\, {\rm d}t
= \\
&
\iiiint\limits_{(0,T)\times\R^d\times\Delta(\mathcal{A})\times\T^d}
u_0(t,\X,s,y)\widehat{\psi}(t,\X,s,y)
\, {\rm d}y \, {\rm d}\beta(s) \, {\rm d}\X\, {\rm d}t,
\end{aligned}
\end{equation}
where $\widehat{\psi}=\mathcal{G}(\psi)$ is the Gelfand transform of $\psi$ (Definition \ref{def:abs:Gelfand-transform}), $\beta$ is given by \eqref{eq:abs:beta} and $\mathcal{A}\odot C(\T^d)$ is defined in subsection \ref{ssec:abs:technicalities}.
\end{Defi}
{\centering\textsc{Notation:} We denote the weak $\Sigma$-convergence along flow $\Phi_\tau(x)$ by $u^\eps \sflow u_0$.}

{\centering\textsc{Convention:} Whenever the limit \eqref{eq:abs:sigma-convergence-flow-defn} holds, we call \emph{$u_0$ the $\Sigma$-$\Phi_\tau$ weak limit of $u^\eps$}.}
\begin{Rem}\label{rem:abs:characteristics-sigma-convergence}
The test functions in \eqref{eq:abs:sigma-convergence-flow-defn} are taken along rapidly moving coordinates in their second variable. This is analogous to the choice of test functions in the theory of two-scale convergence with drift \cite{maruvsic2005homogenization, allaire2008periodic}. Note also that the the $\Sigma$-$\Phi_\tau$ weak limit of the family $u^\eps(t,x)$ depends on the choice of the flow $\Phi_\tau(x)$. It should be noted that when $\Phi_\tau(x)=x$ for all $\tau\in\R$ and for each $x\in\R^d$, i.e. when the test functions in \eqref{eq:abs:sigma-convergence-flow-defn} are taken on a fixed coordinate system, the weak convergence given in Definition \ref{def:abs:2scale-flow} coincides with the notion of weak $\Sigma$-convergence with regard to the product algebra $\mathcal{A}\odot C(\T^d)$ developed in \cite{nguetseng2011sigma, sango2011generalized}.
\end{Rem}

\begin{Rem}\label{rem:abs:test-fn-no-space-oscillations}
Definition \ref{def:abs:2scale-flow} makes sense even for test functions $\psi(t,\X,\tau)$ without oscillations in space, and in this case the limit $u_0$ will be a function of $(t,\X,s)$ only.
\end{Rem}

\subsection{Compactness}\label{ssec:abs:compact}

To show that the Definition \ref{def:abs:2scale-flow} is not empty, we give the following weak-compactness result, which is the main result of this section.

\begin{Thm}\label{Thm:abs:compactness}
Let $\mathcal{A}$ be an algebra w.m.v.. Suppose $\Phi_\tau(x)$ be a flow satisfying Assumption \ref{ass:abs:flow} and let $u^\eps(t,x)$ be a uniformly (with respect to $\eps$) bounded sequence in $L^2((0,T)\times\R^d)$. Then there exists a subsequence (still denoted $u^\eps$) and a limit $u_0(t,\X,s,y)\in L^2((0,T)\times\R^d\times\Delta(\mathcal{A})\times\T^d)$ such that
\begin{align*}
u^\eps\sflow u_0
\end{align*}
in the sense of Definition \ref{def:abs:2scale-flow}.
\end{Thm}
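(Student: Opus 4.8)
The plan is to mirror the classical Nguetseng--Allaire two-scale compactness argument, rephrased through the Gelfand machinery of Subsections \ref{ssec:abs:gelfand-representation}--\ref{ssec:abs:technicalities} and adapted to the flow. For a test function $\psi(t,\X,\tau,y)$ as in Definition \ref{def:abs:2scale-flow} (smooth and compactly supported in $(t,\X)$, $\T^d$-periodic in $y$, and in $\mathcal{A}$ in $\tau$), define the linear functional
\[
L_\eps(\psi):=\iint\limits_{(0,T)\times\R^d} u^\eps(t,x)\,\psi\!\left(t,\Phi_{-t/\eps}(x),\tfrac{t}{\eps},\tfrac{x}{\eps}\right)\, {\rm d}x\, {\rm d}t.
\]
By Cauchy--Schwarz and the uniform bound $\sup_\eps\|u^\eps\|_{L^2}\le C$, controlling $L_\eps$ reduces entirely to estimating the $L^2((0,T)\times\R^d)$ norm of the oscillating test function, so everything hinges on an averaging lemma.

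The heart of the matter, and the step I expect to be the main obstacle, is to prove
\[
\iint\limits_{(0,T)\times\R^d}\left|\psi\!\left(t,\Phi_{-t/\eps}(x),\tfrac{t}{\eps},\tfrac{x}{\eps}\right)\right|^2\, {\rm d}x\, {\rm d}t \;\longrightarrow\; \iiiint\limits_{(0,T)\times\R^d\times\Delta(\mathcal{A})\times\T^d}|\widehat\psi(t,\X,s,y)|^2\, {\rm d}y\, {\rm d}\beta(s)\, {\rm d}\X\, {\rm d}t
\]
as $\eps\to0$. To establish this I would first change variables $x\mapsto\X=\Phi_{-t/\eps}(x)$: Assumption \ref{ass:abs:flow}(iv) ($\det J=1$) makes this volume preserving, so ${\rm d}x={\rm d}\X$, and by the group property \ref{ass:abs:flow}(ii) the integrand becomes $|\psi(t,\X,t/\eps,\Phi_{t/\eps}(\X)/\eps)|^2$. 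It then remains to show that the joint fast oscillation $(\tau,y)=(t/\eps,\Phi_{t/\eps}(\X)/\eps)$ equidistributes with respect to the product measure $\beta\otimes {\rm d}y$ on $\Delta(\mathcal{A})\times\T^d$. By density of $\mathcal{A}\otimes C(\T^d)$ in $\mathcal{A}\odot C(\T^d)$ it suffices to treat tensor test functions $\phi(t,\X)a(\tau)g(y)$: the time factor $a(t/\eps)$ averages through the mean value $M(a)=\int_{\Delta(\mathcal{A})}\widehat a\,{\rm d}\beta$ from Definition \ref{def:abs:algebra-w.m.v.}, while for fixed $t$ the spatial factor $g(\Phi_{t/\eps}(\X)/\eps)$ weak-$*$ averages to $\int_{\T^d}g\,{\rm d}y$ by a generalized Riemann--Lebesgue argument, the non-degeneracy of this $\eps$-scale oscillation being guaranteed by $|J|\le C$ together with $\det J=1$ (which bounds $J^{-1}$ as well, as used in Proposition \ref{prop:heu:homo-eq-exist}). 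The genuine difficulty is the coupling of the two fast variables through the flow, which forbids a naive product of separate limits and requires the equidistribution to be carried out simultaneously in $(\tau,y)$.

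Granting the averaging lemma, the remainder is soft functional analysis. The lemma yields $\limsup_\eps|L_\eps(\psi)|\le C\,\|\widehat\psi\|_{L^2((0,T)\times\R^d\times\Delta(\mathcal{A})\times\T^d)}$, so the $L_\eps$ are uniformly bounded with respect to the $L^2$ norm of $\widehat\psi$. Since the Gelfand transform is an isometric isomorphism between $\mathcal{B}^2_{\mathcal{A}}$ and $L^2(\Delta(\mathcal{A}))$, with $\mathcal{A}$ dense in $\mathcal{B}^2_{\mathcal{A}}$, $C(\T^d)$ dense in $L^2(\T^d)$, and smooth functions dense in $L^2$ of $(t,\X)$, the image of the admissible test functions under $\psi\mapsto\widehat\psi$ is dense in the separable Hilbert space $L^2((0,T)\times\R^d\times\Delta(\mathcal{A})\times\T^d)$. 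Exploiting separability, I would extract by a diagonal argument a subsequence along which $L_\eps(\psi)$ converges for every $\psi$ in a fixed countable dense set; the uniform bound then upgrades this to convergence of $L_\eps(\psi)$ to a limit $L(\psi)$ for all admissible $\psi$, with $|L(\psi)|\le C\|\widehat\psi\|_{L^2}$. Finally, the Riesz representation theorem in $L^2((0,T)\times\R^d\times\Delta(\mathcal{A})\times\T^d)$ produces a unique $u_0$ in that space with $L(\psi)=\iiiint u_0\,\widehat\psi\,{\rm d}y\,{\rm d}\beta\,{\rm d}\X\,{\rm d}t$, which is precisely the assertion $u^\eps\sflow u_0$ of Definition \ref{def:abs:2scale-flow}.
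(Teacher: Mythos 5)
Your overall architecture coincides with the paper's: reduce everything to an averaging lemma for the oscillating test functions (the paper's Lemma \ref{lem:abs:test-function}), deduce the uniform bound $\limsup_{\eps\to0}|L_\eps(\psi)|\le C\|\widehat\psi\|$, extract a subsequence on which the functionals converge, and represent the limit via the Riesz theorem. The genuine gap is in the extraction step. You assert that $L^2((0,T)\times\R^d\times\Delta(\mathcal{A})\times\T^d)$ is a \emph{separable} Hilbert space and diagonalize over a countable dense set. This is false for general algebras w.m.v., and it fails exactly in the cases the theorem is meant to cover: if $\mathcal{A}$ is the algebra of almost-periodic functions (Example \ref{exm:abs:AP}, the algebra relevant for instance to Euclidean motions, Example \ref{Exm:exm:euclidean-motions}), then for $\lambda\neq\mu$ one has $M\bigl(e^{i\lambda\tau}e^{-i\mu\tau}\bigr)=M\bigl(e^{i(\lambda-\mu)\tau}\bigr)=0$ while $M(1)=1$, so the Gelfand transforms of $\{e^{i\lambda\tau}\}_{\lambda\in\R}$ form an \emph{uncountable} orthonormal family in $L^2(\Delta(\mathcal{A}),{\rm d}\beta)$; hence this space, and with it $Y$, is non-separable and no countable dense set exists. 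Nor can you bypass this by viewing the $L_\eps$ as a bounded sequence in $Y'$ and invoking weak sequential compactness (Eberlein--\v{S}mulian): for \emph{fixed} $\eps$ the functional $L_\eps$ need not be continuous with respect to $\|\widehat\psi\|_{Y}$, since your bound is only a $\limsup$ as $\eps\to0$, so the $L_\eps$ are not elements of $Y'$ with controlled norms. This is precisely the difficulty the paper resolves by citing Casado-D\'iaz and Gayte (Theorem \ref{thm:abs:CDG-2002}), which yields pointwise convergence of a subsequence for linear, not necessarily continuous, functionals on a subspace of a reflexive space under exactly such a $\limsup$ bound, with no separability hypothesis. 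Substituting that theorem for your diagonal argument repairs the proof; as written, your argument establishes the theorem only for separable algebras (e.g.\ periodic $\mathcal{A}$).

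A secondary, lesser issue concerns the averaging lemma itself. You correctly identify the ingredients (volume preservation for the change of variables, the Jacobian bounds for non-degeneracy), but the crucial step --- that $g\bigl(\Phi_{t/\eps}(\X)/\eps\bigr)$ averages to $\int_{\T^d}g\,{\rm d}y$, with enough uniformity in $t$ that the $\tau$-factor can afterwards be averaged by the mean value --- is invoked as a ``generalized Riemann--Lebesgue argument'' rather than proved. This is not a citable classical fact: the oscillation is a plane wave composed with the flow (equivalently, undoing your change of variables, the amplitude $h(\Phi_{-t/\eps}(x))$ moves with $\eps$), so the standard lemma does not apply. The paper proves exactly this estimate by \emph{not} changing variables in the oscillatory case: expand in Fourier modes $e^{in\cdot y}$, write $e^{in\cdot x/\eps}=\frac{\eps}{|n|^2}L_n(e^{in\cdot x/\eps})$ with $L_n=-in\cdot\nabla_x$, and integrate by parts; the derivative falling on $h(\Phi_{-t/\eps}(x))$ is bounded by Assumption \ref{ass:abs:flow}.(iii), and the Lebesgue measure of $\Phi_{-t/\eps}^{-1}(K)$ is controlled by Assumption \ref{ass:abs:flow}.(iv), yielding an $O(\eps)$ bound uniform in $t$. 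Your sketch can be completed along these lines (or by an $\eps$-cube decomposition resting on the same two assumptions), but as it stands the key estimate is asserted, not established.
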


To prove the above theorem, we will follow the method of Casado-D\'iaz and Gayte \cite{casado2002two}, as we would like to consider algebras which are not separable. To that end, we will need the following result from \cite{casado2002two}.

\begin{Thm}[Casado-D\'iaz and Gayte, Theorem 2.1. \cite{casado2002two}]\label{thm:abs:CDG-2002}
Let $X$ be a subspace (not necessarily closed) of a reflexive space $Y$
and let $f_n: X \to\R$ be a sequence of linear functionals (not necessarily continuous). Assume there exists a constant $C > 0$ which satisfies
\begin{align*}
\limsup_{n\to\infty}|f_n(x)|\le C \|x\|,\qquad \forall x\in X.
\end{align*}
Then there exists a subsequence $n_k$ and a functional $f\in Y'$ such that
\begin{align*}
\lim_{k\to\infty} f_{n_k}(x)=f(x),\qquad \forall x\in X.
\end{align*}
\end{Thm}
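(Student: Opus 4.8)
The plan is to combine a density/approximation reduction with the reflexivity of $Y$, arranging matters so that the asymptotic bound does the work of turning pointwise information on a small set into pointwise information on all of $X$. First I would replace $Y$ by $\overline{X}$ (a closed subspace of a reflexive space, hence reflexive), so that $X$ is dense in $Y$. Since $\limsup_{n\to\infty}|f_n(x)|\le C\|x\|<\infty$, each scalar sequence $(f_n(x))_n$ is bounded. I would also record the observation that frees up the limit functional: if some subsequence satisfies $f_{n_k}(d)\to f(d)$ for all $d$ in a dense subspace $D\subseteq X$, then $|f(d)|\le\limsup_k|f_{n_k}(d)|\le C\|d\|$, so $f$ is $C$-Lipschitz on $D$ and extends uniquely to an $f\in Y'$ with $\|f\|\le C$. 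Thus it suffices to extract one subsequence converging pointwise on \emph{some} dense subspace $D$.

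The key step is that convergence on a dense subspace automatically propagates to all of $X$, and this is exactly where the hypothesis is essential. Assume $f_{n_k}(d)\to f(d)$ for all $d\in D$ with $f\in Y'$. Given $x\in X$ and $\eta>0$, choose $d\in D$ with $\|x-d\|<\eta$ and estimate
\[
|f_{n_k}(x)-f(x)|\le |f_{n_k}(x-d)|+|f_{n_k}(d)-f(d)|+|f(d-x)|.
\]
The middle term tends to $0$, the last is $\le C\eta$, and, since $x-d\in X$, the first obeys $\limsup_k|f_{n_k}(x-d)|\le\limsup_n|f_n(x-d)|\le C\|x-d\|<C\eta$. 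Letting $\eta\to0$ yields $f_{n_k}(x)\to f(x)$. A naive equicontinuity argument is unavailable because the $f_n$ need not be continuous; it is precisely the control on $\limsup_n|f_n(\cdot)|$ that tames the increment $f_{n_k}(x-d)$.

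It remains to extract a subsequence converging pointwise on a dense $D$. When $Y$ is separable, so is $X$, and I would take $D$ to be the $\mathbb{Q}$-span of a countable dense subset and diagonalise, using the boundedness of each $(f_n(d))_n$. The substantive case --- and the reason to follow Casado-Díaz and Gayte rather than diagonalise --- is the non-separable one, which genuinely arises here since $L^2(\Delta(\mathcal{A}))$ may fail to be separable. Here I would pass through reflexivity via the Eberlein--Šmulian theorem: the ball $C\,B_{Y'}$ is weakly compact, hence weakly sequentially compact, and on it the weak topology is just pointwise convergence on $Y$. The plan is to attach to $(f_n)$ a uniformly bounded sequence $g_n\in C\,B_{Y'}$ with $f_n(x)-g_n(x)\to 0$ for every $x\in X$; the hypothesis is what makes this possible, since any persistent discontinuous component of $f_n$ would force some $g+h\in Y'$ with $h$ unbounded, contradicting $\limsup_n|f_n(x)|\le C\|x\|$, so the discontinuous part must vanish pointwise. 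Extracting $g_{n_k}\rightharpoonup f$ then gives $f_{n_k}(x)=g_{n_k}(x)+o(1)\to f(x)$ for all $x\in X$. The main obstacle is exactly this manufacture of a genuinely $Y'$-bounded sequence out of the merely asymptotically bounded, possibly discontinuous $f_n$, so that Eberlein--Šmulian applies without any countability assumption; reflexivity is indispensable here, being what guarantees weak sequential compactness of bounded sets in the non-separable setting.
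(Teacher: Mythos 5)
You should first be aware that the paper contains no proof of this statement: it is imported verbatim from Casado-D\'iaz and Gayte \cite{casado2002two} and used as a black box, so your attempt can only be measured against the original argument and on its own merits. Much of your proposal is correct and well-judged: the reduction to $Y=\overline{X}$ (a closed subspace of a reflexive space is reflexive), the propagation lemma --- that convergence on a dense subspace $D\subseteq X$ together with $\limsup_k|f_{n_k}(x-d)|\le C\|x-d\|$ forces convergence on all of $X$, with limit extending to $f\in Y'$, $\|f\|\le C$ --- is exactly the right way to exploit the hypothesis, and your diagonal argument settles the separable case. You are also right that the substantive case is the non-separable one (relevant here since $L^2(\Delta(\mathcal{A}))$ need not be separable), that a naive diagonalisation or equicontinuity argument is unavailable, and that reflexivity must enter through weak sequential compactness of $C\,B_{Y'}$ (Eberlein--\v{S}mulian, valid without separability).

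The genuine gap is the step on which the whole non-separable case rests: the manufacture of a bounded sequence $g_n\in C\,B_{Y'}$ with $f_n(x)-g_n(x)\to0$ for every $x\in X$. You assert this but give no construction, and the justification offered does not parse: there is no canonical decomposition of a merely linear functional on $X$ into a ``continuous part'' plus a ``discontinuous component'' (any such splitting depends on a Hamel-basis choice and carries no norm control), and the phrase ``some $g+h\in Y'$ with $h$ unbounded'' is incoherent, since $Y'$ consists precisely of the bounded functionals. Note also that the hypothesis is a joint, tail condition on the sequence: each individual $f_n$ may be an arbitrary unbounded functional (the $\limsup$ ignores any fixed $n$, and even along the tail one can have $f_n=\phi_n$ with every $\phi_n$ unbounded but $\phi_n(x)\to0$ pointwise), so no bounded $g_n$ can be extracted from $f_n$ alone; it must be built from the asymptotic structure of the whole sequence. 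But knowing that structure --- that along subsequences the $f_n$ are pointwise asymptotic to elements of $C\,B_{Y'}$ --- is essentially the conclusion of the theorem, so as written your argument is circular at exactly the point where the theorem's content lies. A self-contained proof must replace this step by an actual mechanism for converting the asymptotic pointwise bound into membership-type control in $Y'$; one true and usable ingredient in this direction, which you never isolate, is that the hypothesis upgrades itself on finite-dimensional subspaces: for every finite-dimensional $F\subseteq X$ one has $\limsup_{n\to\infty}\|f_n|_F\|_{F'}\le C$ (by compactness of the unit sphere of $F$ and the pointwise $\limsup$ bound), which via Hahn--Banach produces norm-controlled continuous extensions of the restrictions $f_n|_F$. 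Absent such an argument, your proposal proves the separable case and correctly reduces the general case, but does not establish the theorem; within the paper this is harmless only because the result is quoted, with proof, from \cite{casado2002two}.
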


We will also need the following lemma, which is the main novel part of our proof.

\begin{Lem}\label{lem:abs:test-function}
Let $\mathcal{A}$ be an algebra w.m.v. and let $\Phi_\tau(x)$ be a flow satisfying Assumption \ref{ass:abs:flow}. Take $\varphi(t,\X,\tau,y)\in L^2((0,T)\times\R^d; \mathcal{A} \odot C(\T^d))$. Then 
\begin{align*}
\lim_{\eps\to0} 
\iint\limits_{(0,T)\times\R^d}
\left| \varphi\left(t, \Phi_{-t/\eps}(x), \frac{t}{\eps}, \frac{x}{\eps} \right) \right|^2
\, {\rm d}x\, {\rm d}t
= \iiiint\limits_{(0,T)\times\R^d\times\Delta(\mathcal{A})\times\T^d}
\left| \widehat{\varphi}(t,\X,s,y)\right|^2
\, {\rm d}y\, {\rm d}\beta(s)\, {\rm d}\X\, {\rm d}t.
\end{align*}
\end{Lem}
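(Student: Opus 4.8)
The plan is to first reduce to a dense class of test functions, then to isolate a single spatial averaging estimate as the crux. Since $\mathcal{A}\otimes C(\T^d)$ is dense in $\mathcal{A}\odot C(\T^d)$ (subsection \ref{ssec:abs:technicalities}) and simple tensors are dense in the Bochner space $L^2((0,T)\times\R^d;\mathcal{A}\odot C(\T^d))$, it suffices to prove the identity for $\varphi(t,\X,\tau,y)=\sum_{k=1}^N c_k(t,\X)\,a_k(\tau)\,g_k(y)$ with $c_k\in C_c^\infty((0,T)\times\R^d)$, $a_k\in\mathcal{A}$, and $g_k\in C^\infty(\T^d)$, and then let $N\to\infty$. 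Expanding $|\varphi|^2=\sum_{k,l}c_k\bar c_l\,(a_k\bar a_l)\,(g_k\bar g_l)$ and using that $a_k\bar a_l\in\mathcal{A}$ and $g_k\bar g_l\in C^\infty(\T^d)$, the claim follows term-by-term from the \emph{bilinear} statement
\[
\lim_{\eps\to0}\iint\limits_{(0,T)\times\R^d} C\!\left(t,\Phi_{-t/\eps}(x)\right)A\!\left(\tfrac{t}{\eps}\right)G\!\left(\tfrac{x}{\eps}\right){\rm d}x\,{\rm d}t=\left(\iint\limits_{(0,T)\times\R^d}C(t,\X)\,{\rm d}\X\,{\rm d}t\right)M(A)\,\langle G\rangle,
\]
where $C\in C_c^\infty$, $A\in\mathcal{A}$, $G\in C^\infty(\T^d)$ and $\langle G\rangle:=\int_{\T^d}G\,{\rm d}y$. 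Indeed, the right-hand side of the Lemma for such a $\varphi$ matches this termwise because the Gelfand transform acts only in $\tau$ and, by the (polarized) isometry between $\mathcal{B}^2$ and $L^2(\Delta(\mathcal{A}))$ together with Proposition \ref{prop:abs:beta}, $\int_{\Delta(\mathcal{A})}\widehat{a_k}\,\overline{\widehat{a_l}}\,{\rm d}\beta=M(a_k\bar a_l)$.

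The heart of the argument is the spatial averaging, carried out for each fixed $t$. Changing variables $x\mapsto\X=\Phi_{-t/\eps}(x)$ (volume preserving, Assumption \ref{ass:abs:flow}(iv), so ${\rm d}x={\rm d}\X$) and writing $\widetilde{C}^\eps(z):=C(t,\Phi_{-t/\eps}(z))$, the inner integral becomes $B^\eps(t):=\int_{\R^d}\widetilde{C}^\eps(z)\,G(z/\eps)\,{\rm d}z$. Two properties of $\widetilde{C}^\eps$ are decisive. First, $\int_{\R^d}\widetilde{C}^\eps(z)\,{\rm d}z=\int_{\R^d}C(t,\X)\,{\rm d}\X$, again by volume preservation. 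Second, and crucially, $\nabla_z\widetilde{C}^\eps={}^\top J(t/\eps,z)\,\nabla_\X C$, and since the support of $\widetilde{C}^\eps$ is $\Phi_{t/\eps}(K)$ with $K:=\mathrm{supp}\,C(t,\cdot)$ fixed and compact, the cocycle identity $J(t/\eps,\Phi_{t/\eps}(\X))=\big[J(-t/\eps,\X)\big]^{-1}$ together with Assumption \ref{ass:abs:flow}(iii)--(iv) bounds $|J(t/\eps,z)|$ on this support \emph{uniformly in $\eps$ and $t$}; hence $\|\nabla_z\widetilde{C}^\eps\|_{L^\infty}\le C_K\|\nabla_\X C\|_{L^\infty}$. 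To exploit this, solve $\Delta_y\Theta=G-\langle G\rangle$ on $\T^d$ and set the periodic field $\Xi=\nabla_y\Theta\in C^\infty(\T^d)$, so that $G(z/\eps)-\langle G\rangle=\eps\,{\rm div}_z\big[\Xi(z/\eps)\big]$; integrating by parts gives
\[
B^\eps(t)-\langle G\rangle\int_{\R^d}C(t,\X)\,{\rm d}\X=-\,\eps\int_{\mathrm{supp}\,\widetilde{C}^\eps}\nabla_z\widetilde{C}^\eps(z)\cdot\Xi(z/\eps)\,{\rm d}z=O(\eps),
\]
the bound being uniform in $t\in[0,T]$ since $|\Phi_{t/\eps}(K)|=|K|$, $\|\Xi\|_\infty<\infty$, and the gradient is controlled as above. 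Thus $B^\eps\to\Theta$ uniformly, where $\Theta(t):=\langle G\rangle\int_{\R^d}C(t,\X)\,{\rm d}\X$ is continuous and compactly supported in $(0,T)$. I expect this step — showing that the $y$-scale averaging decouples from both the slow variable and the fast time, which hinges entirely on the bounded-Jacobian and volume-preservation hypotheses — to be the main obstacle; without them $\widetilde{C}^\eps$ could itself oscillate at scale $\eps$ and the averaging would fail (as the counterexamples of Section \ref{sec:Exm} confirm).

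It remains to average in the fast time. Writing the bilinear integral as $I^\eps=\int_0^T A(t/\eps)\,B^\eps(t)\,{\rm d}t$, split it as $\int_0^T A(t/\eps)\big(B^\eps(t)-\Theta(t)\big){\rm d}t+\int_0^T A(t/\eps)\,\Theta(t)\,{\rm d}t$. The first term is bounded by $\|A\|_\infty\,T\,\|B^\eps-\Theta\|_\infty\to0$. For the second, Definition \ref{def:abs:algebra-w.m.v.}(iii) gives $A(\cdot/\eps)\rightharpoonup M(A)$ weak-$*$ in $L^\infty(\R)$, and testing against $\Theta\in L^1(0,T)$ yields $\int_0^T A(t/\eps)\Theta(t)\,{\rm d}t\to M(A)\int_0^T\Theta(t)\,{\rm d}t$. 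Hence $I^\eps\to M(A)\,\langle G\rangle\int_0^T\int_{\R^d}C(t,\X)\,{\rm d}\X\,{\rm d}t$, which is exactly the right-hand side of the bilinear identity.

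Finally, to pass from the dense class back to a general $\varphi$, I would use that the \emph{evaluation operator} is a contraction uniformly in $\eps$: by the change of variables $x\mapsto\X=\Phi_{-t/\eps}(x)$ and the pointwise bound $|\varphi(t,\X,t/\eps,x/\eps)|\le\|\varphi(t,\X,\cdot,\cdot)\|_{\mathcal{A}\odot C(\T^d)}$ (the $\mathcal{A}\odot C(\T^d)$-norm being the supremum norm on $\R\times\T^d$),
\[
\iint\limits_{(0,T)\times\R^d}\big|\varphi(t,\Phi_{-t/\eps}(x),\tfrac{t}{\eps},\tfrac{x}{\eps})\big|^2{\rm d}x\,{\rm d}t\le\|\varphi\|_{L^2((0,T)\times\R^d;\,\mathcal{A}\odot C(\T^d))}^2.
\]
Together with the analogous continuity of $\varphi\mapsto\widehat\varphi$ into $L^2((0,T)\times\R^d\times\Delta(\mathcal{A})\times\T^d)$ (again the Besicovitch isometry bound $\int_{\Delta(\mathcal{A})}|\widehat\psi|^2\,{\rm d}\beta=M(|\psi|^2)\le\|\psi\|_\infty^2$, integrated in $y$ and $(t,\X)$), a standard approximation argument — estimating the difference of the two quadratic quantities by $(\|\varphi\|+\|\varphi_n\|)\,\|\varphi-\varphi_n\|$ uniformly in $\eps$ — closes the proof by letting $n\to\infty$.
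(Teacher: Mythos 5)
Your proof is correct and follows essentially the same route as the paper's: reduce by density and linearity to tensor-product test functions, gain a factor of $\eps$ from the rapid spatial oscillations by integration by parts, control the resulting term using volume preservation and the Jacobian bound, and finish with the weak-$*$ mean-value property in the fast time variable. Your torus potential $\Delta_y\Theta=G-\langle G\rangle$ is exactly the paper's Fourier-mode identity $e^{in\cdot x/\eps}=\frac{\eps}{|n|^2}L_n(e^{in\cdot x/\eps})$ applied mode by mode; the only substantive additions are that you spell out the cocycle bound $J(t/\eps,\Phi_{t/\eps}(\X))=\bigl[J(-t/\eps,\X)\bigr]^{-1}$ (with $\det J=1$) on the moving support and the uniform-in-$\eps$ contraction estimates needed to close the density argument, both of which the paper leaves implicit.
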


\begin{proof}
By density in $L^2((0,T)\times\R^d; \mathcal{A}\odot C(\T^d))$ of functions of the form
\begin{align*}
\sum_{j=1}^N
g_j(t) h_j(x) f_j(\tau) e^{i n_j\cdot y}
\end{align*}
where $g_j\in C^\infty(0,T)$, $h_j\in C^\infty_c(\R^d)$, $f_j\in\mathcal{A}$ and $n_j\in\Z^d$, and linearity it suffices to show that 
\begin{align*}
& \lim_{\eps\to0}
\iint\limits_{(0,T)\times\R^d}
g(t) h\left(\Phi_{-t/\eps}(x)\right) f\left(\frac{t}\eps\right) e^{in\cdot x/\eps}
\, {\rm d}x\, {\rm d}t
\\
& =
\left(
\int\limits_0^T g(t)
\, {\rm d}t
\right)
\left(
\int\limits_{\R^d}
h(\X)
\, {\rm d}\X
\right)
\left(
\int\limits_{\Delta(\mathcal{A})}
\widehat{f}(s)
\, {\rm d}\beta(s)
\right)
1_{n=0}
\end{align*}
for $g,h,f,n$ in the spaces above, and $1_{n=0}$ is one when $n=0$ and zero otherwise.

We first consider $n=0$, in which case the only $x$ dependence of the integrand is through $h$. By Fubini's theorem we may do the $x$ integration first. By the coordinate change: $\X=\Phi_{-t/\eps}(x)$, which has determinant 1 by the Assumption \ref{ass:abs:flow}.(iv), we have
\begin{align*}
&\iint\limits_{(0,T)\times\R^d}
g(t) h\left(\Phi_{-t/\eps}(x)\right) f\left(\frac{t}\eps\right) 
\, {\rm d}x\, {\rm d}t
\\
&=
\int\limits_0^T
g(t) f\left(\frac{t}\eps\right) 
\left(
\int\limits_{\R^d}
h(\X)
\, {\rm d}\X
\right)
\, {\rm d}t
=
\left(
\int\limits_{\R^d}
h(\X)
\, {\rm d}\X
\right)
\left(
\int\limits_0^T
g(t) f\left(\frac{t}\eps\right)
\, {\rm d} t
\right),
\end{align*}
so it suffices to show that the last integral converges to the required limit. By the definition of the mean value operator, $f(\cdot/\eps)$ converges $L^\infty(\R)$-weak* to $M(f)$. As $g\in L^1(0,T)$ this completes the proof for $n=0$, noting the identification of $M$ with $\beta$ (Proposition \ref{prop:abs:beta}).

Now suppose that $n\ne0$. As in the $n=0$ case we perform the $x$ integration first with $t$ fixed, but this time we do not change coordinates. Define the (formally) self-adjoint differential operator $L_n=-in\cdot\nabla_x$. Then we have the relation
\begin{align*}
e^{in\cdot x/\eps}
=
\frac{\eps}{|n|^2}L_n(e^{in\cdot x/\eps}).
\end{align*}
Substituting this into the $x$ integral and integrating by parts yields
\begin{align*}
\int\limits_{\R^d}
h\left(\Phi_{-t/\eps}(x)\right)
e^{in\cdot x/\eps}
\, {\rm d} x
&=
\frac{\eps}{|n|^2}
\int\limits_{\R^d}
h\left(\Phi_{-t/\eps}(x)\right)
L_n(e^{in\cdot x/\eps})
\, {\rm d} x
\\
&=
\frac{\eps}{|n|^2}
\int\limits_{\R^d}
e^{in\cdot x/\eps}
L_n
\left(
h
\left(
\Phi_{-t/\eps}(x)
\right)
\right)
\, {\rm d} x
\\
&=
\frac{-i\eps}{|n|^2}
\int\limits_{\R^d}
e^{in\cdot x/\eps} n\cdot 
{}^\top\!\!\, \widetilde{J}
\left(
\frac{t}{\eps},\Phi_{-t/\eps}(x)
\right)
\nabla_\XX h(\Phi_{-t/\eps}(x))
\, {\rm d}x.
\end{align*}
Consider the integrand on the last line. By assumption $h$ is smooth with compact support, $K$ say, so by Assumption \ref{ass:abs:flow}.(iii) we may estimate
\begin{align*}
\left|
\int\limits_{\R^d}
{}^\top\!\!\, \widetilde{J}(t/\eps,\Phi_{-t/\eps}(x))\nabla_\XX h(\Phi_{-t/\eps}(x))e^{in\cdot x/\eps}
\, {\rm d} x
\right|
\le C_K \|\nabla h\|_{L^\infty(\R^d)}|\Phi_{-t/\eps}^{-1}(K)|,
\end{align*}
where $|\Phi_{-t/\eps}^{-1}(K)|$ is the Lebesgue measure of the set inside the modulus sign. As $\Phi$ is volume preserving (Assumption \ref{ass:abs:flow}.(iv)) this is equal to the Lebesgue measure of $K$ and is finite. Therefore, the $x$ integral has the bound
\begin{align*}
\left|
\int\limits_{\R^d}
h(\Phi_{-t/\eps}(x))e^{in\cdot x/\eps}
\, {\rm d} x
\right|
\le C\eps
\end{align*}
for some constant $C$. Using this bound in the full $t,x$ integral yields
\begin{align*}
&
\left|\, \, \,
\iint\limits_{(0,T)\times\R^d}
g(t) h\left(\Phi_{-t/\eps}(x)\right) f\left(\frac{t}\eps\right) e^{in\cdot x/\eps}
\, {\rm d}x\, {\rm d}t
\right|
\\
&
\qquad
\le C\eps
\int\limits_0^T|g(t)||f(t/\eps)|
\, {\rm d} t
\le C\eps 
\|f\|_{L^\infty(\R)}
\|g\|_{L^1([0,T])}.
\end{align*}
This completes the $n\not=0$ case and the proof of the lemma.
\end{proof}
\begin{Rem}
It is evident from the above proof that the uniform bound upon the Jacobian (Assumption \ref{ass:abs:flow}.(iii)) is needed only for test functions that depend upon the fast spatial variable $y$. As a consequence, an analogous compactness result for convergence against test functions depending only upon $(t,x,\tau)$ can be obtained without this assumption (see Remark \ref{rem:abs:test-fn-no-space-oscillations}). However, Assumption \ref{ass:abs:flow}.(iii) is needed to identify the $\Sigma$-$\Phi_\tau$ limit of gradient sequences (Proposition \ref{prop:abs:corrector} below).
\end{Rem}

The weak $\Sigma$-convergence along flows is not limited to bounded sequences in $L^2$. Our main result, Theorem \ref{Thm:abs:compactness}, generalises straightaway to bounded sequences in $L^p$ with $1<p<+\infty$.

We are now ready to prove the compactness result.

\begin{proof}[Proof of Theorem \ref{Thm:abs:compactness}]
Let $Y=L^2((0,T)\times\R^d\times\Delta(\mathcal{A})\times\T^d)$ and $X$ be the vector subspace of Gelfand transforms of functions in $L^2((0,T)\times\R^d;\mathcal{A}\odot C(\T^d))$. Now define the linear functionals $F^\eps:X\subset Y\to\R$, by 
\begin{align*}
F^\eps(\widehat{\varphi})=
\iint\limits_{(0,T)\times\R^d}
u^\eps(t,x)
\varphi
\left(t,\Phi_{-t/\eps}(x),\frac t\eps,\frac x\eps\right)
\, {\rm d} x\, {\rm d} t,
\qquad
\widehat{\varphi}\in X.
\end{align*}
By the Cauchy-Schwarz inequality, the $L^2$ boundedness of $\{u^\eps\}$ and Lemma \ref{lem:abs:test-function} we have
\begin{align*}
|F^\eps(\widehat{\varphi})|
&
\le \left(\sup_\eps \|u^\eps\|_{L^2((0,T)\times \R^d)}\right)
\left\|\varphi\Big(t,\Phi_{-t/\eps}(x),\frac t\eps,\frac x\eps\Big)\right\|_{L^2((0,T)\times \R^d)}
\\
&
\le C \|\widehat{\varphi}(t,\X,y,s)\|_{L^2((0,T)\times \R^d\times \T^d\times \Delta(\mathcal{A}))}.
\end{align*}
By Theorem \ref{thm:abs:CDG-2002}, we may pass to a subsequence (still indexed by $\eps$) for which
\begin{align*}
F^\eps(\widehat{\varphi})
\to 
F(\widehat{\varphi})
\text{ as }\eps\to0,
\qquad 
\forall\varphi\in L^2((0,T)\times\R^d;\mathcal{A}\odot C(\T^d))
\end{align*}
where $F\in Y'$. Note that $Y=L^2((0,T)\times \R^d\times \Delta(\mathcal{A})\times\T^d)$ is a Hilbert space, (but is in general non-separable). Therefore, by the Riesz representation theorem, $F$ is represented by
\begin{align*}
F(\widehat{\varphi})
=
\iiiint\limits_{(0,T)\times\R^d\times\Delta(\mathcal{A})\times\T^d}
u_0(t,\X,s,y)\widehat{\varphi}(t,\X,s,y)
\, {\rm d} y\, {\rm d}\beta(s)\, {\rm d} \X\, {\rm d} t
\end{align*}
for some $u_0\in L^2((0,T)\times \R^d\times \T^d\times\Delta(\mathcal{A}))$, which is the desired limit.
\end{proof}

As is classical in the theory of two-scale convergence, we have the following result shedding some light on the product of two sequences that converge in the sense of $\Sigma$-convergence along flows.
\begin{Thm}[Limit of the product]\label{thm:abs:product-two-seq}
Let $u^\eps$ and $v^\eps$ be two families in $L^2((0,T)\times\R^d)$ such that
\begin{align*}
u^\eps \sflow u_0(t,\X,s,y);
\qquad
v^\eps \sflow v_0(t,\X,s,y).
\end{align*}
Assume further that
\begin{align*}
\lim_{\eps\to0}
\|u^\eps\|_{L^2((0,T)\times\R^d)}
=
\|u_0\|_{L^2((0,T)\times\R^d\times\Delta(\mathcal{A})\times\T^d)}.
\end{align*}
Then, we have
\begin{align*}
u^\eps(t,x)\, v^\eps(t,x)
\rightharpoonup 
\iint\limits_{\Delta(\mathcal{A})\times\T^d}
u_0(t,\X,s,y)\, v_0(t,\X,s,y)
\, {\rm d}\beta(s)
\, {\rm d}y
\end{align*}
in the sense of distributions.
\end{Thm}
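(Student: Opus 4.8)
```latex
The plan is to follow the classical strategy from two-scale convergence theory, where the limit of a product of two weakly converging sequences is obtained by exploiting a ``strong'' convergence of one factor that is encoded in the hypothesis matching the norms. The key observation is that the norm condition $\lim_{\eps\to0}\|u^\eps\|_{L^2} = \|u_0\|_{L^2(\cdots\times\Delta(\mathcal{A})\times\T^d)}$, combined with the weak $\Sigma$-convergence along flows, upgrades $u^\eps$ to a genuinely strong convergence relative to oscillating test functions. First I would fix an arbitrary scalar test function $\phi(t,x)\in C^\infty_c((0,T)\times\R^d)$ and aim to show that the integral of $u^\eps v^\eps \phi$ converges to the integral of the claimed limit against $\phi$; this is exactly the statement of distributional convergence.

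The main engine is to approximate. For $v^\eps$ I would use the $\Sigma$-convergence hypothesis directly: against a test function of the form $\psi(t,\Phi_{-t/\eps}(x),\tfrac{t}\eps,\tfrac{x}\eps)$ with $\psi\in L^2((0,T)\times\R^d;\mathcal{A}\odot C(\T^d))$, we have the convergence from Definition \ref{def:abs:2scale-flow}. For $u^\eps$, the crucial step is to establish that the matched-norm hypothesis forces
\begin{align*}
\lim_{\eps\to0}\left\|u^\eps(t,x)-\psi\left(t,\Phi_{-t/\eps}(x),\tfrac{t}\eps,\tfrac{x}\eps\right)\right\|_{L^2((0,T)\times\R^d)}^2 = \|u_0-\widehat\psi\|_{L^2((0,T)\times\R^d\times\Delta(\mathcal{A})\times\T^d)}^2
\end{align*}
for every such $\psi$. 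This follows by expanding the square: the $|u^\eps|^2$ term converges to $\|u_0\|^2$ by hypothesis; the $|\psi(\cdots)|^2$ cross term converges to $\|\widehat\psi\|^2$ by Lemma \ref{lem:abs:test-function}; and the mixed term $\langle u^\eps,\psi(\cdots)\rangle$ converges to $\langle u_0,\widehat\psi\rangle$ by the definition of weak $\Sigma$-convergence along flows applied to $\psi$ itself. Choosing a sequence $\psi_k$ whose Gelfand transforms $\widehat\psi_k$ approximate $u_0$ in $L^2((0,T)\times\R^d\times\Delta(\mathcal{A})\times\T^d)$ then makes the right-hand side arbitrarily small, giving the desired strong approximation of $u^\eps$ by oscillating test functions.

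With this strong approximation in hand I would split $u^\eps v^\eps = (u^\eps-\psi_k(\cdots))v^\eps + \psi_k(\cdots)v^\eps$. The first piece, tested against $\phi$, is controlled by Cauchy--Schwarz using the small $L^2$ distance between $u^\eps$ and $\psi_k(\cdots)$ together with the uniform $L^2$ bound on $v^\eps$ (which holds since weakly convergent sequences are bounded). The second piece $\psi_k(t,\Phi_{-t/\eps}(x),\tfrac t\eps,\tfrac x\eps)\,v^\eps\,\phi$ is exactly an admissible oscillating test function paired against $v^\eps$, so by the $\Sigma$-convergence of $v^\eps$ it passes to the limit $\iiiint \widehat\psi_k\, v_0\,\phi\,{\rm d}y\,{\rm d}\beta(s)\,{\rm d}\X\,{\rm d}t$. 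Finally I would let $k\to\infty$: the approximation $\widehat\psi_k\to u_0$ in $L^2$ together with the boundedness of $v_0$ in the appropriate norm lets this converge to $\iiiint u_0\, v_0\,\phi$, which is the integral of the claimed distributional limit against $\phi$.

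I expect the main obstacle to be the careful bookkeeping in the double limit $\eps\to0$ then $k\to\infty$, ensuring uniformity so that the two limits may be interchanged in the sense required. Concretely, one must verify that the error from replacing $u^\eps$ by $\psi_k(\cdots)$ is small \emph{uniformly in} $\eps$ (for fixed $k$) and that the residual $\|u_0-\widehat\psi_k\|_{L^2}$ can be driven to zero independently of the coupling between the factors. The density of $\{\widehat\psi : \psi\in L^2((0,T)\times\R^d;\mathcal{A}\odot C(\T^d))\}$ in $Y=L^2((0,T)\times\R^d\times\Delta(\mathcal{A})\times\T^d)$---which underlies the proof of Theorem \ref{Thm:abs:compactness}---is what guarantees the approximating sequence $\psi_k$ exists; I would lean on the isometric identification of the Besicovitch space $\mathcal{B}^2$ with $L^2(\Delta(\mathcal{A}))$ via the Gelfand transform to justify this density rigorously.
```
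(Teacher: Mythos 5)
Your proposal follows exactly the route the paper itself intends: the paper omits the proof of Theorem \ref{thm:abs:product-two-seq} and refers to the density argument on p.~1488 of \cite{allaire1992homogenization}, which is precisely your scheme --- expand $\|u^\eps-\psi_k(\cdots)\|_{L^2}^2$, use the matched-norm hypothesis together with Lemma \ref{lem:abs:test-function} and the weak convergence to identify the limit of the square as $\|u_0-\widehat{\psi}_k\|^2$, split the product, control one piece by Cauchy--Schwarz, pass to the limit in the other, and then let $k\to\infty$ by density of the Gelfand transforms. Structurally you are aligned with the intended proof.

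However, one step fails as literally written, and it is exactly the point where the flow makes this setting different from classical two-scale convergence. You fix a \emph{fixed-frame} test function $\phi(t,x)\in C^\infty_c((0,T)\times\R^d)$ and then pair $v^\eps$ with $\psi_k\left(t,\Phi_{-t/\eps}(x),\frac{t}{\eps},\frac{x}{\eps}\right)\phi(t,x)$, asserting that this is ``exactly an admissible oscillating test function.'' It is not: written in the variables of Definition \ref{def:abs:2scale-flow}, this function is $\chi(t,\X,\tau,y)=\psi_k(t,\X,\tau,y)\,\phi(t,\Phi_\tau(\X))$, and the flow representation $\tau\mapsto\phi(t,\Phi_\tau(\X))$ of a fixed-frame function need \emph{not} belong to $\mathcal{A}$ --- this is precisely the kind of hypothesis the paper is forced to impose on its coefficients (Assumptions \ref{ass:hom:b-x-y}--\ref{ass:hom:Jacobain-algebra}) and which can fail (cf.\ Counterexample \ref{Exm:CExm:non-uniqueness}). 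Worse, with a fixed-frame $\phi$ the conclusion itself is false: take the constant drift flow $\Phi_\tau(x)=x+{\bf b}^*\tau$ with ${\bf b}^*\neq0$ and $u^\eps(t,x)=U\left(t,x-\frac{{\bf b}^*t}{\eps}\right)$, $v^\eps(t,x)=V\left(t,x-\frac{{\bf b}^*t}{\eps}\right)$ for smooth compactly supported $U,V$ with $UV\not\equiv0$; then $u_0=U(t,\X)$, $v_0=V(t,\X)$, the norm hypothesis holds (the change of variables is measure preserving and $\beta$, ${\rm d}y$ are probability measures), yet $\iint u^\eps v^\eps\,\phi(t,x)\,{\rm d}x\,{\rm d}t\to0$ by dominated convergence, while $\iint u_0v_0\,{\rm d}\beta\,{\rm d}y=UV\not\equiv0$. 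The theorem must therefore be read --- and your proof must be run --- with the test function taken \emph{along the flow}, i.e.\ pairing against $\phi\left(t,\Phi_{-t/\eps}(x)\right)$; then $\chi(t,\X,\tau,y)=\psi_k(t,\X,\tau,y)\phi(t,\X)$ is genuinely admissible and the rest of your argument, including the double-limit bookkeeping, goes through. Two smaller points: testing against $\psi\in L^2((0,T)\times\R^d;\mathcal{A}\odot C(\T^d))$ rather than smooth $\psi$ does require the density argument you cite; and the uniform bound $\sup_{\eps}\|v^\eps\|_{L^2}<\infty$ does \emph{not} follow from $\Sigma$-convergence via Banach--Steinhaus (the functionals $F^\eps$ live only on the non-closed subspace $X\subset Y$, and one can exhibit unbounded sequences that $\Sigma$-converge), so it should be taken as a standing hypothesis, as it is in Theorem \ref{Thm:abs:compactness}.
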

The proof of Theorem \ref{thm:abs:product-two-seq} is by a density argument. These arguments are similar to the ones found in \cite{allaire1992homogenization} (see p.1488 in \cite{allaire1992homogenization} to be precise). As the proof can be given mutatis mutatndis, we skip the proof of Theorem \ref{thm:abs:product-two-seq}.

Next, we recall the notion of \emph{admissible test functions} given by M. Radu \cite{radu1992homogenization} in the context of two-scale convergence:
\begin{Defi}\label{defn:abs:admissible-2-scale}
Let $\varphi\in L^2(\Omega\times\T^d)$ be a function that can be approximated by a sequence of functions $\varphi_n\in C^\infty(\Omega;C^\infty(\T^d))$ such that for $n\to\infty$:
\begin{align*}
& 
\bullet \quad \left\|\varphi_n - \varphi\right\|_{L^2(\Omega\times\T^d)} \to 0.
\\
&
\bullet \quad \sup_{\eps>0}\left\|(\varphi_n - \varphi)\left(x,\frac{x}{\eps}\right) \right\|_{L^2(\Omega)} \to 0.
\end{align*}
Then $\varphi$ is said to be an admissible test function.
\end{Defi}
Inspired by the above definition, we introduce the notion of \emph{admissible test functions} suitable for the notion of weak $\Sigma$-convergence along flows.
\begin{Defi}[Admissible test functions]\label{defn:abs:admissible-test-fn}
A function $\psi(t,x,\tau, y)$ which is periodic in the $y$ variable and belongs to a certain algebra w.m.v. $\mathcal{A}$ in the $\tau$ variable is said to be an admissible test function if it can be approximated by a sequence of functions $\psi_n(t,x,\tau,y)\in C((0,T)\times\R^d;\mathcal{A}\odot C(\T^d))$ such that for $n\to\infty$:
\begin{align*}
&
\bullet \quad
\left\|
\widehat{\psi}
-
\widehat{\psi}_n
\right\|_{L^2((0,T)\times\R^d\times\Delta(\mathcal{A})\times\T^d)}
\to 0.
\\[0.3 cm]
&
\bullet \quad
\sup_{\eps>0}
\left\|
(\psi - \psi_n)
\left(t, \Phi_{-t/\eps}(x), \frac{t}{\eps}, \frac{x}{\eps}\right)
\right\|_{L^2((0,T)\times\R^d)}
\to 0.
\end{align*}
\end{Defi}
The following result says that having coefficients that are `admissible' in the sense of Definition \ref{defn:abs:admissible-test-fn} enables us to pass to the limit in the product sequence.
\begin{Lem}\label{lem:abs:admissible-coeff}
Let $\mathcal{A}$ be an algebra w.m.v. and $\Phi_\tau(x)$ be a flow satisfying Assumption \ref{ass:abs:flow}. Let the family $u^\eps(t,x)\subset L^2((0,T)\times\R^d)$ be such that
\begin{align*}
u^\eps \sflow u_0(t,\X,s,y).
\end{align*}
Finally, let $a(t,x,\tau,y)$ be admissible in the sense of Definition \ref{defn:abs:admissible-test-fn}. Then, for any smooth test function $\psi(t,x,\tau,y)$ which is periodic in the $y$ variable and which belongs to $\mathcal{A}$ as a function of the $\tau$ variable, we have
\begin{align*}
\lim_{\eps\to0}
& \iint\limits_{(0,T)\times\R^d}
u^\eps(t,x)
a
\left(
t, \Phi_{-t/\eps}(x), \frac{t}{\eps}, \frac{x}{\eps}
\right)
\psi
\left(
t, \Phi_{-t/\eps}(x), \frac{t}{\eps}, \frac{x}{\eps}
\right)
\, {\rm d}x\, {\rm d}t
\\
& =
\iiiint\limits_{(0,T)\times\R^d\times\Delta(\mathcal{A})\times\T^d}
u_0(t,\X,s,y)
\widehat{a}
(t,\X,s,y)
\widehat{\psi}
(t,\X,s,y)
\, {\rm d}y
\, {\rm d}\beta(s)
\, {\rm d}\XX
\, {\rm d}t
\end{align*}
\end{Lem}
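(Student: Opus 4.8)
The plan is to reduce the statement to the definition of $\Sigma$-convergence along flows (Definition~\ref{def:abs:2scale-flow}) by approximation. The obstruction is that $a$ is only \emph{admissible} (Definition~\ref{defn:abs:admissible-test-fn}) and is therefore not itself an allowable test function: as a function of $(\tau,y)$ it need not lie in $\mathcal{A}\odot C(\T^d)$. What is available is a sequence $a_n\in C((0,T)\times\R^d;\mathcal{A}\odot C(\T^d))$ approximating $a$ in the two senses of Definition~\ref{defn:abs:admissible-test-fn}. Since $\mathcal{A}\odot C(\T^d)$ is a Banach algebra (closed under products) and $\psi$ is a genuine test function, each product $a_n\psi$ is again a legitimate test function in $L^2((0,T)\times\R^d;\mathcal{A}\odot C(\T^d))$, and its Gelfand transform factors, $\widehat{a_n\psi}=\widehat{a}_n\,\widehat{\psi}$, by multiplicativity of $\mathcal{G}$. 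The argument is then a standard $3\delta$/diagonal argument, splitting the difference through the auxiliary index $n$.

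As a preliminary step I note that the convergence \eqref{eq:abs:sigma-convergence-flow-defn} extends from smooth test functions to all $\varphi\in L^2((0,T)\times\R^d;\mathcal{A}\odot C(\T^d))$. Indeed, the separated-form functions used in the proof of Lemma~\ref{lem:abs:test-function} are dense in this space, and Lemma~\ref{lem:abs:test-function} supplies, for any such $\varphi$, the identity $\lim_{\eps\to0}\|\varphi(t,\Phi_{-t/\eps}(x),\frac{t}{\eps},\frac{x}{\eps})\|_{L^2}^2=\|\widehat{\varphi}\|_{L^2}^2$; together with the uniform bound $\sup_\eps\|u^\eps\|_{L^2}\le C$ (the standing hypothesis under which the $\Sigma$-limit is produced, cf.\ Theorem~\ref{Thm:abs:compactness}) this lets one pass the limit through the density. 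In particular, for each fixed $n$,
\begin{equation*}
\lim_{\eps\to0}\iint\limits_{(0,T)\times\R^d} u^\eps\,(a_n\psi)\Big(t,\Phi_{-t/\eps}(x),\tfrac{t}{\eps},\tfrac{x}{\eps}\Big)\,{\rm d}x\,{\rm d}t
=\iiiint\limits_{(0,T)\times\R^d\times\Delta(\mathcal{A})\times\T^d} u_0\,\widehat{a}_n\,\widehat{\psi}\,{\rm d}y\,{\rm d}\beta(s)\,{\rm d}\X\,{\rm d}t.
\end{equation*}

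Next I control, uniformly in $\eps$, the two errors incurred by replacing $a_n$ with $a$. On the $\eps$-side, by Cauchy--Schwarz, the uniform bound on $u^\eps$, and boundedness of $\psi$,
\begin{equation*}
\left|\iint u^\eps\,((a-a_n)\psi)\Big(t,\Phi_{-t/\eps}(x),\tfrac{t}{\eps},\tfrac{x}{\eps}\Big)\,{\rm d}x\,{\rm d}t\right|
\le C\,\|\psi\|_{L^\infty}\,\sup_{\eps>0}\Big\|(a-a_n)\Big(t,\Phi_{-t/\eps}(x),\tfrac{t}{\eps},\tfrac{x}{\eps}\Big)\Big\|_{L^2},
\end{equation*}
which tends to $0$ as $n\to\infty$ by the \emph{second} bullet of Definition~\ref{defn:abs:admissible-test-fn}. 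On the limit side, using $\|\widehat{\psi}\|_{L^\infty}=\|\psi\|_{L^\infty}$ (Gelfand--Naimark isometry, Theorem~\ref{thm:abs:Gelfand-Naimark}),
\begin{equation*}
\left|\iiiint u_0\,(\widehat{a}-\widehat{a}_n)\,\widehat{\psi}\,{\rm d}y\,{\rm d}\beta(s)\,{\rm d}\X\,{\rm d}t\right|
\le \|u_0\|_{L^2}\,\|\psi\|_{L^\infty}\,\|\widehat{a}-\widehat{a}_n\|_{L^2},
\end{equation*}
which tends to $0$ as $n\to\infty$ by the \emph{first} bullet of Definition~\ref{defn:abs:admissible-test-fn}.

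Finally I combine these by the triangle inequality: for every $n$,
\begin{equation*}
\left|\iint u^\eps\,(a\psi)\Big(t,\Phi_{-t/\eps}(x),\tfrac{t}{\eps},\tfrac{x}{\eps}\Big) - \iiiint u_0\,\widehat{a}\,\widehat{\psi}\right|
\le \mathrm{(I)}_n + \mathrm{(II)}_{n,\eps} + \mathrm{(III)}_n,
\end{equation*}
where $\mathrm{(I)}_n$ and $\mathrm{(III)}_n$ are the two errors just estimated and $\mathrm{(II)}_{n,\eps}\to0$ as $\eps\to0$ for fixed $n$ by the preliminary step. Given $\delta>0$, first fix $n$ so large that $\mathrm{(I)}_n+\mathrm{(III)}_n<\delta$ uniformly in $\eps$, then send $\eps\to0$ to annihilate $\mathrm{(II)}_{n,\eps}$; this yields the claim. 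The main obstacle is exactly the uniform-in-$\eps$ control of the substitution error $\|(a-a_n)(t,\Phi_{-t/\eps}(x),t/\eps,x/\eps)\|_{L^2}$, which is precisely what the second (and more delicate) condition of admissibility delivers and is the reason that approximating $a$ only in $L^2((0,T)\times\R^d\times\Delta(\mathcal{A})\times\T^d)$ would be insufficient.
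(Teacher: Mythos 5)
Your proposal is correct and is precisely the density argument the paper itself invokes but does not write out (it defers to Radu's thesis, noting the proof "is inherent in the definition of admissibility"): approximate $a$ by $a_n\in C((0,T)\times\R^d;\mathcal{A}\odot C(\T^d))$, treat $a_n\psi$ as a genuine test function, and use the two bullets of Definition \ref{defn:abs:admissible-test-fn} to control the two substitution errors, the first on the limit side and the second uniformly in $\eps$. The one caveat, which you yourself flag, is that the Cauchy--Schwarz step requires $\sup_{\eps>0}\|u^\eps\|_{L^2((0,T)\times\R^d)}<\infty$; this is not literally part of the lemma's hypotheses, but it is needed equally by the paper's intended proof and holds in every application, since the $\Sigma$-$\Phi_\tau$ limits are produced from uniformly bounded families via Theorem \ref{Thm:abs:compactness}.
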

The proof of Lemma \ref{lem:abs:admissible-coeff} is by a density argument (this is inherent in the definition of admissibility). These arguments are similar to the ones found in \cite{radu1992homogenization} (see p.6 in \cite{radu1992homogenization} to be precise). As the proof can be given mutatis mutatndis, we skip the details.

\subsection{Additional bounds on derivatives}\label{ssec:abs:additional-bounds}

We first establish conditions under which the $\Sigma$-$\Phi_\tau$ limit does not depend upon $y$. The following result has the flavour of standard two-scale convergence (see e.g. \cite{nguetseng1989general, allaire1992homogenization}), where gradient bounds imply that the two-scale limit is independent of the fast spatial variable. Here the proof is slightly complicated by the flow $\Phi_\tau$, but is otherwise the same.

\begin{Prop}\label{prop:abs:independent-of-y}
Let $\mathcal{A}$ be an algebra w.m.v., $\Phi_\tau$ be a flow satisfying the Assumption \ref{ass:abs:flow}, and $u^\eps\sflow u_0$ in the sense of Definition \ref{def:abs:2scale-flow}. Further if 
\begin{align*}
\sup_{\eps>0}\|\nabla u^\eps\|_{L^2((0,T)\times\R^d)}<\infty,
\end{align*}
then the weak $\Sigma$-$\Phi_\tau$ limit $u_0$ does not depend on the $y$ variable, i.e. $u_0(t,\X,s,y)\equiv u_0(t,\X,s)$.
\end{Prop}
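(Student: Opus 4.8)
The plan is to adapt the classical two-scale argument---in which an $H^1$ bound forces the limit to be independent of the fast variable---to the present flow setting, the only new ingredient being the Jacobian factor that the flow injects into the chain rule. Fix a smooth vector field $\psi(t,\X,\tau,y)$ that is compactly supported in $(t,\X)$, periodic in $y$, and lies in $\mathcal{A}$ as a function of $\tau$, and abbreviate $\Psi^\eps(t,x):=\psi\bigl(t,\Phi_{-t/\eps}(x),\tfrac t\eps,\tfrac x\eps\bigr)$. I would begin from the quantity
\[
I^\eps:=\iint\limits_{(0,T)\times\R^d}\eps\,\nabla u^\eps(t,x)\cdot\Psi^\eps(t,x)\,{\rm d}x\,{\rm d}t,
\]
and observe that, since $\{\nabla u^\eps\}$ is bounded in $L^2$ by hypothesis while $\|\Psi^\eps\|_{L^2}$ stays bounded (it in fact converges, by Lemma \ref{lem:abs:test-function}), the Cauchy--Schwarz inequality gives $|I^\eps|\le C\eps\to0$.

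The second step is to integrate $I^\eps$ by parts in $x$ (there are no boundary terms, as $\Psi^\eps$ is compactly supported in $x$ for each fixed $t,\eps$) and invoke the chain rule recorded in Subsection \ref{ssec:asymptotic-expansion}. This expresses the divergence as
\[
\nabla_x\cdot\Psi^\eps=G^\eps(t,x)+\tfrac1\eps\,(\nabla_y\cdot\psi)\Bigl(t,\Phi_{-t/\eps}(x),\tfrac t\eps,\tfrac x\eps\Bigr),
\]
where $G^\eps$ is the contribution of the $\X$-gradient, assembled from ${}^\top\widetilde J(\tfrac t\eps,\Phi_{-t/\eps}(x))$ and $\nabla_\XX\psi$. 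Hence $I^\eps=-\iint\eps\,u^\eps\,G^\eps-\iint u^\eps\,(\nabla_y\cdot\psi)(\ldots)$. The first integral is $O(\eps)$: the field $G^\eps$ taken along the flow is uniformly $L^2$-bounded by precisely the estimate used in the proof of Lemma \ref{lem:abs:test-function}, combining the bound on the Jacobian (Assumption \ref{ass:abs:flow}.(iii)) with volume preservation (Assumption \ref{ass:abs:flow}.(iv)) and the compact $\X$-support of $\psi$; the extra factor $\eps$ then sends it to zero against the $L^2$-bounded $u^\eps$.

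Finally, $\nabla_y\cdot\psi$ is again a legitimate test function for $\Sigma$-convergence along $\Phi_\tau$, so by Definition \ref{def:abs:2scale-flow}---and because $\nabla_y$ commutes with the Gelfand transform taken in the $\tau$ variable, giving $\widehat{\nabla_y\cdot\psi}=\nabla_y\cdot\widehat\psi$---the second integral converges to $-\iiiint u_0\,\nabla_y\cdot\widehat\psi\,{\rm d}y\,{\rm d}\beta(s)\,{\rm d}\X\,{\rm d}t$. Passing $\eps\to0$ and using $I^\eps\to0$ leaves
\[
\iiiint\limits_{(0,T)\times\R^d\times\Delta(\mathcal{A})\times\T^d}u_0(t,\X,s,y)\,\nabla_y\cdot\widehat\psi(t,\X,s,y)\,{\rm d}y\,{\rm d}\beta(s)\,{\rm d}\X\,{\rm d}t=0
\]
for every such $\psi$. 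Integrating by parts in $y$ over the torus (no boundary terms) shows that the distributional $y$-gradient of $u_0$ vanishes, i.e. $u_0(t,\X,s,y)\equiv u_0(t,\X,s)$. I expect the only delicate point to be the uniform $L^2$-control of the Jacobian term $G^\eps$ along the flow, which is exactly where Assumptions \ref{ass:abs:flow}.(iii)--(iv) enter; everything else is a routine transcription of the standard two-scale compactness argument.
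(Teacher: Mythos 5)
Your proposal is correct and follows essentially the same route as the paper's proof: test $\nabla u^\eps$ against a $y$-divergence-type field evaluated along the flow, integrate by parts in $x$ so the chain rule splits off an $\mathcal{O}(\eps^{-1})$ $\nabla_y$-term and an $\mathcal{O}(1)$ Jacobian term controlled by Assumption \ref{ass:abs:flow}.(iii), multiply by $\eps$, pass to the limit via the $\Sigma$-$\Phi_\tau$ convergence (using that the Gelfand transform commutes with $\nabla_y$), and conclude that $u_0$ is orthogonal to all $y$-divergences. The only differences are cosmetic (you carry the factor $\eps$ from the start and spell out the $L^2$-bound on the Jacobian contribution slightly more explicitly), so there is nothing to correct.
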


\begin{proof}
Let $\Psi(t,\X,\tau,y)\in [C_c^1((0,T)\times \R^d\times\T^d;\mathcal{A})]^d$, then by the uniform bound on $\nabla u^\eps$ in $L^2$ and Lemma \ref{lem:abs:test-function} we have
\begin{align}\label{eq:abs:corrector1}
\sup_{\eps>0}
\iint\limits_{(0,T)\times \R^d}
\nabla u^\eps(t,x)\cdot\Psi\left(t,\Phi_{-t/\eps}(x),\frac{t}{\eps},\frac{x}{\eps}\right)
\, {\rm d} x\, {\rm d} t
\le C < \infty 
\end{align}
for some constant $C$ depending on $\Psi$. By integration by parts, the integral on the left hand side is equal to
\begin{align*}
-\frac1\eps
\iint\limits_{(0,T)\times\R^d}
&
u^\eps(t,x)\nabla_y\cdot\Psi
\left(t,\Phi_{-t/\eps}(x),\frac{t}{\eps},\frac{x}{\eps}\right)
\, {\rm d} x\, {\rm d} t
+
\mathcal{O}(1)
\end{align*}
where the order $1$ term comes from the gradient hitting $\Phi_{-t/\eps}(x)$ which are bounded due to Assumption \ref{ass:abs:flow}.(iii) on the Jacobian matrix associated with the flow. Multiplying this by $\eps$, using the convergence $u^\eps\sflow u_0$ and comparing to the bound \eqref{eq:abs:corrector1}, we have
\begin{align*}
\iiiint\limits_{(0,T)\times\R^d\times\Delta(\mathcal{A})\times \T^d}
u_0(t,\X,s,y)\nabla_y\cdot \widehat{\Psi}(t,\X,s,y)
\, {\rm d}\beta(s)\, {\rm d} y\, {\rm d} \X\, {\rm d} t
= 0.
\end{align*}
Noting that, by linearity and as it acts in a different variable, the Gelfand transform commutes with $\nabla_y$, we deduce that $u_0$ is orthogonal (in the $L^2(\T^d)$ sense) to all $y$-divergences and is hence independent of $y$.
\end{proof}

To obtain the $\Sigma$-$\Phi_\tau$ limit of the gradient sequence $\nabla u^\eps$, we require that the Jacobian matrix associated the flow lie in an algebra w.m.v.

\begin{Prop}[Two-scale limit for the gradient sequence]\label{prop:abs:corrector}
Let $\mathcal{A}$ be an algebra w.m.v., $\Phi_\tau$ be a flow satisfying Assumption \ref{ass:abs:flow} and let $J(\tau,\Phi_\tau(\X))\in C(\R^d;\mathcal{A})$. Suppose that
\begin{align*}
u^\eps\sflow u_0\text{\qquad and \qquad} \qquad \nabla u^\eps\sflow v_0\qquad \text{ as }\eps\to0
\end{align*}
for a sequence $u^\eps$ in the sense of Definition \ref{def:abs:2scale-flow}. Then we have
\begin{align*}
v_0={}^\top\!\widehat{\widetilde{J}}(s,\X)\nabla_\XX u_0+\nabla_y u_1
\end{align*}
for some $u_1(t,\X,s,y)\in L^2((0,T)\times\R^d\times \Delta(\mathcal{A});H^1(\T^d)).$
\end{Prop}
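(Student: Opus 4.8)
The plan is to adapt the classical two-scale corrector argument (as in \cite{nguetseng1989general, allaire1992homogenization}) to the $\Sigma$-$\Phi_\tau$ setting. First I would record that, since $\nabla u^\eps$ is bounded in $L^2$ (as is implicit in $\nabla u^\eps\sflow v_0$), Proposition \ref{prop:abs:independent-of-y} applies and the limit $u_0$ is independent of the fast spatial variable, i.e. $u_0=u_0(t,\X,s)$. The remaining task is purely to identify $v_0$, and the strategy is to pair $\nabla u^\eps$ with oscillating test vector fields that are divergence-free in $y$, so that the singular $1/\eps$ term produced by integration by parts drops out.

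Concretely, take $\Psi(t,\X,\tau,y)\in[C^1_c((0,T)\times\R^d\times\T^d;\mathcal{A})]^d$ with $\nabla_y\cdot\Psi=0$ and test against $\Psi(t,\Phi_{-t/\eps}(x),t/\eps,x/\eps)$. Using the spatial chain rule recorded in Subsection \ref{ssec:asymptotic-expansion} together with integration by parts in $x$,
\begin{align*}
\iint\limits_{(0,T)\times\R^d} \nabla u^\eps\cdot\Psi\Big(t,\Phi_{-t/\eps}(x),\tfrac t\eps,\tfrac x\eps\Big)\,{\rm d}x\,{\rm d}t
= -\iint\limits_{(0,T)\times\R^d} u^\eps\Big(\textstyle\sum_{i,k}\widetilde J_{ki}\,\partial_{X_k}\Psi_i\Big)\Big(t,\Phi_{-t/\eps}(x),\tfrac t\eps,\tfrac x\eps\Big)\,{\rm d}x\,{\rm d}t,
\end{align*}
the term carrying $\tfrac1\eps\nabla_y\cdot\Psi$ vanishing by construction. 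The crucial observation is that the coefficient $\widetilde J(\tau,\X)=J(\tau,\Phi_\tau(\X))$ multiplying $\partial_{X_k}\Psi_i$ is, by the hypothesis $J(\tau,\Phi_\tau(\X))\in C(\R^d;\mathcal{A})$, admissible in the $\tau$ variable; hence $\sum_{i,k}\widetilde J_{ki}\partial_{X_k}\Psi_i$ is a legitimate test function for Definition \ref{def:abs:2scale-flow}.

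Next I would pass to the limit $\eps\to0$ using $u^\eps\sflow u_0$ and $\nabla u^\eps\sflow v_0$ on the two sides, then integrate by parts in the $\X$ variable on the right. Because the Gelfand transform acts in the $\tau$ variable and commutes with $\nabla_\XX$, and because $\sum_k\partial_{X_k}\widetilde J_{ki}=0$ by Lemma \ref{lem:heu:basic-facts}.(i), the contribution in which the $\X$-derivative falls on $\widehat{\widetilde J}$ drops out, and using that $u_0$ is $y$-independent to move $\nabla_\XX$ onto $u_0$ one is left with
\begin{align*}
\iiiint\limits_{(0,T)\times\R^d\times\Delta(\mathcal{A})\times\T^d} \big(v_0-{}^\top\widehat{\widetilde J}(s,\X)\nabla_\XX u_0\big)\cdot\widehat\Psi\,{\rm d}y\,{\rm d}\beta(s)\,{\rm d}\X\,{\rm d}t = 0
\end{align*}
for every such $y$-divergence-free $\Psi$.

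Finally, by density the identity extends to all divergence-free fields, and testing additionally against $y$-independent $\Psi$ shows that the $y$-average of $v_0-{}^\top\widehat{\widetilde J}\nabla_\XX u_0$ vanishes; the Helmholtz (de Rham) decomposition on $\T^d$ then forces $v_0-{}^\top\widehat{\widetilde J}\nabla_\XX u_0=\nabla_y u_1$ for some $u_1\in L^2((0,T)\times\R^d\times\Delta(\mathcal{A});H^1(\T^d))$, which is the claim. I expect the main obstacle to be bookkeeping the Jacobian: one must verify that $\sum_{i,k}\widetilde J_{ki}\partial_{X_k}\Psi_i$ remains in the product algebra (this is where $J(\tau,\Phi_\tau(\X))\in C(\R^d;\mathcal{A})$ is indispensable) and that the distributional identity $\nabla_\XX\cdot{}^\top\widetilde J=0$ may legitimately be used against the merely-$L^2$ limit $u_0$ after the $\eps\to0$ passage, the latter being handled by a routine approximation of $u_0$ by smooth functions.
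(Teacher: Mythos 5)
Your proposal is correct and follows essentially the same route as the paper's own proof: reduce to $y$-independence of $u_0$ via Proposition \ref{prop:abs:independent-of-y}, test $\nabla u^\eps$ against $y$-divergence-free fields so the $\eps^{-1}$ term drops, pass to the limit using both convergences, integrate by parts in $\X$ (where the Gelfand transform commutes with $\nabla_\XX$), and conclude by orthogonality to divergence-free fields on $\T^d$. If anything, you are slightly more explicit than the paper in invoking Lemma \ref{lem:heu:basic-facts}.(i) to kill the terms where $\nabla_\XX$ hits $\widehat{\widetilde{J}}$ and in checking that ${}^\top\widetilde{J}\nabla_\XX\Psi$ is an admissible test function, both of which the paper leaves implicit.
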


\begin{Rem}
The above result differs from the classical result for two-scale convergence (see e.g. \cite{allaire1992homogenization}) and two-scale convergence with constant drift \cite{maruvsic2005homogenization, allaire2008periodic}, in the presence of the Jacobian matrix of the flow, which depends on the fast time variable, in the limit. If the flow $\Phi_\tau$ is taken to be a constant drift flow $\Phi_\tau(x)=x+{\bf b}^*\tau$ then the Jacobian is the identity matrix.
\end{Rem}

\begin{proof}[Proof of Proposition \ref{prop:abs:corrector}]
Note that $u_0$ is independent of $y$ by Proposition \ref{prop:abs:independent-of-y}. We test against $\Psi(t,\X,\tau,y)\in [C^1_c(\R_+\times\R_d\times\T^d;\mathcal{A})]^d$ which satisfy $\nabla_y\cdot \Psi=0$. By integration by parts, we obtain
\begin{align*}
&
\iint\limits_{(0,T)\times\R^d}
\nabla u^\eps(t,x) \cdot \Psi \left(t, \Phi_{-t/\eps}(x),  \frac{t}{\eps},\frac{x}{\eps}\right)
\, {\rm d}x\, {\rm d}t
\\
&
= -\iint\limits_{(0,T)\times\R^d}
u^\eps(t,x)
\nabla_x
\cdot
\left(
\Psi
\left(
t, \Phi_{-t/\eps}(x),  \frac{t}{\eps},\frac{x}{\eps}
\right)
\right)
\, {\rm d}x\, {\rm d}t
\\
&=
-\frac1\eps
\iint\limits_{(0,T)\times\R^d}
u^\eps(t,x) 
\nabla_y
\cdot
\Psi
\left(
t, \Phi_{-t/\eps}(x),  \frac{t}{\eps},\frac{x}{\eps}
\right)
\, {\rm d}x\, {\rm d} t
\\
&
\quad  -\iint\limits_{(0,T)\times\R^d}
u^\eps(t,x)
\sum_{i=1}^d
\left(
{}^\top\!\! J
\left(\frac{t}{\eps},x
\right)
\nabla_\XX\Psi_i
\left(
t, \Phi_{-t/\eps}(x),  \frac{t}{\eps},\frac{x}{\eps}
\right)
\right)_i
\, {\rm d}x\, {\rm d}t
\\
&
= -\iint\limits_{(0,T)\times\R^d}
u^\eps(t,x)
\sum_{i=1}^d
\left(
{}^\top\!\!\, \widetilde{J}
\left(
\frac{t}{\eps}, \Phi_{-t/\eps}(x)
\right)
\nabla_\XX\Psi_i
\left(
t, \Phi_{-t/\eps}(x),  \frac{t}{\eps},\frac{x}{\eps}
\right)
\right)_i
\, {\rm d}x\, {\rm d}t.
\end{align*}
By the convergences  $\nabla u^\eps\sflow v_0$ and $u^\eps\sflow u_0$ we may pass to the limit in the first and last lines respectively to obtain
\begin{align*}
& \iiint\limits_{(0,T)\times\R^d\times \Delta(\mathcal{A})\times\T^d}
v_0(t,\X,s,y)\cdot\widehat{\Psi}(t,\X,s,y)
\, {\rm d}\beta(s)\, {\rm d} y\, {\rm d} \X\, {\rm d} t
\\
& = - \iiint\limits_{(0,T)\times\R^d\times \Delta(\mathcal{A})\times \T^d}
u_0(t,\X)
\sum_{i=1}^d
\left(
\widehat{{}^\top\!\!\, \widetilde{J}}(s,\X)
\widehat{\nabla_\XX\Psi_i}(t,\X,s, y)
\right)_i
\, {\rm d}\beta(s)\, {\rm d} y\, {\rm d} \X\, {\rm d} t
\end{align*}
The Gelfand transform is with regard to the $s$-variable. Hence we have the commutation: $\widehat{\nabla_\XX\Psi_i} = \nabla_\XX\widehat{\Psi_i}$. This observation and an integration by parts in the $\X$-variable yields
\begin{align*}
0=
\iiint\limits_{(0,T)\times\R^d\times\Delta(\mathcal{A})\times\T^d}
\left(v_0(t,\X,s,y)
-
\widehat{{}^\top\!\!\, \widetilde{J}}(s,\X)\nabla_\XX u_0(t,\X)\right) \cdot\widehat{\Psi}(t,\X,s,y)
\, {\rm d}\beta(s)\, {\rm d} y\, {\rm d} \X\, {\rm d} t.
\end{align*}
Thus the bracketed expression in the above integrand is orthogonal (in the $L^2(\T^d)$ sense) to $y$-divergence free vector fields, and is hence equal to the $y$-gradient of some function $u_1$. Basic Fourier analysis in $\T^d$ tells us that $u_1$ is bounded in $L^2(\R_+\times \R^d\times \Delta(\mathcal{A});H^1(\T^d))$. This completes the proof of the Proposition.
\end{proof}

\section{Homogenization Result}\label{sec:homog_result}

This section is dedicated to the rigorous derivation of the homogenized equation for \eqref{eq:heu:CD}-\eqref{eq:heu:IV} using the $\Sigma$-convergence along flows developed in Section \ref{sec:abs}.

\subsection{Qualitative analysis}\label{sec:QA}
The compactness results (Theorem \ref{Thm:abs:compactness}, Proposition \ref{prop:abs:corrector}) of previous section demand uniform (with respect to $\eps$) estimates on the solution family $\{u^\eps(t,x)\}$ and on the family of derivatives (in space) of the solution family $\{\nabla u^\eps(t,x)\}$.

\begin{Lem}\label{lem:QA:apriori-estimates}
Suppose the fluid field ${\bf b}(x,y)\in L^\infty(\R^d\times\T^d;\R^d)$ is incompressible in both $x$ and $y$ variables, i.e. satisfying \eqref{eq:heu:null-divergence-fluid-field}. Suppose the molecular diffusion tensor ${\bf D}(x,y)\in L^\infty(\R^d\times\T^d;\R^{d\times d})$ is uniformly coercive, i.e. satisfying \eqref{eq:heu:coercive-diffusion-matrix}. Suppose the initial data $u^{in}(x)\in L^2(\R^d)$. Then we have uniform (with respect to $\eps$) a priori estimates on the solutions to \eqref{eq:heu:CD}-\eqref{eq:heu:IV} given by
\begin{align}\label{eq:QA:apriori-estimates}
\|u^\eps\|_{L^\infty([0,T];L^2(\R^d))}
+ \|\nabla u^\eps\|_{L^2((0,T)\times\R^d)}
\le 
C\|u^{in}\|_{L^2(\R^d)},
\end{align}
for any arbitrary time $T>0$. The constant $C$ in \eqref{eq:QA:apriori-estimates} is independent of $\eps$ and the time instant $T$.
\end{Lem}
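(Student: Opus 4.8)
The plan is to run the standard parabolic energy method, testing \eqref{eq:heu:CD} against $u^\eps$ itself. For each fixed $\eps>0$ the existence of a weak solution
\begin{align*}
u^\eps\in L^\infty(0,T;L^2(\R^d))\cap L^2(0,T;H^1(\R^d)),\qquad \partial_t u^\eps\in L^2(0,T;H^{-1}(\R^d)),
\end{align*}
is classical (for fixed $\eps$ the coefficient $\eps^{-1}{\bf b}$ is a bounded drift and the operator is uniformly parabolic by \eqref{eq:heu:coercive-diffusion-matrix}); the entire content of the lemma is that the resulting bound is \emph{independent} of $\eps$ and of $T$.

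First I would use $u^\eps$ as a test function. By the Lions--Magenes lemma the time-derivative pairing gives $\langle\partial_t u^\eps,u^\eps\rangle=\tfrac12\tfrac{\rm d}{{\rm d}t}\|u^\eps\|_{L^2(\R^d)}^2$, while integrating the diffusion term by parts and invoking coercivity \eqref{eq:heu:coercive-diffusion-matrix} produces a term bounded below by $\lambda\|\nabla u^\eps\|_{L^2(\R^d)}^2$. The crux is that the large convection term contributes nothing: writing $(\nabla u^\eps)\,u^\eps=\tfrac12\nabla (u^\eps)^2$ and integrating by parts,
\begin{align*}
\frac1\eps\int_{\R^d}{\bf b}\Big(x,\tfrac{x}{\eps}\Big)\cdot\nabla u^\eps\; u^\eps\,{\rm d}x
= -\frac1{2\eps}\int_{\R^d}\nabla\cdot\Big[{\bf b}\Big(x,\tfrac{x}{\eps}\Big)\Big]\,(u^\eps)^2\,{\rm d}x = 0,
\end{align*}
because the chain rule gives $\nabla\cdot[{\bf b}(x,x/\eps)]=(\nabla_x\cdot{\bf b})(x,x/\eps)+\eps^{-1}(\nabla_y\cdot{\bf b})(x,x/\eps)$, and \emph{both} divergences vanish by the double incompressibility \eqref{eq:heu:null-divergence-fluid-field}. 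It is precisely this cancellation that prevents the $\mathcal{O}(\eps^{-1})$ drift from destroying the estimate.

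With the convection term gone, the energy identity reduces to $\tfrac12\tfrac{\rm d}{{\rm d}t}\|u^\eps\|_{L^2}^2+\lambda\|\nabla u^\eps\|_{L^2}^2\le 0$. Integrating in time from $0$ to $t$ yields
\begin{align*}
\tfrac12\|u^\eps(t)\|_{L^2(\R^d)}^2+\lambda\int_0^t\|\nabla u^\eps(s)\|_{L^2(\R^d)}^2\,{\rm d}s\le\tfrac12\|u^{in}\|_{L^2(\R^d)}^2,
\end{align*}
from which both summands in \eqref{eq:QA:apriori-estimates} follow at once, with a constant $C$ depending only on $\lambda$ (hence independent of $\eps$ and $T$), after taking the supremum over $t\in[0,T]$ in the first term and $t=T$ in the second.

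The only genuine subtlety, and hence the step I would treat most carefully, is that ${\bf b}$ is merely $L^\infty$, so the integration by parts and chain rule leading to the vanishing of the convection term are a priori only distributional. I would make this rigorous through the skew-symmetry of the transport operator: for a field that is divergence free in the distributional sense, $\int{\bf b}\cdot\nabla u\,v+\int{\bf b}\cdot\nabla v\,u=\langle \nabla\cdot {\bf b},uv\rangle=0$ whenever $uv\in W^{1,1}$ has suitable decay, and taking $v=u$ gives $\int{\bf b}\cdot\nabla u\,u\,{\rm d}x=0$ for $u\in H^1(\R^d)$; here $(u^\eps)^2\in W^{1,1}$ since $\nabla(u^\eps)^2=2u^\eps\nabla u^\eps\in L^1$. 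The composition with the scaling $x\mapsto x/\eps$ is handled by first verifying the identity for smooth divergence-free ${\bf b}$ and passing to the limit, or equivalently by working directly in the weak formulation of \eqref{eq:heu:CD}; a truncation argument supplies the compact support needed to pair with $\nabla\cdot{\bf b}$.
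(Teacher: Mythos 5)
Your proof is correct and is exactly the argument the paper has in mind: the paper's own ``proof'' consists of the single remark that the lemma ``is very classical and follows the energy method,'' with all details skipped. Your energy estimate --- testing against $u^\eps$, the coercivity bound $\lambda\|\nabla u^\eps\|_{L^2}^2$, and above all the cancellation of the $\mathcal{O}(\eps^{-1})$ convection term through the double incompressibility \eqref{eq:heu:null-divergence-fluid-field}, together with the care taken to justify the integration by parts for merely $L^\infty$ divergence-free ${\bf b}$ --- supplies precisely those omitted details.
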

As the proof of the above lemma is very classical and follows the energy method, we shall skip the details. Now, we state the following result showing that our model problem \eqref{eq:heu:CD}-\eqref{eq:heu:IV} is well-posed.
\begin{Prop}\label{prop:QA:well-posed}
Suppose the fluid field ${\bf b}(x,y)\in L^\infty(\R^d\times\T^d;\R^d)$ is incompressible in both $x$ and $y$ variables, i.e. satisfying \eqref{eq:heu:null-divergence-fluid-field}. Suppose further that the diffusion tensor ${\bf D}(x,y)\in L^\infty(\R^d\times\T^d;\R^{d\times d})$ is uniformly coercive, i.e. satisfying \eqref{eq:heu:coercive-diffusion-matrix}.
Suppose the initial data $u^{in}\in L^2(\R^d)$. Then, for any fixed $\eps>0$, there exists a unique solution $u^\eps\in L^2((0,T);H^1(\R^d))\cap C^1((0,T);L^2(\R^d))$ to \eqref{eq:heu:CD}-\eqref{eq:heu:IV}.
\end{Prop}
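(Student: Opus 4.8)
The plan is to observe that, for each fixed $\eps>0$, the problem \eqref{eq:heu:CD}--\eqref{eq:heu:IV} is a linear parabolic equation with bounded measurable coefficients, and so falls within classical variational theory. Writing ${\bf b}^\eps(x):={\bf b}(x,x/\eps)$ and ${\bf D}^\eps(x):={\bf D}(x,x/\eps)$, the matrix ${\bf D}^\eps$ inherits the uniform coercivity \eqref{eq:heu:coercive-diffusion-matrix}, while the chain rule together with the incompressibility \eqref{eq:heu:null-divergence-fluid-field} gives $\nabla_x\cdot {\bf b}^\eps = (\nabla_x\cdot{\bf b})(x,x/\eps) + \eps^{-1}(\nabla_y\cdot{\bf b})(x,x/\eps)=0$, so that ${\bf b}^\eps$ is divergence free. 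I would work in the Gelfand triple $H^1(\R^d)\hookrightarrow L^2(\R^d)\hookrightarrow H^{-1}(\R^d)$ and introduce the time-independent bilinear form
\begin{align*}
a^\eps(u,v) = \int\limits_{\R^d} {\bf D}^\eps\nabla u\cdot\nabla v\,{\rm d}x + \frac1\eps\int\limits_{\R^d} ({\bf b}^\eps\cdot\nabla u)\,v\,{\rm d}x,
\end{align*}
which encodes the weak formulation $\langle \partial_t u^\eps,v\rangle + a^\eps(u^\eps,v)=0$.

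The key structural point is that the convection term is skew-symmetric: since ${\bf b}^\eps$ is divergence free, an integration by parts yields $\int_{\R^d}({\bf b}^\eps\cdot\nabla u)\,u\,{\rm d}x = -\frac12\int_{\R^d}(\nabla\cdot{\bf b}^\eps)\,u^2\,{\rm d}x=0$, so the large $\eps^{-1}$ factor disappears entirely from the diagonal. Consequently $a^\eps(u,u)=\int_{\R^d}{\bf D}^\eps|\nabla u|^2\,{\rm d}x\ge\lambda\|\nabla u\|_{L^2}^2$, which is a G\aa rding inequality of the form $a^\eps(u,u)\ge\lambda\|u\|_{H^1}^2-\lambda\|u\|_{L^2}^2$ (no Poincar\'e inequality is available on the unbounded domain, so one genuinely needs the shift by $\|u\|_{L^2}^2$). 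Boundedness of $a^\eps$ on $H^1\times H^1$ is immediate from the $L^\infty$ bounds on the coefficients. With these two properties in hand, the Lions existence theorem for abstract parabolic equations—equivalently, a Galerkin construction using the uniform-in-time bounds of Lemma \ref{lem:QA:apriori-estimates} to pass to the limit—produces a unique $u^\eps\in L^2((0,T);H^1(\R^d))\cap C([0,T];L^2(\R^d))$ with $\partial_t u^\eps\in L^2((0,T);H^{-1}(\R^d))$ solving \eqref{eq:heu:CD}--\eqref{eq:heu:IV}; uniqueness follows by testing the equation for the difference of two solutions against itself and applying the G\aa rding inequality together with Gr\"onwall's lemma.

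To upgrade the temporal regularity to the claimed $C^1((0,T);L^2(\R^d))$, I would invoke analytic semigroup theory. The elliptic operator $A^\eps := -\nabla\cdot({\bf D}^\eps\nabla\,\cdot\,)+\eps^{-1}{\bf b}^\eps\cdot\nabla$, being a second-order uniformly elliptic operator whose first-order part is relatively bounded with relative bound zero, is sectorial on $L^2(\R^d)$ and hence generates an analytic semigroup (the factor $\eps^{-1}$ affects only the constants, not the sectoriality). The solution is then $u^\eps(t)=e^{-tA^\eps}u^{in}$, which is smooth—in particular $C^1$—in $t$ for $t>0$ by the parabolic smoothing property, regardless of the mere $L^\infty$ regularity of the coefficients in $x$; this explains the open time interval $(0,T)$ in the statement. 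Alternatively, one may cite directly the classical linear parabolic theory of \cite{ladyzenskaja1968linear}.

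The main obstacle, and really the only nonroutine point, is ensuring that the $\mathcal{O}(\eps^{-1})$ convection term does not destroy coercivity: this is exactly where the incompressibility hypothesis \eqref{eq:heu:null-divergence-fluid-field} is essential, since it renders the convection skew-adjoint so that the potentially dangerous factor $\eps^{-1}$ never appears in the energy identity. Everything else is a direct application of standard parabolic machinery.
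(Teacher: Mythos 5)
Your proposal is correct, and its core follows the same route the paper indicates: the paper gives no detailed proof, stating only that for fixed $\eps>0$ the result follows from the a priori bounds of Lemma \ref{lem:QA:apriori-estimates} and the Galerkin method (citing Chapter 7 of \cite{evans1998partial}), and your variational argument---Gelfand triple, boundedness and G\aa rding inequality for $a^\eps$, Lions/Galerkin existence, Gr\"onwall uniqueness---is precisely that machinery spelled out, including the one structural point the paper uses implicitly, namely that the incompressibility \eqref{eq:heu:null-divergence-fluid-field} makes the $\mathcal{O}(\eps^{-1})$ convection term skew-adjoint so that it disappears from the energy identity. Where you genuinely go beyond the paper is the final step: Galerkin theory with merely $L^\infty$ coefficients yields $u^\eps\in L^2((0,T);H^1(\R^d))\cap C([0,T];L^2(\R^d))$ with $\partial_t u^\eps\in L^2((0,T);H^{-1}(\R^d))$, not the $C^1((0,T);L^2(\R^d))$ regularity asserted in the statement; your appeal to analytic semigroups (the operator associated with the bounded, quasi-coercive form $a^\eps$ is sectorial on $L^2(\R^d)$, so $t\mapsto e^{-tA^\eps}u^{in}$ is analytic for $t>0$) is exactly what is needed to justify that claim, and it correctly explains why the interval is open at $t=0$. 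One small caution: the phrase ``relatively bounded with relative bound zero'' should be justified via the form method, or via the estimate $\lambda\|\nabla u\|_{L^2}^2\le\|A_0u\|_{L^2}\|u\|_{L^2}$ valid on $D(A_0)$, rather than via $H^2$ elliptic regularity, since for divergence-form operators with only $L^\infty$ coefficients the domain of the principal part need not be contained in $H^2(\R^d)$; the direct form-method route you also sketch avoids this issue entirely.
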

For any fixed $\eps>0$, we can use the a priori bounds \eqref{eq:QA:apriori-estimates} and the Galerkin method to prove the above result. As this approach is very well-established (see Chapter 7 in \cite{evans1998partial} if necessary), we shall skip the proof of the above result as well.

\begin{Rem}\label{rem:QA:regularity-b-xy}
The regularity of the coefficients in \eqref{eq:heu:CD} considered in Lemma \ref{lem:QA:apriori-estimates} and Proposition \ref{prop:QA:well-posed} are quite weak. We shall impose some stronger regularity assumptions on the fluid field ${\bf b}(x,y)$ when we get to the homogenization result later in this section.
\end{Rem}

 We denote the difference between the mean-field and the locally periodic fluid field by
\begin{align}\label{eq:hom:mathcal-F-field}
\mathcal{F}(x,y):= \bar{\bf b}(x) - {\bf b}(x,y),
\qquad\qquad
\mbox{ for }(x,y)\in\R^d\times\T^d.
\end{align}

\begin{Rem}
We specialise the main result to the $3$ dimensional case. All the arguments to follow can be cast in the language of differential forms to generalize the theory to dimensions $d\ge2$, but to simplify presentation and increase accessibility of the proof, we leave this extension to the reader.
\end{Rem} 

The null-divergence assumption on the fluid field ${\bf b}(x,y)$ in the $y$ variable implies that $\mathcal{F}(x,y)$ is divergence free in the $y$-variable. Helmholtz decomposition of vector fields on the torus $\mathbb{T}^3$ yields the following result.
\begin{Lem}\label{lem:hom:helmotz}
There exists $\Upsilon(x,y)\in [L^2(\R^3;H^1(\T^3))]^3$ such that
\begin{align*}
\mathcal{F}(x,y) = \nabla_y\times\Upsilon(x,y);
\quad
\mbox{ with }
\int\limits_{\mathbb{T}^3}\Upsilon(x,y)\, {\rm d}y = 0.
\end{align*}
\end{Lem}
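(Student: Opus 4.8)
The plan is to construct $\Upsilon$ explicitly via Fourier series on the torus, exploiting two structural properties of $\mathcal{F}$. First I would record that $\mathcal{F}(x,\cdot)$ is divergence free in $y$ (inherited from $\nabla_y\cdot{\bf b}=0$ and the $y$-independence of $\bar{\bf b}$) and, crucially, has vanishing $y$-mean: by the very definition \eqref{eq:heu:mean-field} of the mean field one has $\int_{\T^3}\mathcal{F}(x,y)\,{\rm d}y = \bar{\bf b}(x)-\bar{\bf b}(x)=0$. This zero-mean property is precisely what makes a curl representation possible, since any field of the form $\nabla_y\times\Upsilon$ automatically integrates to zero over $\T^3$ and could not otherwise reproduce a nonzero mean.

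Expanding $\mathcal{F}(x,y)=\sum_{k\in\Z^3}\widehat{\mathcal{F}}_k(x)\,e^{2\pi i k\cdot y}$, the zero-mean condition reads $\widehat{\mathcal{F}}_0(x)=0$ and the divergence-free condition reads $k\cdot\widehat{\mathcal{F}}_k(x)=0$ for every $k$. I would then solve $\nabla_y\times\Upsilon=\mathcal{F}$ mode by mode: in Fourier variables the equation is $2\pi i\,k\times\widehat{\Upsilon}_k=\widehat{\mathcal{F}}_k$, and I set $\widehat{\Upsilon}_0=0$ (which delivers the required normalisation $\int_{\T^3}\Upsilon\,{\rm d}y=0$) and, for $k\neq0$,
\[
\widehat{\Upsilon}_k(x)=\frac{i}{2\pi}\,\frac{k\times\widehat{\mathcal{F}}_k(x)}{|k|^2}.
\]
That this solves the equation follows from the triple-product identity $k\times(k\times a)=(k\cdot a)\,k-|k|^2 a$ combined with the divergence-free relation $k\cdot\widehat{\mathcal{F}}_k=0$, so that the longitudinal term drops out and one recovers exactly $\widehat{\mathcal{F}}_k$.

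For the regularity I would observe that the multiplier gains one power of $|k|^{-1}$: the estimate $|\widehat{\Upsilon}_k|\le(2\pi)^{-1}|\widehat{\mathcal{F}}_k|/|k|$ gives $\sum_k(1+|k|^2)|\widehat{\Upsilon}_k(x)|^2\le C\sum_k|\widehat{\mathcal{F}}_k(x)|^2=C\,\|\mathcal{F}(x,\cdot)\|_{L^2(\T^3)}^2$, so that $\Upsilon(x,\cdot)\in H^1(\T^3)$ pointwise in $x$ with a pointwise bound controlled by $\mathcal{F}$. Since the assignment $\widehat{\mathcal{F}}_k\mapsto\widehat{\Upsilon}_k$ is linear and measurable in $x$, integrating this bound in $x$ produces $\Upsilon\in[L^2(\R^3;H^1(\T^3))]^3$, which is the assertion.

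The only mildly delicate point is the bookkeeping at the zero mode: one must invoke the vanishing of $\widehat{\mathcal{F}}_0$ to handle the constant mode, and simultaneously recognise that the choice $\widehat{\Upsilon}_0=0$ is exactly what enforces $\int_{\T^3}\Upsilon\,{\rm d}y=0$. Everything else is the standard Hodge--Helmholtz decomposition on $\T^3$, under which $\nabla_y\times$ maps onto precisely the space of divergence-free, mean-zero vector fields.
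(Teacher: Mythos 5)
Your construction is correct and is essentially the paper's own route: the paper simply invokes the Helmholtz decomposition on $\T^3$ without giving details, and your Fourier-mode argument (using $k\cdot\widehat{\mathcal{F}}_k=0$, the zero mean $\widehat{\mathcal{F}}_0=0$, and the gain of one power of $|k|^{-1}$ to get $H^1$ from $L^2$) is precisely the standard proof of that decomposition, applied fiber-wise in $x$. The only caveat, inherited from the lemma statement rather than from your argument, is that the final integration in $x$ tacitly requires $\mathcal{F}\in L^2(\R^3\times\T^3)$, which the paper likewise takes for granted.
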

Under the scaling $y=x/\eps$, we have the chain rule:
\begin{align}\label{eq:hom:chain-rule-upsilon}
\nabla_x\times\left( \Upsilon\left(x,\frac{x}{\eps}\right)\right)
= \nabla_x\times\Upsilon\left(x,\frac{x}{\eps}\right) 
+ \frac{1}{\eps}\nabla_y\times\Upsilon\left(x,\frac{x}{\eps}\right).
\end{align}
Hence by Lemma \ref{lem:hom:helmotz}, we have
\begin{align}\label{eq:hom:mathcal-F=epsilon-curl-upsilon}
\mathcal{F}\left(x, \frac{x}{\eps}\right)
=
\eps\, \nabla_x\times\left( \Upsilon\left(x,\frac{x}{\eps}\right)\right)
- \eps\, \nabla_x\times\Upsilon\left(x,\frac{x}{\eps}\right).
\end{align}

\subsection{Assumptions}\label{ssec:hom:assumptions}

In this subsection we shall make precise the assumptions on the fluid field ${\bf b}(x,y)$, the mean field $\bar{{\bf b}}(x)$ and the Jacobian matrix $J(\tau,x)$ associated with the flow $\Phi_\tau$. Throughout, we will assume that $\mathcal{A}$ is a fixed given ergodic algebra w.m.v.. See Section \ref{sec:Exm} for further discussions on the assumptions made here.

\begin{Ass}\label{ass:hom:b-x-y}
The fluid field ${\bf b}(x,y)$ belongs to $C^1(\R^3\times\T^3;\R^3)$ and its flow-representation belongs to $\mathcal{A}$ as follows: 
\begin{align*}
\widetilde{{\bf b}}(\tau,x,y) = {\bf b}(\Phi_\tau(x), y) \in [C^1(\R^3\times\T^3;\mathcal{A})]^3.
\end{align*}
\end{Ass}

\begin{Rem}\label{rem:hom:regularity-algebra-mean-field}
The mean-field $\bar{\bf b}(x)$ is nothing but the $y$-average of the fluid field ${\bf b}(x,y)$. The regularity hypothesis in Assumption \ref{ass:hom:b-x-y} implies that $\bar{\bf b}(x)\in C^1(\R^3;\R^3)$. Furthermore, the linearity of $\mathcal{A}$ implies that the flow-representation of the mean-field belongs to $\mathcal{A}$ as follows: 
\begin{align*}
\widetilde{\bar{{\bf b}}}(\tau,x) = \bar{\bf b}(\Phi_\tau(x)) \in [C^1(\R^3;\mathcal{A})]^3.
\end{align*}
\end{Rem}

\begin{Ass}\label{ass:hom:curl-mathcal-F-algebra}
The field $\nabla_x \times \mathcal{F}(x,y)\in C(\R^3\times\T^3;\R^3)$ and its flow-representation belongs to $\mathcal{A}$ as follows:
\begin{align*}
\widetilde{\nabla_x \times \mathcal{F}}(\tau, x, y)
= \nabla_x \times \mathcal{F}\left(\Phi_\tau(x), y\right)
\in [C(\R^3\times\T^3;\mathcal{A})]^3.
\end{align*}
\end{Ass}

\begin{Rem}\label{rem:hom:flow-Upsilon-algebra}
Lemma \ref{lem:hom:helmotz} implies that the flow-representation of $\Upsilon$ is given by a convolution in the $y$-variable of a Greens function and the flow-representation of $\mathcal{F}$. The linearity of $\mathcal{A}$ allows us deduce that the flow-representation of $\Upsilon(x,y)$ belongs to $\mathcal{A}$ as follows: 
\begin{align*}
\widetilde{\Upsilon}(\tau,x,y)
= \Upsilon(\Phi_\tau(x), y) 
\in [C^1(\R^3\times\T^3;\mathcal{A})]^3.
\end{align*}
\end{Rem}

\begin{Rem}\label{rem:hom:flow-curl-Upsilon-algebra}
Observe that $\zeta:=\nabla_x \times \Upsilon$ solves the equation $\nabla_y\times \zeta=\nabla_x\times\mathcal{F}$. Hence a similar argument as in Remark \ref{rem:hom:flow-Upsilon-algebra} and the hypothesis in Assumption \ref{ass:hom:curl-mathcal-F-algebra} implies that $\zeta$ belongs to $\mathcal{A}$ as follows:
\begin{align*}
\widetilde{\zeta}(\tau,x,y)
= \widetilde{\nabla_x \times \Upsilon}(\tau,x,y)
= \nabla_x\times \Upsilon (\Phi_\tau(x), y)
\in [C(\R^3\times\T^3;\mathcal{A})]^3.
\end{align*}
\end{Rem}

\begin{Ass}\label{ass:hom:D-x-y-algebra}
The molecular diffusion matrix ${\bf D}(x,y)\in [L^\infty(\R^3;C(\T^3))]^{3\times3}$ and its flow-representation belongs to $\mathcal{A}$ as follows:
\begin{align*}
\widetilde{\bf D}(\tau,x,y) 
= {\bf D}(\Phi_\tau(x), y) 
\in [L^\infty(\R^3;C(\T^3)\odot\mathcal{A})]^{3\times 3}.
\end{align*}
\end{Ass}

\begin{Ass}\label{ass:hom:Jacobain-algebra}
The Jacobian matrix associated with the flow $\Phi_\tau$ has the regularity $J(\tau,x)\in [L^\infty(\R^3;\mathcal{A})]^{3\times3}$ and its flow-representation belongs to $\mathcal{A}$ as follows:
\begin{align*}
\widetilde{J}(\tau,x)
= J(\tau, \Phi_\tau(x))
\in [L^\infty(\R^3;\mathcal{A})]^{3\times3}.
\end{align*}
\end{Ass}
\begin{Rem}
Assumption \ref{ass:hom:D-x-y-algebra} is trivially satisfied if the molecular diffusion is purely periodic and bounded, i.e. ${\bf D}(x,y)\equiv {\bf D}(y)\in [L^\infty(\T^3)]^{3\times3} $. Similarly, that the flow representation of ${{\bf b}}$ belongs to $\mathcal{A}$ as in Assumption \ref{ass:hom:b-x-y} follows from Assumption \ref{ass:hom:Jacobain-algebra} if the fluid field ${\bf b}$ has the special form:
\begin{equation*}
{\bf b}(x,y)=\bar{{\bf b}}(x)+{\bf b}^1(y).
\end{equation*}
This may be seen from using Lemma \ref{lem:heu:basic-facts}.(iv) to write 
\begin{equation*}
\widetilde{{\bf b}}(\tau,\X,y)=\widetilde{\bar{{\bf b}}}(\tau,\X)+{\bf b}^1(y)=\left(\widetilde{J}(\tau,\X)\right)^{-1}\bar{{\bf b}}(\X)+{\bf b}^1(y).
\end{equation*}
\end{Rem}

\begin{Rem}\label{rem:hom:admissible-by-assumption}
The assumptions on the coefficients and the Jacobian matrix (Assumption \ref{ass:hom:b-x-y} - Assumption \ref{ass:hom:Jacobain-algebra}) ensure that they are admissible test functions in the sense of Definition \ref{defn:abs:admissible-test-fn}.
\end{Rem}

Next, we state our main result on the homogenization of the scaled convection-diffusion equation \eqref{eq:heu:CD}-\eqref{eq:heu:IV}.

\begin{Thm}\label{thm:hom:hom}
Let $\Phi_\tau$ be the flow associated with the three dimensional autonomous system \eqref{eq:heu:ode-mean-field}. Suppose $u^\eps(t,x)$ be the family of solutions associated with the scaled convection-diffusion equation \eqref{eq:heu:CD}-\eqref{eq:heu:IV}. Suppose $u_0(t,\X)$ be the $\Sigma$-$\Phi_\tau$ limit associated with the solution family. Suppose the fluid field ${\bf b}(x,y)$ satisfies the Assumption \ref{ass:hom:b-x-y}, 
the molecular diffusion ${\bf D}(x,y)$ satisfies the Assumption \ref{ass:hom:D-x-y-algebra},
the Jacobian matrix $J(\tau,x)$ satisfies the Assumption \ref{ass:hom:Jacobain-algebra}
and the field $\mathcal{F}(x,y)$ given by \eqref{eq:hom:mathcal-F-field} satisfies the Assumption \ref{ass:hom:curl-mathcal-F-algebra}.
Then the limit $u_0(t,\X)$ solves the weak formulation
\begin{equation}\label{eq:hom:wf:homogenized-equation}
\begin{aligned}
- \iint\limits_{(0,T)\times\R^3} u_0(t,\X) \frac{\partial \psi}{\partial t}(t,\X)
\, {\rm d}\X\, {\rm d}t
+ \iint\limits_{(0,T)\times\R^3}
\mathfrak{D}(\X)\nabla_\XX u_0(t,\X)\cdot \nabla_\XX \psi(t,\X)
\, {\rm d}\X\, {\rm d}t
\\
- \int\limits_{\R^3}
u^{in}(\X) \psi(0,\X)
\, {\rm d}\X
= 0,
\end{aligned}
\end{equation}
where the effective diffusion matrix $\mathfrak{D}(\X)$ is given by
\begin{align}\label{eq:hom:effective-diffusion}
\mathfrak{D}(\X) = 
\int\limits_{\Delta(\mathcal{A})}
\widehat{\!\! \widetilde{J}}(s,{\scriptstyle X})
\mathfrak{B}(s,\X)
{}^\top\!\!\, \widehat{\widetilde{J}}(s,\X)
\, {\rm d}\beta(s) 
\end{align} 
with the elements of the matrix $\mathfrak{B}(s,\X)$ given by
\begin{equation}\label{eq:hom:effective-diffusion-mathfrak-B}
\begin{aligned}
\mathfrak{B}_{ij}(s,\X)
& = 
\int\limits_{\T^3}
\widehat{\widetilde{{\bf D}}}(s,\X,y)
\left(
\nabla_y \widehat{\widetilde{\omega}}_j(s,\X,y)
+ {\bf e}_j
\right)
\cdot
\left(
\nabla_y \widehat{\widetilde{\omega}}_i(s,\X,y)
+ {\bf e}_i
\right)
\, {\rm d}y
\\
& + \int\limits_{\T^3}
\left( 
\widehat{\widetilde{\bf b}}(s, {\scriptstyle X}, y)
\cdot 
\nabla_y \widehat{\widetilde{\omega}}_i(s,\X,y)
\right)
\widehat{\widetilde{\omega}}_j(s,\X,y)
\, {\rm d}y
\\
& +
\int\limits_{\T^3}
\widehat{\widetilde{{\bf D}}}(s,\X,y)
\nabla_y \widehat{\widetilde{\omega}}_j(s,\X,y)
\cdot
{\bf e}_i
\, {\rm d}y
-
\int\limits_{\T^3}
\widehat{\widetilde{{\bf D}}}(s,\X,y)
\nabla_y \widehat{\widetilde{\omega}}_i(s,\X,y)
\cdot
{\bf e}_j
\, {\rm d}y,
\end{aligned}
\end{equation}
where the $\omega_i$ are the solutions to the cell problem \eqref{eq:heu:cell-problem}.
\end{Thm}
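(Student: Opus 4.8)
The plan is to combine the $\Sigma$-$\Phi_\tau$ compactness theory of Section~\ref{sec:abs} with a careful treatment of the singular $\mathcal{O}(\eps^{-1})$ convection term, which is the heart of the matter. First I would invoke the a priori estimates of Lemma~\ref{lem:QA:apriori-estimates} to bound $u^\eps$ and $\nabla u^\eps$ uniformly in $L^2((0,T)\times\R^3)$. Theorem~\ref{Thm:abs:compactness} then yields, along a subsequence, a limit $u_0$; Proposition~\ref{prop:abs:independent-of-y} shows $u_0$ is independent of $y$; and Proposition~\ref{prop:abs:corrector} gives the corrector structure $\nabla u^\eps \sflow {}^\top\widehat{\widetilde{J}}(s,\X)\nabla_\XX u_0+\nabla_y u_1$ for some $u_1\in L^2((0,T)\times\R^3\times\Delta(\mathcal{A});H^1(\T^3))$. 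The admissibility built into Assumptions~\ref{ass:hom:b-x-y}--\ref{ass:hom:Jacobain-algebra} (see Remark~\ref{rem:hom:admissible-by-assumption}) is exactly what lets products of the coefficients with $u^\eps$ and $\nabla u^\eps$ pass to the limit through Lemma~\ref{lem:abs:admissible-coeff}.

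Next I would insert the flow-adapted test function $\phi^\eps(t,x)=\psi(t,\Phi_{-t/\eps}(x))$, $\psi\in C^\infty_c([0,T)\times\R^3)$, into the weak formulation of \eqref{eq:heu:CD}. Using the chain rules of Subsection~\ref{ssec:asymptotic-expansion}, incompressibility of ${\bf b}$, and the identity $\bar{\bf b}(\X)=\widetilde{J}\widetilde{\bar{\bf b}}$ from Lemma~\ref{lem:heu:basic-facts}.(iv), the $\mathcal{O}(\eps^{-1})$ contributions of $\partial_t\phi^\eps$ and of the convection term do not cancel but combine into the single term
\begin{align*}
I^\eps = \frac1\eps\iint\limits_{(0,T)\times\R^3} u^\eps(t,x)\,\big(\widetilde{J}\,\widetilde{\mathcal{F}}\big)\big(\tfrac{t}{\eps},\Phi_{-t/\eps}(x),\tfrac{x}{\eps}\big)\cdot\nabla_\XX\psi\, {\rm d}x\, {\rm d}t,
\end{align*}
with $\mathcal{F}=\bar{\bf b}-{\bf b}$. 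The main obstacle is to show $I^\eps=\mathcal{O}(1)$ and to compute its limit. Here the Helmholtz decomposition $\mathcal{F}=\nabla_y\times\Upsilon$ (Lemma~\ref{lem:hom:helmotz}) and the scaling identity \eqref{eq:hom:mathcal-F=epsilon-curl-upsilon} are decisive: after the flow identification $\widetilde{\mathcal{F}}(\tfrac t\eps,\Phi_{-t/\eps}(x),\tfrac x\eps)=\mathcal{F}(x,\tfrac x\eps)$ they trade the factor $1/\eps$ for a full spatial curl, $\tfrac1\eps\mathcal{F}(x,\tfrac x\eps)=\nabla_x\times(\Upsilon(x,\tfrac x\eps))-(\nabla_x\times\Upsilon)(x,\tfrac x\eps)$. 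The partial-curl term is already bounded and converges directly by admissibility (Remark~\ref{rem:hom:flow-curl-Upsilon-algebra}). For the full-curl term I would exploit the crucial bookkeeping identity ${}^\top\!\widetilde{J}\nabla_\XX\psi=\nabla_x\phi^\eps$, which absorbs the Jacobian so that an integration by parts of the curl produces $\nabla_x\times(u^\eps\nabla_x\phi^\eps)=\nabla u^\eps\times\nabla_x\phi^\eps$ (the gradient-of-gradient term vanishing), leaving $\iint \Upsilon(x,\tfrac x\eps)\cdot(\nabla u^\eps\times\nabla_x\phi^\eps)$. This is uniformly bounded and amenable to the corrector limit.

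With the singular term tamed, I would pass to the limit in every term of the weak formulation. The time-derivative and initial-data terms converge trivially (using $\Phi_0=\mathrm{id}$, so that $\phi^\eps(0,x)=\psi(0,x)$); the diffusion term converges to an expression in ${}^\top\widehat{\widetilde{J}}\nabla_\XX u_0+\nabla_y u_1$ via Proposition~\ref{prop:abs:corrector}; and $I^\eps$ converges to an expression in $\widehat{\widetilde\Upsilon}$, $\widehat{\widetilde\zeta}$ and the same corrector limit. This yields a coupled limit relation for $(u_0,u_1)$. To close it I would identify $u_1$ by testing against functions oscillating in the fast variable $y$: the resulting $\mathcal{O}(1)$ relation is precisely the weak form of the cell problem \eqref{eq:heu:cell-problem-flow-rep} (the attendant singular terms being neutralised by the same incompressibility-and-rescaling mechanism), so the Fredholm alternative (Lemma~\ref{lem:heu:fredholm}) gives $u_1=\widehat{\widetilde\omega}(s,\X,y)\cdot{}^\top\widehat{\widetilde J}(s,\X)\nabla_\XX u_0$ for $\beta$-a.e.\ $s$, in agreement with the formal computation \eqref{eq:heu:form-of-u_1}.

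Substituting this back, testing \eqref{eq:heu:cell-problem-flow-rep} against $\widehat{\widetilde\omega}_j$ to symmetrise exactly as in the proof of Proposition~\ref{prop:heu:formal-result}, and recognising that the fast-time mean value $\lim_{\ell\to\infty}\frac1{2\ell}\int_{-\ell}^{+\ell}$ of the formal derivation is now rigorously realised by the Radon measure integral $\int_{\Delta(\mathcal{A})}\cdot\, {\rm d}\beta(s)$ (Proposition~\ref{prop:abs:beta}), I would arrive at the matrices $\mathfrak{B}$ and $\mathfrak{D}$ of \eqref{eq:hom:effective-diffusion-mathfrak-B}--\eqref{eq:hom:effective-diffusion}, hence at the weak formulation \eqref{eq:hom:wf:homogenized-equation}. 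Finally, since Proposition~\ref{prop:heu:homo-eq-exist} guarantees that $\mathfrak{D}$ is coercive and that the limit problem has a unique solution, the limit is independent of the extracted subsequence, so the whole family $u^\eps$ (not merely a subsequence) $\Sigma$-$\Phi_\tau$-converges to $u_0$. The decisive and most delicate step is the curl manipulation taming $I^\eps$; everything else is a careful but routine passage to the limit.
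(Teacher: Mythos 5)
Your proposal follows essentially the same route as the paper: the same a priori bounds and $\Sigma$-$\Phi_\tau$ compactness, the same flow-adapted test functions, the same Helmholtz/curl mechanism (Lemma \ref{lem:hom:helmotz}, identity \eqref{eq:hom:mathcal-F=epsilon-curl-upsilon}, and the observation ${}^\top\!\!\,\widetilde{J}\nabla_\XX\psi=\nabla_x\big(\psi(t,\Phi_{-t/\eps}(x))\big)$ killing the full-curl term) to tame the singular convection term, the same Fredholm identification of $u_1$, and the same symmetrization and uniqueness arguments. There is, however, one genuine gap: you never establish that the limit $u_0$ is independent of the spectral variable $s$ (equivalently, of the fast time $\tau$). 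Theorem \ref{Thm:abs:compactness} together with Propositions \ref{prop:abs:independent-of-y} and \ref{prop:abs:corrector} only yields $u_0=u_0(t,\X,s)$; the paper devotes the second half of Lemma \ref{lem:hom:2-scale-compactness} to removing the $s$-dependence, by testing the equation with the $\eps$-scaled function $\eps\varphi(t,\Phi_{-t/\eps}(x),t/\eps)$, running the very same curl argument on the resulting singular terms, and then invoking \emph{ergodicity} of $\mathcal{A}$ through Lemma \ref{lem:abs:mean-value-tau-derivative}.

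This step is not cosmetic, because your own closure of the argument depends on it. When you test with $\eps\psi_1(t,\Phi_{-t/\eps}(x),t/\eps,x/\eps)$ to identify $u_1$, the time derivative produces the $\mathcal{O}(1)$ term $-\iint u^\eps\,\partial_\tau\psi_1$, whose $\Sigma$-$\Phi_\tau$ limit is $-\iiiint u_0\,\widehat{\partial_\tau\psi_1}\,{\rm d}y\,{\rm d}\beta(s)\,{\rm d}\X\,{\rm d}t$; this vanishes only once $u_0$ is known to be $s$-independent (so that the mean value of a $\tau$-derivative is zero). Without that, the relation you obtain is \emph{not} the weak form of the cell problem \eqref{eq:heu:cell-problem-flow-rep} but carries an extra source term, so the separation of variables $u_1=\widehat{\widetilde{\omega}}\cdot{}^\top\!\!\,\widehat{\widetilde{J}}\nabla_\XX u_0$ cannot be concluded; likewise the time-derivative and initial-data terms of your final weak formulation would involve the $\beta$-average of $u_0(\cdot,s)$ rather than $u_0$ itself. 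A tell-tale sign of the omission is that ergodicity of $\mathcal{A}$ — which the standing assumptions of Section \ref{ssec:hom:assumptions} impose and which is used precisely and only here — is never invoked in your proposal. The fix lives entirely inside your framework: test against $\eps\varphi(t,\Phi_{-t/\eps}(x),t/\eps)$, check that every term except the one in $\partial_\tau\varphi$ is $\mathcal{O}(\eps)$ (the singular pair being handled by your curl trick), and conclude $u_0=M(u_0)$ in $\mathcal{B}^2_{\mathcal{A}}$ from Lemma \ref{lem:abs:mean-value-tau-derivative}.
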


\begin{Rem}\label{rem:hom:homogenized-equation}
The weak formulation \eqref{eq:hom:wf:homogenized-equation} corresponds to solving the homogenized equation
\begin{align}\label{eq:hom:hom:diff}
\frac{\partial u_0}{\partial t}
- 
\nabla_\XX
\cdot 
\Big(
\mathfrak{D}(\X)
\nabla_\XX
u_0(t,\X)
\Big)
= 0
\qquad
\mbox{ in }]0,T[\times\R^d,
\end{align}
\begin{align}\label{eq:hom:hom:IV}
u_0(0,\X)
= 
u^{in}(\X)
\qquad \qquad \qquad
\mbox{ in }\R^d.
\end{align}
This equation is identical to that given in the formal homogenization result Proposition \ref{prop:heu:formal-result}, which may be seen from the identity \eqref{eq:abs:beta}. In particular, the diffusion coefficient $\mathfrak{D}$ may be computed using \eqref{eq:heu:effective-diffusion}-\eqref{eq:heu:effective-diffusion-mathfrak-B}, and the limits therein exist and are finite.
\end{Rem}

\begin{Rem}
Recall that for any function $f(x,y)$, the flow representation is given by
\[
\widetilde{f}(\tau,\X,y) = f\left( \Phi_\tau(\X), y\right) = f\left( x, y\right)\, \mbox{ for }\tau\in\R
\]
with the convention $\X:=\Phi_{-\tau}(x)$. Taking the Gelfand transform of a flow representation should be understood in the abstract as follows:
\[
\widehat{\widetilde{f}}(s,\X,y) = s\left(\widetilde{f}(\tau,\X,y)\right)\, \mbox{ for }s\in \Delta(\mathcal{A}).
\]
\end{Rem}

Theorem \ref{thm:hom:hom} asserts that a $\Sigma$-$\Phi_\tau$ limit of the solution family $u^\eps(t,x)$ solves the homogenized equation \eqref{eq:hom:hom:diff}-\eqref{eq:hom:hom:IV}. The rest of the section is devoted to proving this result. In Lemma \ref{lem:hom:2-scale-compactness}, we first prove that we can extract subsequences off the solution family $\{u^\eps\}$ and the gradient sequence $\{\nabla u^\eps\}$ such that the extracted subsequences admit $\Sigma$-$\Phi_\tau$ limits. Lemma \ref{lem:hom:2-scale-compactness} also proves that the $\Sigma$-$\Phi_\tau$ limit $u_0$ is independent of the $s$ variable.

Inspired by the structure of the $\Sigma$-$\Phi_\tau$ limits in Lemma \ref{lem:hom:2-scale-compactness}, we make a particular choice of the test functions (in Subsection \ref{ssec:hom:choice-test-functions}) in the weak formulation of the scaled convection-diffusion \eqref{eq:heu:CD}-\eqref{eq:heu:IV}.

As we need to pass to the limit in some singular terms in the weak formulation, we prove the limit behaviour of those singular terms in Lemma \ref{lem:hom:singular-terms-sigma-limit}. In Subsection \ref{ssec:hom:cell-problem}, we derive the cell problem. Finally, in Subsection \ref{ssec:hom:hom}, we give the proof of Theorem \ref{thm:hom:hom}.

\begin{Rem}\label{rem:hom:entire-seq}
Even though the $\Sigma$-$\Phi_\tau$ compactness results are up to extraction of a subsequence, the entire sequence $u^\eps$ does converge to the $\Sigma$-$\Phi_\tau$ limit $u_0$ as the homogenized equation is uniquely solvable (Proposition \ref{prop:heu:homo-eq-exist}).
\end{Rem}

\subsection{$\Sigma$-compactness along the flow $\Phi_\tau$}\label{ssec:hom:two-scale-compact}

\begin{Lem}\label{lem:hom:2-scale-compactness}
Let $u^\eps(t,x)$ be the family of solutions to \eqref{eq:heu:CD}-\eqref{eq:heu:IV}. Then, there exists a sub-sequence (still indexed by $\eps$) and limits $u_0(t,\X)\in L^2((0,T);H^1(\R^3))$, $u_1(t,\X,s,y)\in L^2((0,T)\times\R^3\times\Delta(\mathcal{A});H^1(\T^3))$ such that
\begin{align}
& u^\eps \sflow u_0(t,\X),\label{eq:hom:thm:2-scale-compact-u}\\
& \nabla u^\eps \sflow \widehat{{}^\top\!\!\, \widetilde{J}}(s,\X) \nabla_\XX u_0(t,\X) + \nabla_y u_1(t,\X,s,y).\label{eq:hom:thm:2-scale-compact-nabla-u}
\end{align}
\end{Lem}

\begin{proof}
The a priori bounds from Lemma \ref{lem:QA:apriori-estimates} give us the necessary uniform bounds (with respect to $\eps$) so that the result of Proposition \ref{prop:abs:corrector} implies the existence of $u_0(t,\X,s)$ and $u_1(t,\X,s,y)$ such that \eqref{eq:hom:thm:2-scale-compact-u} and \eqref{eq:hom:thm:2-scale-compact-nabla-u} hold. To prove that $u_0$ is independent of the $s$ variable, we shall consider the weak formulation of the $\eps$-problem \eqref{eq:heu:CD}-\eqref{eq:heu:IV} with the test function $\eps \varphi\left(t,\Phi_{-t/\eps}(x),\frac{t}{\eps}\right)$ such that $\varphi(T,\cdot,\cdot) = 0$ and $\varphi(t,x,\cdot), \frac{\partial \varphi}{\partial \tau}(t, x, \cdot) \in \mathcal{A}$. The weak formulation of interest shall be
\begin{equation}\label{eq:hom:wf-tau-independence}
\begin{aligned}
& - \eps \iint\limits_{(0,T)\times\R^3} 
u^\eps(t,x) \frac{\partial\varphi}{\partial t}\left( t,\Phi_{-t/\eps}(x),\frac{t}{\eps} \right)
\, {\rm d}x\, {\rm d}t
- \eps \int\limits_{\R^3}
u^{in}(x) \varphi(0,x,0)
\, {\rm d}x
\\
& + \iint\limits_{(0,T)\times\R^3} 
u^\eps(t,x)
\bar{{\bf b}}\left(\Phi_{-t/\eps}(x)\right)
\cdot
\nabla_\XX \varphi \left( t,\Phi_{-t/\eps}(x),\frac{t}{\eps} \right)
\, {\rm d}x\, {\rm d}t\\
& - \iint\limits_{(0,T)\times\R^3}
u^\eps(t,x) \frac{\partial\varphi}{\partial \tau}\left( t,\Phi_{-t/\eps}(x),\frac{t}{\eps} \right)
\, {\rm d}x\, {\rm d}t\\
& -\iint\limits_{(0,T)\times\R^3}
u^\eps(t,x)
{\bf b}\left(x,\frac{x}{\eps}\right)
\cdot 
{}^\top\!\!\, \widetilde{J}\left(\frac{t}{\eps}, \Phi_{-t/\eps}(x)\right)
\nabla_\XX \varphi\left( t,\Phi_{-t/\eps}(x),\frac{t}{\eps} \right)
\, {\rm d}x\, {\rm d}t\\
& + \eps \iint\limits_{(0,T)\times\R^3}
{\bf D}\left(x,\frac{x}{\eps}\right)
\nabla u^\eps(t,x)
\cdot 
{}^\top\!\!\, \widetilde{J}\left(\frac{t}{\eps}, \Phi_{-t/\eps}(x)\right)
\nabla_\XX \varphi\left( t,\Phi_{-t/\eps}(x),\frac{t}{\eps} \right)
\, {\rm d}x\, {\rm d}t
= 0.
\end{aligned}
\end{equation}
The first and second terms on the left hand side of the above expression are of $\mathcal{O}(\eps)$. The third and the fifth terms in the weak formulation \eqref{eq:hom:wf-tau-independence} together become, using \eqref{eq:hom:mathcal-F-field} and \eqref{eq:hom:mathcal-F=epsilon-curl-upsilon},
\begin{align*}
& \iint\limits_{(0,T)\times\R^3} 
u^\eps(t,x)
\Big(
\bar{{\bf b}}\left(x\right)
- {\bf b}\left(x,\frac{x}{\eps}\right)
\Big)
\cdot
{}^\top\!\!\, \widetilde{J}\left(\frac{t}{\eps}, \Phi_{-t/\eps}(x)\right)
\nabla_\XX \varphi \left( t,\Phi_{-t/\eps}(x),\frac{t}{\eps} \right)
\, {\rm d}x\, {\rm d}t
\\
& = \iint\limits_{(0,T)\times\R^3} 
u^\eps(t,x)
\mathcal{F}\left(x,\frac{x}{\eps}\right)
\cdot
{}^\top\!\!\, \widetilde{J}\left(\frac{t}{\eps}, \Phi_{-t/\eps}(x)\right)
\nabla_\XX \varphi \left( t,\Phi_{-t/\eps}(x),\frac{t}{\eps} \right)
\, {\rm d}x\, {\rm d}t
\\
& = \eps \iint\limits_{(0,T)\times\R^3} 
u^\eps(t,x)
\nabla_x\times\left( \Upsilon\left(x,\frac{x}{\eps}\right)\right)
\cdot
{}^\top\!\!\, \widetilde{J}\left(\frac{t}{\eps}, \Phi_{-t/\eps}(x)\right)
\nabla_\XX \varphi \left( t,\Phi_{-t/\eps}(x),\frac{t}{\eps} \right)
\, {\rm d}x\, {\rm d}t
\\
& - \eps \iint\limits_{(0,T)\times\R^3} 
u^\eps(t,x)
\nabla_x\times\Upsilon\left(x,\frac{x}{\eps}\right)
\cdot
{}^\top\!\!\, \widetilde{J}\left(\frac{t}{\eps}, \Phi_{-t/\eps}(x)\right)
\nabla_\XX \varphi \left( t,\Phi_{-t/\eps}(x),\frac{t}{\eps} \right)
\, {\rm d}x\, {\rm d}t
\end{align*}
\begin{align*}
& = \eps \iint\limits_{(0,T)\times\R^3} 
\Upsilon\left(x,\frac{x}{\eps}\right)
\cdot
\left( \nabla_x u^\eps(t,x) \times {}^\top \!\!\, \widetilde{J}\left(\frac{t}{\eps}, \Phi_{-t/\eps}(x)\right) \nabla_{\scriptscriptstyle X}\varphi\left( t,\Phi_{-t/\eps}(x),\frac{t}{\eps} \right)\right)
\, {\rm d}x\, {\rm d}t
\\
& + \eps \iint\limits_{(0,T)\times\R^3} 
\Upsilon\left(x,\frac{x}{\eps}\right)
\cdot
\left( u^\eps(t,x)\nabla_x \times {}^\top \!\!\, \widetilde{J}\left(\frac{t}{\eps}, \Phi_{-t/\eps}(x)\right) \nabla_{\scriptscriptstyle X}\varphi\left( t,\Phi_{-t/\eps}(x),\frac{t}{\eps} \right)\right)
\, {\rm d}x\, {\rm d}t
\\
& - \eps \iint\limits_{(0,T)\times\R^3} 
u^\eps(t,x)
\nabla_x\times\Upsilon\left(x,\frac{x}{\eps}\right)
\cdot
{}^\top\!\!\, \widetilde{J}\left(\frac{t}{\eps}, \Phi_{-t/\eps}(x)\right)
\nabla_\XX \varphi \left( t,\Phi_{-t/\eps}(x),\frac{t}{\eps} \right)
\, {\rm d}x\, {\rm d}t
\end{align*}
where we have used the Green's formula for the Curl operator. The second term on the far right hand side of the above expression vanishes because
\begin{align*}
\nabla_x 
\times 
{}^\top \!\!\, \widetilde{J}\left(\frac{t}{\eps}, \Phi_{-t/\eps}(x)\right) 
\nabla_{\scriptscriptstyle X}\varphi\left( t,\Phi_{-t/\eps}(x),\frac{t}{\eps} \right)
& = 
\nabla_x 
\times 
{}^\top \!\! J\left(\frac{t}{\eps}, x\right) 
\nabla_{\scriptscriptstyle X}\varphi\left( t,\Phi_{-t/\eps}(x),\frac{t}{\eps} \right)
\\
& =
\nabla_x 
\times 
\nabla_x
\left(
\varphi\left( t,\Phi_{-t/\eps}(x),\frac{t}{\eps} \right)
\right)
= 0,
\end{align*}
as the curl of a gradient is zero. (This is the reason that $\Upsilon$ was chosen in this particular manner).
In the rest of the terms, using the flow-representation, we have
\begin{align*}
& \eps \iint\limits_{(0,T)\times\R^3} 
\widetilde{\Upsilon}\left(\frac{t}{\eps}, \Phi_{-t/\eps}(x),\frac{x}{\eps}\right)
\cdot
\left( \nabla_x u^\eps(t,x) \times {}^\top \!\!\, \widetilde{J}\left(\frac{t}{\eps}, \Phi_{-t/\eps}(x)\right) \nabla_{\scriptscriptstyle X}\varphi\left( t,\Phi_{-t/\eps}(x),\frac{t}{\eps} \right)\right)
\, {\rm d}x\, {\rm d}t
\\
& - \eps \iint\limits_{(0,T)\times\R^3} 
u^\eps(t,x)
\widetilde{\nabla_x\times\Upsilon}\left(\frac{t}{\eps}, \Phi_{-t/\eps}(x),\frac{x}{\eps}\right)
\cdot
{}^\top\!\!\, \widetilde{J}\left(\frac{t}{\eps}, \Phi_{-t/\eps}(x)\right)
\nabla_\XX \varphi \left( t,\Phi_{-t/\eps}(x),\frac{t}{\eps} \right)
\, {\rm d}x\, {\rm d}t.
\end{align*}
These are of $\mathcal{O}(\eps)$. Using the flow-representation for the molecular diffusion ${\bf D}$, the final term on the left hand side of the weak formulation is also of $\mathcal{O}(\eps)$. Hence, passing to the limit as $\eps$ tends to zero in the weak formulation yields
\begin{align*}
\iiint\limits_{(0,T)\times\R^3\times\Delta(\mathcal{A})}
u_0(t,\X,s)\widehat{\frac{\partial \varphi}{\partial \tau}}(t,\X,s)
\, {\rm d}\beta(s)\, {\rm d}\XX\, {\rm d}t
= 0.
\end{align*}
Upon using Lemma \ref{lem:abs:mean-value-tau-derivative}, we deduce that the $u_0$ is independent of the $s$-variable.
\end{proof}

\subsection{Choice of test functions}\label{ssec:hom:choice-test-functions}

We consider the weak formulation of the convection-diffusion equation \eqref{eq:heu:CD}-\eqref{eq:heu:IV} with test function $\psi^\eps(t,x)$ such that $\psi^\eps(T,x)=0$:
\begin{equation}\label{eq:hom:weak-form}
\begin{array}{ll}
\displaystyle
\mathcal{I}_{time}
+ \mathcal{I}_{convect}
+ \mathcal{I}_{diffuse}
+ \mathcal{I}_{initial}
:= \\[0.3 cm]
\displaystyle 
- \iint\limits_{(0,T)\times\R^3} u^\eps(t,x) \frac{\partial\psi^\eps}{\partial t}(t,x)
\, {\rm d}x\, {\rm d}t
+ \frac{1}{\eps} \iint\limits_{(0,T)\times\R^3} {\bf b}\left(x,\frac x\eps\right)\cdot \nabla u^\eps(t,x) \psi^\eps(t,x)
\, {\rm d}x\, {\rm d}t\\[0.3 cm]
\displaystyle
+ \iint\limits_{(0,T)\times\R^3} {\bf D}\left(x,\frac x\eps\right)\nabla u^\eps(t,x) \cdot \nabla \psi^\eps(t,x)
\, {\rm d}x\, {\rm d}t
- \int\limits_{\R^3} u^{in}(x)\psi^\eps(0,x)
\, {\rm d}x 
= 0.
\end{array}
\end{equation}
The choice of the family of test functions $\psi^\eps(t,x)$ is as follows:
\begin{align}\label{eq:hom:test-fn-choice}
\psi^\eps(t,x) = 
\psi\left(t,\Phi_{-t/\eps}(x)\right)
+ \eps \psi_1\left(t,\Phi_{-t/\eps}(x), \frac{t}{\eps}, \frac{x}{\eps}\right)
\end{align}
for
\begin{equation}\label{eq:hom:test-fn-spaces}
\psi\in C^1((0,T)\times\R^3)\qquad\text{and}\qquad\psi_1\in C^1((0,T)\times\R^3;C^1(\T^3)\odot \mathcal{A})
\end{equation}
which are compactly supported in space. We shall treat term by term. To begin with, let us consider the term with the partial time derivative:
\begin{align*}
\mathcal{I}_{time} = 
&- \iint\limits_{(0,T)\times\R^3}
u^\eps(t,x) \frac{\partial \psi}{\partial t}\left(t,\Phi_{-t/\eps}(x)\right)
\, {\rm d}x\, {\rm d}t\\
&+ \frac{1}{\eps} \iint\limits_{(0,T)\times\R^3}
u^\eps(t,x) \bar{\bf b}(\Phi_{-t/\eps}(x))\cdot \nabla_{\scriptscriptstyle X}\psi\left(t,\Phi_{-t/\eps}(x)\right) 
\, {\rm d}x\, {\rm d}t\\
& - \iint\limits_{(0,T)\times\R^3}
u^\eps(t,x) \frac{\partial \psi_1}{\partial \tau} \left(t,\Phi_{-t/\eps}(x), \frac{t}{\eps}, \frac{x}{\eps}\right)
\, {\rm d}x\, {\rm d}t\\
& + \iint\limits_{(0,T)\times\R^3}
u^\eps(t,x)\bar{\bf b}(\Phi_{-t/\eps}(x))\cdot \nabla_{\scriptscriptstyle X}\psi_1 \left(t,\Phi_{-t/\eps}(x), \frac{t}{\eps}, \frac{x}{\eps}\right)
\, {\rm d}x\, {\rm d}t
+ \mathcal{O}(\eps).
\end{align*}
Now, for the convection term:
\begin{align*}
\mathcal{I}_{convect} =
& - \frac{1}{\eps} \iint\limits_{(0,T)\times\R^3}
u^\eps(t,x) {\bf b}\left(x,\frac{x}{\eps}\right)\cdot {}^\top \!\!\, \widetilde{J}\left(\frac{t}{\eps}, \Phi_{-t/\eps}(x)\right) \nabla_{\scriptscriptstyle X}\psi\left(t,\Phi_{-t/\eps}(x)\right)
\, {\rm d}x\, {\rm d}t\\
& + \iint\limits_{(0,T)\times\R^3}
{\bf b}\left(x,\frac{x}{\eps}\right)\cdot \nabla u^\eps(t,x) \psi_1\left(t,\Phi_{-t/\eps}(x), \frac{t}{\eps}, \frac{x}{\eps}\right)
\, {\rm d}x\, {\rm d}t
\end{align*}
Next, for the diffusion term:
\begin{align*}
\mathcal{I}_{diffuse} =
& \iint\limits_{(0,T)\times\R^3}
{\bf D}\left(x,\frac{x}{\eps}\right)\nabla u^\eps(t,x) \cdot {}^\top \!\!\, \widetilde{J}\left(\frac{t}{\eps}, \Phi_{-t/\eps}(x)\right) \nabla_{\scriptscriptstyle X}\psi\left(t,\Phi_{-t/\eps}(x)\right)
\, {\rm d}x\, {\rm d}t\\
& + \iint\limits_{(0,T)\times\R^3}
{\bf D}\left(x,\frac{x}{\eps}\right)\nabla u^\eps(t,x) \cdot \nabla_y \psi_1\left(t,\Phi_{-t/\eps}(x), \frac{t}{\eps}, \frac{x}{\eps}\right)
\, {\rm d}x\, {\rm d}t
+ \mathcal{O}(\eps).
\end{align*}
Finally, for the term involving initial data:
\begin{align*}
\mathcal{I}_{initial} =
- \int\limits_{\R^3} 
u^{in}(x) \psi(0,x)
\, {\rm d}x
+ \mathcal{O}(\eps).
\end{align*}
By using the flow-representation and Lemma \ref{lem:heu:basic-facts}.(iv) we notice that
\begin{align}\label{eq:hom:prop-cell-pb-trick-1}
\bar{\bf b}\left(\Phi_{-t/\eps}(x)\right)
= \widetilde{J}\left(\frac{t}{\eps}, \Phi_{-t/\eps}(x)\right) \bar{\bf b}(x)
= \widetilde{J}\left(\frac{t}{\eps}, \Phi_{-t/\eps}(x)\right) \widetilde{\bar{{\bf b}}}\left(\frac{t}{\eps}, \Phi_{-t/\eps}(x)\right).
\end{align}
Again using the flow-representation, we have
\begin{align}\label{eq:hom:prop-cell-pb-trick-2}
{\bf b}\left(x,\frac{x}{\eps}\right)
= \widetilde{{\bf b}}\left( \frac{t}{\eps}, \Phi_{-t/\eps}(x), \frac{x}{\eps}\right), \qquad {\bf D}(x,y)=\widetilde{{\bf D}}\left( \frac{t}{\eps}, \Phi_{-t/\eps}(x), \frac{x}{\eps}\right).
\end{align}
The observations \eqref{eq:hom:prop-cell-pb-trick-1}-\eqref{eq:hom:prop-cell-pb-trick-2}, combined with the $\Sigma$-compactness result along the flow $\Phi_\tau$ (Lemma \ref{lem:hom:2-scale-compactness}) will allow us to pass to the limit as $\eps\to0$ in all but two singular terms.
\subsection{Singular terms}\label{ssec:hom:singular-terms}
 We record below a result giving the limit of the singular terms in the weak formulation.

\begin{Lem}\label{lem:hom:singular-terms-sigma-limit}
Under Assumption \ref{ass:hom:curl-mathcal-F-algebra} on the field $\mathcal{F}(x,y)$ and for $\psi$ satisfying \eqref{eq:hom:test-fn-spaces}, we have
\begin{equation}\label{eq:hom:lem:sigular-terms-sigma-limit}
\begin{array}{cc}
\displaystyle
\lim_{\eps\to0}
\frac{1}{\eps} \iint\limits_{(0,T)\times\R^3}
u^\eps(t,x) \widetilde{J}\left(\frac{t}{\eps},\Phi_{-t/\eps}(x)\right) \left[ \bar{\bf b}(x) - {\bf b}\left(x,\frac{x}{\eps}\right) \right] \cdot \nabla_{\scriptscriptstyle X} \psi\left(t,\Phi_{-t/\eps}(x)\right)
\, {\rm d}x\, {\rm d}t\\[0.3 cm]
\displaystyle
= \iiiint\limits_{(0,T)\times\R^3\times\Delta(\mathcal{A})\times\mathbb{T}^3}
\left(
\widehat{\widetilde{\bar{\bf b}}}(s, {\scriptstyle X})
- \widehat{\widetilde{\bf b}}(s, {\scriptstyle X}, y)
\right)
\cdot
\left(
u_1(t,{\scriptstyle X},s,y)
{}^\top \!\!\, \widehat{\widetilde{J}}(s,{\scriptstyle X}) \nabla_{\scriptscriptstyle X}\widehat{\psi} (t,{\scriptstyle X})
\right)
\, {\rm d}y\, {\rm d}\beta(s)\, {\rm d}{\scriptstyle X}\, {\rm d}t.
\end{array}
\end{equation}
\end{Lem}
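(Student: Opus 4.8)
The plan is to remove the singular $1/\eps$ prefactor exactly as in the proof of Lemma~\ref{lem:hom:2-scale-compactness}, by invoking the Helmholtz potential $\Upsilon$ of Lemma~\ref{lem:hom:helmotz}. First I would rewrite $\bar{\bf b}(x)-{\bf b}(x,x/\eps)=\mathcal{F}(x,x/\eps)$ and use the chain rule ${}^\top\widetilde{J}(t/\eps,\Phi_{-t/\eps}(x))\nabla_\XX\psi(t,\Phi_{-t/\eps}(x))=\nabla_x\big(\psi(t,\Phi_{-t/\eps}(x))\big)$, so that the quantity becomes $\frac1\eps\iint u^\eps\,\mathcal{F}(x,x/\eps)\cdot\nabla_x(\psi\circ\Phi_{-t/\eps})\,{\rm d}x\,{\rm d}t$. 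Then identity~\eqref{eq:hom:mathcal-F=epsilon-curl-upsilon} replaces $\mathcal{F}(x,x/\eps)$ by $\eps\,\nabla_x\times(\Upsilon(x,x/\eps))-\eps\,(\nabla_x\times\Upsilon)(x,x/\eps)$; the explicit factor $\eps$ cancels the $1/\eps$ and leaves two $\mathcal{O}(1)$ integrals.

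For the first of these I would apply the Green formula for the curl, moving $\nabla_x\times$ onto $u^\eps\,G^\eps$ with $G^\eps:=\nabla_x(\psi\circ\Phi_{-t/\eps})$. Since the curl of a gradient vanishes, $\nabla_x\times(u^\eps G^\eps)=\nabla_x u^\eps\times G^\eps$, and the scalar triple product turns the integral into $\iint \nabla_x u^\eps\cdot\big(G^\eps\times\Upsilon(x,x/\eps)\big)\,{\rm d}x\,{\rm d}t$. Here $G^\eps\times\Upsilon(x,x/\eps)$ is the evaluation along the flow of $({}^\top\widetilde{J}\nabla_\XX\psi)\times\widetilde{\Upsilon}$, an admissible test function by Assumption~\ref{ass:hom:Jacobain-algebra}, Remark~\ref{rem:hom:flow-Upsilon-algebra} and Remark~\ref{rem:hom:admissible-by-assumption}; likewise the second integral pairs $u^\eps$ with the admissible coefficient $\widetilde{\zeta}\cdot{}^\top\widetilde{J}\nabla_\XX\psi$ (Remark~\ref{rem:hom:flow-curl-Upsilon-algebra}). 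Passing to the limit using $\nabla u^\eps\sflow {}^\top\widehat{\widetilde{J}}\nabla_\XX u_0+\nabla_y u_1$ and $u^\eps\sflow u_0$ from Lemma~\ref{lem:hom:2-scale-compactness} then produces a limit of the form $\iiiint v_0\cdot[({}^\top\widehat{\widetilde{J}}\nabla_\XX\psi)\times\widehat{\widetilde{\Upsilon}}]-\iiiint u_0\,\widehat{\widetilde{\zeta}}\cdot{}^\top\widehat{\widetilde{J}}\nabla_\XX\psi$.

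Splitting $v_0={}^\top\widehat{\widetilde{J}}\nabla_\XX u_0+\nabla_y u_1$, the contribution of $\nabla_y u_1$ is exactly the claimed right-hand side: integrating by parts in $y$ and using $\nabla_y\times\widehat{\widetilde{\Upsilon}}=\widehat{\widetilde{\mathcal{F}}}=\widehat{\widetilde{\bar{\bf b}}}-\widehat{\widetilde{\bf b}}$ — valid because $\nabla_y$ and the Gelfand transform both commute with the flow representation — converts it into $\iiiint(\widehat{\widetilde{\bar{\bf b}}}-\widehat{\widetilde{\bf b}})\cdot(u_1\,{}^\top\widehat{\widetilde{J}}\nabla_\XX\widehat{\psi})$, where $\widehat{\psi}=\psi$ since $\psi$ is independent of $\tau$.

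The hard part is to show that the two remaining \emph{slow} pieces cancel, namely the ${}^\top\widehat{\widetilde{J}}\nabla_\XX u_0$ part of the first integral and the whole second integral. After an integration by parts in $\X$ (with no boundary terms, as $\psi$ has compact support) this reduces to the transformation rule for the curl under the volume-preserving map $\Phi_\tau$: up to the transpose/inverse conventions fixing $J$, one has $\nabla_\XX\times\big({}^\top\widehat{\widetilde{J}}\,\widehat{\widetilde{\Upsilon}}\big)=(\widehat{\widetilde{J}})^{-1}\widehat{\widetilde{\zeta}}$, which is the vector-calculus form of the fact that the exterior derivative commutes with pullback. This is precisely why the argument is cast in three dimensions and why $\det J=1$ (Assumption~\ref{ass:abs:flow}.(iv)) together with the identities of Lemma~\ref{lem:heu:basic-facts} are indispensable; carefully tracking these Jacobian factors, and confirming the cancellation is exact, is the main obstacle.
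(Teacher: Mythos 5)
Your first two paragraphs reproduce the paper's own proof essentially step for step, and correctly: the reduction $\bar{\bf b}-{\bf b}=\mathcal{F}$, the substitution \eqref{eq:hom:mathcal-F=epsilon-curl-upsilon} that cancels the $1/\eps$, the Green formula for the curl combined with the fact that ${}^\top\widetilde{J}\nabla_\XX\psi=\nabla_x\bigl(\psi\circ\Phi_{-t/\eps}\bigr)$ is an exact gradient (so the curl of it vanishes), the limit passage by admissibility of $\widetilde{\Upsilon}$, $\widetilde{\zeta}$, $\widetilde{J}$, and the conversion of the $\nabla_y u_1$ contribution into the claimed right-hand side via Green's formula in $y$ and $\nabla_y\times\widehat{\widetilde{\Upsilon}}=\widehat{\widetilde{\mathcal{F}}}$. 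Up to that point your argument and the paper's are the same.

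The genuine gap is your last paragraph. You treat the two remaining ``slow'' contributions as a cancellation problem, propose to resolve it by an integration by parts in $\X$ plus a curl--pullback identity, and explicitly leave the verification open (``the main obstacle''). The idea you are missing is that no cancellation is needed: each of these terms vanishes \emph{on its own}, because Lemma \ref{lem:hom:helmotz} normalizes $\Upsilon$ to have zero mean over $\T^3$. Since the flow representation acts only in $x$ and the Gelfand transform only in $\tau$, both preserve the zero $y$-mean, so $\int_{\T^3}\widehat{\widetilde{\Upsilon}}(s,\X,y)\,{\rm d}y=0$; differentiating under the integral sign also gives $\int_{\T^3}\zeta(x,y)\,{\rm d}y=\nabla_x\times\int_{\T^3}\Upsilon(x,y)\,{\rm d}y=0$, hence $\int_{\T^3}\widehat{\widetilde{\zeta}}\,{\rm d}y=0$. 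In the slow part of the first limit integral the factor multiplying $\widehat{\widetilde{\Upsilon}}$, namely ${}^\top\widehat{\widetilde{J}}\nabla_\XX u_0\times{}^\top\widehat{\widetilde{J}}\nabla_\XX\widehat{\psi}$, is independent of $y$, and in the second the factor $u_0\,{}^\top\widehat{\widetilde{J}}\nabla_\XX\widehat{\psi}$ multiplying $\widehat{\widetilde{\zeta}}$ is independent of $y$; the ${\rm d}y$ integration therefore annihilates both, and the proof is finished. This is exactly why the zero-average normalization was built into Lemma \ref{lem:hom:helmotz}, and it is the mechanism the paper invokes (for the $\widehat{\widetilde{\zeta}}$-term the paper first rewrites it through a pullback identity and an integration by parts in $\X$, but the vanishing is again concluded from the zero $y$-mean).

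Your route could in principle be completed — the relevant identities for $\det\widetilde{J}=1$ are $Av\times Aw={}^\top\!A^{-1}(v\times w)$ and $\nabla_\XX\times\bigl({}^\top\widetilde{J}^{-1}\,\widetilde{\Upsilon}\bigr)=\widetilde{J}\,\widetilde{\zeta}$; note the inverse here, since ${}^\top\widehat{\widetilde{J}}\,\widehat{\widetilde{\Upsilon}}$ as you wrote it is not the pullback of $\Upsilon$ and its curl obeys no clean transformation rule — but it costs strictly more than the mean-value observation: it needs $\X$-derivatives of $\widetilde{J}$, which Assumption \ref{ass:hom:Jacobain-algebra} (only $L^\infty$ in $x$) does not supply, and it needs the nonlinear-in-$\widetilde{J}$ identity to be transported through the Gelfand transform. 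As submitted, the proposal is incomplete at precisely the step it flags as hardest, and that step is in fact the easy one.
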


\begin{proof}
Consider the singular terms in the weak formulation:
\begin{align}\label{eq:hom:singular-terms}
\frac{1}{\eps} \iint\limits_{(0,T)\times\R^3}
u^\eps(t,x) \widetilde J\left(\frac{t}{\eps},\Phi_{-t/\eps}(x)\right) \left[ \bar{\bf b}(x) - {\bf b}\left(x,\frac{x}{\eps}\right) \right] \cdot \nabla_{\scriptscriptstyle X} \psi\left(t,\Phi_{-t/\eps}(x)\right)
\, {\rm d}x\, {\rm d}t.
\end{align}
Using the observation \eqref{eq:hom:mathcal-F=epsilon-curl-upsilon} on the field $\mathcal{F}\left(x,\frac{x}{\eps}\right)$, we rewrite the singular terms \eqref{eq:hom:singular-terms} successively as follows:
\begin{align*}
\iint\limits_{(0,T)\times\R^3}
\nabla_x\times\left( \Upsilon\left(x,\frac{x}{\eps}\right)\right) \cdot \left( u_\eps(t,x) {}^\top \!\!\, \widetilde{J}\left(\frac{t}{\eps}, \Phi_{-t/\eps}(x)\right) \nabla_{\scriptscriptstyle X}\psi\left(t,\Phi_{-t/\eps}(x)\right)\right)
\, {\rm d}x\, {\rm d}t\\
- \iint\limits_{(0,T)\times\R^3}
\nabla_x\times\Upsilon\left(x,\frac{x}{\eps}\right) \cdot \left( u_\eps(t,x) {}^\top \!\!\, \widetilde{J}\left(\frac{t}{\eps}, \Phi_{-t/\eps}(x)\right) \nabla_{\scriptscriptstyle X}\psi\left(t,\Phi_{-t/\eps}(x)\right)\right)
\, {\rm d}x\, {\rm d}t\\
= \iint\limits_{(0,T)\times\R^3}
\Upsilon\left(x,\frac{x}{\eps}\right)
\cdot
\left( \nabla_x u_\eps(t,x) \times {}^\top \!\!\, \widetilde{J}\left(\frac{t}{\eps}, \Phi_{-t/\eps}(x)\right) \nabla_{\scriptscriptstyle X}\psi\left(t,\Phi_{-t/\eps}(x)\right)\right)
\, {\rm d}x\, {\rm d}t\\
+ \iint\limits_{(0,T)\times\R^3}
\Upsilon\left(x,\frac{x}{\eps}\right)
\cdot
\left( u_\eps(t,x)\nabla_x \times \left( {}^\top \!\!\, \widetilde{J}\left(\frac{t}{\eps}, \Phi_{-t/\eps}(x)\right) \nabla_{\scriptscriptstyle X}\psi\left(t,\Phi_{-t/\eps}(x)\right)\right)\right)
\, {\rm d}x\, {\rm d}t\\
- \iint\limits_{(0,T)\times\R^3}
\nabla_x\times\Upsilon\left(x,\frac{x}{\eps}\right) \cdot \left( u_\eps(t,x) {}^\top \!\!\, \widetilde{J}\left(\frac{t}{\eps}, \Phi_{-t/\eps}(x)\right) \nabla_{\scriptscriptstyle X}\psi\left(t,\Phi_{-t/\eps}(x)\right)\right)
\, {\rm d}x\, {\rm d}t,
\end{align*}
where we have used the Green's formula for the curl operator. Note that the second term on the right hand side of the previous expression is zero because
\begin{align*}
{}^\top \!\!\, \widetilde{J}\left(\frac{t}{\eps}, \Phi_{-t/\eps}(x)\right) \nabla_{\scriptscriptstyle X}\psi\left(t,\Phi_{-t/\eps}(x)\right)
= \nabla_x \left( \psi\left(t,\Phi_{-t/\eps}(x)\right) \right)
\end{align*}
and because of the fact that curl of a gradient is zero. Next, using the flow-representation, the singular terms \eqref{eq:hom:singular-terms} simplify to the following expression:
\begin{align*}
\iint\limits_{(0,T)\times\R^3}
\widetilde{\Upsilon}\left( \frac{t}{\eps}, \Phi_{-t/\eps}(x), \frac{x}{\eps} \right)
\cdot
\left( \nabla_x u_\eps(t,x) \times {}^\top \!\!\, \widetilde{J}\left(\frac{t}{\eps}, \Phi_{-t/\eps}(x)\right) \nabla_{\scriptscriptstyle X}\psi\left(t,\Phi_{-t/\eps}(x)\right)\right)
\, {\rm d}x\, {\rm d}t\\
- \iint\limits_{(0,T)\times\R^3}
\widetilde{\nabla_x\times\Upsilon}\left( \frac{t}{\eps}, \Phi_{-t/\eps}(x), \frac{x}{\eps} \right)
\cdot 
\left( u_\eps(t,x) {}^\top \!\!\, \widetilde{J}\left(\frac{t}{\eps}, \Phi_{-t/\eps}(x)\right) \nabla_{\scriptscriptstyle X}\psi\left(t,\Phi_{-t/\eps}(x)\right)\right)
\, {\rm d}x\, {\rm d}t.
\end{align*}
Thanks to the Assumption \ref{ass:hom:curl-mathcal-F-algebra} (see Remark \ref{rem:hom:flow-Upsilon-algebra} and Remark \ref{rem:hom:flow-curl-Upsilon-algebra}), we can pass to the limit, as $\eps\to0$, in the previous expression using $\Sigma$-convergence along the flow $\Phi_\tau$ yielding
\begin{equation}\label{eq:hom:singular-terms-sigma-limit}
\begin{array}{cc}
\displaystyle
\iiiint\limits_{(0,T)\times\R^3\times\Delta(\mathcal{A})\times\mathbb{T}^3}
\widehat{\widetilde{\Upsilon}}(s, {\scriptstyle X}, y)
\cdot
\left(
\widehat{{}^\top \!\!\, \widetilde{J}}(s,{\scriptstyle X}) \nabla_{\scriptscriptstyle X} u_0(t,{\scriptstyle X})
\times
\widehat{{}^\top \!\!\, \widetilde{J}}(s,{\scriptstyle X}) \nabla_{\scriptscriptstyle X}\widehat{\psi} (t,{\scriptstyle X})
\right)
\, {\rm d}y\, {\rm d}\beta(s)\, {\rm d}{\scriptstyle X}\, {\rm d}t\\[0.3 cm]
\displaystyle
+ \iiiint\limits_{(0,T)\times\R^3\times\Delta(\mathcal{A})\times\mathbb{T}^3}
\widehat{\widetilde{\Upsilon}}(s, {\scriptstyle X}, y)
\cdot
\left(
\nabla_y u_1(t,{\scriptstyle X},s,y)
\times
\widehat{{}^\top \!\!\, \widetilde{J}}(s,{\scriptstyle X}) \nabla_{\scriptscriptstyle X}\widehat{\psi} (t,{\scriptstyle X})
\right)
\, {\rm d}y\, {\rm d}\beta(s)\, {\rm d}{\scriptstyle X}\, {\rm d}t\\[0.3 cm]
\displaystyle
- \iiiint\limits_{(0,T)\times\R^3\times\Delta(\mathcal{A})\times\mathbb{T}^3}
\widehat{\widetilde{\nabla_x\times\Upsilon}}(s, {\scriptstyle X}, y)
\cdot
\left(
u_0(t,{\scriptstyle X})
\widehat{{}^\top \!\!\, \widetilde{J}}(s,{\scriptstyle X}) \nabla_{\scriptscriptstyle X}\widehat{\psi} (t,{\scriptstyle X})
\right)
\, {\rm d}y\, {\rm d}\beta(s)\, {\rm d}{\scriptstyle X}\, {\rm d}t.
\end{array}
\end{equation}
By construction, $\Upsilon$ is of zero average in the $y$ variable (Lemma \ref{lem:hom:helmotz}). Hence the first term in \eqref{eq:hom:singular-terms-sigma-limit} vanishes. In the second term of \eqref{eq:hom:singular-terms-sigma-limit}, we use the Green's formula for the curl operator in $y$ variable leading to the following expression:
\begin{align*}
\iiiint\limits_{(0,T)\times\R^3\times\Delta(\mathcal{A})\times\mathbb{T}^3}
\nabla_y \times \widehat{\widetilde{\Upsilon}}(s, {\scriptstyle X}, y)
\cdot
\left(
u_1(t,{\scriptstyle X},s,y)
{}^\top \!\!\, \widehat{\widetilde{J}}(s,{\scriptstyle X}) \nabla_{\scriptscriptstyle X}\widehat{\psi} (t,{\scriptstyle X})
\right)
\, {\rm d}y\, {\rm d}\beta(s)\, {\rm d}{\scriptstyle X}\, {\rm d}t.
\end{align*}
Again by Lemma \ref{lem:hom:helmotz}, we have
\begin{align*}
\nabla_y \times \widehat{\widetilde{\Upsilon}}(s, {\scriptstyle X}, y)
= \widehat{\widetilde{\mathcal{F}}}(s, {\scriptstyle X}, y)
= \widehat{\widetilde{\bar{\bf b}}}(s, {\scriptstyle X})
- \widehat{\widetilde{\bf b}}(s, {\scriptstyle X}, y).
\end{align*}
Hence, the second term of \eqref{eq:hom:singular-terms-sigma-limit} is the same as
\begin{align}\label{eq:hom:singular-term-2nd-term-limit}
\iiiint\limits_{(0,T)\times\R^3\times\Delta(\mathcal{A})\times\mathbb{T}^3}
\left(
\widehat{\widetilde{\bar{\bf b}}}(s, {\scriptstyle X})
- \widehat{\widetilde{\bf b}}(s, {\scriptstyle X}, y)
\right)
\cdot
\left(
u_1(t,{\scriptstyle X},s,y)
\widehat{{}^\top \!\!\, \widetilde{J}}(s,{\scriptstyle X}) \nabla_{\scriptscriptstyle X}\widehat{\psi} (t,{\scriptstyle X})
\right)
\, {\rm d}y\, {\rm d}\beta(s)\, {\rm d}{\scriptstyle X}\, {\rm d}t.
\end{align}
Regarding the third term in \eqref{eq:hom:singular-terms-sigma-limit}, remark that
\begin{align*}
\widehat{\widetilde{\nabla_x\times\Upsilon}}(s, {\scriptstyle X}, y)
= \widehat{{}^\top \!\!\, \widetilde{J}}(s,{\scriptstyle X})
\left(
\nabla_{\scriptscriptstyle X} \times \widehat{\widetilde{\Upsilon}}(s, {\scriptstyle X}, y)
\right).
\end{align*}
Hence the third term in \eqref{eq:hom:singular-terms-sigma-limit} rewrites as (upon using Green's formula):
\begin{align*}
\iiiint\limits_{(0,T)\times\R^3\times\Delta(\mathcal{A})\times\mathbb{T}^3}
\widehat{\widetilde{\Upsilon}}(s, {\scriptstyle X}, y)
\cdot
\nabla_{\scriptscriptstyle X} \times
\left(
u_0(t,{\scriptstyle X})
\widehat{\widetilde{J}}(s,{\scriptstyle X})
\widehat{{}^\top \!\!\, \widetilde{J}}(s,{\scriptstyle X}) 
\nabla_{\scriptscriptstyle X}\widehat{\psi} (t,{\scriptstyle X})
\right)\, {\rm d}y\, {\rm d}\beta(s)\, {\rm d}{\scriptstyle X}\, {\rm d}t.
\end{align*}
Thanks again to the construction that $\Upsilon$ is of zero average in the $y$ variable (Lemma \ref{lem:hom:helmotz}), the above expression vanishes. Hence the only non-zero term in the limit expression is \eqref{eq:hom:singular-term-2nd-term-limit}. This is nothing but the limit in \eqref{eq:hom:lem:sigular-terms-sigma-limit}.
\end{proof}

\subsection{Cell problem}\label{ssec:hom:cell-problem}

Now, we record a result to give the limit equation for the weak formulation involving the test function $\psi_1$, i.e. to derive the cell problem.

\begin{Prop}\label{prop:hom:sigma-limit-cell-problem}
Let $\Phi_\tau$ be the flow associated with the autonomous system \eqref{eq:heu:ode-mean-field}. Under Assumption \ref{ass:hom:b-x-y} on the fluid field ${\bf b}(x,y)$, Assumption \ref{ass:hom:D-x-y-algebra} on the molecular diffusion matrix ${\bf D}(x,y)$ and Assumption \ref{ass:hom:Jacobain-algebra} on the Jacobian matrix $J(\tau,x)$, the $\Sigma$-$\Phi_\tau$ limit $u_1(t,\X,s,y)$ obtained in Lemma \ref{lem:hom:2-scale-compactness} can be written as
\begin{align}\label{eq:hom:u-1-seprable-variables}
u_1(t,\X,s,y)
= 
\widehat{\widetilde{\omega}}(s,\X,y)
\cdot
{}^\top\!\!\, \widehat{\widetilde{J}}(s,\X)\nabla_\XX u_0(t,\X),
\end{align}
where the components of $\omega(x,y)\in [L^\infty(\R^3;H^1(\T^3))]^d$ with $\int_{\T^3}\omega\,{\rm d}y=0$ solve the cell problem:
\begin{align}\label{eq:hom:prop:cell-probem}
{\bf b}(x,y)\cdot \left(\nabla_y \omega_i + {\bf e}_i\right)
- \nabla_y \cdot \left( {\bf D}(x,y) \left(\nabla_y \omega_i + {\bf e}_i\right) \right)
= \bar{{\bf b}}(x)\cdot {\bf e}_i\qquad \mbox{ in }\T^3,
\end{align}
for each $i\in\{1,2,3\}$, where $\{{\bf e}_i\}_{i=1}^3$ denote the canonical basis in $\R^3$ and $x$ is viewed as a parameter.
\end{Prop}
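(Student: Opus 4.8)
The plan is to read off the cell problem by testing the weak formulation \eqref{eq:hom:weak-form} against the purely oscillatory corrector. Concretely, I would take $\psi\equiv0$ in the ansatz \eqref{eq:hom:test-fn-choice}, so that the test function is $\psi^\eps(t,x)=\eps\,\psi_1(t,\Phi_{-t/\eps}(x),t/\eps,x/\eps)$, and restrict $\psi_1$ to the dense subclass of \eqref{eq:hom:test-fn-spaces} for which both $\psi_1$ and $\partial_\tau\psi_1$ belong to $\mathcal{A}$ in the $\tau$-variable (exactly as in Lemma \ref{lem:hom:2-scale-compactness}). Applying the space and time chain rules of Subsection \ref{ssec:asymptotic-expansion}, the prefactor $\eps$ cancels the $1/\eps$ in the convection term, every contribution becomes $\mathcal{O}(1)$, and after discarding the $\mathcal{O}(\eps)$ remainders already isolated in Subsection \ref{ssec:hom:choice-test-functions} the identity reduces to four surviving terms (all integrals over $(0,T)\times\R^3$, arguments as above): a fast-time term $-\iint u^\eps\,\partial_\tau\psi_1$, a drift term $\iint u^\eps\,\bar{\bf b}(\Phi_{-t/\eps}(x))\cdot\nabla_\XX\psi_1$, a convection term $\iint {\bf b}(x,x/\eps)\cdot\nabla u^\eps\,\psi_1$, and a diffusion term $\iint {\bf D}(x,x/\eps)\nabla u^\eps\cdot\nabla_y\psi_1$.

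I would then pass to the limit term by term. For the two terms carrying $\nabla u^\eps$, I use the gradient convergence $\nabla u^\eps\sflow{}^\top\!\!\,\widehat{\widetilde{J}}(s,\X)\nabla_\XX u_0+\nabla_y u_1$ from Lemma \ref{lem:hom:2-scale-compactness}, rewriting the coefficients through their flow representations \eqref{eq:hom:prop-cell-pb-trick-1}--\eqref{eq:hom:prop-cell-pb-trick-2}; since $\widetilde{\bf b},\widetilde{\bf D},\widetilde{J},\widetilde{\bar{\bf b}}$ are admissible (Remark \ref{rem:hom:admissible-by-assumption}), Lemma \ref{lem:abs:admissible-coeff} (with $\nabla u^\eps$ in the role of the $\Sigma$-convergent family) lets me pass to the limit in these products, and the Gelfand transform commutes with $\nabla_\XX$ and $\nabla_y$. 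The drift term, by $u^\eps\sflow u_0$ and admissibility of $\bar{\bf b}$, converges to $\iiiint u_0\,\bar{\bf b}(\X)\cdot\nabla_\XX\widehat{\psi}_1$. The fast-time term converges to $-\iiiint u_0\,\widehat{\partial_\tau\psi_1}$; because $u_0$ is independent of $(s,y)$ the $s$-integration produces $M(\partial_\tau\psi_1)$, which vanishes as the mean value of the $\tau$-derivative of a bounded function (this is where ergodicity of $\mathcal{A}$, via Lemma \ref{lem:abs:mean-value-tau-derivative}, enters), so this term drops.

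The crucial algebraic step is to recombine the drift term with the macroscopic part of the convection limit. Integrating $\iiiint u_0\,\bar{\bf b}(\X)\cdot\nabla_\XX\widehat{\psi}_1$ by parts in $\X$ and using the incompressibility $\nabla_\XX\cdot\bar{\bf b}=0$ gives $-\iiiint(\bar{\bf b}(\X)\cdot\nabla_\XX u_0)\,\widehat{\psi}_1$, and the Gelfand transform of the identity $\bar{\bf b}(\X)=\widetilde{J}\,\widetilde{\bar{\bf b}}$ from Lemma \ref{lem:heu:basic-facts}.(iv) rewrites this as $-\iiiint\big(\widehat{\widetilde{\bar{\bf b}}}\cdot{}^\top\!\!\,\widehat{\widetilde{J}}\nabla_\XX u_0\big)\widehat{\psi}_1$. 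Added to the $\nabla_\XX u_0$-part of the convection limit this produces exactly the convective source $\big(\widehat{\widetilde{\bf b}}-\widehat{\widetilde{\bar{\bf b}}}\big)\cdot{}^\top\!\!\,\widehat{\widetilde{J}}\nabla_\XX u_0$. The limit relation then holds for every admissible $\psi_1$ as an identity that is local in $(t,\X,s)$ and involves $u_1$ and $\psi_1$ only through their $y$-gradients; that is, it is the weak form on $\T^3$ of the Gelfand-transformed flow-representation cell problem \eqref{eq:heu:cell-problem-flow-rep}, driven linearly by the vector $\mathbf{p}:={}^\top\!\!\,\widehat{\widetilde{J}}(s,\X)\nabla_\XX u_0(t,\X)$.

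Finally, I would invoke solvability. The associated scalar problem has the structure of \eqref{eq:heu:fredholm}, and its compatibility condition \eqref{eq:heu:fredholm-compatible} holds since the source has zero $y$-average (using $\int_{\T^3}{\bf b}\,{\rm d}y=\bar{\bf b}$ and that $y$-divergences integrate to zero on the torus), so Lemma \ref{lem:heu:fredholm} yields a unique mean-zero solution. By linearity in $\mathbf{p}$ this forces $u_1=\widehat{\widetilde{\omega}}(s,\X,y)\cdot\mathbf{p}$ with $\omega_i$ solving \eqref{eq:hom:prop:cell-probem}, which is precisely \eqref{eq:hom:u-1-seprable-variables}; the admissible additive $y$-constant does not affect $u_1$ and is discarded (Remark \ref{rem:heu:add-constant-cell-solution}). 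The two non-routine points that I expect to be the main obstacles are exactly those above: establishing the vanishing of the fast-time term through ergodicity, and carrying out the $\nabla_\XX$ integration by parts so that the drift and convection contributions coalesce into the single source $(\widetilde{\bf b}-\widetilde{\bar{\bf b}})$, which hinges on keeping the flow-representation and Gelfand-transform identities perfectly aligned throughout.
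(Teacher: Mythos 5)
Your proposal is correct and follows essentially the same route as the paper's proof: take $\psi\equiv 0$ in \eqref{eq:hom:weak-form}, pass to the $\Sigma$-$\Phi_\tau$ limit term by term using admissibility of the flow-represented coefficients, kill the fast-time term, convert the drift term by an integration by parts in $\X$ (via incompressibility and Lemma \ref{lem:heu:basic-facts}) so that it merges with the convection term into the source $\bigl(\widehat{\widetilde{\bf b}}-\widehat{\widetilde{\bar{\bf b}}}\bigr)\cdot{}^\top\!\!\,\widehat{\widetilde{J}}\nabla_\XX u_0$, and then separate variables in the resulting linear cell PDE and undo the flow representation. The only slip is one of attribution: $M(\partial_\tau\psi_1)=0$ is elementary for bounded $\psi_1$ (the mean of a derivative vanishes), whereas ergodicity and Lemma \ref{lem:abs:mean-value-tau-derivative} are what were used upstream, in Lemma \ref{lem:hom:2-scale-compactness}, to show that $u_0$ is independent of $s$ — the fact you correctly rely on to factor out that mean.
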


\begin{proof}
Taking $\psi\equiv0$ in the weak formulation \eqref{eq:hom:weak-form} and passing to the limit in the sense of $\Sigma$-convergence along the flow $\Phi_\tau$, we obtain
\begin{align*}
& - \iiiint\limits_{(0,T)\times\R^3\times\Delta(\mathcal{A})\times\mathbb{T}^3}
u_0(t, {\scriptstyle X})
\widehat{\frac{\partial \psi_1}{\partial \tau}}(t, {\scriptstyle X}, s, y)
\, {\rm d}y\, {\rm d}\beta(s)\, {\rm d}{\scriptscriptstyle X}\, {\rm d}t\\
& + \iiiint\limits_{(0,T)\times\R^3\times\Delta(\mathcal{A})\times\mathbb{T}^3}
u_0(t, {\scriptstyle X})
\left\{
\widehat{\widetilde{J}}(s, {\scriptstyle X})
\widehat{\widetilde{\bar{{\bf b}}}}(s, {\scriptstyle X})
\cdot
\nabla_{\scriptscriptstyle X}\widehat{\psi_1} (t, {\scriptstyle X}, s, y)
\right\}
\, {\rm d}y\, {\rm d}\beta(s)\, {\rm d}{\scriptstyle X}\, {\rm d}t\\
& + \iiiint\limits_{(0,T)\times\R^3\times\Delta(\mathcal{A})\times\mathbb{T}^3}
\left\{
\widehat{\widetilde{{\bf b}}}(s, {\scriptstyle X}, y)
\cdot
\widehat{{}^\top \!\!\, \widetilde{J}}(s,{\scriptstyle X}) \nabla_{\scriptscriptstyle X} u_0(t,{\scriptstyle X})
\right\}
\widehat{\psi_1} (t, {\scriptstyle X}, s, y)
\, {\rm d}y\, {\rm d}\beta(s)\, {\rm d}{\scriptstyle X}\, {\rm d}t
\end{align*}
\begin{align*}
& + \iiiint\limits_{(0,T)\times\R^3\times\Delta(\mathcal{A})\times\mathbb{T}^3}
\left\{
\widehat{\widetilde{{\bf b}}}(s, {\scriptstyle X}, y)
\cdot
\nabla_y u_1(t,{\scriptstyle X},s,y)
\right\}
\widehat{\psi_1} (t, {\scriptstyle X}, s, y)
\, {\rm d}y\, {\rm d}\beta(s)\, {\rm d}{\scriptstyle X}\, {\rm d}t\\
& + \iiiint\limits_{(0,T)\times\R^3\times\Delta(\mathcal{A})\times\mathbb{T}^3}
\widehat{\widetilde{\bf D}}(s,\X,y)
\widehat{{}^\top \!\!\, \widetilde{J}}(s,{\scriptstyle X}) \nabla_{\scriptscriptstyle X} u_0(t,{\scriptstyle X})
\cdot
\nabla_y \widehat{\psi_1} (t, {\scriptstyle X}, s, y)
\, {\rm d}y\, {\rm d}\beta(s)\, {\rm d}{\scriptstyle X}\, {\rm d}t\\
& + \iiiint\limits_{(0,T)\times\R^3\times\Delta(\mathcal{A})\times\mathbb{T}^3}
\widehat{\widetilde{\bf D}}(s,\X,y)
\nabla_y u_1(t,{\scriptstyle X},s,y)
\cdot
\nabla_y \widehat{\psi_1} (t, {\scriptstyle X}, s, y)
\, {\rm d}y\, {\rm d}\beta(s)\, {\rm d}{\scriptstyle X}\, {\rm d}t
= 0.
\end{align*}
The first term in the previous equation vanishes as $u_0$ is independent of the $s$-variable (Lemma \ref{lem:hom:2-scale-compactness}). Finally, performing an integration by parts in the $\scriptstyle X$ variable in the second integral of the previous equation shall lead to
\begin{equation}\label{eq:hom:prop-sigma-limit-u-1}
\begin{array}{lc}
\displaystyle
\iiiint\limits_{(0,T)\times\R^3\times\Delta(\mathcal{A})\times\mathbb{T}^3}
\widehat{\widetilde{{\bf D}}}(s,\X,y)
\nabla_y u_1(t,{\scriptstyle X},s,y)
\cdot
\nabla_y \widehat{\psi_1} (t, {\scriptstyle X}, s, y)
\, {\rm d}y\, {\rm d}\beta(s)\, {\rm d}{\scriptstyle X}\, {\rm d}t\\[0.3 cm]
\displaystyle
+ \iiiint\limits_{(0,T)\times\R^3\times\Delta(\mathcal{A})\times\mathbb{T}^3}
\widehat{\widetilde{{\bf D}}}(s,\X,y)
\widehat{{}^\top \!\!\, \widetilde{J}}(s,{\scriptstyle X}) \nabla_{\scriptscriptstyle X} u_0(t,{\scriptstyle X})
\cdot
\nabla_y \widehat{\psi_1} (t, {\scriptstyle X}, s, y)
\, {\rm d}y\, {\rm d}\beta(s)\, {\rm d}{\scriptstyle X}\, {\rm d}t\\[0.3 cm]
\displaystyle
+ \iiiint\limits_{(0,T)\times\R^3\times\Delta(\mathcal{A})\times\mathbb{T}^3}
\left\{
\widehat{\widetilde{{\bf b}}}(s, {\scriptstyle X}, y)
\cdot
\nabla_y u_1(t,{\scriptstyle X},s,y)
\right\}
\widehat{\psi_1} (t, {\scriptstyle X}, s, y)
\, {\rm d}y\, {\rm d}\beta(s)\, {\rm d}{\scriptstyle X}\, {\rm d}t\\[0.3 cm]
\displaystyle
+ \iiiint\limits_{(0,T)\times\R^3\times\Delta(\mathcal{A})\times\mathbb{T}^3}
\left\{
\left(
\widehat{\widetilde{{\bf b}}}(s, {\scriptstyle X}, y)
-
\widehat{\widetilde{\bar{{\bf b}}}}(s, {\scriptstyle X})
\right)
\cdot
\widehat{{}^\top \!\!\, \widetilde{J}}(s,{\scriptstyle X}) \nabla_{\scriptscriptstyle X} u_0(t,{\scriptstyle X})
\right\}
\widehat{\psi_1} (t, {\scriptstyle X}, s, y)
\, {\rm d}y\, {\rm d}\beta(s)\, {\rm d}{\scriptstyle X}\, {\rm d}t
= 0,
\end{array}
\end{equation}
where we have used Lemma \ref{lem:heu:basic-facts}.(ii) and that ${\bf b}$ is of zero $x$-divergence. The weak formulation \eqref{eq:hom:prop-sigma-limit-u-1} is associated with the following PDE for $u_1(t,\X,s,y)$ in $\T^3$:
\begin{align*}
\widehat{\widetilde{{\bf b}}}(s, {\scriptstyle X}, y)
\cdot
\left(
\nabla_y u_1
+ \widehat{{}^\top \!\!\, \widetilde{J}}(s,{\scriptstyle X}) \nabla_\XX u_0
\right)
& -
\nabla_y
\cdot
\left(
\widehat{\widetilde{{\bf D}}}(s,\X,y)
\left(
\nabla_y u_1
+  \widehat{{}^\top \!\!\, \widetilde{J}}(s,{\scriptstyle X}) \nabla_\XX u_0
\right)
\right)
\\
& =
\widehat{\widetilde{\bar{{\bf b}}}}(s, {\scriptstyle X})
\cdot
\widehat{{}^\top \!\!\, \widetilde{J}}(s,{\scriptstyle X}) \nabla_\XX u_0.
\end{align*}
The above PDE is linear and hence separation of variables may be performed as in \eqref{eq:hom:u-1-seprable-variables}. Taking \eqref{eq:hom:u-1-seprable-variables} and undoing the flow-representation, yields the cell problem \eqref{eq:hom:prop:cell-probem}.
\end{proof}

\subsection{Homogenized problem}\label{ssec:hom:hom}

\begin{proof}[Proof of Theorem \ref{thm:hom:hom}]
Taking $\psi_1\equiv0$ in the weak formulation \eqref{eq:hom:weak-form} and passing to the limit in the sense of $\Sigma$-convergence along $\Phi_\tau$, using Lemma \ref{lem:hom:singular-terms-sigma-limit} for the singular terms, gives
\begin{equation}\label{eq:hom:sigma-limit-wf-hom}
\begin{aligned}
& - \iint\limits_{(0,T)\times\R^3}
u_0(t, {\scriptstyle X})
\frac{\partial \psi}{\partial t}(t, {\scriptstyle X})
\, {\rm d}{\scriptstyle X}\, {\rm d}t\\
& + \iiiint\limits_{(0,T)\times\R^3\times\Delta(\mathcal{A})\times\mathbb{T}^3}
\left(
\widehat{\widetilde{\bar{\bf b}}}(s, {\scriptstyle X})
- \widehat{\widetilde{\bf b}}(s, {\scriptstyle X}, y)
\right)
\cdot
\left(
u_1(t,{\scriptstyle X},s,y)
\widehat{{}^\top \!\! \widetilde{J}}(s,{\scriptstyle X}) \nabla_{\scriptscriptstyle X}\psi (t,{\scriptstyle X})
\right)
\, {\rm d}y\, {\rm d}\beta(s)\, {\rm d}{\scriptstyle X}\, {\rm d}t\\
& + \iiiint\limits_{(0,T)\times\R^3\times\Delta(\mathcal{A})\times\mathbb{T}^3}
\widehat{\widetilde{{\bf D}}}(s,\X,y)
\widehat{{}^\top \!\! \widetilde{J}}(s,{\scriptstyle X}) \nabla_{\scriptscriptstyle X} u_0(t,{\scriptstyle X})
\cdot
\widehat{{}^\top \!\! \widetilde{J}}(s,{\scriptstyle X}) \nabla_{\scriptscriptstyle X} \psi (t, {\scriptstyle X})
\, {\rm d}y\, {\rm d}\beta(s)\, {\rm d}{\scriptstyle X}\, {\rm d}t\\
& + \iiiint\limits_{(0,T)\times\R^3\times\Delta(\mathcal{A})\times\mathbb{T}^3}
\widehat{\widetilde{{\bf D}}}(s,\X,y)
\nabla_y u_1(t,{\scriptstyle X},s,y)
\cdot
\widehat{{}^\top \!\! \widetilde{J}}(s,{\scriptstyle X}) \nabla_{\scriptscriptstyle X} \psi (t, {\scriptstyle X})
\, {\rm d}y\, {\rm d}\beta(s)\, {\rm d}{\scriptstyle X}\, {\rm d}t\\
& - \int\limits_{\R^3}
u^{in}({\scriptstyle X}) \psi(0,{\scriptstyle X})
\, {\rm d}{\scriptstyle X}
= 0.
\end{aligned}
\end{equation}
Substituting \eqref{eq:hom:u-1-seprable-variables} for $u_1(t,s,\X,y)$ in the second term of the above equation yields
\begin{align*}
\iiiint\limits_{(0,T)\times\R^3\times\Delta(\mathcal{A})\times\mathbb{T}^3}
\widehat{\!\! \widetilde{J}}(s,{\scriptstyle X})
\left(
\widehat{\widetilde{\bar{\bf b}}}(s, {\scriptstyle X})
- \widehat{\widetilde{\bf b}}(s, {\scriptstyle X}, y)
\right)
\left(
\widehat{\widetilde{\omega}}(s,\X,y)
\cdot
{}^\top\!\!\, \widehat{\widetilde{J}}(s,\X)\nabla_\XX u_0(t,\X)
\right)
\\
\cdot
\nabla_{\scriptscriptstyle X}\psi (t,{\scriptstyle X})
\, {\rm d}y\, {\rm d}\beta(s)\, {\rm d}{\scriptstyle X}\, {\rm d}t
\\
= \iiiint\limits_{(0,T)\times\R^3\times\Delta(\mathcal{A})\times\mathbb{T}^3}
\left(
\widehat{\!\!\, \widetilde{J}}(s,{\scriptstyle X})
\left(
\widehat{\widetilde{\bar{\bf b}}}(s, {\scriptstyle X})
- \widehat{\widetilde{\bf b}}(s, {\scriptstyle X}, y)
\right)
{}^\top\!\, \widehat{\widetilde{\omega}}(s,\X,y)
{}^\top\!\!\, \widehat{\widetilde{J}}(s,\X)\nabla_\XX u_0(t,\X)
\right)
\\
\cdot 
\nabla_{\scriptscriptstyle X}\psi (t,{\scriptstyle X})
\, {\rm d}y\, {\rm d}\beta(s)\, {\rm d}{\scriptstyle X}\, {\rm d}t.
\end{align*}
Substituting \eqref{eq:hom:u-1-seprable-variables} for $u_1(t,s,\X,y)$ in the fourth term on the left hand side of \eqref{eq:hom:sigma-limit-wf-hom} yields
\begin{align*}
\iiiint\limits_{(0,T)\times\R^3\times\Delta(\mathcal{A})\times\mathbb{T}^3}
\widehat{ \!\! \widetilde{J}}(s,{\scriptstyle X}) 
\widehat{\widetilde{{\bf D}}}(s,\X,y)
\nabla_y
\left(
\widehat{\widetilde{\omega}}(s,\X,y)
\cdot
{}^\top\!\!\, \widehat{\widetilde{J}}(s,\X)\nabla_\XX u_0(t,\X)
\right)
\\
\cdot
\nabla_{\scriptscriptstyle X} \psi (t, {\scriptstyle X})
\, {\rm d}y\, {\rm d}\beta(s)\, {\rm d}{\scriptstyle X}\, {\rm d}t
\\
= \iiiint\limits_{(0,T)\times\R^3\times\Delta(\mathcal{A})\times\mathbb{T}^3}
\left(
\widehat{ \!\! \widetilde{J}}(s,{\scriptstyle X}) 
\widehat{\widetilde{{\bf D}}}(s,\X,y)
{}^\top\nabla_y \widehat{\tilde{\omega}}(s,\X,y)
{}^\top\!\!\, \widehat{\widetilde{J}}(s,\X)\nabla_\XX u_0(t,\X)
\right)
\\
\cdot
\nabla_{\scriptscriptstyle X} \psi (t, {\scriptstyle X})
\, {\rm d}y\, {\rm d}\beta(s)\, {\rm d}{\scriptstyle X}\, {\rm d}t.
\end{align*}
Hence the limit weak formulation \eqref{eq:hom:sigma-limit-wf-hom} rewrites as
\begin{align*}
- \iint\limits_{(0,T)\times\R^3}
u_0(t, {\scriptstyle X})
\frac{\partial \psi}{\partial t}(t, {\scriptstyle X})
\, {\rm d}{\scriptstyle X}\, {\rm d}t
+ \iint\limits_{(0,T)\times\R^3}
\mathfrak{D}(\X)\nabla_\XX u_0(t, {\scriptstyle X})
\cdot 
& \nabla_\XX \psi(t,\X)
\, {\rm d}{\scriptstyle X}\, {\rm d}t
\\
& - \int\limits_{\R^3}
u^{in}({\scriptstyle X}) \psi(0,{\scriptstyle X})
\, {\rm d}{\scriptstyle X}
= 0,
\end{align*}
where the expression for the diffusion matrix $\mathfrak{D}(\X)$ is given by
\begin{align*}
\mathfrak{D}(\X)
& = 
\int\limits_{\Delta(\mathcal{A})}
\widehat{\!\! \widetilde{J}}(s,{\scriptstyle X})
\left(
\int\limits_{\T^3}
\left(
\widehat{\widetilde{\bar{\bf b}}}(s, {\scriptstyle X})
- \widehat{\widetilde{\bf b}}(s, {\scriptstyle X}, y)
\right)
{}^\top\!\, \widehat{\widetilde{\omega}}(s,\X,y)
\, {\rm d}y
\right)
{}^\top\!\!\, \widehat{\widetilde{J}}(s,\X)
\, {\rm d}\beta(s)
\\
& +
\int\limits_{\Delta(\mathcal{A})}
\widehat{\!\! \widetilde{J}}(s,{\scriptstyle X})
\left(
\int\limits_{\T^3}
\widehat{\widetilde{{\bf D}}}(s,\X,y)
\, {\rm d}y
\right)
{}^\top\!\!\, \widehat{\widetilde{J}}(s,\X)
\, {\rm d}\beta(s)
\\
& +
\int\limits_{\Delta(\mathcal{A})}
\widehat{\!\! \widetilde{J}}(s,{\scriptstyle X})
\left(
\int\limits_{\T^3}
\widehat{\widetilde{{\bf D}}}(s,\X,y)
{}^\top\nabla_y \widehat{\widetilde{\omega}}(s,\X,y)
\, {\rm d}y
\right)
{}^\top\!\!\, \widehat{\widetilde{J}}(s,\X)
\, {\rm d}\beta(s).
\end{align*}
Using the cell problem, we arrive at the desired expression for the effective diffusion. As the computations are exactly similar to the ones present in the proof of Proposition \ref{prop:heu:formal-result}, we skip the details.
\end{proof}

\section{Discussion of assumptions}\label{sec:Exm}

In this section we discuss the assumptions (detailed in Section \ref{ssec:hom:assumptions}) of the homogenization result (Theorem \ref{thm:hom:hom}) and give both the examples where they are satisfied and counterexamples where the failure of these assumptions can lead either to trivial or non-unique limits. We remark that the main obstacle to obtaining homogenization results in our setting is, in fact, not related to the oscillating coefficients, but rather to deriving an effective equation in Lagrangian coordinates.  
\subsection{Bounds on the Jacobian}
The main restriction on the fluid flow is Assumption \ref{ass:hom:Jacobain-algebra} which implies that the Jacobian of the flow is \emph{uniformly bounded in time}, i.e. in the $\tau$ variable. This is a highly non-generic assumption, but is needed for the validity of the posited asymptotic expansion \eqref{eq:intro:2scale-flow-exp-classic}. Indeed, if the Jacobian is not uniformly bounded in time, then, for example, the right hand side of the $\mathcal{O}(\varepsilon^{0})$ equation in the cascade \eqref{eq:heu:cascade} may grow to be of $\mathcal{O}(\varepsilon^{-1})$ for sufficiently large values of the fast time variable $\tau$, breaking the formal expansion. First we shall give some examples of mean fluid fields which obey this assumption, which although restrictive, still covers a large class of vector fields.

\begin{Exm}[Constant drift]\label{Exm:exm:constant-drift}
The most obvious example is the constant drift flow $\bar{\bf b}(x)={\bf b}^*$ for a constant vector ${\bf b}^*\in\R^d$. In this case the Jacobian matrix is the identity for all times. This case falls under the regime of \emph{two-scale convergence with drift} studied in \cite{papanicolaou1995diffusion,allaire2010two}.
\end{Exm}

\begin{Exm}[Euclidean motions]\label{Exm:exm:euclidean-motions}
Euclidean motions are the composition of a translation and a rigid rotation. An autonomous flow consists of Euclidean motions if and only if the vector field is given by $\bar{\bf b}(x)={\bf A}x+{\bf b}^*$ for a constant skew-symmetric matrix ${\bf A}$ and a constant vector ${\bf b}^*$. The associated Jacobian matrix is an orthogonal matrix and hence of norm $1$.
\end{Exm}

\begin{Exm}[Asymptotically constant drift]\label{Exm:asymptotically-constant-drift}
Let the mean flow $\bar{\bf b}$ in dimension $d\ge2$ be given by
\begin{align*}
\bar{\bf b}(x)=\begin{cases}
{\bf b}^{*}&\text{when } x_1<-R,\\
{\bf c}(x)&\text{when } x_1\in [-R,R],\\
{\bf b}^{**}&\text{when } x_1>R,
\end{cases}
\end{align*}
where $R>0$, ${\bf e}_1\cdot {\bf b}^{*},{\bf e}_1\cdot{\bf b}^{**}>0$ and  ${\bf c}(x)$ is chosen to make $\bar{\bf b}$ continuously differentiable and divergence free. To ensure that the Jacobian of the flow is uniformly bounded in time we require that any integral curve spends only finite time $T$ in $\{x_1\in [-R,R]\}$, which implies that the Jacobian is norm bounded by $C\exp(T\|\nabla {\bf c}\|_{L^\infty})$. This can easily be achieved by requiring that ${\bf e}_1\cdot {\bf c}(x)\ge c>0$.
\end{Exm}

We remark also that the Jacobian in each of these examples belongs to some algebra w.m.v., specifically the Jacobian in Examples \ref{Exm:exm:constant-drift} and \ref{Exm:asymptotically-constant-drift} belong to the algebra of functions that converge at infinity (see Example \ref{exm:abs:a.w.m.v-convergence-at-infinity}), and the Jacobian in Example \ref{Exm:exm:euclidean-motions} belongs to the algebra of almost periodic functions (see Example \ref{exm:abs:AP}).

\subsection{Necessity of uniformly bounded Jacobian}\label{subsec:necessity-bounded-jacobian}
The assumption of uniform bounds on the Jacobian is not a mere technical assumption. We illustrate this with a counterexample, which we have made as simple as possible to allow explicit calculations.

\begin{CExm}[Blow-up of the Jacobian for a shear flow]\label{Exm:CExm:blow-up}
Consider the simplest example of a shear flow:
\begin{equation*}
\bar{{\bf b}}(x_1,x_2)=\begin{bmatrix}x_2\\0\end{bmatrix}.
\end{equation*}
An easy computation gives that the flow $\Phi_\tau$ generated by this vector field and its Jacobian are given by
\begin{equation*}
\Phi_{\tau}(x_1,x_2)=\begin{bmatrix}
x_1+\tau x_2\\x_2
\end{bmatrix},\qquad 
J(-\tau,x)=\begin{bmatrix}
1&\tau\\0&1
\end{bmatrix}.
\end{equation*}
In particular, the Jacobian grows linearly in time.

Consider the parabolic problem on $]0,T[\times\R^2$ given by
\begin{equation}\label{Exm:eq:blow-up-parabolic}
\frac{\partial u^\eps}{\partial t}+\frac 1\eps \bar{{\bf b}}(x_1,x_2)\cdot\nabla u^\eps-\Delta u^\eps=0
\end{equation}
(Note that this example does not have oscillating coefficients.) The posited asymptotic expansion \eqref{eq:intro:2scale-flow-exp-classic} becomes in this case
\begin{equation*}
u^\eps(t,x_1,x_2)
\ \approx\
 u_0\left(t,\frac{t}{\eps},x_1-\frac{x_2t}{\eps},x_2\right)
 \ +\
 \eps\, u_1\left(t,\frac{t}\eps,x_1-\frac{x_2t}{\eps},x_2\right)\ +\ \dotsb
\end{equation*}
and the cascade of equations \eqref{eq:heu:cascade} becomes
\begin{equation*}
\begin{array}{ccl}
\displaystyle
\mathcal{O}(\eps^{-2}): &
\displaystyle
0 &
\displaystyle
= 0,
\\ [0.3 cm]
\displaystyle
\mathcal{O}(\eps^{-1}): &
\displaystyle
0&
\displaystyle
= -\frac{\partial u_0}{\partial \tau},
\\[0.3 cm]
\displaystyle
\mathcal{O}(\eps^{0}): &
\displaystyle
0&
\displaystyle
= -\frac{\partial u_0}{\partial t} -\frac{\partial u_1}{\partial \tau}
+ (1+\tau^2)\frac{\partial^2 u_0}{\partial \X_1^2}-2\tau\frac{\partial^2 u_0}{\partial \X_1\partial \X_2}+\frac{\partial^2 u_0}{\partial \X_2^2},
\end{array}
\end{equation*}
where the $\mathcal{O}(\eps^{-2})$ equation is trivial due to the lack of oscillating coefficients. But this cascade is only valid for times $\tau \ll \eps^{-1/2}$, as at this value of $\tau$ the $(1+\tau^2)$ coefficient in the posited $\mathcal{O}(\eps^{0})$ equation jumps order. To be a valid asymptotic expansion for the parabolic problem \eqref{Exm:eq:blow-up-parabolic}, we require it to be valid for $\tau\in[0,T/\eps]$, i.e. up to $\mathcal{O}(\eps^{-1})$ values of $\tau$. This means that the posited asymptotic expansion cannot be correct. 

Indeed, the problem \eqref{Exm:eq:blow-up-parabolic} can be explicitly solved using the Fourier transform. Let $(\xi_1,\xi_2)$ be Fourier variables corresponding to $(\X_1,\X_2)$. Then an easy computation yields
\begin{equation*}
\hat{u}^\eps(t,\xi_1,\xi_2)=\exp\left(\int^t_0-|\xi_1|^2-\left|\xi_2-\frac{s}{\eps}\xi_1\right|^2\,{\rm d}s\right)\hat{u}^\eps(0,\xi_1,\xi_2),
\end{equation*}
(where we have abused notation and used $\hat{\cdot}$ to denote the Fourier instead of Gelfand transform). The integrand in the above exponential converges pointwise to $-\infty$ as $\eps\to0$ so long as $s\xi_1\ne0$. Therefore, $\hat{u}\to0$ almost everywhere in $[0,T]\times\mathbb{R}^2$ as $\varepsilon\to0$, and it follows from dominated convergence and Plancherel's theorem that $u^\eps\to 0$ strongly in $L^2([0,T]\times\mathbb{R}^2)$. So, not only is the asymptotic expansion not correct, but the limit as $\eps\to0$ is trivial.
\end{CExm}

This counterexample illustrates a general phenomenon for shear flows, where the convection enhances the diffusion (see for example \cite{fannjiang1994convection} where this is considered in detail for the more complicated case of cats eye flows, and \cite{hamel2010extinction,constantin2008diffusion,berestycki2005elliptic} where conditions under which the solution converges strongly to zero are studied). As a consequence of this enhancement, the time scale on which diffusion is observed is different and one should not expect to obtain a non-trivial limit in the scaling we consider. We give a partial result to this effect below. The authors shall address this problem in a forthcoming publication \cite{holdingetal2016shearflow}.

\begin{Prop}
Let the assumptions of Proposition \ref{prop:QA:well-posed} hold and let $u^\eps$ be the solution to  \eqref{eq:heu:CD}-\eqref{eq:heu:IV}. Let $v^\eps$ be the solution in Lagrangian coordinates, i.e. $v^\eps(t,\X)=u^\eps(t,\Phi_{t/\eps}(\X))$. Let $\xi\in\R^d$ be a unit vector, and suppose that for some (non-empty) open set $A\subset \R^d$ we have, for $\X\in A$,
\begin{equation}\label{Exm:blow-up-J-grows-on-A}
\lim_{\tau\to\infty}|{}^\top\!\!\, \widetilde{J}(\tau,\X)\xi|=\infty.
\end{equation}
Then for any $v_0$ a $L^2((0,T);H^1(\R^d))$-weak limit of $v^\eps$, we have $\xi\cdot\nabla_\XX v_0=0$ on $(0,T)\times A$.
\end{Prop}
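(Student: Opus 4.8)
The plan is to transport the parabolic energy estimate into Lagrangian coordinates and read off the degeneracy of $v_0$ from the blow-up of the Jacobian. First I would recall the basic energy identity underlying Lemma \ref{lem:QA:apriori-estimates}: testing \eqref{eq:heu:CD} against $u^\eps$ and using that ${\bf b}(x,x/\eps)$ is divergence free (so the convective term drops) together with the coercivity \eqref{eq:heu:coercive-diffusion-matrix} gives
\begin{equation*}
\lambda\int_0^T\!\!\int_{\R^d}|\nabla u^\eps(t,x)|^2\,{\rm d}x\,{\rm d}t
\le
\int_0^T\!\!\int_{\R^d}{\bf D}\Big(x,\tfrac x\eps\Big)\nabla u^\eps\cdot\nabla u^\eps\,{\rm d}x\,{\rm d}t
\le\tfrac12\|u^{in}\|_{L^2(\R^d)}^2 .
\end{equation*}
Since $\bar{\bf b}$ inherits the null-divergence of ${\bf b}$, the flow $\Phi_{t/\eps}$ is volume preserving and the change of variables $x=\Phi_{t/\eps}(\X)$ has unit Jacobian. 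Differentiating $v^\eps(t,\X)=u^\eps(t,\Phi_{t/\eps}(\X))$ and using the convention \eqref{eq:heu:Jacobian-matrix} together with the autonomy of the flow (exactly as in the proof of Proposition \ref{prop:heu:homo-eq-exist}, where the inverse Jacobian is re-expressed through the Jacobian at a flowed point) yields the chain rule $(\nabla_x u^\eps)(t,\Phi_{t/\eps}(\X))={}^\top\!\!\,\widetilde{J}(t/\eps,\X)\nabla_\XX v^\eps(t,\X)$.

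Substituting this into the energy bound and using the unit Jacobian gives the weighted estimate
\begin{equation*}
\int_0^T\!\!\int_{\R^d}\big|{}^\top\!\!\,\widetilde{J}(t/\eps,\X)\,\nabla_\XX v^\eps(t,\X)\big|^2\,{\rm d}\X\,{\rm d}t\le C,
\end{equation*}
with $C$ independent of $\eps$; this is the central object of the proof. Since $v^\eps\rightharpoonup v_0$ in $L^2((0,T);H^1(\R^d))$, the family $\nabla_\XX v^\eps$ is also bounded in $L^2$, so $\xi\cdot\nabla_\XX v^\eps\rightharpoonup\xi\cdot\nabla_\XX v_0$ weakly. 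The goal is then to fix $\phi\in C^\infty_c((0,T)\times A)$ and prove $\int\!\int\phi\,(\xi\cdot\nabla_\XX v^\eps)\to0$, which by weak convergence forces $\int\!\int\phi\,(\xi\cdot\nabla_\XX v_0)=0$ and hence $\xi\cdot\nabla_\XX v_0=0$ on $(0,T)\times A$.

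The key point is that the weighted estimate controls precisely the component of $\nabla_\XX v^\eps$ that the flow amplifies. Writing $w^\eps:={}^\top\!\!\,\widetilde{J}(t/\eps,\X)\nabla_\XX v^\eps$ (bounded in $L^2$ by the display above) and combining with the plain $L^2$ bound on $\nabla_\XX v^\eps$, I would isolate the amplified quantity by choosing a \emph{bounded} direction field $\eta^\eps$ for which $\widetilde{J}(t/\eps,\X)\eta^\eps$ has a large component along $\xi$ with a bounded remainder; the elementary identity $\eta^\eps\cdot w^\eps=(\widetilde{J}\eta^\eps)\cdot\nabla_\XX v^\eps$ then produces a pointwise inequality of the form
\begin{equation*}
\big|{}^\top\!\!\,\widetilde{J}(t/\eps,\X)\,\xi\big|\;\big|\xi\cdot\nabla_\XX v^\eps\big|\le |w^\eps|+C\,|\nabla_\XX v^\eps| .
\end{equation*}
On $\mathrm{supp}\,\phi$ one has $t\ge t_0>0$, so $\tau=t/\eps\to\infty$ and, by hypothesis \eqref{Exm:blow-up-J-grows-on-A}, $|{}^\top\!\!\,\widetilde{J}(t/\eps,\X)\xi|\to\infty$ on $A$; dividing by this factor and integrating shows that $\xi\cdot\nabla_\XX v^\eps\to0$ in $L^2$ on $\mathrm{supp}\,\phi$, which suffices. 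The shear flow of Counterexample \ref{Exm:CExm:blow-up} is the transparent model: there $\widetilde{J}$ is triangular, $\eta^\eps$ may be taken constant, and the amplified derivative is seen directly to be $\mathcal{O}(\eps/t_0)$.

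The hard part is this last extraction: converting the \emph{scalar} divergence of $|{}^\top\!\!\,\widetilde{J}(\tau,\X)\xi|$ into control of the \emph{particular} directional derivative $\xi\cdot\nabla_\XX v^\eps$. The weighted estimate alone only bounds the amplified component of the gradient, so the additional $H^1$ bound on $\nabla_\XX v^\eps$ must be used to absorb the remainder coming from the non-amplified (in particular volume-contracting) directions of the flow, and the construction of the admissible field $\eta^\eps$ is exactly where the interplay between the expanding and contracting singular directions of $\widetilde{J}$, constrained by $\det\widetilde{J}=1$, enters. A secondary technical point is upgrading the pointwise blow-up in \eqref{Exm:blow-up-J-grows-on-A} to the dominated/uniform control on the compact set $\mathrm{supp}\,\phi$ that is needed to pass the factor $|{}^\top\!\!\,\widetilde{J}(t/\eps,\X)\xi|^{-1}$ to zero, which I would handle by an application of Egorov's theorem on $\mathrm{supp}\,\phi$.
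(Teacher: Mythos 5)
You follow the same route as the paper: transport the energy estimate \eqref{eq:QA:apriori-estimates} to Lagrangian coordinates via the volume-preserving change of variables to get the weighted bound $\iint |{}^\top\widetilde{J}(t/\eps,\X)\nabla_\XX v^\eps|^2\,{\rm d}\X\,{\rm d}t \le C$, then divide by the diverging factor $|{}^\top\widetilde{J}\xi|$ on $(t_0,T)\times A$, with Egorov handling the non-uniformity of \eqref{Exm:blow-up-J-grows-on-A} (the paper invokes Egorov at the outset, you at the end; same role). The chain rule, the weighted estimate, and the reduction to showing $\iint\phi\,(\xi\cdot\nabla_\XX v^\eps)\to 0$ are all correct.

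The genuine gap is the step you yourself flag as the hard part, and it is not merely unfinished: the desired pointwise inequality
\begin{equation*}
|{}^\top\widetilde{J}\xi|\,|\xi\cdot\nabla_\XX v^\eps| \le |{}^\top\widetilde{J}\nabla_\XX v^\eps| + C\,|\nabla_\XX v^\eps|,
\end{equation*}
equivalently the existence of a bounded $\eta^\eps$ with $\widetilde{J}\eta^\eps=\alpha\xi+O(1)$ and $\alpha\sim|{}^\top\widetilde{J}\xi|$, is false for general volume-preserving flows. Writing $\eta^\eps=\widetilde{J}^{-1}(\alpha\xi+r)$, boundedness of $\eta^\eps$ forces the point $\alpha\xi$ to lie within bounded distance of the ellipsoid $\widetilde{J}(B_C)$, which happens only when $\xi$ is aligned, up to an $O(|{}^\top\widetilde{J}\xi|^{-1})$ error, with the expanding singular directions of $\widetilde{J}$. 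Hypothesis \eqref{Exm:blow-up-J-grows-on-A} does not give this: it permits $\xi$ to have an order-one component along a contracting direction. Concretely, for the hyperbolic mean flow $\bar{\bf b}(x)=(x_1,-x_2)$ one has $\widetilde{J}(\tau,\X)={\rm diag}(e^{-\tau},e^{\tau})$, and for $\xi=(1,1)/\sqrt{2}$ hypothesis \eqref{Exm:blow-up-J-grows-on-A} holds, yet with $g=e_1$ the left side above is of order $e^{\tau}/2$ while the right side is $e^{-\tau}+C$; no admissible $\eta^\eps$ exists. Your construction succeeds for the shear of Counterexample \ref{Exm:CExm:blow-up} only because there $\xi=e_1$ is asymptotically the expanding singular direction. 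Worse, in the hyperbolic example the two estimates you use control only $\partial_2 v^\eps$ (strongly) and leave $\partial_1 v^\eps$ merely bounded, so $\xi\cdot\nabla_\XX v_0=\partial_1 v_0/\sqrt{2}$ cannot be killed by any argument that uses only the weighted bound, the plain $L^2$ bound and weak convergence; some additional input (e.g. the PDE itself) would be required.

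For fairness, you should know how this compares with the paper's own proof: there, this very step is performed as the single unjustified inequality $\iint |{}^\top\widetilde{J}\xi|^2|\xi\cdot\nabla_\XX v^\eps|^2 \le \iint |{}^\top\widetilde{J}\nabla_\XX v^\eps|^2$, i.e. your domination without even the corrective term $C|\nabla_\XX v^\eps|$, and it fails pointwise for the same reason (already for the shear, on gradients with $g_2=\tau g_1$). The elementary bound that is always true is $|\xi\cdot\nabla_\XX v^\eps|\le|\widetilde{J}^{-1}\xi|\,|{}^\top\widetilde{J}\nabla_\XX v^\eps|$, coming from $\xi\cdot g=(\widetilde{J}^{-1}\xi)\cdot({}^\top\widetilde{J}g)$; by the Cauchy--Schwarz relation $1=({}^\top\widetilde{J}\xi)\cdot(\widetilde{J}^{-1}\xi)\le|{}^\top\widetilde{J}\xi|\,|\widetilde{J}^{-1}\xi|$, the domination used by the paper is legitimate exactly in the equality case, i.e. when $\xi$ is an eigenvector of $\widetilde{J}\,{}^\top\widetilde{J}$. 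Thus the energy argument closes directly under the hypothesis $|\widetilde{J}(\tau,\X)^{-1}\xi|\to0$ rather than \eqref{Exm:blow-up-J-grows-on-A}. Your instinct about where the difficulty sits is exactly right, but neither your construction nor the paper's one-line version resolves it under \eqref{Exm:blow-up-J-grows-on-A} alone.
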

\begin{Rem}
As the set $A$ is independent of the choice of initial data $u^{in}$, the initial data can be chosen so that $v_0\not\in C([0,T];L^2(\R^d))$, and in particular so that $v_0$ does not solve a `nice' parabolic PDE with this initial datum.
\end{Rem}
\begin{proof}
Without loss of generality we can assume that $A$ is bounded, and by applying Egorov's theorem it is sufficient to prove the claim for $A$ measurable with the limit \eqref{Exm:blow-up-J-grows-on-A} uniform on $A$. By converting \eqref{eq:QA:apriori-estimates} to Lagrangian coordinates, we have the estimate
\begin{equation*}
\iint\limits_{(0,T)\times\R^d}\left|{}^\top\!\!\, \widetilde{J}(t/\eps,\X)\nabla_\XX v^{\eps}(t,\X)\right|^2\,{\rm d}\X{\rm d}t\le C
\end{equation*}
with $C$ only depending on $u^{in}$. Let $t_0\in(0,T)$ be arbitrary, and define 
\begin{equation*}
\theta(\eps):=\inf_{\X\in A,\tau \ge t_0/\eps}\left|{}^\top\!\!\, \widetilde{J}(\tau,\X)\xi\right|^2,
\end{equation*}
so that $\theta(\eps)\to\infty$ as $\eps\to0$. Then we have
\begin{align*}
\iint\limits_{(t_0,T)\times A}\left|\xi\cdot\nabla_\XX v^\eps(t,\X)\right|^2\, {\rm d}\X{\rm d}t&\le \theta(\eps)^{-1}
\iint\limits_{(t_0,T)\times A}\left|{}^\top\!\!\, \widetilde{J}(t/\eps,\X)\xi\right|^2\left|\xi\cdot\nabla_\XX v^{\eps}(t,\X)\right|^2\, {\rm d}\X{\rm d}t\\
&\le \theta(\eps)^{-1}\iint\limits_{(t_0,T)\times A}\left|{}^\top\!\!\, \widetilde{J}(t/\eps,\X)\nabla_\XX v^{\eps}(t,\X)\right|^2\, {\rm d}\X{\rm d}t\\
&\le \theta(\eps)^{-1}\iint\limits_{(0,T)\times \R^d}\left|{}^\top\!\!\, \widetilde{J}(t/\eps,\X)\nabla_\XX v^{\eps}(t,\X)\right|^2\, {\rm d}\X{\rm d}t\\
&\le C\theta(\eps)^{-1}.
\end{align*}
That $\xi\cdot\nabla_{\XX}v_0=0$ on $(t_0,A)$ follows from upper semi-continuity under weak convergence. That this holds for $(0,T)\times A$ follows as $t_0$ was arbitrary.
\end{proof}

\subsection{Flow representations of coefficients}\label{ssec:Exm:counter}
The main assumptions upon the coefficients ${\bf b}(x,y)$ and ${\bf D}(x,y)$ is that their flow representations $\widetilde{\bf b}(\tau,\X,y)$ and $\widetilde{\bf D}(\tau,x,y)$ belong to some fixed algebra w.m.v. $\mathcal{A}$. The reason we require this is to ensure that we obtain a single unique homogenized equation. This is in contrast to the uniform bounds on the Jacobian, which as described above, we require in order to be sure that we can obtain \emph{any} non-trivial limit. We illustrate the non-uniqueness phenomenon with the following counterexample. We remark that, again, the difficulty is present without any rapid spatial oscillations, or complicated mean flows.
\begin{CExm}[Non-uniqueness of the limit]\label{Exm:CExm:non-uniqueness}
Consider the $1+1$ dimensional parabolic problem on $]0,T[\times \mathbb{R}$ given by
\begin{equation}\label{Exm:eq:counter-example-flow-parabolic}
\frac{\partial u^\eps}{\partial t}
+ \frac{1}{\eps}\frac{\partial u^\eps}{\partial x}-\frac{\partial}{\partial x}\left({\bf D}(x)\frac{\partial u^\eps}{\partial x}\right)=0,
\end{equation}
where ${\bf D}(x)$ is given by
\begin{equation}\label{Exm:eq:counter-example-diffusion-coefficient}
{\bf D}(x)=\begin{cases}
1&\text{ if } |x|\in [2^{(2n)^2},2^{(2n+1)^2})\text{ for some integer }n\ge0,\\
2&\text{ otherwise}.
\end{cases}
\end{equation}
(Note that although this function ${\bf D}$ is not continuous, the example could be easily modified to have ${\bf D}\in C^\infty$.) The corresponding mean flow field $\bar{{\bf b}}$ and its flow and Jacobian are given by
\begin{equation*}
\bar{{\bf b}}(x)=1,\qquad \Phi_\tau(x)=x+\tau,\qquad J(\tau,x)=1,
\end{equation*}
i.e. we are in the constant drift case. The posited asymptotic expansion \eqref{eq:intro:2scale-flow-exp-classic} becomes
\begin{equation*}
u^\eps(t,x_1,x_2)\approx u_0\left(t,\frac{t}{\eps},x-\frac{t}{\eps}\right) + \eps\, u_1\left(t,\frac{t}\eps,x-\frac{t}{\eps}\right)+\dotsb
\end{equation*}
and the cascade of equations \eqref{eq:heu:cascade} is 
\begin{equation*}
\begin{array}{ccl}
\displaystyle
\mathcal{O}(\eps^{-2}): &
\displaystyle
0 &
\displaystyle
= 0,
\\ [0.3 cm]
\displaystyle
\mathcal{O}(\eps^{-1}): &
\displaystyle
0&
\displaystyle
= -\frac{\partial u_0}{\partial \tau},
\\[0.3 cm]
\displaystyle
\mathcal{O}(\eps^{0}): &
\displaystyle
0&
\displaystyle
= -\frac{\partial u_0}{\partial t} -\frac{\partial u_1}{\partial \tau}
+ \frac{\partial }{\partial \X}\left(\widetilde{{\bf D}}(\tau,\X)\frac{\partial u_0}{\partial \X}\right),
\end{array}
\end{equation*}
where the simplicity of the equations is due to the lack of fast spatial oscillations, and the flow representation of ${\bf D}$ is given by
\begin{equation*}
\widetilde{{\bf D}}(\tau,\X)={\bf D}(\X+\tau).
\end{equation*}
Unlike in the previous Counterexample \ref{Exm:CExm:blow-up}, there is nothing obviously wrong with this asymptotic expansion. The problem comes when we try to average the $\mathcal{O}(\eps^0)$ equation in the fast time variable $\tau$. Consider the limit
\begin{equation*}
(M\widetilde{{\bf D}})(\X)=\lim_{l\to\infty}\frac1{2l}\int\limits^l_{-l}\widetilde{{\bf D}}(\tau,\X)\, {\rm d}\tau.
\end{equation*}
We claim that this limit does not exist. Indeed, let $l_n=2^{(2n)^2}$ then for $n\ge1$,
\begin{equation}
\left| \frac1{2l_n}\int\limits^{l_n}_{-l_n}\widetilde{{\bf D}}(\tau,0)\,{\rm d}\tau-2\right|\le \frac{2\cdot 2^{(2n-1)^2}}{2\cdot2^{(2n)^2}}=2^{-4n+1}\to 0 \text{ as }n\to\infty,
\end{equation}
as the contribution from $|\tau|\in [2^{(2n-1)^2},2^{(2n)^2})$ dominates. Similarly, for $l_n'=2^{(2n-1)^2}$ we obtain
\begin{equation*}
\lim_{n\to\infty}\frac1{2l'_n}\int\limits^{l'_n}_{-l'_n}\widetilde{{\bf D}}(\tau,0)\, {\rm d}\tau=1.
\end{equation*}
Thus the limit $l\to\infty$ depends upon the choice of sequence. 

We remark that, as a consequence, $\widetilde{{\bf D}}$, cannot belong to any algebra w.m.v., and therefore none of our results apply to this problem.
\end{CExm}

One might think that the failure of the asymptotic expansion in the above counterexample could be rectified by some smarter choice of expansion. However, this is not the case. For this counterexample it is impossible to obtain a unique homogenized equation as we will now show.

\begin{Prop}
Let $v^\eps(t,\X)=u^\eps(t,\X+t/\eps)$ where $u^\eps$ solves the problem \eqref{Exm:eq:counter-example-flow-parabolic} in Counterexample \ref{Exm:CExm:non-uniqueness} with initial data $u^{in}\in L^2(\mathbb{R})$. Then there are two sequences $\eps\to0$ and $\eps'\to0$ such $v^{\eps}\rightharpoonup u_0$ and $v^{\eps'}\rightharpoonup u_0'$ weakly in $L^2([0,T]\times\mathbb{R})$, which solve the homogenized problems
\begin{equation}\label{Exm:eq:two-different-limit-equations}
\begin{aligned}
\frac{\partial u_0}{\partial t}-\frac{\partial^2 u_0}{\partial \X^2}=0,& \qquad\text{and}\qquad \frac{\partial u'_0}{\partial t}-2\frac{\partial^2 u'_0}{\partial \X^2}=0,
\end{aligned} 
\end{equation}
with $u_0(0,\X)=u_0'(0,\X)=u^{in}(\X)$, which are different equations.
\end{Prop}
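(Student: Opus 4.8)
The plan is to pass to Lagrangian coordinates, where the equation becomes parabolic with a coefficient oscillating only in the fast time variable, and then to exhibit two sequences $\eps\to0$ along which this coefficient converges \emph{strongly} in $L^2_{loc}$ to two different constants; the point is that strong (rather than merely weak-$*$) convergence of the coefficient is what renders the limit passage elementary. Concretely, since $\Phi_\tau(x)=x+\tau$ and $J\equiv1$, setting $v^\eps(t,\X)=u^\eps(t,\X+t/\eps)$ and applying the chain rule absorbs the convective term exactly, so that $v^\eps$ solves
\begin{equation*}
\frac{\partial v^\eps}{\partial t}-\frac{\partial}{\partial\X}\Big(a^\eps(t,\X)\frac{\partial v^\eps}{\partial\X}\Big)=0,\qquad v^\eps(0,\cdot)=u^{in},
\end{equation*}
with $a^\eps(t,\X):=\widetilde{\bf D}(t/\eps,\X)={\bf D}(\X+t/\eps)$ and $1\le a^\eps\le2$. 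The a priori bound of Lemma \ref{lem:QA:apriori-estimates}, read in Lagrangian coordinates, yields $\|v^\eps\|_{L^\infty(0,T;L^2(\R))}+\|\partial_\X v^\eps\|_{L^2((0,T)\times\R)}\le C\|u^{in}\|_{L^2(\R)}$, so along any sequence $\eps\to0$ I may extract a subsequence with $v^\eps\rightharpoonup v_0$ and $\partial_\X v^\eps\rightharpoonup\partial_\X v_0$ weakly in $L^2$.

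The crux, and the only genuinely nontrivial step, is the strong convergence of the coefficient. I would take $\eps_n=T\,2^{-(2n-1)^2}$ and $\eps_n'=T\,2^{-(2n)^2}$, so that as $t$ ranges over $(0,T)$ the argument $t/\eps_n$ (resp. $t/\eps_n'$) sweeps $(0,2^{(2n-1)^2})$ (resp. $(0,2^{(2n)^2})$). Fix a strip $(0,T)\times[-R,R]$. After the substitution $\sigma=\X+t/\eps_n$, the Lebesgue measure of the ``bad set'' where $a^{\eps_n}$ differs from $1$ equals $\eps_n$ times the length of $\{{\bf D}=2\}\cap(\X,\X+2^{(2n-1)^2})$; because the dyadic gaps in the definition \eqref{Exm:eq:counter-example-diffusion-coefficient} grow super-exponentially, this length is dominated by its largest block $[2^{(2n-3)^2},2^{(2n-2)^2})$ and is at most $2\cdot2^{(2n-2)^2}+C_R$, whence the bad set has measure $O(2^{-4n})\to0$, uniformly for $\X\in[-R,R]$. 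Since $a^{\eps_n}$ is bounded and equals $1$ off this vanishing set, $a^{\eps_n}\to1$ strongly in $L^2_{loc}$, and the symmetric computation at scale $2^{(2n)^2}$ gives $a^{\eps_n'}\to2$ strongly in $L^2_{loc}$. This is precisely the quantitative counterpart of the non-existence of $(M\widetilde{\bf D})$ already observed in Counterexample \ref{Exm:CExm:non-uniqueness}.

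With strong convergence of the coefficient in hand, the limit passage in the weak formulation is routine. Writing $a^{\eps_n}=1+g_n$ with $g_n\to0$ in $L^2_{loc}$, the diffusion integral $\iint a^{\eps_n}\partial_\X v^{\eps_n}\,\partial_\X\phi$ splits as $\iint\partial_\X v^{\eps_n}\,\partial_\X\phi$, which converges by the weak convergence of $\partial_\X v^{\eps_n}$, plus $\iint g_n\,\partial_\X v^{\eps_n}\,\partial_\X\phi$, which tends to $0$ by weak--strong convergence, as $g_n\,\partial_\X\phi\to0$ strongly in $L^2$ while $\partial_\X v^{\eps_n}\rightharpoonup\partial_\X v_0$; the time and initial-data terms pass to the limit using $v^{\eps_n}\rightharpoonup v_0$. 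Hence the $\eps_n$-limit $u_0$ solves $\partial_t u_0-\partial_\X^2 u_0=0$ and the $\eps_n'$-limit $u_0'$ solves $\partial_t u_0'-2\partial_\X^2 u_0'=0$, both with datum $u^{in}$, and standard uniqueness for each constant-coefficient heat equation forces the extracted subsequences to converge, giving two manifestly distinct limit equations. I expect the only real obstacle to be the passage to the limit in the product of the two oscillating factors $a^\eps$ and $\partial_\X v^\eps$; the device above avoids the usual delicate temporal-homogenization (or div--curl) argument by exploiting that, \emph{precisely at the dyadic time scales}, $a^\eps$ converges strongly to a constant, reducing the whole matter to weak--strong convergence.
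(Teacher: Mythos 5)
Your proposal is correct and follows essentially the same route as the paper's own proof: pass to Lagrangian coordinates, extract weak limits from the a priori bounds, prove strong $L^2_{loc}$ convergence of the oscillating coefficient ${\bf D}(\X+t/\eps)$ to the constants $1$ and $2$ along the two dyadic-scale sequences, and conclude by weak--strong convergence in the weak formulation plus uniqueness for the constant-coefficient heat equations. The only differences are cosmetic: your sequences are labelled (and normalised by $T$) oppositely to the paper's, and you quantify the coefficient convergence by measuring the ``bad set'' directly rather than via the paper's cut-off time $T_n(\X)$, which is the same computation in different clothing.
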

\begin{proof}
Without loss of generality, let $T=1$. Define $\eps_n=1/l_n$ and $\eps'_n=1/l'_n$ for $l_n=2^{(2n)^2}$ and $l_n'=2^{(2n-1)^2}$ as in Counterexample \ref{Exm:CExm:non-uniqueness}. We will first show that the following strong convergences 
\begin{equation*}
\begin{aligned}
\widetilde{{\bf D}}(t/\eps_n,\X)\to 2,& \qquad\text{and}\qquad \widetilde{{\bf D}}(t/\eps'_n,\X)\to1,
\end{aligned} 
\end{equation*}
hold in $L^2_{loc}([0,1]\times \mathbb{R})$ as $n\to\infty$. Indeed, let $K\in(0,\infty)$ be arbitrary, then,
\begin{equation}\label{Exm:eq:counter-L^2-convergence-integral}
\int\limits^1_0\int\limits^K_{-K}\left|\widetilde{{\bf D}}(t/\eps_n,\X)-2\right|^2\, {\rm d}t{\rm d}\X=\int\limits^K_{-K}\int\limits_0^{T_n(\X)}\left|{\bf D}(\X+t/\eps_n)-2\right|^2\, {\rm d}t{\rm d}\X+0
\end{equation}
where $T_n(\X)$ is chosen as the solution of $\X+T_n(\X)/\eps_n=2^{(2n-1)^2}$ (or $0$ if this is negative) so that ${\bf D}(\X+t/\eps)=2$ for $t\in[T_n,1]$. Hence
\begin{equation*}
T_n(\X)=2^{(2n-1)^2-4n^2}+\X\varepsilon_n\le 2^{-4n+1}+K\varepsilon_n\to0 \text{ as }n\to\infty,
\end{equation*}
and the convergence of \eqref{Exm:eq:counter-L^2-convergence-integral} to zero follows easily. The proof that $\widetilde{{\bf D}}(t/\eps'_n)$ converges to $1$ is similar, where instead $T_n(\X)$ is chosen so that ${\bf D}(\X+t/\eps_n')=1$ for $t\in[T_n(\X),1]$.

We will now show the convergence of $v^{\eps_n}$ to $u_0$. The argument for $v^{\eps_n'}$ is analogous, using instead the convergence of $\widetilde{{\bf D}}(t/\eps'_n)\to2$, and we leave it to the reader. Straight forward estimates allow us to pass to a subsequence $n_k$ on which $v^{\eps_{n_k}}(t,\X)$ and $\frac{\partial v^{\eps_{n_k}}}{\partial \X}$ converge $L^2([0,1]\times\mathbb{R})$-weak to limits $u_0$ and $\frac{\partial u_0}{\partial \X}$ as $k\to\infty$. Uniqueness of solutions of the equation for $u_0$ will later show that $v^{\eps_n}\to u_0$ as $n\to\infty$, i.e. the original sequence converges. We abuse notation and keep the original sequence. Writing \eqref{Exm:eq:counter-example-flow-parabolic} in $(t,\X)=(t,x-t/\eps)$ coordinates, multiplying by a test function $\varphi(t,\X)$ and integrating by parts, we obtain
\begin{align*}
&\int\limits_{-\infty}^\infty \varphi(0,\X)u^{in}(0,\X)\,{\rm d}\X-\int\limits^1_0\int\limits^\infty_{-\infty} \frac{\partial \varphi}{\partial t}(t,\X)v^{\eps_n}(t,\X)\, {\rm d}\X {\rm d}t\\
&\qquad+\int\limits^1_0\int\limits^\infty_{-\infty}\widetilde{{\bf D}}(t/\eps_n,\X)\frac{\partial \varphi}{\partial \X}(t,\X)\frac{\partial v^{\eps_n}}{\partial \X}(t,\X)\, {\rm d}\X{\rm d}t=0.
\end{align*}
By the weak convergences of $v^{\eps_n}\to u_0$, $\frac{\partial v^{\eps_n}}{\partial \X}\to \frac{\partial u_0}{\partial \X}$, the strong convergence $\widetilde{{\bf D}}(t/\eps_n,\X)\to 2$ and the compact support of $\varphi$ we can pass to the limit as $n\to\infty$ in each of these terms to obtain the weak formulation of the equation \eqref{Exm:eq:two-different-limit-equations} for $u_0$.
\end{proof}

We remark that, although the above counterexample features bad behaviour in the diffusion coefficient, similar examples could be constructed where the undesirable behaviour is in the drift term $\bar{\bf b}$ (or ${\bf b}$) or the Jacobian matrix $J$. The issue here is the appearance of the spatial scale $x=\mathcal{O}(\eps^{-1})$ in the problem due to the $\mathcal{O}(\eps^{-1})$ mean drift. Such a scale is not present when the average convection is zero, i.e. $\bar{{\bf b}}=0$, even in the convection dominated regime. This additional spatial scale is exploited in the choice of diffusion coefficient \eqref{Exm:eq:counter-example-diffusion-coefficient}, which exhibits different behaviour at a sequence of spatial scales tending to infinity. 

Next we show that this bad behaviour is a problem only at infinity, in the sense that if the trajectories of $\Phi_\tau$ are bounded, then our assumptions always hold.

\begin{Prop}\label{Exm:prop:flow-reps}
Let Assumption \ref{heu:ass:bound-on-J} hold. Then exactly one of the following hold:
\begin{enumerate}[(i)]
\item $\Phi_\tau$ has bounded orbits, i.e. for any $x$ the set $\{\Phi_\tau(x):\tau\in\R\}$ is bounded.
\item $\Phi_\tau$ converges to infinity, i.e. for any $x$ we have $|\Phi_\tau(x)|\to \infty$ as $|\tau|\to\infty$.
\end{enumerate}
Let $\mathcal{AP}$ denote the algebra of almost-periodic functions (Example \ref{exm:abs:AP}). In each respective case, the following also holds:
\begin{enumerate}[(i)]
\item $\Phi_\tau$ is uniformly almost-periodic, i.e. $\Phi_\tau(x)\in [C(\R^d;\mathcal{AP})]^d$. For every $f(x,y)\in C(\R^d\times \T^d)$, the flow-representation is uniformly almost-periodic, i.e. $\widetilde{f}\in C(\R^d\times\T^d;\mathcal{AP})$. If additionally $J(\tau,x)$ is uniformly continuous on $\R\times K$ for each compact set $K\subset \R^d$, then $J$ and $\tilde{J}$ are uniformly almost-periodic, i.e. $J,\widetilde{J}\in [C(\R^d;\mathcal{AP})]^{d\times d}$.
\item Let $f\in C(\R^d\times\T^d)$ converge to a limit as $|x|\to\infty$, i.e. $\lim_{|x|\to\infty}f(x,y)$ exists and is finite for each $y\in\T^d$. Then for each $x,y\in \R^d\times\T^d$, the flow-representation $\widetilde{f}(\cdot,x,y)$ belongs to the algebra of functions that converge at infinity (Example \ref{exm:abs:a.w.m.v-convergence-at-infinity}).
\end{enumerate}
\end{Prop}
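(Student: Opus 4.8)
The plan is to base everything on two uniform consequences of Assumption \ref{heu:ass:bound-on-J}. First, since $\bar{\bf b}$ is divergence free the flow is volume preserving, so $\det J\equiv1$; together with $|J|\le C$ this gives, exactly as in the proof of Proposition \ref{prop:heu:homo-eq-exist}, that every flow map is \emph{uniformly bi-Lipschitz}, i.e. $C^{-1}|x-x'|\le|\Phi_\tau(x)-\Phi_\tau(x')|\le C|x-x'|$ for all $\tau$. In particular $\{\Phi_\tau\}$ is an equicontinuous group and the map $y\mapsto\Phi_{\cdot}(y)$ is $C$-Lipschitz from $\R^d$ into $\mathrm{BUC}(\R;\R^d)$ with the sup-norm. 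Second, Lemma \ref{lem:heu:basic-facts}.(iv) yields $\bar{\bf b}(\Phi_\sigma(x))=J(-\sigma,x)\bar{\bf b}(x)$, so the speed along any orbit is pinched: $C^{-1}|\bar{\bf b}(x)|\le|\bar{\bf b}(\Phi_\sigma(x))|\le C|\bar{\bf b}(x)|$. The lower bi-Lipschitz bound already makes boundedness an all-or-nothing property, since $|\Phi_\tau(y)|\le|\Phi_\tau(x)|+C|x-y|$ shows a single bounded orbit forces all orbits bounded. Thus either all orbits are bounded, which will be case (i), or all orbits are unbounded; mutual exclusivity of (i) and (ii) is then immediate, because an escaping orbit is unbounded while a fixed point neither escapes nor is unbounded. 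It remains to show that when all orbits are unbounded each one escapes in both time directions, and the backward direction follows from the forward one applied to the flow of $-\bar{\bf b}$.

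The hard part will be precisely this escape direction. I would argue by contradiction: if the forward orbit of $x$ fails to escape, pick $\tau_n\to+\infty$ with $\Phi_{\tau_n}(x)\to z$, $|z|\le R$; equicontinuity (apply $\Phi_{-\tau_n}$, a $C$-Lipschitz map, to a Cauchy tail of the $\Phi_{\tau_n}(x)$) then makes $x$ itself recurrent, with return times tending to infinity. The mechanism I would exploit is volume preservation: for small $\rho$ the images $\Phi_{\tau_n}(B_\rho(x))$ all lie in a fixed ball about $z$ and have equal positive volume, hence cannot be pairwise disjoint, which is a Poincaré-recurrence obstruction. Combined with uniqueness of trajectories (orbits do not cross) and the pinched speed (the orbit never slows), this should trap the orbit in a compact invariant set, contradicting unboundedness. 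The delicate point, which I regard as the genuine obstacle, is upgrading ``some recurrence'' to ``the whole orbit is bounded'': equicontinuity alone with $C>1$ permits exponentially accumulating errors and does \emph{not} suffice (witness the non-recurrent, unbounded constant-drift flow), so it is the volume preservation coming from $\nabla\cdot\bar{\bf b}=0$ that must rule out recurrent unbounded orbits, as the elementary two-dimensional Hamiltonian picture already indicates.

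In case (i), $\overline{\{\Phi_\tau(x):\tau\in\R\}}$ is compact, so the $C$-Lipschitz map $y\mapsto\Phi_{\cdot}(y)$ carries it to a compact subset of $\mathrm{BUC}(\R;\R^d)$ which contains every translate $\Phi_{\cdot+a}(x)=\Phi_{\cdot}(\Phi_a(x))$. Hence the set of translates is relatively compact and $\Phi_{\cdot}(x)$ is Bohr almost periodic (Example \ref{exm:abs:AP}), and the Lipschitz dependence on $x$ promotes this to $\Phi_\tau(x)\in[C(\R^d;\mathcal{AP})]^d$. For $f\in C(\R^d\times\T^d)$, the function $\widetilde f(\cdot,x,y)$ is the composition of the almost-periodic $\Phi_{\cdot}(x)$, whose range is relatively compact, with the continuous $f(\cdot,y)$, and composition of such a function with a continuous one is again almost periodic; uniform continuity of $f$ on compacts then gives $\widetilde f\in C(\R^d\times\T^d;\mathcal{AP})$. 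For the Jacobian I would use the cocycle identity $\partial_x\Phi_{\tau+a}(x)=\partial_x\Phi_\tau(\Phi_a(x))\,\partial_x\Phi_a(x)$: under the extra hypothesis that $J$ is uniformly continuous on $\R\times K$, the assignment $y\mapsto\partial_x\Phi_{\cdot}(y)$ is continuous into the sup-norm, so the translates of $J(\cdot,x)$ lie in the image under matrix multiplication of a compact family and the bounded, hence precompact, set $\{\partial_x\Phi_a(x):a\in\R\}$. This product is relatively compact, giving $J,\widetilde J\in[C(\R^d;\mathcal{AP})]^{d\times d}$.

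Finally, in case (ii) the conclusion is immediate. If $f\in C(\R^d\times\T^d)$ satisfies $\lim_{|x|\to\infty}f(x,y)=f_\infty(y)$ for each $y$, then, since $|\Phi_\tau(x)|\to\infty$ as $|\tau|\to\infty$, we obtain $\widetilde f(\tau,x,y)=f(\Phi_\tau(x),y)\to f_\infty(y)$ as $|\tau|\to\infty$. Thus $\widetilde f(\cdot,x,y)$ converges at infinity and so belongs to the algebra of functions with a limit at infinity (Example \ref{exm:abs:a.w.m.v-convergence-at-infinity}), which completes the plan.
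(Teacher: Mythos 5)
Your setup and your treatment of the algebra-membership claims are largely sound, and in places cleaner than the paper's: the uniform bi-Lipschitz bound $C^{-1}|x-x'|\le|\Phi_\tau(x)-\Phi_\tau(x')|\le C|x-x'|$, the observation that one bounded orbit forces all orbits bounded, the Bochner-criterion argument that the compact orbit closure pushes forward under the $C$-Lipschitz map $y\mapsto\Phi_{\cdot}(y)$ to a compact set of translates in $\mathrm{BUC}(\R;\R^d)$ (the paper instead invokes the classical Ellis equivalence of equicontinuity and uniform almost periodicity on the compact invariant sets $K_R$), the cocycle identity $J(\tau+a,x)=J(\tau,\Phi_{-a}(x))\,J(a,x)$ for the Jacobian (the paper uses uniform convergence of difference quotients instead), and the immediate case (ii) are all correct.

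However, there is a genuine gap exactly where you flag one: you never prove that a forward orbit which fails to escape to infinity is bounded, and this implication is the entire content of the dichotomy. Your Poincar\'e-recurrence sketch does not close it: intersecting $\Phi_{\tau_n}(B_\rho(x))$ with $\Phi_{\tau_m}(B_\rho(x))$ only produces points near $x$ returning near $x$, i.e.\ recurrence, but recurrence of $x$ itself is already free from the bi-Lipschitz bound, since $|\Phi_{\tau_n-\tau_m}(x)-x|=|\Phi_{-\tau_m}(\Phi_{\tau_n}(x))-\Phi_{-\tau_m}(\Phi_{\tau_m}(x))|\le C\,|\Phi_{\tau_n}(x)-\Phi_{\tau_m}(x)|$; the missing step is precisely recurrence $\Rightarrow$ boundedness, which you leave as ``should trap the orbit.'' Moreover, your diagnosis that equicontinuity cannot suffice and that volume preservation ($\det J=1$) must be the operative mechanism is wrong: the constant-drift flow you cite is not recurrent, so it witnesses nothing, and the paper's proof of this proposition uses \emph{only} the Jacobian bound — the comparison $\sup_{\tau_n\le\tau\le\tau_{n+1}}|\Phi_\tau(x)-\Phi_{\tau-\tau_n}(x)|\le C\,|\Phi_{\tau_n}(x)-x|\le C\operatorname{diam}(K)$ for the dichotomy, and equicontinuity for part (i) — with no appeal to $\det J=1$ anywhere. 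Indeed equicontinuity alone does yield the implication: replace the Euclidean metric by the equivalent flow-invariant metric $d'(x,y)=\sup_\tau|\Phi_\tau(x)-\Phi_\tau(y)|$, so that $\Phi_\tau$ acts by isometries of the proper, connected space $(\R^d,d')$; properness of the isometry group of such a space then makes a non-escaping one-parameter group have compact closure, forcing all orbits to be compact. So, as written, your proposal establishes the easy half of the dichotomy and the two membership claims, but the central compactness step is absent, and the mechanism you propose for it points in the wrong direction.
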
 
\begin{Rem}
The almost-periodicity of the Jacobian of an (locally) uniformly almost-periodic flow is a subtle issue as there are uniformly almost-periodic functions whose derivative is not uniformly almost-periodic. The assumption in (i) that $J$ is uniformly continuous is to side step this issue.
\end{Rem} 
To prove this proposition we need a definition.
\begin{Defi}[Equicontinuous flow]
A one-parameter group $\phi_\tau$ of homeomorphisms of $K\subseteq\R^d$ is \emph{equicontinuous} if for any $\eps>0$ and $x\in K$ there is a $\delta=\delta(x,\eps)$ such that whenever $|x'-x|\le \delta$ and $x'\in K$ it holds that $|\phi_{\tau}(x)-\phi_\tau(x')|\le \eps $ for all $\tau\in\R$.
\end{Defi}
\begin{proof}[Proof of Proposition \ref{Exm:prop:flow-reps}]
We first prove the dichotomy. Let $x\in\R^d$ be fixed. We first claim that either $|\Phi_\tau(x)|\to\infty$ as $|\tau|\to\infty$ or its orbit is bounded. Suppose $|\Phi_\tau(x)|\not\to\infty$ as $|\tau|\to\infty$, then there must be a compact set $K$ containing $x$ and a sequence of times $\tau_n$ with $|\tau_n|\to\infty$ as $n\to\infty$ and $\Phi_{\tau_n}(x)\in K$. Without loss of generality let $0<\tau_1<\tau_2\dotsb$. By integrating Assumption \ref{heu:ass:bound-on-J}, for any $n\ge1$ it holds that
\begin{equation*}
\sup_{\tau_n\le \tau\le\tau_{n+1}}|\Phi_{\tau}(x)-\Phi_{\tau-\tau_n}(x)|\le C|\Phi_{\tau_n}(x)-x|\le C\operatorname{diam}(K)
\end{equation*}
and hence the forward orbit is bounded. We now claim that the backwards orbit is also bounded. Indeed, let $s>0$ be arbitrary, then, using Assumption \ref{heu:ass:bound-on-J} once more we have
\begin{equation*}
|\Phi_{-s}(x)-x|=|\Phi_{-s}(x)-\Phi_{-s}(\Phi_s(x))|\le C|x-\Phi_{s}(x)|
\end{equation*}
and the right hand side is bounded uniformly in $s>0$.

We have shown that the dichotomy holds for some fixed $x$, but this together with Assumption \ref{heu:ass:bound-on-J} imply that the same dichotomy holds for all $x$. Indeed, consider the orbit starting from an arbitrary $x'$, then
\begin{equation*}
\sup_{\tau\in\R}|\Phi_\tau(x)-\Phi_\tau(x')|\le C|x-x'|<\infty 
\end{equation*}
which we obtain by again integrating Assumption \ref{heu:ass:bound-on-J}. This implies that if the orbit of $x$ is bounded (resp. converges to infinity) then the orbit of $x'$ is bounded (resp. converges to infinity).

We now prove the claims, starting with (i). Let $R>0$ be arbitrary, then the set
\begin{equation*}
K_R=\overline{\{\Phi_{\tau}(x):\tau\in\R,|x|\le R\}}
\end{equation*}
is invariant under $\Phi_\tau$ and compact. Moreover, the $K_R$ are nested and cover $\R^d$. It is thus sufficient to prove the claims on $K_R$. Note that $(\Phi_\tau,K,|\cdot|)$ is a compact dynamical system, and Assumption \ref{heu:ass:bound-on-J} implies that it is \emph{equicontinuous} in the sense of the above definition. It is a classical result of topological dynamical systems (see e.g. \cite{Ellis-lecture-notes}) that for compact dynamical systems the property of equicontinuity is equivalent to being \emph{uniformly almost-periodic}, in the sense that $\Phi_\tau(x)\in [C(K_R;\mathcal{AP})]^d$. Now suppose that $f\in C(\R^d\times\T^d)$, then $f$ is uniformly continuous on $K_R\times \T^d$ as this set is compact. Moreover, as $K_R$ is invariant under $\Phi_\tau$ the function $\widetilde{f}(\tau,x,y)$ restricted to $x\in K_R$ depends only on $f$ restricted to $K_R\times \T^d$. Hence $\widetilde{f}(\tau,x,y)=f(\Phi_\tau(x),y)$ (restricted to $x\in K_R$) is the composition of a uniformly continuous function and a uniformly almost-periodic function, and is uniformly almost-periodic. Finally, suppose that $J$ is uniformly continuous on $\R\times K_R$, then the difference quotients defined for any unit vector $\xi\in\R^d$, by
\begin{equation*}
J_{h}(\tau,x)\xi=\frac{\Phi_{-\tau}(x+h\xi)-\Phi_{-\tau}(x)}{|h|}
\end{equation*}
converge in $[C(K_{R/2}\times \R)]^d$ as $\R\ni h\to0$ to $J(\tau,x)\xi$. As both terms in the difference quotient are uniformly almost-periodic, the limit is also. That $\widetilde{J}$ is uniformly almost-periodic can be proved in the same way.

Now we prove the claim for (ii). Let $f(x,y)\in C(\R^d\times \T^d)$ be as assumed and converge to $g(y)$ as $|x|\to\infty$. Clearly $\widetilde{f}(\tau,x,y)\to g(y)$ as $|\tau|\to\infty$, it only remains to show that $\widetilde{f}$ is uniformly continuous. To this end, note that $f$ is continuous on $\overline{\R^d}\times \T$ where $\overline{\R^d}$ is the one-point compactification of $\R^d$. As this set is compact, $f$ is uniformly continuous on this set. Moreover, as $\Phi_\tau(x)$ is uniformly continuous from $\R\times\R^d$ to $\overline{\R^d}$, the composition $\widetilde{f}$ is uniformly continuous, which completes the proof of the proposition.
\end{proof} 

\section{Applications to other models}\label{sec:explicit}

In this section, we consider some explicit models and perform the asymptotic analysis using the $\Sigma$-$\Phi_\tau$ convergence.

\subsection{Lagrangian coordinates}
For a smooth fluid field $\bar{\bf b}(x)\in C^1(\R^d;\R^d)$ and diffusion coefficient ${\bf D}(x)\in L^\infty(\R^d;\R^{d\times d})$, consider the Cauchy problem with large convection term
\begin{align}
\frac{\partial u^\eps}{\partial t}
+
\frac{1}{\eps}
\bar{\bf b}(x)
\cdot
\nabla u^\eps 
-
\nabla
\cdot
\Big(
{\bf D}(x)\nabla u^\eps 
\Big)
=
0
\qquad
\mbox{ for }
(t,x)\in\, ]0,T[\times\R^d.
\end{align}
Let $\Phi_\tau(x)$ be the flow associated with the vector field $\bar{\bf b}(x)$. As Remark \ref{rem:abs:test-fn-no-space-oscillations} suggests, we consider the $\Sigma$-$\Phi_\tau$ convergence with no oscillations in space, i.e. with test functions $\psi\left(t,\Phi_{-t/\eps}(x),\frac{t}{\eps}\right)$:
\begin{align}
\lim_{\eps\to0}
\iint\limits_{(0,T)\times\R^d}
u^\eps(t,x)
\psi\left(t,\Phi_{-t/\eps}(x),\frac{t}{\eps}\right)
\, {\rm d}x\, {\rm d}t
=
\iiint\limits_{(0,T)\times\R^d\times\Delta(\mathcal{A})}
u_0(t,\X,s)
\widehat{\psi}(t,\X,s)
\, {\rm d}\beta(s)
\, {\rm d}\X
\, {\rm d}t.
\end{align}
An argument similar to the proof of Lemma \ref{lem:hom:2-scale-compactness} implies that the above limit function $u_0$ is independent of the $s$ variable. As done earlier in Section \ref{sec:homog_result}, we need to pass to the limit (as $\eps\to0$) in the weak formulation
\begin{align*}
& -
\iint\limits_{(0,T)\times\R^d}
u^\eps(t,x)
\frac{\partial \psi}{\partial t}
\left(
t, \Phi_{-t/\eps}(x)
\right)
\, {\rm d}x\, {\rm d}t
-
\int\limits_{\R^d}
u^{in}(x)
\psi(0,x)
\, {\rm d}x
\\
& +
\iint\limits_{(0,T)\times\R^d}
\widetilde{\bf D}\left(\frac{t}{\eps}, \Phi_{-t/\eps}(x)\right)
\nabla u^\eps(t,x)
\cdot 
{}^\top\!\!\, \widetilde{J}\left(\frac{t}{\eps}, \Phi_{-t/\eps}(x)\right)
\nabla_\XX \psi\left(t, \Phi_{-t/\eps}(x)\right)
\, {\rm d}x\, {\rm d}t
= 
0.
\end{align*}
Under the Assumption \ref{ass:hom:D-x-y-algebra} on the diffusion coefficient ${\bf D}(x)$ and under the Assumption \ref{ass:hom:Jacobain-algebra} on the Jacobian matrix $J(\tau,x)$, the product ${}^\top\!\!\, \widetilde{\bf D}(\tau,\X){}^\top\!\!\, \widetilde{J}(\tau,\X)\nabla_\XX\psi(t,\X)$ is an admissible test function in the sense of Definition \ref{defn:abs:admissible-test-fn}. Hence, passing to the limit yields
\begin{align*}
-
\iint\limits_{(0,T)\times\R^d}
& u_0(t,\X)
\, {\rm d}\X\, {\rm d}t
-
\int\limits_{\R^d}
u^{in}(\X)
\, {\rm d}\X
\\
& +
\iiint\limits_{(0,T)\times\R^d\times\Delta(\mathcal{A})}
\widehat{\widetilde{J}}(s,\X)
\widehat{\widetilde{\bf D}}(s,\X)
\widehat{{}^\top\!\!\, \widetilde{J}}(s,\X)
\nabla_\XX u_0(t,\X)
\cdot
\nabla_\XX \psi(t,\X)
\, {\rm d}\beta(s)\, {\rm d}\X\, {\rm d}t
= 0.
\end{align*}

\begin{Rem}\label{rem:explicit:Lagrangian}
In the above computation, passing to the limit as $\eps\to0$ using $\Sigma$-$\Phi_\tau$ convergence amounts to arriving at a limit equation which is in Lagrangian coordinates
\begin{align*}
\frac{\partial u_0}{\partial t}
- 
\nabla_\XX
\cdot 
\Big(
\mathfrak{D}(\X)
\nabla_\XX
u_0
\Big)
= 0
\end{align*}
where the diffusion coefficient is given by
\begin{align*}
\mathfrak{D}(\X)
=
\int\limits_{\Delta(\mathcal{A})}
\widehat{\widetilde{J}}(s,\X)
\widehat{\widetilde{\bf D}}(s,\X)
\widehat{{}^\top\!\!\, \widetilde{J}}(s,\X)
\, {\rm d}\beta(s).
\end{align*}
\end{Rem}

\subsection{Fluid field with $\mathcal{O}(\eps)$ perturbation}\label{ssec:explicit-eps}

In a next transport model, we consider a smooth fluid field with a particular structure
\begin{align}\label{eq:explicit-eps-fluid-field}
{\bf b}\left(x, \frac{x}{\eps}\right)
= 
{\bf h}\left(\frac{x}{\eps}\right)
+ \eps {\bf h}^1\left(x, \frac{x}{\eps}\right).
\end{align}
The convection-diffusion equation that we consider is
\begin{align}\label{eq:explicit:eps-CD}
\frac{\partial u^\eps}{\partial t}
+
\frac{1}{\eps}
{\bf h}\left(\frac{x}{\eps}\right)
\cdot
\nabla u^\eps 
+
{\bf h}^1\left(x,\frac{x}{\eps}\right)
\cdot
\nabla u^\eps 
-
\nabla
\cdot
\Big(
{\bf D}\left(\frac{x}{\eps}\right)\nabla u^\eps 
\Big)
=
0
\, \, 
\mbox{ for }
(t,x)\in\, ]0,T[\times\R^d.
\end{align}
As only the field ${\bf h}\left(\frac{x}{\eps}\right)$ is of $\mathcal{O}(\eps^{-1})$ in \eqref{eq:explicit:eps-CD}, we need to consider the flow associated with the mean field 
\begin{align*}
{\bf h}^*:=\int\limits_{\T^d}{\bf h}(y)\, {\rm d}y,
\qquad
\mbox{ i.e. }
\Phi_\tau = x + {\bf h}^* \tau.
\end{align*}
This suggests the use of \emph{two-scale convergence with drift} \cite{maruvsic2005homogenization, allaire2008periodic}. The solution family $u^\eps$ satisfies the uniform a priori bounds:
\begin{align*}
\|u^\eps\|_{L^2((0,T)\times\R^d)} \le C;
\qquad
\|\nabla u^\eps\|_{L^2((0,T)\times\R^d)} \le C.
\end{align*}
The compactness results in \emph{two-scale convergence with drift} theory implies the existence of $u_0\in L^2((0,T);H^1(\R^d))$ and $u_1\in L^2((0,T)\times\R^d;H^1(\T^d))$ such that
\begin{align}\label{eq:explicit-2scale-hstar}
u^\eps \2scaleh u_0(t,x);
\qquad
\nabla u^\eps \2scaleh \nabla_x u_0(t,x) + \nabla_y u_1(t,x,y).
\end{align}
The idea is indeed to pass to the limit in the weak formulation with 
$$
\psi\left(t, x-\frac{{\bf h}^*t}{\eps}\right) + \eps \psi_1\left(t, x-\frac{{\bf h}^*t}{\eps}, \frac{x}{\eps}\right)
$$ 
as the test function which vanishes at time instant $t=T$.
\begin{align*}
& -
\iint\limits_{(0,T)\times\R^d}
u^\eps(t,x)
\frac{\partial \psi}{\partial t}
\left(
t, x - \frac{{\bf h}^*t}{\eps}
\right)
\, {\rm d}x\, {\rm d}t
+
\iint\limits_{(0,T)\times\R^d}
u^\eps(t,x)
{\bf h}^*
\cdot 
\nabla_x \psi_1
\left(
t, x - \frac{{\bf h}^*t}{\eps}, \frac{x}{\eps}
\right)
\, {\rm d}x\, {\rm d}t
\\
& +
\frac{1}{\eps}
\iint\limits_{(0,T)\times\R^d}
u^\eps(t,x)
\left(
{\bf h}^*
-
{\bf h}\left(\frac{x}{\eps}\right)
\right)
\cdot 
\nabla_x \psi
\left(
t, x - \frac{{\bf h}^*t}{\eps}
\right)
\, {\rm d}x\, {\rm d}t
\\
& +
\iint\limits_{(0,T)\times\R^d}
{\bf h}^1\left(x,\frac{x}{\eps}\right)
\cdot 
\nabla u^\eps(t,x)
\psi
\left(
t, x - \frac{{\bf h}^*t}{\eps}
\right)
\, {\rm d}x\, {\rm d}t
\\
& 
+
\iint\limits_{(0,T)\times\R^d}
{\bf D}\left(\frac{x}{\eps}\right)
\nabla u^\eps(t,x)
\cdot 
\left(
\nabla_x \psi
\left(
t, x - \frac{{\bf h}^*t}{\eps}
\right)
+
\nabla_y \psi_1
\left(
t, x - \frac{{\bf h}^*t}{\eps}, \frac{x}{\eps}
\right)
\right)
\, {\rm d}x\, {\rm d}t
+
\mathcal{O}(\eps)
= 0.
\end{align*}
We can pass to the limit in almost all the terms in above expression except for the fourth term on the left hand side. This is essentially because the product ${\bf h}^1\left(x,y\right)
\psi
\left(
t, x
\right)$
does not form an admissible test function in the sense of Definition \ref{defn:abs:admissible-test-fn}. However, if we consider the flow representation of the fluid field ${\bf h}^1(x,y)$ and assume that $\widetilde{{\bf h}^1}(\cdot,x,y)\in \mathcal{A}$ for certain ergodic algebra w.m.v. $\mathcal{A}$, then the product $\widetilde{{\bf h}^1}(\tau,x,y)\psi(t,x)$ forms an admissible test function in the sense of Definition \ref{defn:abs:admissible-test-fn}. Thus, using the notion of weak $\Sigma$-$\Phi_\tau$ convergence, we can prove the following result. The proof of which is a simple adaptation of the calculations already present in Section \ref{sec:homog_result}. Hence is left to the reader.

\begin{Thm}\label{thm:explicit-eps}
Suppose the flow representation of the fluid field ${\bf h}^1(x,y)$ belongs to an ergodic algebra w.m.v. $\mathcal{A}$. The two-scale with drift ${\bf h}^*$ limits for the solution family $u^\eps$ obtained in \eqref{eq:explicit-2scale-hstar} satisfy the homogenized equation
\begin{align}\label{eq:explicit:hom-eq}
\frac{\partial u_0}{\partial t}
+ \mathfrak{h}(x)\cdot \nabla u_0
- \nabla \cdot \Big(\mathfrak{D}\nabla u_0 \Big)
= 0
\qquad
\mbox{ for }(t,x)\in\, ]0,T[\times\R^d,
\end{align}
where 
the convective field in the homogenized equation is given by
\begin{align}\label{eq:explicit:hom-convect-field}
\mathfrak{h}(x)
=
\iint\limits_{\Delta(\mathcal{A})\times\T^d}
\widehat{\widetilde{{\bf h}^1}}(s,x,y)
\, {\rm d}\beta(s)\, {\rm d}y
\end{align}
and the effective diffusion coefficient in the homogenized equation is given by
\begin{align*}
\mathfrak{D}_{ij}
= 
\int\limits_{\T^d}
{\bf D}(y)
\Big(
\nabla_y \omega_j(y)
+ {\bf e}_j
\Big)
\cdot
\Big(
\nabla_y \omega_i(y)
+ {\bf e}_i
\Big)
\, {\rm d}y
\end{align*}
for $i,j\in \{1,\cdots,d\}$ where the $\omega_i$ solve the cell problem
\begin{align*}
{\bf h}(y)\cdot \left(\nabla_y \omega_i + {\bf e}_i\right)
- \nabla_y \cdot \left( {\bf D}(y) \left(\nabla_y \omega_i + {\bf e}_i\right) \right)
= {\bf h}^*\cdot {\bf e}_i\qquad \mbox{ in }\T^d.
\end{align*}
\end{Thm}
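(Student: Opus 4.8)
The plan is to reproduce the argument of Theorem~\ref{thm:hom:hom} for the constant-drift flow $\Phi_\tau(x)=x+{\bf h}^*\tau$, the only genuinely new ingredient being the treatment of the $\mathcal{O}(1)$ convective term carrying the locally periodic field ${\bf h}^1$. First I would observe that $\Phi_\tau$ has identity Jacobian and is volume preserving, hence satisfies Assumption~\ref{ass:abs:flow}, and that two-scale convergence with drift is precisely $\Sigma$-$\Phi_\tau$ convergence for this flow, with $\mathcal{A}$ an ergodic algebra w.m.v.\ containing $\widetilde{{\bf h}^1}$ in the $\tau$ variable. The a priori bounds of Lemma~\ref{lem:QA:apriori-estimates} together with Proposition~\ref{prop:abs:corrector} then yield, along a subsequence, the limits \eqref{eq:explicit-2scale-hstar} with $\widehat{{}^\top\!\!\,\widetilde{J}}=I$, and a verbatim repetition of Lemma~\ref{lem:hom:2-scale-compactness} shows $u_0$ is independent of $s$. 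I would then insert the test function \eqref{eq:hom:test-fn-choice} into the weak formulation and pass to the limit term by term.

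Since ${\bf h}(y)$ and ${\bf D}(y)$ depend on the fast variable alone, their flow-representations are constant in $\tau$ and so lie trivially in $\mathcal{A}$; the time-derivative and diffusion terms therefore converge exactly as in Section~\ref{sec:homog_result}. The singular term $\eps^{-1}\iint u^\eps({\bf h}^*-{\bf h}(x/\eps))\cdot\nabla_\XX\psi$ is handled by the curl trick of Lemma~\ref{lem:hom:singular-terms-sigma-limit}: as ${\bf h}^*-{\bf h}(y)$ is $y$-divergence free it has a periodic vector potential $\Upsilon_0(y)$, and integrating by parts converts the $\eps^{-1}$ prefactor into a bounded integrand whose limit involves the corrector $\nabla_y u_1$. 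Combining this with the diffusion term, substituting the separated form $u_1=\omega(y)\cdot\nabla_\XX u_0$, and testing the cell equation against $\omega_j$ exactly as in the proof of Proposition~\ref{prop:heu:formal-result} produces both the cell problem and the effective diffusion $\mathfrak{D}$ of the statement; the identity Jacobian erases all the $\widetilde{J}$ factors of \eqref{eq:hom:effective-diffusion-mathfrak-B}, leaving the purely periodic expressions claimed.

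The main obstacle is the $\mathcal{O}(1)$ term $\iint {\bf h}^1(x,x/\eps)\cdot\nabla u^\eps\,\psi(t,\Phi_{-t/\eps}(x))$. Here the naive two-scale product ${\bf h}^1(x,y)\psi(t,x)$ is \emph{not} admissible, precisely because ${\bf h}^1$ depends on the slow variable $x=\X+{\bf h}^*\tau$ that runs off to infinity along the flow, so no mean value in $y$ is available. The remedy is to write ${\bf h}^1(x,x/\eps)=\widetilde{{\bf h}^1}(t/\eps,\Phi_{-t/\eps}(x),x/\eps)$ in flow representation and to invoke the hypothesis $\widetilde{{\bf h}^1}(\cdot,x,y)\in\mathcal{A}$: by Remark~\ref{rem:hom:admissible-by-assumption} the field $\widetilde{{\bf h}^1}(\tau,\X,y)\psi(t,\X)$ is admissible in the sense of Definition~\ref{defn:abs:admissible-test-fn}, so Lemma~\ref{lem:abs:admissible-coeff} applies to $\nabla u^\eps\sflow\nabla_\XX u_0+\nabla_y u_1$ and yields the limit $\iiiint(\nabla_\XX u_0+\nabla_y u_1)\cdot\widehat{\widetilde{{\bf h}^1}}(s,\X,y)\,\psi(t,\X)\,{\rm d}y\,{\rm d}\beta(s)\,{\rm d}\X\,{\rm d}t$. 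As $u_0$ is independent of $y$ and $s$, its contribution is exactly $\mathfrak{h}(\X)\cdot\nabla_\XX u_0$ with $\mathfrak{h}$ as in \eqref{eq:explicit:hom-convect-field}; the corrector contribution $\iiiint\nabla_y u_1\cdot\widehat{\widetilde{{\bf h}^1}}\,\psi$ vanishes after integration by parts over $\T^d$, since incompressibility of ${\bf b}$ forces $\nabla_y\cdot{\bf h}^1=0$, a property inherited by $\widehat{\widetilde{{\bf h}^1}}$ because $\nabla_y$ commutes with both the Gelfand transform and the $\beta$-average in $\tau$.

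To finish, I would split the limit identity: setting $\psi\equiv0$ isolates the weak form of the cell problem for $\omega(y)$, which is $\X$- and $s$-independent because its coefficients are, reducing it to the stated purely periodic problem; setting $\psi_1\equiv0$ assembles the homogenized equation \eqref{eq:explicit:hom-eq} with the drift $\mathfrak{h}$ and diffusion $\mathfrak{D}$ above. Uniqueness for this uniformly parabolic equation (Proposition~\ref{prop:heu:homo-eq-exist} and Remark~\ref{rem:hom:entire-seq}) then promotes the subsequential limit to convergence of the whole family. Every step other than the admissibility of the ${\bf h}^1$ term is a direct transcription of Section~\ref{sec:homog_result} with the Jacobian set to the identity, which is why the remaining details may safely be left to the reader.
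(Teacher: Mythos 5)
Your proposal is correct and takes essentially the same approach as the paper, which only sketches this proof (calling it ``a simple adaptation of the calculations already present in Section~\ref{sec:homog_result}''): constant-drift $\Sigma$-$\Phi_\tau$ convergence with identity Jacobian, the curl trick of Lemma~\ref{lem:hom:singular-terms-sigma-limit} for the singular term, and admissibility of the flow representation $\widetilde{{\bf h}^1}$ (Lemma~\ref{lem:abs:admissible-coeff}) for the $\mathcal{O}(1)$ convective term. Your observation that the corrector contribution of the ${\bf h}^1$ term vanishes by integration by parts in $y$, using $\nabla_y\cdot{\bf h}^1=0$ inherited from incompressibility, is precisely the detail needed so that ${\bf h}^1$ enters only through the convective field \eqref{eq:explicit:hom-convect-field} and not through $\mathfrak{D}$, in agreement with the paper's remark following the theorem.
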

Remark that, due to particular choice of the fluid field \eqref{eq:explicit-eps-fluid-field}, the field ${\bf h}^1(x,\frac{x}{\eps})$ only contributes to the homogenized equation \eqref{eq:explicit:hom-eq} via the convective field \eqref{eq:explicit:hom-convect-field} and not the effective diffusion coefficient. Only the fluid field of $\mathcal{O}(\eps^{-1})$ contribute to the dispersive effects in the effective diffusion coefficient.

Remark also that even in the constant drift scenario, the previously known \emph{two-scale convergence with drift} developed in \cite{maruvsic2005homogenization} has a handicap in dealing with coefficients that depend on the macroscopic variable. Hence, the notion of weak convergence developed in this work generalizes the known multiple scale techniques (in the spirit of two-scale convergence of Nguetseng and Allaire) in homogenization theory to a great extent.

\section{Conclusion}\label{sec:conclusions}

The structural assumption of periodicity (in the $y$ variable) on the fluid field ${\bf b}(x,y)$ made in the previous sections is for the sake of simplicity. We can indeed develop a theory of $\Sigma$-convergence along flows (similar to the theory developed in Section \ref{sec:abs}) under the assumption that the oscillations in space belong to certain \emph{ergodic algebra with mean value}. To be precise, suppose ${\bf b}(x,y)$ a smooth fluid field which belongs to an ergodic algebra w.m.v. (say $\mathcal{A}_1$) in the $y$ variable. By the definition of algebra w.m.v. (precisely, property (iii) in Definition \ref{def:abs:algebra-w.m.v.}), ${\bf b}(x,\cdot)\in\mathcal{A}_1$ possesses a mean value, i.e.
\begin{align*}
{\bf b}\left(x,\frac{x}{\eps}\right) \rightharpoonup M{\bf b}(x)\quad
\mbox{ in }L^\infty(\R^d)\mbox{-weak*}\mbox{ as }\eps\to0.
\end{align*}
In this scenario, we take the mean field $\bar{\bf b}(x) = M{\bf b}(x)$ and consider the flow $\Phi_\tau$ associated with this mean field. To extend the notion of $\Sigma$-convergence along flows (Definition \ref{def:abs:2scale-flow}), we need to essentially characterize the limit
\begin{align*}
\lim_{\eps\to0}
\iint\limits_{(0,T)\times\R^d}
u^\eps(t,x)
\psi\left( t, \frac{t}{\eps}, \Phi_{-t/\eps}(x), \frac{x}{\eps}\right)
\, {\rm d}x\, {\rm d}t
\end{align*}
where the test function $\psi(t,\tau,x,y)$ belongs to an ergodic algebra w.m.v. (say $\mathcal{A}_2$) as a function of the fast time variable $\tau$ and belongs to an ergodic algebra w.m.v. $\mathcal{A}_1$ as a function of the $y$ variable. To prove compactness result, in the spirit of Theorem \ref{Thm:abs:compactness}, the approach is to consider the differentiation theory developed in the context of \emph{algebras w.m.v.} developed in \cite{nguetseng2003homogenization, nguetseng2004homogenization, casado2002two, sango2011generalized}. We also need to approach it using the reiterated homogenization techniques as in \cite{nguetseng2010reiterated}. The effective diffusion matrix obtained under the periodicity assumption (see \eqref{eq:hom:effective-diffusion}-\eqref{eq:hom:effective-diffusion-mathfrak-B}) is given in terms of the cell solutions obtained by solving elliptic problems on a torus. In this general setting, however, the expressions for effective diffusion shall involve solutions to some variational problems solved on the spectrum of the algebra w.m.v., i.e. $\Delta(\mathcal{A}_1)$ (cf. the works of Nguetseng \cite{nguetseng2003homogenization, nguetseng2004homogenization}). This potential theory of $\Sigma$-convergence along flows in a more general setting is quite intricate and is left for future investigations.

As is evident from Subsection \ref{ssec:Exm:counter}, even in the constant drift case, one can only homogenize the convection-diffusion problems in strong convection regime provided the flow representation of the diffusion matrix belongs to an algebra w.m.v., i.e. satisfies Assumption \ref{ass:hom:D-x-y-algebra}.

All along this article, we have considered time-independent coefficients. This resulted in the study of autonomous ordinary differential systems (see \eqref{eq:heu:ode-mean-field}). Considering flows associated with non-autonomous systems would be interesting. But, the authors believe that the analysis would be very complicated and it remains to be checked if we can get compactness results (in the spirit of Theorem \ref{Thm:abs:compactness}) for non-autonomous flows.

The assumption that the Jacobian matrices are bounded functions of the fast time variable is quite non-generic (see Section \ref{sec:Exm}). To lift this assumption would require an enormous amount of work in the theory of Banach algebras. The main difficulty is the appearance of new time scales (as is evident from the shear flow case considered in Example \ref{Exm:CExm:blow-up}). This problem largely remains to be solved. A partial result in this direction shall be given by the authors in a forthcoming publication \cite{holdingetal2016shearflow}.

Finally, the assumption of incompressibility on the fluid field has ensured that the associated flows are volume preserving (see (iv) in Assumption \ref{ass:abs:flow}). This property of the flows has played an intricate role in our analysis, notably the proof of Lemma \ref{lem:abs:test-function}. It is worth mentioning \cite{blanchet2009stochastic} where they have treated the homogenization of convection-diffusion problem in strong convection regime where the fluid field is given by an harmonic potential. In the context of purely periodic fluid fields, there are works that consider compressible flows and perform the homogenization of convection-diffusion problems in strong convection regime (see \cite{donato2005averaging, allaire2014homogenization}). The approach is to employ a factorization principle to factor out oscillations from the solution via principal eigenfunctions of an associated spectral problem and to cancel any exponential decay in time of the solution using the principal eigenvalue of the same spectral problem. This approach has not been attempted in the literature for locally periodic coefficients.

\section{Appendix}\label{sec:appendix}

In this section, we give the proof of Lemma \ref{lem:heu:basic-facts} on some basic facts on the flows.

\begin{proof}[Proof of Lemma \ref{lem:heu:basic-facts}]
We prove each claim in turn.
\begin{enumerate}[(i)]
\item 
Let $\varphi(\X)\in C^\infty_c(\R^d;\R)$ be an arbitrary test function and let the index $i$ be arbitrary. By the chain rule,
\begin{align*}
\frac{\partial}{\partial x_i}
\Big(
\varphi\left(\Phi_{-\tau}(x)\right)
\Big)
=
\sum_{j=1}^d
\frac{\partial \varphi}{\partial \X_j}\left(\Phi_{-\tau}(x)\right)
\frac{\partial \Phi^j_{-\tau}}{\partial x_i}(x).
\end{align*}
Integrating over $\R^d$ yields:
\begin{align*}
0 =
\int\limits_{\R^d}
\frac{\partial}{\partial x_i}
\left(
\varphi\left(\Phi_{-\tau}(x)\right)
\right)
\, {\rm d}x
= 
\int\limits_{\R^d}
\sum_{j=1}^d
\frac{\partial \varphi}{\partial \X_j}\left(\Phi_{-\tau}(x)\right)
\frac{\partial \Phi^j_{-\tau}}{\partial x_i}(x)
\, {\rm d}x.
\end{align*}
Making the change of variables: $\X=\Phi_{-\tau}(x)$, the above expression can be successively written as
\begin{align*}
0 =
\int\limits_{\R^d}
\sum_{j=1}^d
\frac{\partial \varphi}{\partial \X_j}(\X)
\frac{\partial \Phi^j_{-\tau}}{\partial x_i}\left( \Phi_\tau(\X)\right)
\, {\rm d}\X
= \int\limits_{\R^d}
\nabla_\XX \varphi(\X)
\cdot
\left(
\widetilde{J}_{ji}(\tau,\X)
\right)_{j=1}^d
\, {\rm d}\X,
\end{align*}
i.e. each column of $\widetilde{J}$ is divergence free in the sense of distributions, proving the claim.
\item We compute
\begin{equation*}
\nabla_\XX\cdot
\Big(
\widetilde{J}(\tau,\X)\widetilde{f}(\tau,\X)
\Big)
=
\widetilde{f}(\tau,\X)
\cdot
\left(
\nabla_\XX \cdot {}^\top\!\!\, \widetilde{J}(\tau,\X)
\right)
+
\widetilde{J}(\tau,\X)
:
\nabla_\XX\widetilde{f}(\tau,\X),
\end{equation*}
where $:$ is the Frobenius inner product. The first term on the right hand side vanishes thanks to (i). For the second term, we use the flow representation to obtain
\begin{equation*}
\nabla_\XX\cdot
\Big(
\widetilde{J}(\tau,\X)\widetilde{f}(\tau,\X)
\Big)
=
\widetilde{J}(\tau,\X) J(-\tau,\X)
:
\nabla_x f(\Phi_\tau(\X),y).
\end{equation*}
Thanks to the autonomy of the flow, the left side of the Frobenius product is the identity matrix. Therefore the above display vanishes as $f$ is divergence free.
\item Performing an integration by parts, we have:
\begin{align*}
\int\limits_{\R^d}
\phi(\X)
& \left(
{}^\top\!\!\, \widetilde{J}(\tau,\X) \nabla_\XX \varphi(\X)
\right)
\, {\rm d}\X
\\
& = 
- \int\limits_{\R^d}
\phi(\X)
\varphi(\X)
\left(
\nabla_\XX \cdot {}^\top\!\!\, \widetilde{J}(\tau,\X)
\right)
\, {\rm d}\X
-\int\limits_{\R^d}
\varphi(\X)
\left(
{}^\top\!\!\, \widetilde{J}(\tau,\X) \nabla_\XX \phi(\X)
\right)
\, {\rm d}\X.
\end{align*}
The first term on the right hand side of the previous expression vanishes, thanks to (i). Hence, we have proved the result.
\item Consider the time derivatives for the $i$-th component:
\begin{align}\label{eq:heu:dt-bi}
\frac{\rm d}{{\rm d}\tau}
\bar{\bf b}_i\left(\Phi_{-\tau}(x)\right)
= - \bar{\bf b}\left(\Phi_{-\tau}(x)\right)
\cdot
\nabla_{\scriptscriptstyle X} \bar{\bf b}_i\left(\Phi_{-\tau}(x)\right)
\end{align}
and
\begin{align}\label{eq:heu:dt-Jij-bj}
\frac{\rm d}{{\rm d}\tau}
\left[
\sum_{j=1}^d
J_{ij}(\tau,x) \bar{\bf b}_j(x)
\right]
= \frac{\rm d}{{\rm d}\tau}
\left[
\sum_{j=1}^d
\frac{\partial \Phi^i_{-\tau}}{\partial x_j}(x)
\bar{\bf b}_j(x)
\right]
=
-
\sum_{j=1}^d
\frac{\partial}{\partial x_j}
\Big(
\bar{{\bf b}}_i\left(\Phi_{-\tau}(x)\right)
\Big)
\bar{\bf b}_j(x).
\end{align}
The relation in \eqref{eq:heu:dt-Jij-bj} can be continued as
\begin{equation}\label{eq:heu:dt-Jij-bj-cont.}
\begin{aligned}
\frac{\rm d}{{\rm d}\tau}
\left[
\sum_{j=1}^d
J_{ij}(\tau,x) \bar{\bf b}_j(x)
\right]
&=
-
\sum_{j,k=1}^d
\frac{\partial \bar{{\bf b}}_i}{\partial {\scriptscriptstyle X}_k}\left( \Phi_{-\tau}(x)\right)
\frac{\partial \Phi^k_{-\tau}}{\partial x_j}(x)
\bar{\bf b}_j(x)\\
&=
-
\nabla_{{\scriptscriptstyle X}} \bar{\bf b}_i\left(\Phi_{-\tau}(x)\right) 
\cdot 
\Big(
J(\tau,x)\bar{\bf b}(x)
\Big).
\end{aligned}
\end{equation} 
Fix $x\in\R^d$ and define
\begin{align}
g_i(\tau) := 
\bar{\bf b}_i\left(\Phi_{-\tau}(x)\right) 
- 
\left[
\sum_{j=1}^d
J_{ij}(\tau,x) \bar{\bf b}_j(x)
\right].
\end{align}
Then, from \eqref{eq:heu:dt-bi} and \eqref{eq:heu:dt-Jij-bj-cont.}, we have:
\begin{align*}
\frac{\rm d}{{\rm d}t} g_i(\tau) = - \nabla_{{\scriptscriptstyle X}} \bar{\bf b}_i\left(\Phi_{-\tau}(x)\right) \cdot g(\tau).
\end{align*}
As $g(0)=0$, a Gr\"onwall type argument yields $g(\tau)=0$. Hence the result. \qedhere
\end{enumerate}
\end{proof}

\bibliography{biblio.bib}

\end{document}